\documentclass[11pt,a4paper]{amsart}
\usepackage[margin=1in]{geometry}
\usepackage[T1]{fontenc}
\usepackage{lmodern}
\usepackage{amssymb,mathtools,booktabs,array,microtype}
\usepackage[numbers,sort]{natbib}
\let\cite\citep
\usepackage{hyperref}
\usepackage{needspace}
\hypersetup{colorlinks=true,citecolor=blue,linkcolor=black,urlcolor=black,
  pdftitle={Sharp Projection Bounds and Direction Dependence for Intermediate Dimensions of Measures},
  pdfauthor={Shengyang Zang; Yingyu Zhao; Yunhao Zhao; Chen Zhou},
  pdfsubject={Intermediate dimensions and projections of measures}}
\allowdisplaybreaks[2]

\makeatletter
\let\originalsetaddresses\@setaddresses
\renewcommand{\@setaddresses}{
  \par\begingroup
  \leftskip=\normalparindent
  \parindent=0pt
  \originalsetaddresses
  \endgroup
}
\makeatother

\newtheorem{theorem}{Theorem}[section]
\newtheorem{proposition}[theorem]{Proposition}
\newtheorem{lemma}[theorem]{Lemma}
\newtheorem{corollary}[theorem]{Corollary}
\theoremstyle{definition}
\newtheorem{definition}[theorem]{Definition}
\theoremstyle{remark}
\newtheorem*{remark}{Remark}
\DeclareMathOperator{\spt}{supp}

\newcommand{\N}{\mathbb N}
\newcommand{\R}{\mathbb R}
\newcommand{\G}{G(n,m)}
\newcommand{\qA}{\dim_{\mathrm{qA}}}
\newcommand{\dA}{\dim_{\mathrm A}}

\numberwithin{equation}{section}
\title[Projection bounds for intermediate dimensions]
{Sharp Projection Bounds\\
for Intermediate Dimensions of Measures}
\author{Shengyang Zang}
\address{Shengyang Zang, School of Mathematical Sciences,
Nankai University, Tianjin, China}
\email{1120230032@mail.nankai.edu.cn}
\author{Yingyu Zhao}
\address{Yingyu Zhao, Qiuzhen College, Tsinghua University, Beijing, China}
\email{zhao-yy25@mails.tsinghua.edu.cn}
\author{Yunhao Zhao}
\address{Yunhao Zhao, School of Advanced Interdisciplinary Sciences,
University of Chinese Academy of Sciences, Beijing, China}
\email{zhaoyunhao23@mails.ucas.ac.cn}
\author{Chen Zhou}
\address{Chen Zhou, School of Mathematical Sciences,
Zhejiang University of Technology, Hangzhou, China}
\email{zcygo01@163.com}
\date{}
\subjclass[2020]{Primary 28A80; Secondary 28A78, 28A75}
\keywords{Intermediate dimensions of measures, quasi-Assouad dimension,
projections, exceptional sets, point potentials}

\begin{document}
\raggedbottom
\begin{abstract}
We prove sharp bounds for the upper and lower intermediate dimensions
of orthogonal and radial projections of compactly supported measures.
Both dimension functions are preserved almost everywhere when the
quasi-Assouad dimension does not exceed the target dimension.
We obtain exceptional-set estimates using the entire source dimension
function, with sharpness for explicit families.
We also construct a fixed measure whose projected dimension functions
vary on open direction sets, even when all source dimensions lie below
the target dimension.
\end{abstract}
\maketitle

\section{Introduction}\label{sec:introduction}

Intermediate dimensions of measures, introduced by Angelini, Molter
and Tejada~\cite{AMT}, quantify uniform lower mass bounds over a range
of scales. At a small scale $r$, every point of the support must admit a ball
with radius between $r$ and $r^\theta$ whose mass is bounded below
by a fixed power of its radius. At $\theta=1$,
the upper and lower versions recover the Minkowski dimensions of the
measure. These dimensions are sensitive to points carrying little mass,
even on a geometrically simple support. For example, the probability
measure with density $\beta x^{\beta-1}$ on $[0,1]$, with $\beta>1$,
has both intermediate dimensions equal to $\beta$ for every positive
parameter. In particular, these intermediate dimensions can exceed the
ambient dimension.

Projection combines mass along fibres and can eliminate the small-mass
behaviour that determines a source dimension. Understanding the resulting
change therefore requires information about the mass distribution.
Classical Hausdorff projection theorems determine almost-everywhere
projected dimensions and bound exceptional directions
\cite{Kaufman1968,Mattila2015}. For intermediate dimensions of compact
sets, Burrell, Falconer and Fraser~\cite{BFF} obtained
almost-everywhere formulas in terms of capacities; the resulting values
are independent of the direction outside a null set. For measures,
this leads to three related questions: under what conditions are the
entire dimension functions preserved, how large can the sets of
dimension-dropping directions be, and must the projected dimension
functions of one fixed measure be constant almost everywhere?

Let $\mu$ be a nonzero finite Borel measure with compact support
$K\subset\R^n$, and let $1\le m<n$. For $V\in G(n,m)$, write
$\mu_V=(P_V)_\#\mu$, where $P_V$ is orthogonal projection onto $V$,
and let $\gamma_{n,m}$ be Haar probability measure on $G(n,m)$.
Set $q=\dim_{\mathrm{qA}}\mu$. We use the conventions
$(x)_+=\max\{x,0\}$, $1/\infty=0$ and $1/0=\infty$.
Both intermediate dimension functions are nondecreasing in $\theta$,
so their limits as $\theta\to0^+$ exist. The definitions and basic
comparisons are recalled in Section~\ref{sec:definitions}. We state
the results below for upper intermediate dimensions and indicate their
lower-dimensional counterparts.

\subsection{Projection bounds and preservation}

The quasi-Assouad dimension measures relative mass growth between
sufficiently separated scales. Our first theorem shows how this growth
controls the loss under projection through an additive correction to
the reciprocal dimension. The estimate holds on one set of directions
for both dimension functions and every positive parameter.

\begin{theorem}\label{thm:orthogonal-main}
There is a set $\mathcal V\subset G(n,m)$ of full
$\gamma_{n,m}$-measure such that
\begin{equation}\label{eq:intro-quantitative}
 \left[\frac1{\overline{\dim}_\theta\mu}
       +\left(\frac1m-\frac1q\right)_+\right]^{-1}
 \le\overline{\dim}_\theta\mu_V\le\overline{\dim}_\theta\mu
\end{equation}
for every $V\in\mathcal V$ and $0<\theta\le1$.
The same bounds hold for lower intermediate dimensions on the same set.
\end{theorem}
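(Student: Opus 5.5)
The upper bound in \eqref{eq:intro-quantitative} holds for every $V\in G(n,m)$ and is purely geometric; the lower bound is the real content and I would get it by a potential-theoretic Borel--Cantelli argument over dyadic scales. For the upper bound, $P_V$ is $1$-Lipschitz, so $\spt\mu_V=P_V K$, and for $x\in K$ and $\rho>0$ we have $\mu_V(B(P_Vx,\rho))=\mu(P_V^{-1}B(P_Vx,\rho))\ge\mu(B(x,\rho))$. Hence if $s>\overline{\dim}_\theta\mu$, any witness ball $B(x,\rho)$ with $r\le\rho\le r^\theta$ and $\mu(B(x,\rho))\ge\rho^s$ gives the same radius for $P_Vx$ in $\mu_V$. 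Every point of $P_VK$ has this form, so $\overline{\dim}_\theta\mu_V\le s$. The same argument, applied along the sequence of scales witnessing the lower dimension, gives the lower-dimensional bound.

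For the lower bound, fix $0<t<s<\overline{\dim}_\theta\mu$ and $\varepsilon>0$, with $t$ slightly below the bracket in \eqref{eq:intro-quantitative} (computed with $q+\varepsilon$ in place of $q$). By definition, for arbitrarily small $r$, hence along a sequence $r_k\to0$ that I would take lacunary (e.g.\ $r_k\le 2^{-k}$), there is a bad point $x_k\in K$ with $\mu(B(x_k,\rho))<\rho^s$ for all $\rho\in[r_k,r_k^\theta]$. I would show that for $\gamma_{n,m}$-a.e.\ $V$, for all large $k$, the point $P_Vx_k$ satisfies $\mu_V(B(P_Vx_k,\rho))<\rho^{t}$ for all $\rho$ in a slightly shrunk scale window. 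That forces $\overline{\dim}_{\theta'}\mu_V\ge t$ for $\theta'$ close to $\theta$, and continuity in $\theta$ from Section~\ref{sec:definitions} removes the shrinkage.

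The tool is the standard point-potential estimate
\[
\int \mu_V(B(P_Vx,\rho))\,d\gamma_{n,m}(V)\lesssim\int\min\{1,(\rho/|x-y|)^m\}\,d\mu(y)\lesssim\sum_{j\ge0}2^{-jm}\mu(B(x,2^j\rho)).
\]
I would split the dyadic sum at a threshold radius $u$. For $R\le u$, I would bound $\mu(B(x_k,R))$ by the quasi-Assouad growth from the bad scale, giving $\lesssim(R/\rho)^{q+\varepsilon}\rho^{s}$; this requires $R\ge\rho^{1-\delta}$-type separation, so I would put the base bad scale suitably inside $[r_k,r_k^\theta]$. For $R>u$, I would use either the bad-scale bound directly or the trivial bound $\mu(K)$. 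Optimising $u$ (and the relation between the tube radius and the bad source radius) should produce exactly the exponent $[1/s+(1/m-1/q)_+]^{-1}$. When $q\le m$ the geometric series is dominated by its first term, which gives full preservation.

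Markov's inequality then bounds the measure of the set of $V$ with $\mu_V(B(P_Vx_k,\rho))\ge\rho^t$ by $\rho^{\eta}$ for some $\eta>0$. I would sum over the $O(\log 1/r_k)$ dyadic $\rho$ in the window and over $k$; lacunarity makes this summable, so Borel--Cantelli gives a full-measure set depending on $(s,t,\theta,\varepsilon)$. Intersecting over rationals, and over the analogous sets for the upper dimension (where bad scales occur for arbitrarily small $r$ in the opposite quantifier, handled identically), gives the single set $\mathcal V$.

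I expect the main obstacle to be the scale bookkeeping in the potential estimate: making the quasi-Assouad growth, which is valid only between sufficiently separated scales, interact with the bad-scale bounds that hold only inside $[r,r^\theta]$, so that the optimisation yields precisely the harmonic-type exponent uniformly in $\theta$. I would first try placing the tube radius at $\rho$ and the source comparison radius at $\rho^{t/s}$, and then check that this stays inside the window after shrinking $\theta$ slightly.
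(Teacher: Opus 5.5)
\textbf{A gap in the step you flag as the main obstacle.} Your Lipschitz upper bound is correct. The architecture of your lower bound is the paper's: average $\int\min\{1,(\rho/|x-y|)^m\}\,d\mu(y)$ over $G(n,m)$ at a bad point, apply Markov and Borel--Cantelli along lacunary bad scales, intersect countably, and use continuity in $\theta$. The gap is in the step you defer (``should produce exactly the exponent''), and the version you actually write down cannot produce it when $q>m$.

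Suppose the image witness window is a slightly shrunk copy of the source window $[r_k,r_k^\theta]$. Then its top is, on the logarithmic scale, essentially $T=r_k^\theta$. Mass beyond $T$ is controlled only by growth, $\mu(B(x_k,R))\lesssim\min\{1,(R/T)^QT^s\}$ with $Q>q$. The far field therefore contributes $\rho^mT^{-m(1-s/Q)}$, which equals $T^{ms/Q}$ at $\rho\approx T$. Basing the growth at the tube radius, as in your $(R/\rho)^{q+\varepsilon}\rho^s$, gives $\rho^{ms/Q}$ at every $\rho$. Either way you get only $t\le ms/q$, and nothing at all if $q=\infty$. Moreover $ms/q<[1/s+1/m-1/q]^{-1}$ exactly when $(q-m)(q-s)>0$.

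This is not a weakness of the estimate. Take the marked points that witness $\overline{\dim}_\theta\mu=s$ in the layered measures of Theorem~\ref{thm:sharp-clouds}. There the projected mass at radius $r^\theta$ is, up to logarithms, at least $(r^\theta)^{ms/q}$ for every $V$. For $q\le m$ your estimate does suffice, since $ms/Q\to s$ as $Q\downarrow m$; that case is Corollary~\ref{cor:preserve}.

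\textbf{The fix: put the image window below the source window.} Your closing suggestion $\rho\leftrightarrow\rho^{t/s}$ is exactly what is needed once it is made systematic.
\begin{itemize}
\item Choose $\varphi<\theta$ with $s<\overline{\dim}_\varphi\mu$, choose $Q>\max\{q,m\}$ (or $Q=\infty$ when $q=\infty$, using only the total mass), and choose $t<t'<(1/s+1/m-1/Q)^{-1}$.
\item Take a bad point $x$ on $[r,r^\varphi]$ and image radii $\rho\in[r^{s/t'},r^{\theta s/t'}]$. This window has parameter $\theta$ and starts strictly below $r$, since $t'<s$.
\item Put $\sigma=\rho^{t'/s}\in[r,r^\theta]$. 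The mass within $\sigma$ of $x$ is at most $\sigma^s=\rho^{t'}$.
\item On $[\sigma,\sigma^{\varphi/\theta}]\subset[r,r^\varphi]$ use the window bound. Beyond that, use quasi-Assouad growth based at $\sigma$, capped by the total mass; these scales are separated by the fixed exponent $\theta/\varphi>1$.
\item The far field is then $\lesssim\rho^m\sigma^{-m(1-s/Q)}=\rho^{m-mt'/s+mt'/Q}\le\rho^{t'}$.
\item Markov over the dyadic $\rho$ in this window gives total failure probability $\lesssim r^{\theta s(t'-t)/t'}$, and your Borel--Cantelli scheme closes.
\end{itemize}

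This is the paper's Lemma~\ref{lem:potential} with $\kappa=\theta$. Its image scale $r=\delta^b$ has, in your notation, $b=1+s(1/m-1/Q)$, which equals $s/t'$ at the critical $t'$. The paper differs in two details: it bases the growth at the top of the source window, and it uses the single truncated kernel $\widetilde\phi^{s}_{r,\theta}$ of Lemma~\ref{lem:averaging} instead of your dyadic union bound.

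For the lower dimension, Borel--Cantelli gives witnesses only along the scales $r_k^{s/t'}$. Take $r_k$ geometric and interpolate between consecutive scales by monotonicity of ball masses, as in the proof of Theorem~\ref{thm:transversal}.
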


When $q\le m$, the correction term vanishes and the two bounds
coincide, giving simultaneous preservation of both dimension functions.

\begin{corollary}\label{cor:intro-preservation}
If $\dim_{\mathrm{qA}}\mu\le m$, there is a set
$\mathcal V\subset G(n,m)$ of full $\gamma_{n,m}$-measure such that
\[
 \overline{\dim}_\theta\mu_V=\overline{\dim}_\theta\mu,
 \quad V\in\mathcal V,\ 0<\theta\le1.
\]
The same equality holds for lower intermediate dimensions on the same set.
\end{corollary}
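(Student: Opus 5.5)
The corollary follows directly from Theorem~\ref{thm:orthogonal-main}; the only work is to interpret the correction term under the conventions $1/\infty=0$ and $1/0=\infty$. Take $\mathcal V$ to be the full-measure set supplied by Theorem~\ref{thm:orthogonal-main}. Using the same set for every $\theta$ and for both the upper and lower versions is essential: it is what lets us conclude the equality for all $\theta$ at once rather than on a $\theta$-dependent null set.

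The first step is to show that the correction term vanishes. Suppose $q=\dim_{\mathrm{qA}}\mu\le m$. If $0<q\le m$, then $1/m-1/q\le0$. If $q=0$, then $1/q=\infty$ and $1/m-1/q=-\infty$. In either case $(1/m-1/q)_+=0$. Substituting this into \eqref{eq:intro-quantitative}, the left side becomes $[1/\overline{\dim}_\theta\mu]^{-1}$.

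The second step is to check that $[1/\overline{\dim}_\theta\mu]^{-1}=\overline{\dim}_\theta\mu$ in all cases, including the degenerate ones.
\begin{itemize}
\item If $0<\overline{\dim}_\theta\mu<\infty$, this is just arithmetic.
\item If $\overline{\dim}_\theta\mu=0$, the conventions give $1/0=\infty$ and then $1/\infty=0$, so the identity still holds.
\item If $\overline{\dim}_\theta\mu=\infty$, the same conventions give $1/\infty=0$ and $1/0=\infty$.
\end{itemize}
So for every $V\in\mathcal V$ and every $0<\theta\le1$ the lower and upper bounds in \eqref{eq:intro-quantitative} coincide, which forces $\overline{\dim}_\theta\mu_V=\overline{\dim}_\theta\mu$.

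The lower intermediate dimensions are handled by repeating this argument verbatim with the lower-dimension version of Theorem~\ref{thm:orthogonal-main}, which holds on the same set $\mathcal V$.

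There is no real obstacle; the only point needing care is the arithmetic with the conventions in the degenerate cases. (The case $\dim_\theta\mu=\infty$ may be excluded anyway, since $q<\infty$ bounds the intermediate dimensions via the comparisons in Section~\ref{sec:definitions}.)
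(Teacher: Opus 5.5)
Your proof is correct and is the paper's own argument. On the common full-measure set from Theorem~\ref{thm:orthogonal-main} (that is, Theorem~\ref{thm:transversal}(ii) with $\alpha=m$), the correction $(1/m-1/q)_+$ vanishes when $q\le m$, so the two bounds coincide for every $\theta$ and for both dimension functions at once; your checks of the degenerate conventions, and the finiteness of the dimensions via \eqref{eq:dimchain}, are harmless additions.
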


Corollary~\ref{cor:intro-preservation} replaces the assumption
$\dim_{\mathrm A}\mu<m$ in
\cite[Theorem~5.13 and Remark~5.14]{AMT} by
$\dim_{\mathrm{qA}}\mu\le m$. It includes equality at the growth
threshold and answers the question in \cite[Remark~5.16]{AMT}.
The criterion also applies to measures with infinite Assouad dimension.
For related projection results for sets involving the quasi-Assouad
dimension, see \cite{FFS,AM}.

The growth threshold cannot be relaxed, even when every source
intermediate dimension is below $m$. Indeed, for every $m<q\le\infty$
and $0<a\le\infty$, Theorem~\ref{thm:sharp-clouds} constructs a
measure with countable compact support in $\R^{m+1}$,
quasi-Assouad dimension $q$ and infinite Assouad dimension, such that
\begin{equation}\label{eq:intro-sharpness}
 \overline{\dim}_\theta\mu
 =\left(\frac1q+\frac1{a\theta}\right)^{-1},\qquad
 \overline{\dim}_\theta\mu_V
 =\left(\frac1m+\frac1{a\theta}\right)^{-1},
 \quad0<\theta\le1,
\end{equation}
on one set of full $\gamma_{m+1,m}$-measure, with the same formulas
for lower dimensions. For sufficiently small $a$, all source dimensions
are below $m$, but strict loss still occurs at every positive parameter.

A complementary preservation criterion comes from the source
dimension function as $\theta\to0^+$. To use this information,
Theorem~\ref{thm:transversal} treats the source parameter $\kappa$
and the image parameter $\theta$ independently. Letting
$\kappa\to0^+$ gives the following orthogonal form of
Corollary~\ref{cor:limits}, without any restriction on $q$.

\begin{corollary}\label{cor:intro-limits}
Without any growth assumption on $\mu$, there is a set
$\mathcal V\subset G(n,m)$ of full $\gamma_{n,m}$-measure such that
\begin{equation}\label{eq:intro-limits}
 \min\left\{m,\lim_{\theta\to0^+}\overline{\dim}_\theta\mu\right\}
 \le\lim_{\theta\to0^+}\overline{\dim}_\theta\mu_V
 \le\lim_{\theta\to0^+}\overline{\dim}_\theta\mu
\end{equation}
for every $V\in\mathcal V$. In particular, if
$\overline{\dim}_\theta\mu=h\le m$ for all $0<\theta\le1$, then
$\overline{\dim}_\theta\mu_V=h$ for every $V\in\mathcal V$ and
$0<\theta\le1$. The same assertions hold for lower intermediate
dimensions on the same set.
\end{corollary}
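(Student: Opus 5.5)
The plan is to derive Corollary~\ref{cor:intro-limits} from the two-parameter transversal estimate of Theorem~\ref{thm:transversal} by letting the source parameter $\kappa$ tend to $0^+$. I take that theorem to give a full-measure set $\mathcal V$ on which, for every $V\in\mathcal V$, $0<\kappa\le1$ and $0<\theta\le1$, the image dimension $\overline{\dim}_\theta\mu_V$ is bounded below by an explicit function of $\overline{\dim}_\kappa\mu$, $m$, $\kappa$ and $\theta$. Roughly, this function should be $\min\{m,\overline{\dim}_\kappa\mu\}$ corrected by a factor that degenerates as $\theta/\kappa$ becomes small. The point is that a large ball at source scale $r^{\kappa}$ projects, in a generic direction, to a ball whose mass is controlled by point potentials of exponent below $m$ at scales between $r$ and $r^\theta$.

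The upper bound needs no such estimate. Since $P_V$ is $1$-Lipschitz, $\mu_V(B(P_Vx,\rho))\ge\mu(B(x,\rho))$, and every point of $\spt\mu_V$ is $P_Vx$ for some $x\in K$. Hence the covering condition defining $\overline{\dim}_\theta$ transfers directly, and $\overline{\dim}_\theta\mu_V\le\overline{\dim}_\theta\mu$ for all $V$ and $\theta$. Taking $\theta\to0^+$ gives the right-hand inequality of \eqref{eq:intro-limits}; the limits exist by monotonicity.

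For the lower bound I fix $\kappa$ and $V\in\mathcal V$. Monotonicity in $\theta$ gives $\lim_{\theta\to0^+}\overline{\dim}_\theta\mu_V\ge\overline{\dim}_{\theta}\mu_V$ for each fixed $\theta$. Applying Theorem~\ref{thm:transversal} with $\theta\le\kappa$ (or in whatever admissible range it specifies) should yield $\lim_{\theta\to0^+}\overline{\dim}_\theta\mu_V\ge\min\{m,\overline{\dim}_\kappa\mu\}$, possibly up to a loss that vanishes as $\theta/\kappa\to0$. Letting $\kappa\to0^+$ then gives the left inequality. The set $\mathcal V$ must not depend on $\kappa$. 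If it does, I intersect the full-measure sets over $\kappa\in\{1/k:k\in\N\}$ and use monotonicity of $\kappa\mapsto\overline{\dim}_\kappa\mu$ to pass to all $\kappa$.

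The main obstacle is this order of limits. I have to check that the bound from Theorem~\ref{thm:transversal} really does tend to $\min\{m,\overline{\dim}_\kappa\mu\}$ when $\theta\to0$ before $\kappa\to0$, rather than requiring $\theta$ comparable to $\kappa$. If the theorem only provides the bound for $\theta\ge\kappa$, I would instead use the image monotonicity in reverse: I would bound $\overline{\dim}_\kappa\mu_V$ below and then let both parameters go to $0$ along the diagonal.

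For the particular case, suppose $\overline{\dim}_\theta\mu=h\le m$ for all $\theta$. Then the lower bound gives $\lim_{\theta\to0^+}\overline{\dim}_\theta\mu_V\ge h$. Monotonicity then yields $\overline{\dim}_\theta\mu_V\ge h$ for every $\theta$, and the upper bound gives equality. The lower intermediate dimensions are handled identically, using the lower version of Theorem~\ref{thm:transversal} on the same set $\mathcal V$.
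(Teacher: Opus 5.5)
Your upper bound and your deduction of the constant case from the limit inequality are fine. The lower bound has a genuine gap, and it is the order-of-limits issue you flagged, resolved in the wrong direction.

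First, monotonicity runs the other way. Since $\theta\mapsto\overline{\dim}_\theta\mu_V$ is nondecreasing, $\lim_{\theta\to0^+}\overline{\dim}_\theta\mu_V=\inf_\theta\overline{\dim}_\theta\mu_V\le\overline{\dim}_\theta\mu_V$. So a lower bound at a single $\theta$ says nothing about the limit; you need a lower bound on $\overline{\dim}_\theta\mu_V$ that is uniform in $\theta$.

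Second, the loss in \eqref{eq:two-window-bound} vanishes as $\kappa/\theta\to0$, not as $\theta/\kappa\to0$. With $\alpha=m$ the bound reads
\[
\min\Bigl\{\overline{\dim}_\kappa\mu,\Bigl[\frac1m+\frac\kappa\theta\Bigl(\frac1{\overline{\dim}_\kappa\mu}-\frac1{\max\{q,m\}}\Bigr)\Bigr]^{-1}\Bigr\}.
\]
For fixed $\kappa$ and $\theta\to0^+$, the second entry tends to $0$ whenever $\overline{\dim}_\kappa\mu<\max\{q,m\}$. Your diagonal fallback $\kappa=\theta$ only reproduces Theorem~\ref{thm:orthogonal-main}, i.e.\ $[1/\overline{\dim}_\theta\mu+(1/m-1/q)_+]^{-1}$. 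In the limit this is strictly smaller than $\min\{m,h\}$ as soon as $m<q$ and $0<h<q$, which is exactly the regime the corollary is meant to cover. For example, take a source function constantly equal to $h<m$ with $q=\infty$. Your first route gives $0$, and the diagonal gives $(1/m+1/h)^{-1}<h$.

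The fix, which is the paper's argument, is to keep the image parameter $\theta$ fixed and send the source parameter $\kappa\to0^+$. This is allowed because Theorem~\ref{thm:transversal} gives \eqref{eq:two-window-bound} for all pairs $(\theta,\kappa)\in(0,1]^2$ on one full-measure set.

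Suppose $0<h<\infty$. Then $\overline{\dim}_\kappa\mu\ge h$ keeps $1/\overline{\dim}_\kappa\mu-1/\max\{q,m\}$ bounded, while the prefactor $\kappa/\theta\to0$. So the two entries tend to $h$ and $m$, and hence $\overline{\dim}_\theta\mu_V\ge\min\{m,h\}$ for every $\theta\in(0,1]$. Only after this uniform bound do you let $\theta\to0^+$.

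The case $h=0$ is trivial. If $h=\infty$, then $q=\infty$ and the second entry equals $m$ for every $\kappa$. The lower-dimensional version is identical.
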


This removes the finite Assouad dimension hypothesis in
\cite[Theorem~6.2 and Remark~6.3]{AMT}. In particular, a constant
source dimension function with value at most $m$ is preserved even
when the quasi-Assouad dimension is infinite. The entire source
function can therefore force preservation when the growth criterion
alone gives only a lower bound.

\subsection{Exceptional directions}

We next quantify the size of the set of directions in which dimension
decreases. The two-parameter estimate makes the entire source function
relevant: a lower bound for the image dimension at $\theta$ may follow
from source information at a smaller parameter. Optimizing this choice
leads to the exponent below. Fix $0<\theta\le1$ and finite
$0<d\le s=\overline{\dim}_\theta\mu$, and put
\[
 \kappa_d=\inf\{\kappa\in(0,1]:\overline{\dim}_\kappa\mu\ge d\},
 \qquad
 b_*=\left[\frac1d-\frac{\kappa_d}{\theta}
                    \left(\frac1d-\frac1q\right)\right]^{-1}.
\]
Thus $\kappa_d$ records how early the source function reaches the
threshold $d$; it is zero if the threshold is reached arbitrarily near
zero. Theorem~\ref{thm:exception} gives the following Hausdorff-null
estimate at the critical exponent.

\begin{theorem}\label{thm:intro-exception}
If $b_*\le m$, then
\begin{equation}\label{eq:intro-exception}
 \mathcal H^{m(n-m-1)+b_*}
 \{V\in G(n,m):\overline{\dim}_\theta\mu_V<d\}=0.
\end{equation}
The same holds for lower intermediate dimensions, with $s$ and
$\kappa_d$ defined using the lower source dimension function.
\end{theorem}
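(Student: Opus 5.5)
Our plan is to combine a two-parameter lower mass estimate with a Kaufman-type energy argument on $G(n,m)$, adapted so that the relevant energy is taken with respect to a Frostman measure on the exceptional set of directions.

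\textbf{Reduction.} Suppose the exceptional set $E=\{V:\overline{\dim}_\theta\mu_V<d\}$ has $\mathcal H^{m(n-m-1)+b}(E)>0$ for some $b$ slightly larger than $b_*$. We cannot take $b=b_*$ directly, so we first treat the case $b>b_*$ and recover $b_*$ at the end, as explained below. Write $E$ as a countable union of sets $E_j$ on which $\overline{\dim}_\theta\mu_V<d-1/j$ uniformly. For large $j$, some $E_j$ has positive $\mathcal H^{m(n-m-1)+b}$-measure. Frostman's lemma on the Grassmannian then gives a probability measure $\nu$ on a compact subset of $E_j$ satisfying
\[
 \nu(B(V,\rho))\lesssim\rho^{m(n-m-1)+b}.
\]
The standard transversality estimate for orthogonal projections (as in \cite{Mattila2015}) then yields, for $x\ne y$,
\[
 \nu\{V:|P_V(x-y)|\le r\}\lesssim\Bigl(\frac r{|x-y|}\Bigr)^{\min\{b,m\}}.
\]
This is the only place where the hypothesis $b_*\le m$ is used: it ensures that for $b$ near $b_*$ this estimate has the full exponent $b$.

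\textbf{Two-parameter mass estimate.} Next we redo the argument of Theorem~\ref{thm:transversal} with $\gamma_{n,m}$ replaced by $\nu$. Fix $\kappa>\kappa_d$ with $\overline{\dim}_\kappa\mu\ge d'$, where $d'<d$ is close to $d$. The goal is to show that for $\nu$-almost every $V$, at scales $r\to0$, every point $P_Vx$ admits a radius $R\in[r,r^\theta]$ with $\mu_V(B(P_Vx,R))\ge R^{d''}$. We estimate the $\nu$-average of a truncated point potential
\[
 \int\mu_V(B(P_Vx,R))\,d\nu(V)\gtrsim\int\min\{1,(R/|x-y|)^{b}\}\,d\mu(y).
\]
The source information enters in two pieces. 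First, there is a ball around $x$ of radius $\rho\in[r^{\theta/\kappa},r^{\theta}]$ with mass at least $\rho^{d'}$; this comes from the $\kappa$-parameter at scale $r^{\theta/\kappa}$. Second, quasi-Assouad growth, $\mu(B(x,R'))\ge\mu(B(x,\rho))(\rho/R')^{-q-\epsilon}$, transports this mass up to a radius in the image range $[r,r^\theta]$. Optimizing the choice of radius gives a lower bound with exponent exactly $b_*$: the formula for $b_*$ interpolates between $d$ and $q$ with weight $\kappa_d/\theta$. The inequality $b>b_*$ leaves a polynomial gain $r^{\epsilon}$ in the averaged failure probability.

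\textbf{Borel--Cantelli and the critical case.} The averaged estimate bounds the $\nu$-measure of directions that fail at dyadic scale $r=2^{-k}$. Covering $\spt\mu$ by $r$-nets, with $\lesssim r^{-\overline{\dim}_B}$ points, is too expensive in general. We instead use the uniform lower-bound structure of the intermediate dimension to restrict to the points that matter, as done in the proof of Theorem~\ref{thm:transversal}. Summing the failure probabilities over $k$ then shows that $\nu$-almost every $V$ satisfies $\overline{\dim}_\theta\mu_V\ge d-1/j$, which contradicts $\nu(E_j)=1$.

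To reach the critical exponent $b_*$ itself, we use the fact that the lower bound in $d$ is strict, $d-1/j<d$. Running the argument with $d_j=d-1/(2j)$ in place of $d$ gives $b_*(d_j)<b_*(d)$, provided $\kappa_{d_j}\le\kappa_d$, which holds by monotonicity. So $\mathcal H^{m(n-m-1)+b_*(d)}(E_j)=0$ for each $j$, and hence for $E$.

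\textbf{Main obstacle.} The main difficulty is the two-parameter mass estimate with a general Frostman measure $\nu$ in place of Haar measure. The transversality exponent is capped at $m$, and the chain of scales $r^{\theta/\kappa}\to r^\theta$ must be handled uniformly over $x$. The lower-dimensional case follows by the same argument, using the lower source function and requiring the estimate along all scales.
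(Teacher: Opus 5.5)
Your outer structure matches the paper's: a Frostman measure $\nu$ on the exceptional set, a transversality exponent read off from its Frostman exponent via Lemma~\ref{lem:grassmann-trans}, and the two-parameter estimate run with $\nu$ in place of Haar measure. Theorem~\ref{thm:transversal} is already stated for an arbitrary $(\Lambda,\nu)$ satisfying \eqref{eq:transversality}. The paper therefore just applies part~(i) with $\alpha=b_*$ and source parameter $\kappa_d$, or Corollary~\ref{cor:limits} if $\kappa_d=0$. You also need Lemma~\ref{lem:borel} (the exceptional set is Borel) before invoking Frostman's lemma.

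The genuine gap is that the mass estimate you describe runs in the wrong direction. To prove $\overline{\dim}_\theta\mu_V\ge s$ you must exhibit, at arbitrarily small $r$, a \emph{single} point of $\spt\mu_V$ whose balls are light for \emph{every} radius in $[r,r^\theta]$. Your stated goal is that every $P_Vx$ has some $R\in[r,r^\theta]$ with $\mu_V(B(P_Vx,R))\ge R^{d''}$. That is exactly the defining condition for $\overline{\dim}_\theta\mu_V\le d''$, so it can never contradict $\overline{\dim}_\theta\mu_V<d-1/j$ on $E_j$. Each ingredient is reversed with it:
\begin{itemize}
\item Transversality yields only $\int\mu_V(B(P_Vx,R))\,d\nu(V)\lesssim\int\min\{1,(R/|x-y|)^{b}\}\,d\mu(y)$, not $\gtrsim$.
\item The hypothesis $\overline{\dim}_\kappa\mu\ge d'$ gives no heavy ball around each $x$. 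It gives, for $t<\overline{\dim}_\kappa\mu$, a witness $x_\delta$ with $\mu(B(x_\delta,u))\le u^t$ throughout a source window $[\delta,\delta^\varphi]$, $\varphi\le\kappa$.
\item Quasi-Assouad growth enters as an \emph{upper} bound $\mu(B(x_\delta,R))\lesssim\min\{1,R^Q\delta^{-\kappa(Q-t)}\}$ for $R\ge\delta^\kappa$. This controls the distant mass that projection can bring near $P_Vx_\delta$.
\end{itemize}

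With the directions corrected, the argument runs as follows. Let $\alpha$ be the transversality exponent (your $b$) and put $\lambda=t/\alpha+(\kappa/\theta)(1-t/Q)$. Lemma~\ref{lem:potential} gives $\int\phi^{s,\alpha}_{r,\theta}(x_\delta-y)\,d\mu(y)\lesssim r^{s+\beta}$ at $r=\delta^{\lambda}$. Lemma~\ref{lem:averaging} bounds the $\nu$-average of the truncated image kernel at $P_Vx_\delta$ by this quantity. Borel--Cantelli along one sequence of scales, with one witness per scale, then gives $\overline{\dim}_\theta\mu_V\ge\min\{t,t/\lambda\}$ for $\nu$-almost every $V$. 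No covering of $\spt\mu$ by $r$-nets is needed; that cost is an artifact of the reversed quantifiers.

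With $\alpha=b_*$, $\kappa=\kappa_d$, $t\uparrow d$ and $Q\downarrow q$, the bound becomes $[1/b_*+(\kappa_d/\theta)(1/d-1/q)]^{-1}=d$. This bound is non-strict, while the exceptional set is defined by a strict inequality, so the paper works at the critical exponent $b_*$ directly. Your detour through $d_j=d-1/(2j)$ is logically sound but unnecessary. One caveat: if $\kappa_d=\theta$ then $b_*(d)=q$, and $b_*(d_j)<b_*(d)$ requires $\kappa_{d_j}<\theta$. That follows from continuity of $\kappa\mapsto\overline{\dim}_\kappa\mu$, not from $\kappa_{d_j}\le\kappa_d$.
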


Using only the source value at $\theta$ gives the exponent
\[
 b=\left(\frac1d-\frac1s+\frac1q\right)^{-1},
 \qquad d\le b_*\le b\le q.
\]
The inequality $b_*\le b$ can be strict even when the improved
exponent is sharp for a fixed entire source function.
Corollary~\ref{cor:nonharmonic-sharp} constructs such a function
with $q=2$: at $\theta=1$, the values $s=2/3$ and $d=1/2$
give $b=1$ and $b_*=2/3$.

For the functions in \eqref{eq:intro-sharpness}, the two exponents
coincide. When $b\le m$, Theorem~\ref{thm:finite-graded-sharp}
proves sharpness with $q$ and both entire source functions fixed:
for every $\varepsilon>0$, the dimension of a strict exceptional set
exceeds $m(n-m-1)+b-\varepsilon$. If $d<s$, a non-strict sublevel
at $d$ has dimension exactly $m(n-m-1)+b$, and the set where both
image dimensions equal $d$ has positive Hausdorff measure at this
exponent. The strict inequality in \eqref{eq:intro-exception} is
therefore essential for the Hausdorff-null conclusion.
Corollary~\ref{cor:capped-constant-sharp} gives corresponding sharpness
examples with constant source functions below a prescribed growth
dimension $q$.

\subsection{Radial projections}

For radial projections, the centre replaces the direction as the
parameter. Given $K\subset\R^{m+1}$ and $z\notin K$, write
$\pi_z(x)=(x-z)/|x-z|$ and $\mu_z=(\pi_z)_\#\mu$.
A geometric transversality estimate allows the same potential argument
to apply to this family; see \cite{Orp} for radial projection theory
and \cite{BT} for transversal nonlinear families.
Theorem~\ref{thm:radial} gives the radial analogues of
\eqref{eq:intro-quantitative} and \eqref{eq:intro-limits} for
Lebesgue-almost every exterior centre, on one set common to both
dimension functions and all positive parameters. The corresponding
exceptional-set estimate is
\[
 \mathcal H^{1+b_*}
 \{z\notin K:\overline{\dim}_\theta\mu_z<d\}=0
 \qquad\text{if }b_*\le m,
\]
with the same conclusion for lower dimensions.
For prescribed source functions of the form
\eqref{eq:intro-sharpness}, Theorem~\ref{thm:radial-stable-sharp}
proves sharpness in specified parameter ranges.
Proposition~\ref{prop:radial-constant-reference} extends the available
ranges for constant source functions using measures of finite Assouad
dimension. The precise conditions, including their boundary cases,
are given in Section~\ref{sec:third-round}.

\subsection{Dependence on the direction}

The bounds above leave a further question: how much can the projected
dimension functions vary with the direction when the measure itself is
fixed? The almost-everywhere constancy known for compact sets suggests
one possible answer, but the following result shows a different behaviour
for measures. Proposition~\ref{prop:nonconstant-directions}(i) realizes
the two extremal functions allowed by our bounds on different nonempty
open sets of directions.

\begin{proposition}\label{prop:intro-direction}
For every $m<q\le\infty$ and $0<a\le\infty$, there is a probability
measure $\rho$ with countable compact support in $\R^n$ such that
$\dim_{\mathrm{qA}}\rho=q$, $\dim_{\mathrm A}\rho=\infty$, and
\[
 \overline{\dim}_\theta\rho
 =\left(\frac1q+\frac1{a\theta}\right)^{-1},
 \qquad0<\theta\le1.
\]
There are disjoint nonempty open sets
$\mathcal U_-,\mathcal U_+\subset G(n,m)$ such that, for every
$0<\theta\le1$,
\[
 \overline{\dim}_\theta\rho_V=
 \begin{cases}
 \displaystyle\left(\frac1m+\frac1{a\theta}\right)^{-1},
       &V\in\mathcal U_-,\\[5pt]
 \displaystyle\left(\frac1q+\frac1{a\theta}\right)^{-1},
       &V\in\mathcal U_+.
 \end{cases}
\]
The same formulas hold for lower intermediate dimensions.
\end{proposition}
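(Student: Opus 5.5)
The plan is to build $\rho$ from the ``cloud'' measures of Theorem~\ref{thm:sharp-clouds}, placing the clouds so that their internal geometry is arranged along a fixed direction. The two open sets of directions are then singled out by whether a subspace $V$ is transversal to that geometry or aligned with it. Concretely, I would take the construction of Theorem~\ref{thm:sharp-clouds} in $\R^{m+1}$, which for the given $q$ and $a$ produces a countable compact support made of clusters (clouds) at scales $r_k\to0$ with weights calibrated to give
\[
 \overline{\dim}_\theta\mu=\Bigl(\tfrac1q+\tfrac1{a\theta}\Bigr)^{-1}.
\]
I would then modify it so that each cloud lies in a fixed $(m+1)$-dimensional subspace $W\subset\R^n$ and is a product-like configuration: an $m$-dimensional grid in a fixed $m$-plane $E\subset W$ times a one-dimensional stack in the remaining direction $e\perp E$. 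The clouds are arranged so that the quasi-Assouad growth $q>m$ comes only from stacking points along $e$ at many separated scales. Embedding in $\R^n$ does not change the source dimensions, $\dim_{\mathrm{qA}}\rho=q$ or $\dim_{\mathrm A}\rho=\infty$, since these are intrinsic to the support and mass distribution.

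Next I would define the two direction sets. Let $\mathcal U_+$ be a small open neighbourhood of a $V_0$ that contains $e$ and is suitably placed relative to $E$. On this set $P_V$ is bi-Lipschitz on the finitely many relevant directions inside each cloud, with constants uniform over the neighbourhood. Let $\mathcal U_-$ be a neighbourhood of a $V_1$ with $V_1\perp e$, chosen so that $P_{V_1}$ collapses each stack onto a single point while remaining bi-Lipschitz on $E$. For $V\in\mathcal U_-$ the collapse is only approximate, but the stack heights at scale $r_k$ are tiny, say $\le r_k^{N}$ for large $N$. If the neighbourhood is small, the images of each stack then lie within a ball of radius comparable to the minimal spacing, so the merged masses behave as in the projected measure $\mu_V$ of Theorem~\ref{thm:sharp-clouds}. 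This yields the formula with $m$ in place of $q$.

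For $V\in\mathcal U_+$ the upper bound is just the general inequality $\overline{\dim}_\theta\rho_V\le\overline{\dim}_\theta\rho$ from Theorem~\ref{thm:orthogonal-main} (Lipschitz images). For the lower bound I would use the uniform bi-Lipschitz property of $P_V$ on the support, or at least on each cloud together with separation between clouds. Under this property, every ball in the image of radius $t\in[r,r^\theta]$ pulls back into a bounded number of balls of comparable radius, so the lower mass bounds transfer and the projected and source dimension functions agree. For $V\in\mathcal U_-$ I would recompute directly: the lower bound $(1/m+1/(a\theta))^{-1}$ should follow from an explicit covering argument mimicking the proof in Theorem~\ref{thm:sharp-clouds} (the general bound of Theorem~\ref{thm:orthogonal-main} gives it only almost everywhere, not on an open set). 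The upper bound comes from exhibiting the low-mass points whose merged images still force the dimension up to $(1/m+1/(a\theta))^{-1}$. Lower intermediate dimensions are handled identically, since the computations are done along the full sequence of scales.

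The main obstacle is the \emph{uniformity over an open set} of directions. Theorem~\ref{thm:sharp-clouds} only gives its formulas almost everywhere, via potentials, whereas here we need deterministic control for every $V$ in $\mathcal U_\pm$. I would therefore make the clouds rigid, with finitely many direction types and with scales separated superexponentially. Then a single transversality constant handles all $V\in\mathcal U_+$, and on $\mathcal U_-$ a single smallness constant for the stack heights relative to spacing handles all $V$. The calculation of intermediate dimensions then reduces to the same scale bookkeeping as in Theorem~\ref{thm:sharp-clouds}.
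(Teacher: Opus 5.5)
Your proposal has a genuine gap in the concealment mechanism for $\mathcal U_-$. The intermediate dimensions of $\rho_V$ are computed over windows $[r,r^\theta]$ that reach arbitrarily far below the stack heights. A collapse that is only approximate, with error of order $\varepsilon r_k^N$, therefore conceals nothing at smaller scales.

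Indeed, if $P_Ve\ne0$, then $P_V$ maps each column $x+\R e$ by a similarity of ratio $|P_Ve|$. With heights much smaller than the grid spacing, as you intend, distinct columns stay apart. The small masses near the marked stack point therefore reappear in the image at scales $|P_Ve|$ times smaller, and the source witnesses survive. More generally, a chord between distinct columns makes an angle $O(\text{height}/\text{spacing})$ with $E$, while a chord inside a column is parallel to $e$. Hence $P_V$ is uniformly bi-Lipschitz on the late clouds whenever $P_Ve\ne0$ and $P_V|_E$ is injective. Together with separation of the projected clouds, this forces both image dimension functions to coincide with the source ones for $V$ in an open dense subset of $G(n,m)$. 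The stacks merge only on $\{V:V\subset e^\perp\}$, a closed set of codimension $m$. So the formula with $m$ holds on no nonempty open set.

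Your $\mathcal U_+$ is also misplaced. If $e\in V_0$, then $\ker(P_{V_0}|_W)=W\cap V_0^\perp$ is a nonzero subspace of $E$. So $P_{V_0}$ collapses a grid direction and is not bi-Lipschitz on the clouds.

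What is missing is exact concealment on an open set of kernels, together with isolation, rather than bi-Lipschitz control, on the complementary open set. The paper separates the thin part from the regular part instead of forming a product. It applies Lemma~\ref{lem:arithmetic-reference} with $p=m$ and $S$ a closed cube in $U$ away from $0$. For finite $q$, thin atoms $B^{-j}v$ with masses comparable to $B^{-qj}$ lie on the axis and accumulate at the marked point $0$; for $q=\infty$ only the isolated marked point remains. The regular $m$-dimensional component lives on $v+A$, where $A=S\cup\bigcup_{j\ge1}(1-B^{-j})S$.

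For $V=V_{w,A_0}$ one has $\ker(P_V|_W)=\R(v+w)$. The identities $P_V(B^{-j}v)=P_V(v+(1-B^{-j})w)$ and $P_V0=P_V(v+w)$ place every thin image exactly in the regular image, at every scale, for all $w\in\operatorname{int}S$; this gives $\mathcal U_-$. If instead $w\notin A\cup\{0\}$, compactness of $A$ keeps $P_V0$ at positive distance from $P_V(v+A)$ on an open chart box. The marked image then retains its small mass, which gives $\mathcal U_+$. Lemma~\ref{lem:cloud-transfer} turns these reference bounds into the exact image functions, with samples fixed independently of $V$.

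Your sketch also reverses the roles of the bounds. Lower mass bounds at every image point give the upper bound for $\overline{\dim}_\theta\rho_V$, while a low-mass witness gives the lower bound.
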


Every nonempty open set of directions has positive Haar measure.
Thus, for this one measure, both bounds of
Theorem~\ref{thm:orthogonal-main} occur on sets of positive measure,
and the projected dimension functions are not almost-everywhere
constant. Taking $a$ sufficiently small places all source dimensions
below $m$. Consequently, a direction-independent capacity cannot
give a single almost-everywhere projected value for arbitrary fixed
measures, even under this restriction.

\subsection{Methods and organization}

The analytic argument begins with a point at which the mass is small
throughout a source scale interval. We estimate its potential to control
the mass that projection can bring in from larger distances. The
quasi-Assouad bound controls this contribution when $q<\infty$;
the total mass suffices when $q=\infty$. Keeping the source and image
parameters independent allows the entire source dimension function to
enter the estimate. Averaging over the projection family gives the
almost-everywhere bounds. Averaging against Frostman measures then
gives the exceptional-set bounds at the critical Hausdorff exponent.

The sharpness constructions use relative sampling \cite{LLS,HPS}
to replace reference measures by finite samples placed in separated
layers. A common sample controls source balls and ball preimages under
all orthogonal and exterior radial projections. We can therefore
prescribe the source dimension functions and growth while keeping
control of every projected layer. Exceptional directions are obtained
by making points with small mass project into a regular component;
weighted polygon boundaries allow the projected dimensions to vary
across open sets of directions.

The constructions also give exact exceptional dimensions and prescribed
direction distributions. Proposition~\ref{prop:exact-strict-lower} attains the strict exceptional
dimension with one measure whose lower source dimension function is
constant, simultaneously at every positive parameter and in the
zero-parameter limit. In the plane,
Proposition~\ref{prop:nonconstant-directions}(ii) assigns finitely many
functions $\theta\mapsto(1/p_i+1/(a\theta))^{-1}$, with
$1\le p_i\le q$, to open direction sets with prescribed positive Haar
measures summing to one, and gives corresponding radial formulas.

Section~\ref{sec:definitions} recalls the dimensions of measures.
Sections~\ref{sec:potentials}--\ref{sec:exception} establish the
potential estimate, the projection bounds and the exceptional-set
bounds. Sections~\ref{sec:layered} and~\ref{sec:sharp} develop the
sampling construction and prove sharpness of the projection bounds
simultaneously at all positive parameters.
Section~\ref{sec:third-round} treats sharp exceptional exponents, and
Section~\ref{sec:examples} constructs measures with nonconstant
projected dimension functions. Appendix~\ref{sec:capacity} gives
sharp point-capacity comparisons and computes the associated critical
exponents for the examples.

\section{Dimensions of measures}\label{sec:definitions}

We recall the mass conditions defining the dimensions used in the
projection estimates. Throughout, $\mu$ is a nonzero finite Borel measure
with compact support $K\subset\R^n$, and $B(x,r)$ is a closed ball.
For a Borel map $f:K\to\R^N$, its pushforward is the measure
\[
 (f_\#\mu)(A)=\mu(f^{-1}(A)),\quad A\subset\R^N\text{ Borel}.
\]
Constants in $\lesssim$ may depend on fixed parameters but not on
the scales or centres under consideration. Multiplying a measure
by a positive constant leaves all the dimensions below unchanged.

\begin{definition}\label{def:intermediate}
Let $0<\theta\le1$. The $\theta$-intermediate dimensions of $\mu$ are defined by
\begin{align*}
\overline{\dim}_{\theta}\mu := \inf\big\{s\ge0:\ &\exists c,r_0>0,\ \forall 0<r<r_0,\ \forall x\in K,\
\exists u\in[r,r^\theta]\text{ with }\mu(B(x,u))\ge cu^s\big\},\\
\underline{\dim}_{\theta}\mu := \inf\big\{s\ge0:\ &\exists c>0,\ r_n\to0^+,\ \forall n\in\N,\ \forall x\in K,\
\exists u\in[r_n,r_n^\theta]\text{ with }\mu(B(x,u))\ge cu^s\big\}.
\end{align*}
In both definitions, $c$ is independent of the scale and the centre.
At $\theta=1$ these are the upper and lower Minkowski dimensions of $\mu$.
\end{definition}

\begin{definition}\label{def:qA}
Given $\delta\ge0$, set
\begin{align*}
H(\delta):=\inf\Big\{s\ge0:\ &\exists C,R_*>0,\ \forall 0<u\le R^{1+\delta}\le R\le R_*,\
\sup_{x\in K}\frac{\mu(B(x,R))}{\mu(B(x,u))}\le C\left(\frac Ru\right)^s\Big\}.
\end{align*}
The constants $C,R_*$ are independent of $x,u,R$.
The Assouad and quasi-Assouad dimensions of $\mu$ are, respectively,
\[
 \dim_{\mathrm A}\mu=H(0),\qquad
 \dim_{\mathrm{qA}}\mu=\lim_{\delta\downarrow0}H(\delta).
\]
\end{definition}

The following comparisons describe the dependence on the scale
parameter and relate the intermediate dimensions to mass growth.
For upper dimensions, \eqref{eq:window} is
\cite[Proposition~3.7 and Corollary~3.10]{AMT}; the lower-dimensional
case follows by applying the same estimates along a sequence of scales.
The last inequality in \eqref{eq:dimchain} follows from
Definition~\ref{def:qA} with a fixed outer radius.

\begin{lemma}\label{lem:window}
Let $D_\theta$ denote either $\underline{\dim}_\theta\mu$ or
$\overline{\dim}_\theta\mu$. Then
\begin{equation}\label{eq:window}
 D_\theta\le D_\varphi\le\frac\varphi\theta D_\theta,
 \qquad 0<\theta\le\varphi\le1.
\end{equation}
In particular, each dimension function is either finite and continuous
on $(0,1]$, or identically infinite there. Moreover,
\begin{equation}\label{eq:dimchain}
 0\le\underline{\dim}_\theta\mu\le\overline{\dim}_\theta\mu
 \le\overline{\dim}_1\mu\le\dim_{\mathrm{qA}}\mu.
\end{equation}
\end{lemma}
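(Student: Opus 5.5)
The plan is to verify each inequality directly from Definitions~\ref{def:intermediate} and~\ref{def:qA}. For the upper dimensions, \eqref{eq:window} is already available as \cite[Proposition~3.7 and Corollary~3.10]{AMT}, so we only need to adapt that argument to a sequence of scales for the lower dimensions. Throughout, fix a finite $s$ exceeding the relevant dimension, with its constant $c$ and scales $r<r_0$ (or the sequence $r_n\to0^+$ in the lower case).

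\emph{Monotonicity in the parameter.} If $\theta\le\varphi$, then $[r,r^\varphi]\subset[r,r^\theta]$ because $r<1$. Any admissible radius for $\varphi$ is therefore admissible for $\theta$, which gives $D_\theta\le D_\varphi$. The same inclusion works along a fixed sequence $r_n$.

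\emph{The upper window bound.} Suppose $s>D_\theta$, and let $r$ be small. We apply the $\theta$-condition at the scale $\rho=r^{\varphi/\theta}$, whose window is $[\rho,\rho^\theta]=[r^{\varphi/\theta},r^\varphi]$. This yields a radius $u\in[r^{\varphi/\theta},r^\varphi]$ with $\mu(B(x,u))\ge cu^s$. If $u\ge r$, then $u$ already lies in $[r,r^\varphi]$ and $cu^s\ge cu^{s\varphi/\theta}$ because $u<1$. If $u<r$, we use $B(x,r)\supset B(x,u)$ together with $u\ge r^{\varphi/\theta}$. This gives $\mu(B(x,r))\ge cu^s\ge c r^{s\varphi/\theta}$. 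In either case the $\varphi$-condition holds with exponent $s\varphi/\theta$, so $D_\varphi\le \frac\varphi\theta s$. For the lower dimensions we use the sequence $r_n'=r_n^{\theta/\varphi}$ instead, which still tends to zero.

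\emph{Continuity.} From \eqref{eq:window}, if $D_\theta$ is finite at some point then it is finite everywhere: for $\varphi\ge\theta$ this is direct, and for $\varphi<\theta$ we have $D_\varphi\le D_\theta$. Moreover $D_\varphi-D_\theta\le(\varphi/\theta-1)D_\theta$ for $\theta\le\varphi$, and $D_\theta$ is locally bounded. Together these give continuity, in fact local Lipschitz continuity, on $(0,1]$.

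\emph{The chain \eqref{eq:dimchain}.} We have $\underline{\dim}_\theta\le\overline{\dim}_\theta$ because the condition for all small $r$ implies the condition along a sequence. The inequality $\overline{\dim}_\theta\le\overline{\dim}_1$ is the monotonicity shown above. For the last step, take $s>H(\delta)$ for some $\delta>0$. Fix $R=R_*$ and set $M=\inf_{x\in K}\mu(B(x,R_*/2))$. To see that $M>0$, cover $K$ by finitely many balls $B(y_i,R_*/4)$ with $y_i\in K$, each of positive mass since $y_i$ is in the support. Every $B(x,R_*/2)$ with $x\in K$ contains one of these balls, so its mass is at least the minimum of these finitely many positive masses. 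Definition~\ref{def:qA} with $u=r\le R_*^{1+\delta}$ then gives
\[
\mu(B(x,r))\ge C^{-1}R_*^{-s}M\,r^s .
\]
This is the $\theta=1$ condition, so $\overline{\dim}_1\mu\le s$. Letting $s\downarrow H(\delta)$ and $\delta\downarrow0$ yields $\overline{\dim}_1\mu\le\dim_{\mathrm{qA}}\mu$.

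The only step that needs care is the window bound. In the case $u<r$, one must enlarge the ball to radius $r$ instead of shrinking it; this is what makes the exponent scale by $\varphi/\theta$ rather than by something worse.
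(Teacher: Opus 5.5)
Your proposal is correct and follows the paper's route. The paper cites \cite[Proposition~3.7 and Corollary~3.10]{AMT} for the upper-dimensional window bound, which your rescaling $\rho=r^{\varphi/\theta}$ reproves directly; it transfers this to lower dimensions along a sequence of scales, as you do with $r_n'=r_n^{\theta/\varphi}$; and it obtains $\overline{\dim}_1\mu\le\dim_{\mathrm{qA}}\mu$ from Definition~\ref{def:qA} with a fixed outer radius. The one detail to add is to replace $R_*$ by $\min\{R_*,1\}$ if necessary, because for $\delta>0$ the pair $(u,R_*)$ is admissible only when $R_*^{1+\delta}\le R_*$.
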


\section{The two-scale potential estimate}
\label{sec:potentials}

Small mass near a source point need not give small mass near its image,
since distant points may project nearby. We control this contribution
using the kernels of Burrell, Falconer and Fraser~\cite{BFF}, allowing
a real decay exponent. The full kernel will bound the parameter average
of the truncated kernel after projection.

\begin{definition}\label{def:kernels}
Let $\theta\in(0,1]$, $m>0$, $s\in[0,m]$ and $r\in(0,1)$.
Define $\phi_{r,\theta}^{s,m},\widetilde\phi_{r,\theta}^s:\R^n\to\R$ by
\begin{equation}\label{eq:originalkernels}
\begin{gathered}
\phi_{r,\theta}^{s,m}(x)=\begin{cases}
1,&|x|\le r,\\
r^s/|x|^s,&r<|x|\le r^\theta,\\
r^{s+\theta(m-s)}/|x|^m,&|x|>r^\theta,
\end{cases}\\[3pt]
\widetilde\phi_{r,\theta}^s(x)=\begin{cases}
1,&|x|\le r,\\
r^s/|x|^s,&r<|x|\le r^\theta,\\
0,&|x|>r^\theta.
\end{cases}
\end{gathered}
\end{equation}
and define the point capacity of $\mu$ by
\begin{equation}\label{eq:originalcapacity}
C_{r,\theta}^{s,m}(\mu)=\left(\inf_{x\in K}\int\phi_{r,\theta}^{s,m}(x-y)\,d\mu(y)\right)^{-1}.
\end{equation}
The point potential is continuous on the compact set $K$, so its infimum is attained.
\end{definition}

By Definition~\ref{def:intermediate}, $t<\overline{\dim}_\kappa\mu$
provides a point with small mass throughout the source scale interval
at arbitrarily small scales; $t<\underline{\dim}_\kappa\mu$ provides
such a point at every sufficiently small scale. The next lemma estimates
the potential at this point while keeping the source and image parameters
independent. This distinction will yield the limiting theorem in
Corollary~\ref{cor:limits} without a growth assumption. The point capacity
itself is used only in Appendix~\ref{sec:capacity}.

\begin{lemma}\label{lem:potential}
Fix $\alpha>0$ and $\theta,\kappa\in(0,1]$. Take either
$Q>\max\{\dim_{\mathrm{qA}}\mu,\alpha\}$ finite, or $Q=\infty$. For $0<t<Q$, let
\begin{equation}\label{eq:scale-choice}
 b=\frac t\alpha+\frac\kappa\theta\left(1-\frac tQ\right),
 \quad 0<s<\min\{t,t/b\}.
\end{equation}
For finite $Q$, choose $0<\varphi<\kappa$; for $Q=\infty$,
take $\varphi=\kappa$. There exist $\beta,C,\delta_0>0$ such that,
whenever $0<\delta<\delta_0$ and $x\in K$ satisfy
\begin{equation}\label{small-mass}
    \mu(B(x,u))\le u^t,\quad u\in[\delta,\delta^\varphi],
\end{equation}
one has
\begin{equation}\label{eq:potential-bound}
 \int\phi_{r,\theta}^{s,\alpha}(x-y)\,d\mu(y)
 \le C r^{s+\beta},\quad r=\delta^b.
\end{equation}
\end{lemma}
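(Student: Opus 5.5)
The plan is to split the potential $\int\phi_{r,\theta}^{s,\alpha}(x-y)\,d\mu(y)$ according to the three regimes of the kernel: $|x-y|\le r$, then $r<|x-y|\le r^\theta$, then $|x-y|>r^\theta$. The two outer regions are decomposed into dyadic shells $\{2^{-k-1}<|x-y|\le 2^{-k}\}$. Each piece is bounded by the kernel value on the shell times $\mu(B(x,2^{-k}))$. The target is that every resulting geometric sum is dominated by its extreme term and that this term is $\lesssim r^{s+\beta}$. Throughout, put $r=\delta^b$ and write masses as powers of $\delta$. Mass bounds come from three sources:
\begin{itemize}
\item the hypothesis \eqref{small-mass} inside the window $[\delta,\delta^\varphi]$;
\item monotonicity, $\mu(B(x,u))\le\mu(B(x,\delta))\le\delta^t$ for $u<\delta$;
\item for $u>\delta^\varphi$, Definition~\ref{def:qA} with some $Q'$ strictly between $\dim_{\mathrm{qA}}\mu$ and $Q$ when $Q<\infty$, giving $\mu(B(x,u))\lesssim (u/\delta^\varphi)^{Q'}\delta^{\varphi t}$, or just $\mu(B(x,u))\le\mu(\R^n)$ when $Q=\infty$.
\end{itemize}
Replacing $Q$ by $Q'$ costs only an arbitrarily small amount in all exponent inequalities below, because they are strict.

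\emph{Inner region and middle shells inside the window.} For the inner ball, if $r\le\delta$ then $\mu(B(x,r))\le\delta^t=r^{t/b}$, and $t/b>s$ gives a gain $\beta$. If $\delta<r\le\delta^\varphi$ we get $r^t$ directly, and $t>s$. For shells at radius $u\in[\max\{r,\delta\},\delta^\varphi]$, the contribution is $r^su^{-s}u^{t}$. Since $t>s$ these sum to $\lesssim r^s\max\{r,\delta\}^{t-s}$, which again gives a positive power gain. Shells with $r<u<\delta$ (possible when $b>1$) contribute at most $r^s u^{-s}\delta^t\le r^{s}r^{-s}\delta^t=\delta^{t}=r^{t/b}$, summed over $O(\log(1/r))$ shells. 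The logarithm is absorbed by shrinking $\beta$.

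\emph{Shells beyond $\delta^\varphi$ (middle part above the window, and the tail).} This is the main step. For $Q<\infty$ the tail sum is $r^{s+\theta(\alpha-s)}\sum u^{-\alpha}\mu(B(x,u))$. Since $Q'>\alpha$, the growth bound shows that $u^{-\alpha}\mu(B(x,u))$ is dominated by the largest scale, which contributes only $r^{s+\theta(\alpha-s)}$ and so gains $\beta=\theta(\alpha-s)>0$. The smallest scale $u\approx r^\theta$, when $r^\theta>\delta^\varphi$, contributes $r^{s-\theta s}\mu(B(x,r^\theta))$. Middle shells above $\delta^\varphi$ are handled similarly and are dominated by the same endpoint $u\approx r^\theta$. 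So everything reduces to the single inequality
\[
\mu(B(x,r^\theta))\lesssim\delta^{b\theta Q'-\varphi(Q'-t)}\le r^{\theta s+\beta},
\quad\text{i.e.}\quad b\theta(Q-s)>\varphi(Q-t)\ \text{(with margin)}.
\]
This is where the choice \eqref{eq:scale-choice} enters. Expanding gives $b\theta Q=\theta tQ/\alpha+\kappa(Q-t)$. Using $bs<t$ yields $b\theta s<\theta t\le\theta tQ/\alpha$, since $Q>\alpha$. Hence $b\theta(Q-s)>\kappa(Q-t)>\varphi(Q-t)$, strictly, because $\varphi<\kappa$ and $t<Q$.

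I also need to check that the qA comparison is legitimate. The ratio between $\delta^\varphi$ and any outer radius $u\le\delta^{\varphi'}$ with $\varphi'<\varphi$ is polynomially separated, as Definition~\ref{def:qA} requires. Radii $u\ge\delta^{\varphi'}$ can instead be handled by a fixed-outer-radius comparison or by total mass. I expect this to need a short extra case, and it is the reason $\varphi<\kappa$ is imposed.

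For $Q=\infty$ we have $b=t/\alpha+\kappa/\theta$, so $r^\theta=\delta^{\theta t/\alpha+\kappa}\le\delta^\kappa=\delta^\varphi$. Thus the whole middle region lies inside or below the window, and the tail uses only total mass: $r^{s+\theta(\alpha-s)}\sum_{u>r^\theta}u^{-\alpha}\mu(B(x,u))$. Here the shells near $r^\theta$ are bounded via \eqref{small-mass} by $r^{s-\theta s}r^{\theta t/b}$, and the large shells by $r^{s+\theta(\alpha-s)}r^{-\theta\alpha}\mu(\R^n)$ only in the window-controlled range. I would verify that the exponent $\theta t/b-\theta s>0$ again follows from $s<t/b$.

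Collecting the finitely many cases gives $C$, $\beta$ and $\delta_0$ depending only on the fixed parameters. This is the bound \eqref{eq:potential-bound}.
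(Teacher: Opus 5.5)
There is a genuine gap. Your dyadic decomposition is a workable discrete analogue of the paper's route, which bounds the tail potential $J_u(x)$ and integrates it against a majorant of the kernel. But two steps fail as written.

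\textbf{The tail is not controlled by its endpoints.} For finite $Q$, the only upper bounds beyond the window are the growth bound $Cu^Q\delta^{-\kappa(Q-t)}$ (see below for why $\kappa$ rather than $\varphi$) and the total mass. Since $Q>\alpha$, the shell terms $u^{-\alpha}\min\{\mu(\R^n),Cu^Q\delta^{-\kappa(Q-t)}\}$ increase up to the crossover radius $R_c=\delta^{\kappa(1-t/Q)}$ and decrease afterwards. So the best available bound for the tail is
\[
 r^{s+\theta(\alpha-s)}R_c^{-\alpha}
 =r^{s+\theta(\alpha-s)}\delta^{-\kappa\alpha(1-t/Q)}.
\]
This exceeds your bounds for both endpoint terms (the largest scale, and $u\approx r^\theta<R_c$). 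Your claim that $u^{-\alpha}\mu(B(x,u))$ is dominated by the largest scale, contributing only $r^{s+\theta(\alpha-s)}$, is therefore false. The reduction to one inequality at $u\approx r^\theta$ omits the dominant term. Your inequality $b\theta(Q-s)>\kappa(Q-t)$ has slack (you used $\theta t\le\theta tQ/\alpha$) and does not see this term.

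The missing computation is the identity $b\theta\alpha-\kappa\alpha(1-t/Q)=\theta t$. It turns the crossover term into $r^{s+\theta(t/b-s)}$, whose gain is positive exactly because $s<t/b$; this is where the specific form of $b$ enters. The same issue arises for $Q=\infty$. There the dominant contribution comes from the shells just beyond $\delta^\kappa$, where total mass gives $r^{s+\theta(\alpha-s)}\delta^{-\kappa\alpha}=r^{s+\theta(t/b-s)}$. Your bound for the large shells, $r^{s+\theta(\alpha-s)}r^{-\theta\alpha}\mu(\R^n)=r^{s(1-\theta)}\mu(\R^n)$, has no gain at all. A smaller slip of the same kind: the in-window sum $\sum r^su^{t-s}$ is dominated by its upper end, giving $r^{s+\theta(t-s)}$ rather than $r^s\max\{r,\delta\}^{t-s}$. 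Here the conclusion survives.

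\textbf{The growth bound is anchored at the wrong radius.} The estimate $\mu(B(x,u))\lesssim(u/\delta^\varphi)^{Q'}\delta^{\varphi t}$ does not follow from the quasi-Assouad condition for $u\in(\delta^\varphi,\delta^{\varphi/(1+\zeta)})$. These are precisely the radii not polynomially separated from $\delta^\varphi$; you have the two ranges interchanged, and the large radii are the harmless ones. Your proposed remedies do not help in this range. A fixed-outer-radius comparison only yields lower bounds for small balls, and total mass gives no gain in the middle region $r<|x-y|\le r^\theta$.

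This is where $\varphi<\kappa$ is actually used. Choose $0<\zeta<\kappa/\varphi-1$ and anchor at $\delta^\kappa\in[\delta,\delta^\varphi]$, so that $\delta^\kappa\le u^{1+\zeta}$ for every $u\ge\delta^\varphi$. Combined with the window on $[\delta^\kappa,\delta^\varphi]$, this gives $\mu(B(x,u))\lesssim\min\{1,u^Q\delta^{-\kappa(Q-t)}\}$ for $u\ge\delta^\kappa$. Your middle-region argument survives with this weaker bound, since you did prove the $\kappa$-version of your inequality. The tail is then closed by the crossover computation above.
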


\begin{proof}
We introduce the simple tail potential
\[
J_u(x)=\int\min\{1,(u/|x-y|)^\alpha\}\,d\mu(y):=\int
k_u(x-y)\,d\mu(y),
\]
where the integrand is $1$ at $y=x$. 

We first prove
\begin{equation}\label{eq:simple-potential}
J_u(x)\lesssim\delta^t+u^t+
 u^\alpha\delta^{-\kappa\alpha(1-t/Q)},
\quad 0<u<1.
\end{equation}
Suppose first that $Q<\infty$.
Since $Q>\dim_{\mathrm{qA}}\mu$, we may choose
\[
0<\zeta<\frac{\kappa}{\varphi}-1
\]
and constants $C,R_*>0$ such that
\begin{equation}\label{growth-est}
    \mu(B(x,R))\le C\left(\frac R\rho\right)^Q \mu(B(x,\rho))
\end{equation}
whenever $x\in K,
\
0<\rho\le R^{1+\zeta}\le R\le R_*.$

For $\delta\in(0,1)$ sufficiently small, we may assume that $\delta^\varphi<R_*<1$. For $\delta^\varphi\le R\le R_*$, we have
\[
\delta^{\kappa}=(\delta^\varphi)^{\kappa/\varphi}
\le(\delta^\varphi)^{1+\zeta}
\le R^{1+\zeta}.
\]
Applying \eqref{growth-est} and \eqref{small-mass}, we have
\[
\mu(B(x,R))\leq
\begin{cases}
R^t,
& \delta^\kappa\le R\le\delta^\varphi,\\[4pt]
CR^Q\delta^{-\kappa(Q-t)},
& \delta^\varphi\le R\le R_*,\\[4pt]
\mu(\mathbb R^n)R_*^{-Q}R^Q\delta^{-\kappa(Q-t)},
& R\ge R_*.
\end{cases}
\]
Combining these estimates with the total-mass bound yields
\begin{equation}\label{eq:mass-cutoff}
\mu(B(x,R))\le C\min\{1,R^Q\delta^{-\kappa(Q-t)}\},
\quad R\ge \delta^{\kappa}.
\end{equation}

Denote $R_c=\delta^{\kappa(1-t/Q)}$. By Tonelli's theorem, we have
\begin{align}\label{eq:far-field}
\int_{|x-y|>\delta^{\kappa}}|x-y|^{-\alpha}\,d\mu(y)
&\le\alpha\int_{\delta^{\kappa}}^\infty R^{-\alpha-1}\mu(B(x,R))\,dR\notag\\
&\lesssim \int_0^\infty
R^{-\alpha-1}\min\{1,R^Q\delta^{-\kappa(Q-t)}\}\,dR\\\notag
&=\delta^{-\kappa(Q-t)}\int_0^{R_c}R^{Q-\alpha-1}\,dR
+\int_{R_c}^\infty R^{-\alpha-1}\,dR\notag\\
&\lesssim \delta^{-\kappa\alpha(1-t/Q)}\notag.
\end{align}
When $Q=\infty$, the same estimate follows directly from
\[
\int_{|x-y|>\delta^{\kappa}}|x-y|^{-\alpha}\,d\mu(y)
\le \mu(\mathbb R^n)\delta^{-\kappa\alpha}.
\]

We now split $J_u(x)$ over the regions
$|x-y|\le\delta$, $\delta<|x-y|\le \delta^{\kappa}$, and $|x-y|>\delta^{\kappa}$.
The first contribution is at most $\delta^t$.
By \eqref{eq:far-field}, the last contribution is at most
\[
u^\alpha\int_{|x-y|>\delta^{\kappa}}|x-y|^{-\alpha}\,d\mu(y)
\le C u^\alpha\delta^{-\kappa\alpha(1-t/Q)}.
\]
Applying the small-mass bound on the middle region gives
\[
\int_{\delta<|x-y|\le \delta^{\kappa}}k_u(x-y)\,d\mu(y)\lesssim
\begin{cases}
u^t,
& \delta^\kappa\le u<1,\\[4pt]
u^t,
& 0<u<\delta^\kappa,\ 0<t<\alpha,\\[4pt]
u^\alpha(1+|\log\delta|),
& 0<u<\delta^\kappa,\ t=\alpha,\\[4pt]
u^\alpha\delta^{\kappa(t-\alpha)},
& 0<u<\delta^\kappa,\ \alpha<t<Q.
\end{cases}
\]
Since $\kappa\alpha(1-t/Q)>0$, we have $1+|\log\delta|
\le C\delta^{-\kappa\alpha(1-t/Q)}$ for sufficiently small $\delta$.
Combining these estimates proves \eqref{eq:simple-potential}.

We next pass from $J_u$ to the full kernel.
The definition of $b$ gives
\[
\alpha b
=t+\frac{\kappa\alpha}{\theta}\left(1-\frac tQ\right)>t,
\]
so $s<t/b<\alpha$.
For $0<r<1$, we have
\begin{equation}\label{eq:kernel-majorant}
\phi_{r,\theta}^{s,\alpha}(z)
\le r^{s(1-\theta)}k_{r^\theta}(z)
+s r^s\int_r^{r^\theta}u^{-s-1}k_u(z)\,du.
\end{equation}
Integrating \eqref{eq:kernel-majorant} and applying Tonelli's theorem,
we obtain
\[
\int\phi_{r,\theta}^{s,\alpha}(x-y)\,d\mu(y)
\le r^{s(1-\theta)}J_{r^\theta}(x)
+s r^s\int_r^{r^\theta}u^{-s-1}J_u(x)\,du.
\]
To substitute \eqref{eq:simple-potential}, note that
\[
r^{s(1-\theta)}
+s r^s\int_r^{r^\theta}u^{-s-1}\,du=1,
\]
and, since $s<t$ and $s<\alpha$,
\begin{align*}
r^{s(1-\theta)}r^{\theta t}
+s r^s\int_r^{r^\theta}u^{t-s-1}\,du
&\le\left(1+\frac{s}{t-s}\right)
r^{s+\theta(t-s)},\\
r^{s(1-\theta)}r^{\theta\alpha}
+s r^s\int_r^{r^\theta}u^{\alpha-s-1}\,du
&\le\left(1+\frac{s}{\alpha-s}\right)
r^{s+\theta(\alpha-s)}.
\end{align*}
It follows that
\begin{align}\label{eq:three-terms}
\int\phi_{r,\theta}^{s,\alpha}(x-y)\,d\mu(y)
\le C\left[
\delta^t+r^{s+\theta(t-s)}
+r^{s+\theta(\alpha-s)}
\delta^{-\kappa\alpha(1-t/Q)}
\right].
\end{align}

Finally, take $r=\delta^b$.
After division by $r^s$, the three terms in
\eqref{eq:three-terms} have exponents
\[
t/b-s,\qquad
\theta(t-s),\qquad
\theta(\alpha-s)
-\frac{\kappa\alpha}{b}\left(1-\frac tQ\right).
\]
By the choice of $b$, we have
\[
b\theta\alpha-\kappa\alpha\left(1-\frac{t}{Q}\right)
=\theta t,
\]
and hence
\[
\theta(\alpha-s)
-\frac{\kappa\alpha}{b}\left(1-\frac{t}{Q}\right)
=\theta\left(\frac{t}{b}-s\right).
\]
Thus the first and third exponents are positive
precisely when $s<t/b$, while the second is positive
when $s<t$. Since $s<\min\{t,t/b\}$, all three
exponents are positive. Hence, we have 
\[
\int\phi_{r,\theta}^{s,\alpha}(x-y)\,d\mu(y)
\le C r^s\left[
r^{t/b-s}
+r^{\theta(t-s)}
+r^{\theta(t/b-s)}
\right].
\]
Let $\beta=\theta\bigl(\min\{t,t/b\}-s\bigr)>0.$ Since $0<\theta\le1$, each exponent in brackets is at least $\beta$.
As $0<r<1$, this proves
\[
\int\phi_{r,\theta}^{s,\alpha}(x-y)\,d\mu(y)
\le C r^{s+\beta},
\qquad r=\delta^b.
\]
All constants can be chosen independently of $x$ and $\delta$.
\end{proof}

\section{Quantitative projection bounds}
\label{sec:transversal}

We now average the potential estimate over a family of maps.
The required geometric input is a bound on how often two distinct
source points have nearby images. Let $\Lambda$ be a metric space
with a nonzero finite Borel measure $\nu$, and suppose
$F:\Lambda\times K\to\R^N$ is jointly continuous. Write
$f_\lambda(x)=F(\lambda,x)$ and
$\mu_\lambda=(f_\lambda)_\#\mu$, and assume that each
$f_\lambda$ is Lipschitz on $K$. We require the following
transversality condition for some $\alpha>0$:
\begin{equation}\label{eq:transversality}
\nu\{\lambda:|f_\lambda(x)-f_\lambda(y)|\le\rho|x-y|\}
\le C\rho^\alpha
\end{equation}
for all distinct $x,y\in K$ and $0<\rho<1$, uniformly in
$x,y,\rho$. For geometric background on such families, see \cite{BT}.

The upper bounds for the image dimensions follow from the Lipschitz
comparison \cite[Lemma~5.9]{AMT}: if $f:K\to\R^N$ is Lipschitz, then
\begin{equation}\label{eq:lipschitz}
 \overline{\dim}_\theta(f_\#\mu)\le\overline{\dim}_\theta\mu,
 \qquad
 \underline{\dim}_\theta(f_\#\mu)\le\underline{\dim}_\theta\mu.
\end{equation}

For the lower bounds, transversality allows us to compare the averaged
image kernel with the source kernel estimated in the preceding section.

\begin{lemma}\label{lem:averaging}
If \eqref{eq:transversality} holds and $0<s<\alpha$, then
\begin{equation}\label{eq:averaging}
\int_\Lambda\widetilde\phi_{r,\theta}^{s}
(f_\lambda(x)-f_\lambda(y))\,d\nu(\lambda)
\le C_s\phi_{r,\theta}^{s,\alpha}(x-y).
\end{equation}
\end{lemma}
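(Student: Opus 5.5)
Fix distinct $x,y\in K$ and write $d=|x-y|$. Since $\widetilde\phi^s_{r,\theta}$ is radial and nonincreasing in $|z|$, the plan is a layer-cake argument in $\lambda$: the integrand is controlled by how often $|f_\lambda(x)-f_\lambda(y)|$ is small, and \eqref{eq:transversality} bounds exactly that. Concretely, $\widetilde\phi^s_{r,\theta}(z)\le 1$, it vanishes for $|z|>r^\theta$, and it equals $r^s|z|^{-s}$ on $r<|z|\le r^\theta$. Write $g(\lambda)=|f_\lambda(x)-f_\lambda(y)|$ and $E(\rho)=\nu\{\lambda:g(\lambda)\le\rho d\}\le\min\{\nu(\Lambda),C\rho^\alpha\}$; the restriction $\rho<1$ is harmless, since for $\rho\ge1$ we use the trivial bound $\nu(\Lambda)$. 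The case $y=x$ is trivial, because both sides are then constants with the right comparison.

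\emph{Case $d\le r$.} Here $\phi^{s,\alpha}_{r,\theta}(x-y)=1$, and the left side is at most $\nu(\Lambda)$. Done.

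\emph{Case $r<d\le r^\theta$.} The target is $r^s d^{-s}$. Split $\Lambda$ by the size of $g$:
\begin{itemize}
\item[(a)] On $\{g\le r\}$ the integrand is at most $1$. This set has measure $\le E(r/d)\le C(r/d)^\alpha\le C(r/d)^s$, because $r/d<1$ and $s<\alpha$.
\item[(b)] On $\{r<g\le r^\theta\}$ the integrand is $r^sg^{-s}$. The layer-cake formula gives
\[
\int_{\{g>r\}} r^s g^{-s}\,d\nu=r^s\int_0^{r^{-s}} \nu\{r<g<\tau^{-1/s}\}\,d\tau .
\]
\end{itemize}
In (b), split the $\tau$-range at the level $g=d$. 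Where $\tau^{-1/s}\ge d$, use the total mass, contributing $\lesssim r^sd^{-s}$. For $g<d$, bound $\nu\{g<\rho d\}$ by $C\rho^\alpha$ and integrate $\rho^{\alpha}\,d(\rho^{-s})$ over $\rho\in(r/d,1)$. This converges to $\lesssim 1$ precisely because $s<\alpha$. Altogether (b) is $\lesssim r^sd^{-s}$, with constant depending on $s$ through $\alpha/(\alpha-s)$.

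\emph{Case $d>r^\theta$.} The target is $r^{s+\theta(\alpha-s)}d^{-\alpha}$. The integrand is supported on $\{g\le r^\theta\}$, which has $\nu$-measure $\le C(r^\theta/d)^\alpha$. The same split applies:
\begin{itemize}
\item[(a)] On $\{g\le r\}$ we get $\le C(r/d)^\alpha\le r^{s+\theta(\alpha-s)}d^{-\alpha}$, since $r^\alpha\le r^{s+\theta(\alpha-s)}$.
\item[(b)] On $\{r<g\le r^\theta\}$, use dyadic shells $g\sim 2^{-k}r^\theta$ with $2^{-k}r^\theta\ge r$. Each shell has measure $\lesssim (2^{-k}r^\theta/d)^\alpha$ and integrand $\lesssim r^s(2^{-k}r^\theta)^{-s}$.
\end{itemize}
Summing the geometric series in (b), which converges because $\alpha>s$, gives
\[
\lesssim r^s r^{\theta(\alpha-s)}d^{-\alpha},
\]
which is exactly $\phi^{s,\alpha}_{r,\theta}(x-y)$. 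The same dyadic-shell argument can replace the layer-cake integral in the middle case.

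The only delicate point is keeping the constant uniform. It depends only on $C$, $\nu(\Lambda)$, $\alpha$ and $s$, through the geometric series $\sum 2^{-k(\alpha-s)}$. The assumption $s<\alpha$ is used essentially here; I expect nothing else to be an obstacle.
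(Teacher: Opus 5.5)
Your proof is correct and takes essentially the same route as the paper. The paper uses the same three-case split in $|x-y|$ at the kernel thresholds $r$ and $r^\theta$. It converts transversality into the truncated moment bound $\int_{\{T\le\rho\}}T^{-s}\,d\nu\lesssim\rho^{\alpha-s}$, which you realize via the layer-cake formula and dyadic shells. This yields the same bounds $1$, $(r/d)^\alpha+(r/d)^s$ and $(r/d)^\alpha+(r/d)^s(r^\theta/d)^{\alpha-s}$ in the three ranges.
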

\begin{proof}
The case $x=y$ is immediate. For $x\ne y$, let $L=|x-y|$ and
\begin{equation*}
    T(\lambda):=\frac{|f_\lambda(x)-f_\lambda(y)|}{|x-y|}.
\end{equation*}
By \eqref{eq:transversality}, $\nu\{T=0\}=0$ and, for $0<\rho\le1$,
\begin{align*}
 \int_{\{T\le \rho\}}T^{-s}\,d\nu
 &=\rho^{-s}\nu\{T\le \rho\}
   +s\int_0^\rho v^{-s-1}\nu\{T\le v\}\,dv\\
 &\lesssim \rho^{\alpha-s}.
\end{align*}
Splitting the integral at the kernel thresholds $T=r/L$ and
$T=r^\theta/L$ gives the following bounds in the three ranges of $L$:
\[
 \int_\Lambda\widetilde\phi_{r,\theta}^{s}
 (f_\lambda(x)-f_\lambda(y))\,d\nu(\lambda)
 \lesssim
 \begin{cases}
 1,&L\le r,\\
 (r/L)^\alpha+(r/L)^s,&r<L\le r^\theta,\\
 (r/L)^\alpha+(r/L)^s(r^\theta/L)^{\alpha-s},&L>r^\theta.
 \end{cases}
\]
These bounds are at most $C_s\phi_{r,\theta}^{s,\alpha}(x-y)$.
\end{proof}

Combining Lemmas~\ref{lem:potential} and~\ref{lem:averaging}, we
obtain a lower bound at image parameter $\theta$ from the source
dimension at any parameter $\kappa$. Taking $\kappa=\theta$ then
gives the quantitative loss bound.

\begin{theorem}\label{thm:transversal}
Assume \eqref{eq:transversality} and let $q=\dim_{\mathrm{qA}}\mu$.
There is a set $\Lambda_0\subset\Lambda$ of full $\nu$-measure such
that, for every $\lambda\in\Lambda_0$, the following hold.
\begin{enumerate}
\item[(i)] For all $\theta,\kappa\in(0,1]$,
\begin{equation}\label{eq:two-window-bound}
\begin{aligned}
 \overline{\dim}_\theta\mu_\lambda
 &\ge\min\biggl\{\overline{\dim}_\kappa\mu,
 \left[\frac1\alpha+\frac\kappa\theta
 \left(\frac1{\overline{\dim}_\kappa\mu}
       -\frac1{\max\{q,\alpha\}}\right)\right]^{-1}
 \biggr\}.
\end{aligned}
\end{equation}
\item[(ii)] For all $\theta\in(0,1]$,
\begin{equation}\label{eq:quantitative-bound}
 \left[\frac1{\overline{\dim}_\theta\mu}
       +\left(\frac1\alpha-\frac1q\right)_+\right]^{-1}
 \le\overline{\dim}_\theta\mu_\lambda
 \le\overline{\dim}_\theta\mu.
\end{equation}
\end{enumerate}
Both assertions hold on the same set $\Lambda_0$ with all upper
dimensions replaced by lower dimensions.
\end{theorem}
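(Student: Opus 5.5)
The upper bound in (ii) is the Lipschitz comparison \eqref{eq:lipschitz}, valid for every $\lambda$, so the work is in the lower bounds. The plan is to prove (i) first, for countably many rational parameters and rational auxiliary exponents on one full-measure set, and then to pass to all parameters by monotonicity and continuity (Lemma~\ref{lem:window}). Part (ii) follows from (i) with $\kappa=\theta$.

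\textbf{Proof of (i).} Fix rationals $\theta,\kappa\in(0,1]$ and $0<t<\overline{\dim}_\kappa\mu$. Take $Q$ rational with $Q>\max\{q,\alpha\}$ (or $Q=\infty$ when $q=\infty$), and define $b$ as in \eqref{eq:scale-choice}. Fix a rational $s<\min\{t,t/b\}$; this gives $s<\alpha$.

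1. Since $t<\overline{\dim}_\kappa\mu$, and hence $t<\overline{\dim}_\varphi\mu$ for $\varphi\le\kappa$, Definition~\ref{def:intermediate} (with $c=1$) yields $\delta_j\to0$ and points $x_j\in K$ satisfying \eqref{small-mass} with $\varphi$.

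2. Lemma~\ref{lem:potential} gives $\int\phi^{s,\alpha}_{r_j,\theta}(x_j-y)\,d\mu(y)\le Cr_j^{s+\beta}$ with $r_j=\delta_j^b$.

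3. Integrate over $\lambda$ and apply Lemma~\ref{lem:averaging} and Tonelli. This gives
\[
\int_\Lambda\int\widetilde\phi^{s}_{r_j,\theta}\bigl(f_\lambda(x_j)-f_\lambda(y)\bigr)\,d\mu(y)\,d\nu(\lambda)\lesssim r_j^{s+\beta}.
\]
Passing to a subsequence with $\sum_j r_j^{\beta/2}<\infty$, Chebyshev and Borel--Cantelli show that for $\nu$-a.e.\ $\lambda$ the inner integral is $\le r_j^{s+\beta/2}$ for all large $j$.

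4. The truncated kernel satisfies $\widetilde\phi^s_{r,\theta}\ge (r/u)^s$ on $B(0,u)$ for $u\in[r,r^\theta]$. Hence, at $z_j=f_\lambda(x_j)\in\spt\mu_\lambda$,
\[
\mu_\lambda(B(z_j,u))\,r_j^s u^{-s}\le r_j^{s+\beta/2},\qquad\text{so}\qquad \mu_\lambda(B(z_j,u))\le u^{s}r_j^{\beta/2}\le u^{s+\beta/2}.
\]
This holds for all $u\in[r_j,r_j^\theta]$ along $r_j\to0$, so $\overline{\dim}_\theta\mu_\lambda\ge s$.

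5. Let $s\uparrow\min\{t,t/b\}$, $t\uparrow\overline{\dim}_\kappa\mu$ and $Q\downarrow\max\{q,\alpha\}$. Then $t/b\to[1/\alpha+(\kappa/\theta)(1/\overline{\dim}_\kappa\mu-1/\max\{q,\alpha\})]^{-1}$, which gives \eqref{eq:two-window-bound}. The countable intersection of the full-measure sets is $\Lambda_0$.

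\textbf{Passing to all parameters.} To extend from rational $\theta,\kappa$ to all of $(0,1]$, use Lemma~\ref{lem:window}. The image function $\theta\mapsto\overline{\dim}_\theta\mu_\lambda$ is monotone and, being bounded by the finite source function, continuous. The right side is continuous in $(\theta,\kappa)$ when the source function is finite, and trivially handled when it is infinite. The case $\overline{\dim}_\kappa\mu=\infty$ is covered because $t$ ranges over all rationals.

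\textbf{Lower dimensions.} Here $t<\underline{\dim}_\kappa\mu$ gives \eqref{small-mass} at every small $\delta$, not just a sequence. Borel--Cantelli along a single sequence is then not enough. The fix is to apply step 3 to a dyadic sequence $\delta=2^{-k}$, which gives the bound for all large $k$, and then interpolate between consecutive dyadic scales. The interpolation loses only constants, since the window $[r,r^\theta]$ changes by bounded factors; shrinking $s$ slightly absorbs this loss. I expect this interpolation step, together with keeping track of which point $x_k$ serves each scale, to be the main technical obstacle.

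\textbf{Proof of (ii).} Take $\kappa=\theta$ and $D=\overline{\dim}_\theta\mu$ in (i).

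If $q\ge\alpha$, the bracket equals $1/\alpha+1/D-1/q$. This is $\ge 1/D$, so its reciprocal is at most $D$ and the minimum is the claimed bound.

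If $q<\alpha$, the bracket equals $1/D$, since $\max\{q,\alpha\}=\alpha$. The bound is then $D$, matching $(1/\alpha-1/q)_+=0$.
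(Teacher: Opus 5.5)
Your proposal follows the paper's own proof: the potential estimate of Lemma~\ref{lem:potential} at a small-mass point, then the averaging Lemma~\ref{lem:averaging}, then Borel--Cantelli along a summable sequence of scales. You also match the paper in intersecting over rational parameters, extending via Lemma~\ref{lem:window}, using a geometric sequence of scales with interpolation for the lower dimensions (the paper takes $\delta_j=e^{-j/b}$), and setting $\kappa=\theta$ for (ii). One step, however, is false as written.

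\textbf{The error in Step~1.} You claim that $t<\overline{\dim}_\kappa\mu$ implies $t<\overline{\dim}_\varphi\mu$ for every $\varphi\le\kappa$. By \eqref{eq:window} the dimension functions are nondecreasing in the parameter, so in fact $\overline{\dim}_\varphi\mu\le\overline{\dim}_\kappa\mu$ for $\varphi\le\kappa$. A smaller $\varphi$ gives the longer window $[\delta,\delta^\varphi]$, which is a stronger small-mass requirement. For the harmonic layered measures of Lemma~\ref{lem:general-reference-layers}, $\overline{\dim}_\varphi\mu$ is strictly increasing. So for $t$ close to $\overline{\dim}_\kappa\mu$ and $\varphi$ much smaller than $\kappa$, no point satisfies \eqref{small-mass} at small scales.

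You cannot dodge this by taking $\varphi=\kappa$. For finite $Q$, Lemma~\ref{lem:potential} needs $\varphi<\kappa$: the gap $\delta^\kappa\le(\delta^\varphi)^{1+\zeta}$ is exactly what allows the quasi-Assouad bound, rather than the Assouad bound, to be applied.

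\textbf{The fix.} Choose $\varphi$ as the paper does. By Lemma~\ref{lem:window} the source function is either finite and continuous on $(0,1]$ or identically infinite. Hence for each $t<\overline{\dim}_\kappa\mu$ there is some $\varphi\in(0,\kappa)$ with $t<\overline{\dim}_\varphi\mu$, and the same holds for the lower function. Fix one such $\varphi$ for each rational tuple $(\theta,\kappa,t,Q,s)$; countability is unaffected. With this change your argument is complete.

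\textbf{The interpolation step.} This is not the obstacle you expect. For $r_{k+1}<r\le r_k$, with $r_k=2^{-kb}$, use the witness $w_k$:
\begin{itemize}
\item radii $u\in[r_k,r^\theta]\subset[r_k,r_k^\theta]$ satisfy $\mu_\lambda(B(w_k,u))\le u^s r_k^{\beta/2}$;
\item radii $u\in(r_{k+1},r_k)$ satisfy $\mu_\lambda(B(w_k,u))\le r_k^{s+\beta/2}\le 2^{bs}u^s r_k^{\beta/2}$.
\end{itemize}
So the ratio $\mu_\lambda(B(w_k,u))/u^s$ is at most $2^{bs}r_k^{\beta/2}\to0$, and there is no need to shrink $s$.
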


\begin{proof}
Fix $\theta,\kappa\in(0,1]$ and let $D$ denote either
intermediate dimension. The case $D_\kappa\mu=0$
is trivial. For $0<t<D_\kappa\mu$, choose
\[
\begin{cases}
\max\{q,\alpha\}<Q<\infty,\quad
0<\varphi<\kappa,\quad t<D_\varphi\mu,
& q<\infty,\\
Q=\infty,\quad \varphi=\kappa,
& q=\infty.
\end{cases}
\]
Let
\[
b=\frac{t}{\alpha}
+\frac{\kappa}{\theta}
\left(1-\frac{t}{Q}\right),
\qquad
0<s<\min\{t,t/b\}.
\]
By Definition~\ref{def:intermediate}, at arbitrarily small scales
$\delta$, there exists $x_\delta\in K$ such that
\[
\mu(B(x_\delta,u))\le u^t,
\qquad u\in[\delta,\delta^\varphi].
\]
Applying Lemmas~\ref{lem:potential} and Lemma~\ref{lem:averaging} with $r=\delta^b$
gives
\begin{equation}\label{3.2+4.1}
\int_\Lambda\int
\widetilde{\phi}^{s}_{r,\theta}
(f_\lambda(x_\delta)-z)
\,d\mu_\lambda(z)\,d\nu(\lambda)
\le Cr^{s+\beta}
\end{equation}
for some $\beta>0$.

Choose a sequence of admissible scales
$\delta_j\downarrow0$, let $r_j=\delta_j^b\leq 2^{-j}$, and write
\[
E_j(\lambda)=
\int\widetilde{\phi}^{s}_{r_j,\theta}
(f_\lambda(x_{\delta_j})-z)
\,d\mu_\lambda(z).
\]
By Tonelli's theorem and \eqref{3.2+4.1},
\[
\begin{aligned}
\int_\Lambda
\sum_{j=1}^\infty
r_j^{-s-\beta/2}E_j(\lambda)
\,d\nu(\lambda)
&=
\sum_{j=1}^\infty
r_j^{-s-\beta/2}
\int_\Lambda E_j(\lambda)\,d\nu(\lambda)\\
&\lesssim
\sum_{j=1}^\infty r_j^{\beta/2}\le
\sum_{j=1}^\infty 2^{-j\beta/2}
<\infty.
\end{aligned}
\]
Consequently, for $\nu$-almost every
$\lambda\in\Lambda$,
\[
E_j(\lambda)\le r_j^{s+\beta/2}
\]
for all sufficiently large $j$. Let $w_j=f_\lambda(x_{\delta_j})$.
Since
\[
\widetilde{\phi}^{s}_{r_j,\theta}(z)
\ge\left(\frac{r_j}{u}\right)^s,
\ \text{for all }
|z|\le u,\ r_j\le u\le r_j^\theta,
\]
we obtain
\begin{equation}\label{eq:local-mass-bound}
\sup_{r_j\le u\le r_j^\theta}
\frac{\mu_\lambda(B(w_j,u))}{u^s}
\le r_j^{-s}E_j(\lambda)
\le r_j^{\beta/2}\longrightarrow0.
\end{equation}

For the upper intermediate dimension, the
admissible scales are arbitrarily small, so
\eqref{eq:local-mass-bound} gives 
\begin{equation*}
    \overline{\dim}_\theta\mu_\lambda\ge s.
\end{equation*}
For the lower intermediate dimension, we may
take $\delta_j=e^{-j/b}$, so that $r_j=e^{-j}$.
Given $r_{j+1}<r\le r_j$ and
$u\in[r,r^\theta]$, we have
\[
\mu_\lambda(B(w_j,u))
\le
\begin{cases}
E_j(\lambda),&u<r_j,\\[2mm]
(u/r_j)^sE_j(\lambda),&u\ge r_j.
\end{cases}
\]
Hence
\[
\sup_{r\le u\le r^\theta}
\frac{\mu_\lambda(B(w_j,u))}{u^s}
\le e^s r_j^{\beta/2}\longrightarrow0,
\]
which gives
\[
\underline{\dim}_\theta\mu_\lambda\ge s.
\]

Thus, for the chosen intermediate dimension $D$,
\[
D_\theta\mu_\lambda\ge \min\{t,t/b\}
\]
for $\nu$-almost every $\lambda$.
Since
\[
\frac{t}{b}
=
\left[
\frac1\alpha+
\frac{\kappa}{\theta}
\left(\frac1t-\frac1Q\right)
\right]^{-1},
\]
letting $t\uparrow D_\kappa\mu$ and, when
$q<\infty$, $Q\downarrow\max\{q,\alpha\}$,
yields \eqref{eq:two-window-bound} for fixed $\theta,\kappa$.

To obtain a common full-measure set, first
take $\theta,\kappa$ rational and perform the
preceding argument for both intermediate
dimensions. Countable intersection gives a
set $\Lambda_0$ of full $\nu$-measure on which
\eqref{eq:two-window-bound} holds for all rational pairs.

By Lemma~\ref{lem:window}, each dimension function is
either finite and continuous on $(0,1]$ or
identically infinite. Thus continuity extends
\eqref{eq:two-window-bound} to all $\theta,\kappa\in(0,1]$ whenever
the relevant dimension functions are finite.
If the image dimension function is identically
infinite, the assertion is immediate. If the
source dimension function is identically
infinite, monotonicity and the rational-parameter
bounds give
\[
\overline{\dim}_\theta\mu_\lambda
\ge
\sup_{\substack{\eta\in\mathbb Q\\0<\eta<\theta}}
\overline{\dim}_\eta\mu_\lambda
\ge\alpha,
\]
as required.

Finally, taking $\kappa=\theta$ in \eqref{eq:two-window-bound},
using the Lipschitz bound \eqref{eq:lipschitz}, and observing
that
\[
\frac1\alpha-\frac1{\max\{q,\alpha\}}
=
\left(\frac1\alpha-\frac1q\right)_+,
\]
we obtain \eqref{eq:quantitative-bound} for both intermediate
dimensions on the same set $\Lambda_0$.
\end{proof}

Before specializing to orthogonal projections, we apply the
two-parameter estimate to the identity map. This sharpens the
comparison between source parameters in Lemma~\ref{lem:window}.
For $q\in(0,\infty)$,
\begin{equation}\label{eq:sharp-window}
\overline{\dim}_{\varphi}\mu
\le
\left[
\frac{\theta}{\varphi\overline{\dim}_{\theta}\mu}
+\frac{1-\theta/\varphi}{q}
\right]^{-1},
\qquad 0<\theta\le\varphi\le1.
\end{equation}
Indeed, the identity map on a one-point parameter space satisfies
\eqref{eq:transversality} with every exponent $\alpha>0$.
Applying Theorem~\ref{thm:transversal}(i) with $\alpha=q$, source
parameter $\varphi$ and image parameter $\theta$ yields
\eqref{eq:sharp-window}. The zero-dimensional case follows from
Lemma~\ref{lem:window}.

Rearranging \eqref{eq:sharp-window}, we obtain
\[
\varphi\left(
\frac{1}{\overline{\dim}_{\varphi}\mu}-\frac1q
\right)
\ge
\theta\left(
\frac{1}{\overline{\dim}_{\theta}\mu}-\frac1q
\right).
\]
Consequently,
\begin{equation}\label{eq:reciprocal-window-monotone}
\theta\longmapsto
\theta\left(
\frac{1}{\overline{\dim}_{\theta}\mu}-\frac1q
\right)
\end{equation}
is nondecreasing whenever $\overline{\dim}_{\theta}\mu>0$. The dimension functions in \eqref{eq:general-source-values} attain equality in \eqref{eq:sharp-window} for every pair of parameters. For $q=\infty$, the same conclusion follows from \eqref{eq:window},
with $1/q=0$. Both conclusions also hold for the lower
intermediate dimensions.

For orthogonal projections, the required transversality follows from
Mattila's estimate~\cite[inequality (5.12)]{Mattila2015}. We state it
for Frostman measures on the Grassmannian, since this form will also
be used to bound exceptional directions.

\begin{lemma}\label{lem:grassmann-trans}
    Denote $g=\dim G(n,m)$. Let $g-m<k\le g$, and let $\nu$ be a finite Borel measure on $\G$ such that
\begin{equation}\label{eq:frost}
\nu(B_{\G}(V,\rho))\le C\rho^k,\ \forall V\in\G,\ 0<\rho<1.
\end{equation}
Let $\alpha=k-(g-m)$. Then, uniformly in $z\in\R^n\setminus\{0\}$ and $0<\rho<1$,
\begin{equation}\label{eq:trans}
\nu\{V\in\G:|P_Vz|\le\rho|z|\}
\lesssim\rho^\alpha.
\end{equation}
\end{lemma}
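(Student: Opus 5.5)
The plan is to pass from the Frostman condition on $\nu$ to a bound on the $\nu$-measure of the sublevel set through a covering argument on $G(n,m)$. By homogeneity of the condition $|P_Vz|\le\rho|z|$ we may take $|z|=1$. Write $z^\perp\cong\R^{n-1}$ for its orthogonal complement. The key geometric fact is that the set
\[
 S_\rho(z)=\{V\in\G:|P_Vz|\le\rho\}
\]
lies in the $C\rho$-neighbourhood of the submanifold
\[
 Z=\{V\in\G:V\subset z^\perp\}=G(z^\perp,m),
\]
which has dimension $m(n-1-m)=g-m$. To see this, let $|P_Vz|\le\rho$. We rotate $V$ by an angle $O(\rho)$ to a subspace $V'$ orthogonal to $z$. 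Concretely, choose an orthonormal basis $e_1,\dots,e_m$ of $V$ with $e_2,\dots,e_m\perp P_Vz$, so that $\langle e_i,z\rangle=0$ for $i\ge2$ and $|\langle e_1,z\rangle|\le\rho$. Then replace $e_1$ by the normalized projection of $e_1$ onto $z^\perp$. This moves $e_1$ by $O(\rho)$, and the new frame spans $V'\in Z$ with $d(V,V')\lesssim\rho$. (If $\rho$ is not small, the bound is trivial since $\nu$ is finite.)

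Next, the compact smooth submanifold $Z$ of dimension $g-m$ can be covered by $N\lesssim\rho^{-(g-m)}$ balls $B_{\G}(V_i,\rho)$. The implied constant is uniform in $z$, because the sets $Z$ for different $z$ are images of one another under the isometric action of $O(n)$ on $\G$. Doubling the radii, these balls cover the $C\rho$-neighbourhood of $Z$ by balls $B_{\G}(V_i,C'\rho)$. Applying \eqref{eq:frost} to each ball gives
\[
 \nu(S_\rho(z))\le\sum_i\nu(B_{\G}(V_i,C'\rho))\lesssim\rho^{-(g-m)}\rho^k=\rho^{\alpha}.
\]
Since $k>g-m$, we have $\alpha>0$. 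For $C'\rho\ge1$, the estimate follows from finiteness of $\nu$ after adjusting constants.

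The main point requiring care is uniformity. The covering number of the $\rho$-neighbourhood of $Z$ must be bounded independently of $z$, and the metric on $\G$, for instance the operator norm of $P_V-P_W$, must be one for which the rotation estimate holds with a uniform constant. Both follow from $O(n)$-invariance of the metric together with transitivity of $O(n)$ on unit vectors. This reduces everything to a single fixed $z$, where the estimate is a standard smooth-submanifold covering bound. Alternatively, one could simply cite Mattila's inequality (5.12), which is exactly this estimate.
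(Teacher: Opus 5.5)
Your proof is correct and follows essentially the paper's route. The paper gives no separate argument; it cites Mattila's inequality (5.12), whose proof is exactly your argument: $\{V:|P_Vz|\le\rho|z|\}$ lies in an $O(\rho)$-neighbourhood of $G(z^\perp,m)\cong G(n-1,m)$, a compact submanifold of dimension $g-m$, which is covered by $\lesssim\rho^{-(g-m)}$ balls of radius comparable to $\rho$, and the Frostman bound is applied to each ball. Your frame-rotation estimate and the $O(n)$-invariance argument for uniformity in $z$ supply the details that the citation leaves implicit.
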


\begin{proof}[Proof of Theorem~\ref{thm:orthogonal-main}]
Take $\Lambda=G(n,m)$, $\nu=\gamma_{n,m}$,
and $f_V=P_V$. Lemma~\ref{lem:grassmann-trans} gives
\[
\gamma_{n,m}\bigl\{
V:|P_V(x-y)|\le \rho|x-y|
\bigr\}
\lesssim \rho^m
\]
Thus \eqref{eq:transversality} holds with $\alpha=m$.
Since orthogonal projections are $1$-Lipschitz,
Theorem~\ref{thm:transversal}(ii) applies and gives \eqref{eq:intro-quantitative}
simultaneously for all $\theta\in(0,1]$
and both intermediate dimensions.
\end{proof}

\begin{corollary}\label{cor:preserve}
Let $\mu$ be a finite Borel measure with compact support $K\subset\R^n$. Let $m\in\{1,\ldots,n-1\}$ and suppose that $\dim_{\mathrm{qA}}\mu\le m$. Then, for all $\theta\in(0,1]$,
\[
\overline{\dim}_\theta\mu_V=\overline{\dim}_\theta\mu,\qquad
\underline{\dim}_\theta\mu_V=\underline{\dim}_\theta\mu.
\]
for $\gamma_{n,m}$-almost all $V\in G(n,m)$.
\end{corollary}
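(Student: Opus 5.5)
The plan is to deduce Corollary~\ref{cor:preserve} directly from Theorem~\ref{thm:orthogonal-main}, or equivalently from Theorem~\ref{thm:transversal}(ii) with $\alpha=m$. As in the proof of Theorem~\ref{thm:orthogonal-main}, Lemma~\ref{lem:grassmann-trans} applied to $\nu=\gamma_{n,m}$ (which satisfies \eqref{eq:frost} with $k=g$) gives the transversality condition \eqref{eq:transversality} with exponent $\alpha=k-(g-m)=m$. Since the maps $P_V$ are $1$-Lipschitz and jointly continuous in $(V,x)$, Theorem~\ref{thm:transversal} supplies a set $\mathcal V=\Lambda_0$ of full $\gamma_{n,m}$-measure. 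On this set, \eqref{eq:quantitative-bound} holds for every $\theta\in(0,1]$, both for upper and for lower intermediate dimensions.

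Next, I use the hypothesis $q=\dim_{\mathrm{qA}}\mu\le m$. Then $1/m\le 1/q$, with the convention $1/\infty=0$; here $q$ is finite, and $1/0=\infty$ covers the case $q=0$. Hence the correction term $(1/m-1/q)_+$ vanishes. The left side of \eqref{eq:quantitative-bound} becomes $[1/\overline{\dim}_\theta\mu]^{-1}=\overline{\dim}_\theta\mu$, so the lower and upper bounds coincide and equality follows. The lower-dimensional version is identical, since Theorem~\ref{thm:transversal} states it on the same set $\Lambda_0$.

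The only point needing care is the degenerate values. If $\overline{\dim}_\theta\mu=0$, the reciprocal convention gives a lower bound of $0$, and the upper bound \eqref{eq:lipschitz} forces $\overline{\dim}_\theta\mu_V=0$. By \eqref{eq:dimchain}, $\overline{\dim}_\theta\mu\le q\le m<\infty$, so infinite values do not occur. I expect no real obstacle: the substantive work is already contained in Theorem~\ref{thm:transversal}. One could also note that the corollary holds with the full-measure set chosen uniformly in $\theta$, which is stronger than stated.
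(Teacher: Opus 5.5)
Your proposal is correct and follows the paper's own route: the corollary is Theorem~\ref{thm:orthogonal-main}, i.e.\ Theorem~\ref{thm:transversal}(ii) with $\alpha=m$ supplied by Lemma~\ref{lem:grassmann-trans}. When $q\le m$ the correction term $(1/m-1/q)_+$ vanishes, so the two bounds coincide on one full-measure set for every $\theta$ and for both dimensions. Your treatment of the degenerate cases $q=0$ and $\overline{\dim}_\theta\mu=0$ through the reciprocal conventions is also fine.
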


Keeping the image parameter fixed and letting the source parameter
tend to zero gives another consequence of the two-parameter estimate.
The resulting bound controls the limit as $\theta\to0^+$ without
any growth assumption on $\mu$.

\begin{corollary}\label{cor:limits}
Assume \eqref{eq:transversality}. There is a set $\Lambda_0\subset\Lambda$ of full $\nu$-measure such that, for every $\lambda\in\Lambda_0$,
\begin{equation}\label{eq:limit-preservation}
\begin{aligned}
 \min\left\{\alpha,\lim_{\theta\to0^+}
                  \overline{\dim}_\theta\mu\right\}
 &\le\lim_{\theta\to0^+}\overline{\dim}_\theta\mu_\lambda\le\lim_{\theta\to0^+}\overline{\dim}_\theta\mu.
\end{aligned}
\end{equation}
The same holds for lower dimensions on the same full-measure set. In particular, if \(\overline{\dim}_\theta\mu=h\le\alpha\) for all \(\theta\in(0,1]\), then
\[
\overline{\dim}_\theta\mu_\lambda=h,
\quad \forall\theta\in(0,1].
\]
\end{corollary}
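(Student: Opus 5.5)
The plan is to take $\Lambda_0$ to be the full-measure set supplied by Theorem~\ref{thm:transversal}, on which \eqref{eq:two-window-bound} holds for all $\theta,\kappa\in(0,1]$, for both upper and lower dimensions simultaneously. Write $D$ for either dimension function. The upper inequality in \eqref{eq:limit-preservation} is immediate. By \eqref{eq:lipschitz}, $D_\theta\mu_\lambda\le D_\theta\mu$ for every $\theta$, and both functions are nondecreasing in $\theta$, so letting $\theta\to0^+$ gives the right-hand bound. The monotonicity of $D_\theta\mu_\lambda$, via Lemma~\ref{lem:window}, also guarantees that the limit on the image side exists.

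For the lower bound, fix $\lambda\in\Lambda_0$ and $\theta\in(0,1]$, and apply \eqref{eq:two-window-bound} with image parameter $\theta$ and source parameter $\kappa\in(0,\theta]$. Put $h_0=\lim_{\kappa\to0^+}D_\kappa\mu$.

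\emph{Case $h_0>0$ finite.} Since $D_\kappa\mu\ge h_0$ for all $\kappa$, the bracket in \eqref{eq:two-window-bound} is at most $1/\alpha+(\kappa/\theta)\,(1/h_0)$, using $1/D_\kappa\mu-1/\max\{q,\alpha\}\le 1/h_0$. Hence
\[
D_\theta\mu_\lambda\ge\min\Bigl\{h_0,\Bigl[\frac1\alpha+\frac{\kappa}{\theta h_0}\Bigr]^{-1}\Bigr\}.
\]
Letting $\kappa\to0^+$ with $\theta$ fixed gives $D_\theta\mu_\lambda\ge\min\{h_0,\alpha\}$ for every $\theta$. Letting $\theta\to0^+$ then yields the left inequality.

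\emph{Case $h_0=\infty$.} Here the bracket term is at most $1/\alpha+(\kappa/\theta)\cdot 1/D_\kappa\mu\to1/\alpha$. In fact the source function is then identically infinite, so the bracket reduces to $1/\alpha$ directly, and we get $D_\theta\mu_\lambda\ge\alpha$.

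\emph{Case $h_0=0$.} This case is trivial.

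The only point needing care is that the formula in \eqref{eq:two-window-bound} uses $\max\{q,\alpha\}$, which may be infinite. The estimate $1/D_\kappa\mu-1/\max\{q,\alpha\}\le1/D_\kappa\mu$ holds in all cases, so $q$ plays no role; this is exactly why no growth assumption is needed. Since $\Lambda_0$ is common to both dimension types, the same argument gives the lower-dimensional statement on the same set.

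For the final assertion, suppose $D_\theta\mu=h\le\alpha$ for all $\theta$. The argument above gives $D_\theta\mu_\lambda\ge\min\{h,\alpha\}=h$ for \emph{every} $\theta$, not just in the limit, because the lower bound was obtained at each fixed $\theta$. Combined with $D_\theta\mu_\lambda\le D_\theta\mu=h$ from \eqref{eq:lipschitz}, this gives equality for all $\theta\in(0,1]$.

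I expect no serious obstacle. The main subtlety is making sure the bound holds at each fixed $\theta$ before the limit is taken, and that the argument never needs $q<\infty$.
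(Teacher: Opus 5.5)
Your proposal is correct and follows essentially the same route as the paper. You take the common set $\Lambda_0$ from Theorem~\ref{thm:transversal}, let the source parameter $\kappa\to0^+$ in \eqref{eq:two-window-bound} at each fixed image parameter $\theta$ (treating separately the zero, positive finite and infinite limiting source values), then let $\theta\to0^+$, and obtain both the upper bound and the final equality from \eqref{eq:lipschitz}; your use of monotonicity, $D_\kappa\mu\ge h_0$, to control both entries of the minimum uniformly in $\kappa$ makes the paper's limiting step slightly more explicit.
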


\begin{proof}
Fix $\lambda\in\Lambda_0$ and
$\theta\in(0,1]$. If
\[
0<\lim_{\kappa\to0^+}
\overline{\dim}_\kappa\mu<\infty,
\]
then
\[
\frac{\kappa}{\theta}
\left(
\frac{1}{\overline{\dim}_\kappa\mu}
-\frac{1}{\max\{q,\alpha\}}
\right)\longrightarrow0,
\ \kappa\to0^+.
\]
Hence \eqref{eq:two-window-bound} gives
\[
\overline{\dim}_\theta\mu_\lambda
\ge
\min\left\{
\alpha,
\lim_{\kappa\to0^+}
\overline{\dim}_\kappa\mu
\right\}.
\]
The same bound is trivial when the limit is zero
and follows directly from \eqref{eq:two-window-bound} when it is
infinite. Letting $\theta\to0^+$ and
using \eqref{eq:lipschitz} yields \eqref{eq:limit-preservation}.

If $\overline{\dim}_\theta\mu=h\le\alpha$
for all $\theta\in(0,1]$, the preceding
bound and \eqref{eq:lipschitz} give
\[
h\le\overline{\dim}_\theta\mu_\lambda
\le\overline{\dim}_\theta\mu=h.
\]
The proof for lower intermediate dimensions
is identical.
\end{proof}

\section{Exceptional directions and centres}\label{sec:exception}

The almost-everywhere bounds leave open the size of the sets on
which projected dimensions fall below a prescribed threshold.
We apply Theorem~\ref{thm:transversal} to Frostman measures on these
sets to obtain vanishing Hausdorff measure at the critical exponent.
We first establish the Borel measurability needed for this argument.

\begin{lemma}\label{lem:borel}
Let $\Lambda$ be compact and $F:\Lambda\times K\to\R^N$
jointly continuous. For fixed $\theta\in(0,1]$, the maps
\[
 \lambda\mapsto\overline{\dim}_\theta\mu_\lambda,
 \qquad
 \lambda\mapsto\underline{\dim}_\theta\mu_\lambda
\]
are extended real-valued Borel functions. Their limits as
$\theta\to0^+$ are also Borel.
\end{lemma}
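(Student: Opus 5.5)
The plan is to write each sublevel set $\{\lambda:\overline{\dim}_\theta\mu_\lambda<s\}$ (and the lower analogue) as a countable Boolean combination of closed or open sets in $\Lambda$. Then I will use $\lambda\mapsto\overline{\dim}_\theta\mu_\lambda$ being Borel precisely when every such sublevel set is Borel. Fix $\theta$ and a rational $s\ge0$. By Definition~\ref{def:intermediate}, $\overline{\dim}_\theta\mu_\lambda<s$ holds iff there exist rationals $s'<s$, $c>0$ and $r_0>0$ such that, for every $0<r<r_0$ and every $w\in\spt\mu_\lambda=f_\lambda(K)$, some $u\in[r,r^\theta]$ satisfies $\mu_\lambda(B(w,u))\ge cu^{s'}$. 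Since $w=f_\lambda(x)$, I will quantify over $x\in K$ instead of over the moving support. This reduces matters to the sets
\[
A(c,s',r)=\{\lambda:\forall x\in K\ \exists u\in[r,r^\theta],\ \mu(\{y:|f_\lambda(x)-f_\lambda(y)|\le u\})\ge cu^{s'}\}.
\]

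The key step is to show $A(c,s',r)$ is closed, using upper semicontinuity. The map
\[
g(\lambda,x,u)=\mu(\{y:|f_\lambda(x)-f_\lambda(y)|\le u\})
\]
is upper semicontinuous on $\Lambda\times K\times(0,\infty)$. To see this, take $(\lambda_k,x_k,u_k)\to(\lambda,x,u)$; joint continuity of $F$ on the compact set $\Lambda\times K$ gives uniform continuity. So any $y$ lying in infinitely many of the closed sets $\{y:|f_{\lambda_k}(x_k)-f_{\lambda_k}(y)|\le u_k\}$ also lies in the limiting closed set. Fatou's lemma applied to indicators (reverse Fatou for finite $\mu$) then yields $\limsup g(\lambda_k,x_k,u_k)\le g(\lambda,x,u)$. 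Consequently $\{(\lambda,x,u):g\ge cu^{s'}\}$ is closed. Because $u$ ranges over the compact interval $[r,r^\theta]$, its projection to $\Lambda\times K$ is closed. Because $x$ ranges over the compact set $K$, the set of $\lambda$ whose fibre is all of $K$ is closed as well: its complement is the projection of an open set, hence open. So $A(c,s',r)$ is closed.

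The "for all $0<r<r_0$" quantifier runs over an uncountable set, and this is the point I expect needs the most care. I would try to replace it by rational $r$, at the cost of slightly perturbing $c$ and $s'$. Given real $r$, choose a nearby rational $r'\ge r$ with $r'^\theta\le r^\theta$. This is impossible for $\theta<1$ when $r'>r$, so instead I will use monotonicity directly. If $r'\in[r,2r]$, a witness $u'\in[r',r'^\theta]$ fails to lie in $[r,r^\theta]$ only when $u'\in(r^\theta,r'^\theta]$. In that case $u=r^\theta$ works with constant $c2^{-s'\theta}$, since $u'\le 2^\theta u$ and $g$ is monotone in $u$. Hence
\[
\{\overline{\dim}_\theta\mu_\lambda<s\}=\bigcup_{s',c,r_0\in\mathbb Q}\ \bigcap_{r\in\mathbb Q\cap(0,r_0)}A(c,s',r),
\]
up to harmless changes of rational constants inside the union, which is an $F_\sigma$ set. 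For the lower dimension, the condition "there exists a sequence $r_n\to0$" becomes
\[
\bigcup_{s',c}\ \bigcap_{N}\ \bigcup_{r\in\mathbb Q\cap(0,1/N)}A(c,s',r),
\]
with the same rational-approximation step. This is again Borel.

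Finally, both functions are nondecreasing in $\theta$, so their limits as $\theta\to0^+$ equal the limits along $\theta=1/k$. A pointwise limit of a sequence of extended real-valued Borel functions is Borel, which completes the argument.
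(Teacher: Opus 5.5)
\textbf{Gap: the rational-approximation step uses monotonicity in the wrong direction.} Your closedness argument for $A(c,s',r)$ is sound and is essentially the paper's: upper semicontinuity of $g$ by reverse Fatou, projection along the compact interval $[r,r^\theta]$, and openness of the projection $\Lambda\times K\to\Lambda$ to handle the quantifier over $x\in K$. The failure comes afterwards. You argue that if $r\le r'\le 2r$ and the scale-$r'$ witness satisfies $u'\in(r^\theta,(r')^\theta]$, then $u=r^\theta$ is a witness at scale $r$ ``since $g$ is monotone in $u$''. But $g(\lambda,x,\cdot)$ is nondecreasing, so $r^\theta<u'$ only gives $g(\lambda,x,r^\theta)\le g(\lambda,x,u')$. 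That is the wrong inequality.

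The claimed inclusion $A(c,s',r')\subseteq A(2^{-s'\theta}c,s',r)$ is in fact false. Take a one-point $\Lambda$, $f_\lambda$ the identity, and $\mu=\varepsilon\delta_0+\delta_\rho$ with $\rho=(r')^\theta$, $r<r'\le 2r$, $c\le 1$ and $\varepsilon<2^{-s'\theta}c\,r^{s'}$. The scale-$r'$ condition holds. At scale $r$, however, every ball $B(0,u)$ with $u\in[r,r^\theta]$ has mass $\varepsilon$, so the condition fails.

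For the upper dimension this does no harm, because the step is unnecessary. The set $\bigcap_{0<r<r_0}A(c,s',r)$ is an intersection of closed sets, hence closed, however many there are; this is exactly how the paper obtains $\mathcal U_{s,\theta}$ as an $F_\sigma$ set. The cardinality genuinely matters only in the lower case. There $\bigcup_{r\in(0,1/N)}A(c,s',r)$ is an uncountable union of closed sets, and you invoke ``the same'' false step.

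The valid single-scale comparison runs the other way:
\[
A(c,s',r)\subseteq A(2^{-s'}c,s',r'),\qquad r\le r'\le 2r.
\]
A witness $u\ge r'$ already lies in $[r',(r')^\theta]$. A witness $u<r'$ can be replaced by $r'$, since $g(\lambda,x,r')\ge g(\lambda,x,u)\ge c\,r^{s'}\ge 2^{-s'}c\,(r')^{s'}$. So in the lower case choose a rational $r'\in[r,\min\{2r,1/N\})$, and your formula is then justified. If you want rational intersections in the upper case, apply the same inclusion with the rational as the smaller scale, taken in $[r/2,r]$.

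The paper avoids approximation altogether. It proves that $M_\theta(\lambda,r,s)$ is upper semicontinuous jointly in $(\lambda,r)$, via the reparametrization $u=(1-v)r+vr^\theta$. Then $\{\lambda:\exists r\in[1/k,1/j],\ M_\theta\ge c\}$ is the projection of a compact set, hence closed. Your fixed-$r$ closedness combined with the corrected approximation is a valid alternative that does not need semicontinuity in $r$.
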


\begin{proof}
Fix $0<\theta\le 1$ and $s\ge 0$. For
$\lambda\in\Lambda$ and $0<r<1$, define
\[
M_\theta(\lambda,r,s)
=
\inf_{x\in K}
\max_{r\le u\le r^\theta}
\frac{\mu\{y\in K:
|f_\lambda(y)-f_\lambda(x)|\le u\}}{u^s}.
\]

Let
\[
f(\lambda,x,u)
=
u^{-s}\int_K
\mathbf{1}_{\{
|f_\lambda(y)-f_\lambda(x)|\le u
\}}\,d\mu(y),
\quad u>0.
\]
Since $F$ is jointly continuous, the indicator
is upper semicontinuous in $(\lambda,x,u)$
for every fixed $y\in K$. As $\mu$ is finite, Fatou's lemma shows that $f$ is jointly
upper semicontinuous.

Parametrizing the variable interval by
\[
u=(1-v)r+vr^\theta,
\qquad 0\le v\le1,
\]
and maximizing over the fixed compact interval
$[0,1]$ gives an upper semicontinuous function
\[
H_\theta(\lambda,x,r)
=
\max_{r\le u\le r^\theta}
f(\lambda,x,u).
\]

Taking the infimum over the fixed compact set
$K$ preserves upper semicontinuity. Indeed,
for every $c\in\mathbb R$,
\[
\{(\lambda,r):M_\theta(\lambda,r,s)<c\}
=
\bigcup_{x\in K}
\{(\lambda,r):H_\theta(\lambda,x,r)<c\}
\]
is open. Thus
\[
\mathcal C_{s,c}
=
\{(\lambda,r):M_\theta(\lambda,r,s)\ge c\}
\]
is closed in $\Lambda\times(0,1)$.

For fixed $s>0$, we define
\begin{align*}
    \mathcal U_{s,\theta}:&=\left\{
\lambda\in\Lambda:
\exists\,c,r_0>0,\ \forall 0<r<r_0,\ 
M_\theta(\lambda,r,s)\ge c
\right\}.\\
\mathcal L_{s,\theta}
:&=
\left\{
\lambda\in\Lambda:
\exists\,c>0,\ 
\forall\,r_0>0,\ 
\exists\,0<r<r_0,
\ 
M_\theta(\lambda,r,s)\ge c
\right\}.
\end{align*}
By Definition~\ref{def:intermediate}, 
\[
\mathcal U_{s,\theta}
=
\bigcup_{i=1}^\infty
\bigcup_{j=2}^\infty
\bigcap_{0<r<1/j}
\{\lambda\in\Lambda:
M_\theta(\lambda,r,s)\ge 1/i\}:=\bigcup_{i=1}^\infty
\bigcup_{j=2}^\infty
\bigcap_{0<r<1/j} U_{s,\theta}(r,i).
\]
For each fixed $r$, the set $U_{s,\theta}(r,i)$ is closed. Hence
$\mathcal U_{s,\theta}$ is Borel.

Similarly, we have
\[
\mathcal L_{s,\theta}
=
\bigcup_{i=1}^\infty
\bigcap_{j=2}^\infty
\bigcup_{k>j}
\left\{
\lambda\in\Lambda:
\exists r\in[1/k,1/j],\
M_\theta(\lambda,r,s)\ge 1/i
\right\}:=\bigcup_{i=1}^\infty
\bigcap_{j=2}^\infty
\bigcup_{k>j} L_{s,\theta}(i,j,k).
\]
For fixed $i,j,k$, the set $L_{s,\theta}(i,j,k)$ is the
projection onto $\Lambda$ of $\mathcal C_{s,1/i}\cap
\bigl(\Lambda\times[1/k,1/j]\bigr)$, which is compact. Therefore
$\mathcal L_{s,\theta}$ is also Borel.

For every $d>0$, the strict dimension
sublevel sets satisfy
\[
\left\{\lambda:
\overline{\dim}_\theta\mu_\lambda<d
\right\}
=
\bigcup_{\substack{s\in\mathbb Q\\0<s<d}}
\mathcal U_{s,\theta},\qquad \left\{\lambda:
\underline{\dim}_\theta\mu_\lambda<d
\right\}
=
\bigcup_{\substack{s\in\mathbb Q\\0<s<d}}
\mathcal L_{s,\theta}.
\]
Thus both intermediate dimension functions
are extended real-valued Borel functions.

Finally, by Lemma~\ref{lem:window}, both intermediate
dimension functions are nondecreasing in
$\theta$. Hence,
\[
\lim_{\theta\downarrow0}\overline{\dim}_\theta\mu_\lambda
=
\inf_{j\ge1}\overline{\dim}_{1/j}\mu_\lambda,\qquad \lim_{\theta\downarrow0}\underline{\dim}_\theta\mu_\lambda
=
\inf_{j\ge1}\underline{\dim}_{1/j}\mu_\lambda.
\]
Both limits are Borel as countable infima of Borel functions.
\end{proof}

\subsection{Orthogonal directions} 

For orthogonal projections, the critical exponent depends on how early
the source dimension function reaches the prescribed image threshold.
Using this information can improve the bound obtained from its value
at the image parameter alone.

\begin{theorem}\label{thm:exception}
Fix $0<\theta\le1$ and finite
$0<d\le\overline{\dim}_\theta\mu$.
Let $q=\dim_{\mathrm{qA}}\mu$ and define
\begin{equation}\label{eq:spectral-exception-exponent}
\kappa_d=\inf\{\kappa\in(0,1]:
\overline{\dim}_\kappa\mu\ge d\},
\qquad
b_*=\left[
\frac1d-\frac{\kappa_d}{\theta}
\left(\frac1d-\frac1q\right)
\right]^{-1}.
\end{equation}
If $b_*\le m$, then
\begin{equation}\label{eq:spectral-graded-exception}
\mathcal H^{m(n-m-1)+b_*}
\{V\in G(n,m):\overline{\dim}_\theta\mu_V<d\}=0.
\end{equation}
The same holds for lower intermediate dimensions, with
the corresponding definition of $\kappa_d$.
\end{theorem}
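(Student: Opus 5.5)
We argue by contradiction with Frostman's lemma. Let $E=\{V\in G(n,m):\overline{\dim}_\theta\mu_V<d\}$; by Lemma~\ref{lem:borel} it is Borel. Write $g=\dim G(n,m)=m(n-m)$, so that $m(n-m-1)+b_*=g-m+b_*$. Suppose $\mathcal H^{g-m+b_*}(E)>0$.

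\emph{Frostman measure.} Decompose $E=\bigcup_{s}E_s$ with $E_s=\{V:\overline{\dim}_\theta\mu_V<s\}$ for rational $s<d$. Since Hausdorff measure is not countably subadditive in a way that helps at a fixed exponent, we avoid passing to a single $E_s$ and instead keep the threshold at $d$. Frostman's lemma (in the compact manifold $G(n,m)$, for Borel sets of positive $\mathcal H^{k}$ measure) yields a nonzero finite Borel measure $\nu$ supported on a compact subset of $E$ with $\nu(B(V,\rho))\le C\rho^{k}$, where $k=g-m+b_*\le g$.

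\emph{Transversality and the two-window bound.} The requirement $k>g-m$ holds since $b_*\ge d>0$. Lemma~\ref{lem:grassmann-trans} then gives \eqref{eq:transversality} for the family $f_V=P_V$ with $\alpha=b_*$. Theorem~\ref{thm:transversal}(i) therefore holds for $\nu$-a.e.\ $V$, in particular for some $V\in\operatorname{supp}\nu\subset E$. For each $\kappa>\kappa_d$ we have $\overline{\dim}_\kappa\mu\ge d$, so the right side of \eqref{eq:two-window-bound} is at least
\[
\min\Bigl\{d,\Bigl[\tfrac1{b_*}+\tfrac\kappa\theta\bigl(\tfrac1d-\tfrac1{\max\{q,b_*\}}\bigr)\Bigr]^{-1}\Bigr\}.
\]
Because $b_*\le q$ (it is a convex-type combination of $1/d$ and $1/q$ with weight $\kappa_d/\theta\le1$; here $\kappa_d\le\theta$ since $\overline{\dim}_\theta\mu\ge d$), we have $\max\{q,b_*\}=q$.

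By definition,
\[
\tfrac1{b_*}+\tfrac{\kappa_d}\theta\bigl(\tfrac1d-\tfrac1q\bigr)=\tfrac1d.
\]
Letting $\kappa\downarrow\kappa_d$ and using the common full-measure set for all $\kappa$ gives $\overline{\dim}_\theta\mu_V\ge d$, contradicting $V\in E$. If $\kappa_d=0$, then $b_*=d$ and the same limit applies.

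\emph{Main obstacle.} The expected difficulty is the boundary behaviour: with exponent exactly $\alpha=b_*$, the transversality holds as stated, but the bound one obtains is the limiting value $d$, not more. That is exactly what the strict sublevel needs, so no $\varepsilon$-loss is allowed anywhere. The care required is to check that Lemma~\ref{lem:grassmann-trans} holds with $\alpha=k-(g-m)$ exactly, including the case $k=g$, i.e.\ $b_*=m$, where Haar-type Frostman measures are admissible. One must also handle the degenerate cases $\overline{\dim}_\kappa\mu=\infty$ or $q=\infty$, where $1/q=0$; there one reads off the min directly.

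The lower-dimensional case is identical, using the lower version of Theorem~\ref{thm:transversal}(i) and of Lemma~\ref{lem:borel}.
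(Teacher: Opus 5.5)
Your argument is the paper's: a Frostman measure on the exceptional set, Lemma~\ref{lem:grassmann-trans} to get transversality with exponent exactly $\alpha=b_*$, and Theorem~\ref{thm:transversal}(i) applied where the source function first reaches $d$. You also correctly check $g-m<k\le g$ and $b_*\le q$, so that $\max\{q,b_*\}=q$.

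The one real difference is that you let $\kappa\downarrow\kappa_d$ through values $\kappa>\kappa_d$. The paper instead plugs in $\kappa=\kappa_d$ directly when $\kappa_d>0$, and treats $\kappa_d=0$ separately via Corollary~\ref{cor:limits}. Your limit handles $\kappa_d=0$ without the corollary, which is a small simplification. As written, however, it fails in the edge case $\kappa_d=\theta=1$. This case does occur: take $\theta=1$ and $d=\overline{\dim}_1\mu$, with $\overline{\dim}_\kappa\mu<d$ for all $\kappa<1$. Then $b_*=q$, and the hypothesis reads $q\le m$. Here there is no $\kappa\in(\kappa_d,1]$, so your bound is vacuous and the limit cannot be taken.

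The fix is what the paper does. By Lemma~\ref{lem:window}, the set $\{\kappa\in(0,1]:\overline{\dim}_\kappa\mu\ge d\}$ is upward closed and closed in $(0,1]$, so it equals $[\kappa_d,1]$ whenever $\kappa_d>0$. You may therefore take $\kappa=\kappa_d$ in \eqref{eq:two-window-bound}. This gives $\min\{d,d\}=d$ exactly and makes the limit unnecessary except when $\kappa_d=0$.

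Separately, your aside that Hausdorff measure is not countably subadditive in a useful way is wrong. It is countably subadditive, so some $E_s$ would have positive measure. Since you never use this remark, it does no harm to the argument.
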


\begin{proof}
We first prove the assertion for the upper intermediate dimension.
Suppose that
\[
\mathcal H^{m(n-m-1)+b_*}
\left\{V\in G(n,m):
\overline{\dim}_\theta\mu_V<d\right\}>0.
\]
By Lemma~\ref{lem:borel} and Frostman's lemma in local
charts~\cite[Theorem B.2.5 and Lemma 3.1.1]{BishopPeres2017},
there exists a nonzero finite Borel measure $\nu$,
supported on the exceptional set, such that
\[
\nu(B(V,\rho))\lesssim
\rho^{m(n-m-1)+b_*}.
\]
By Lemma~\ref{lem:grassmann-trans}, the transversality
condition \eqref{eq:transversality} holds with exponent $b_*$.

If $\kappa_d>0$, then
$\overline{\dim}_{\kappa_d}\mu=d$.
Applying Theorem~\ref{thm:transversal}(i) with source
parameter $\kappa_d$ and image parameter $\theta$ gives
\[
\overline{\dim}_\theta\mu_V
\ge
\left[
\frac1{b_*}
+\frac{\kappa_d}{\theta}
\left(\frac1d-\frac1q\right)
\right]^{-1}
=d
\]
for $\nu$-almost every $V$, contradicting the support
of $\nu$.

If $\kappa_d=0$, then
$\lim_{\kappa\to0^+}\overline{\dim}_\kappa\mu\ge d$
and $b_*=d$. The same Frostman argument and
Corollary~\ref{cor:limits} give
\[
\overline{\dim}_\theta\mu_V
\ge
\lim_{\kappa\to0^+}
\overline{\dim}_\kappa\mu_V
\ge d
\]
for $\nu$-almost every $V$, again a contradiction.
The proof for the lower intermediate dimension is identical.
\end{proof}

If only the source dimension at $\theta$ is used, the exponent may be
larger. Indeed, \eqref{eq:reciprocal-window-monotone} gives
\[
d\le b_*\le
b:=\left(
\frac1d-\frac1{\overline{\dim}_\theta\mu}
+\frac1q
\right)^{-1}\le q.
\]
Thus, if $b\le m$, we also have
\begin{equation}\label{eq:graded-exception}
\mathcal H^{m(n-m-1)+b}
\{V\in G(n,m):
\overline{\dim}_\theta\mu_V<d\}=0.
\end{equation}

The same argument also controls exceptional directions for the limiting
dimensions as $\theta\to0^+$.

\begin{corollary}\label{cor:limit-exception}
Let
\[
h=\lim_{\theta\to0^+}
\overline{\dim}_\theta\mu.
\]
For every finite $0<d\le\min\{h,m\}$,
\begin{equation}\label{eq:limit-exception}
\mathcal H^{m(n-m-1)+d}
\left\{
V\in G(n,m):
\lim_{\theta\to0^+}
\overline{\dim}_\theta\mu_V<d
\right\}=0.
\end{equation}
The same holds for lower intermediate dimensions.
\end{corollary}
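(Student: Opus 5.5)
We argue by contradiction with a Frostman measure, exactly as in the proof of Theorem~\ref{thm:exception}, but using Corollary~\ref{cor:limits} in place of the two-parameter bound. Fix a finite $0<d\le\min\{h,m\}$ and let
\[
 E=\Bigl\{V\in G(n,m):\lim_{\theta\to0^+}\overline{\dim}_\theta\mu_V<d\Bigr\}.
\]
By Lemma~\ref{lem:borel}, applied with $\Lambda=G(n,m)$ (compact) and the jointly continuous map $F(V,x)=P_Vx$, the limit map $V\mapsto\lim_{\theta\to0^+}\overline{\dim}_\theta\mu_V$ is Borel. Hence $E$ is a Borel set.

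Suppose that $\mathcal H^{m(n-m-1)+d}(E)>0$. Frostman's lemma in local charts~\cite{BishopPeres2017} then provides a nonzero finite Borel measure $\nu$ with $\nu(G(n,m)\setminus E)=0$ and
\[
 \nu(B(V,\rho))\lesssim\rho^{m(n-m-1)+d},\qquad V\in G(n,m),\ 0<\rho<1.
\]
Write $g=m(n-m)$ and $k=m(n-m-1)+d$, so that $k-(g-m)=d$. Since $0<d\le m$, we have $g-m<k\le g$. Lemma~\ref{lem:grassmann-trans} therefore gives the transversality condition \eqref{eq:transversality} for the family $f_V=P_V$ with exponent $\alpha=d$. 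The maps $P_V$ are $1$-Lipschitz and the family is jointly continuous, so the hypotheses of Section~\ref{sec:transversal} hold.

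Now apply Corollary~\ref{cor:limits} with $\alpha=d$. For $\nu$-almost every $V$,
\[
 \lim_{\theta\to0^+}\overline{\dim}_\theta\mu_V\ge\min\{d,h\}=d,
\]
because $d\le h$. This says $\nu(E)=0$, which contradicts the facts that $\nu$ is nonzero and gives no mass to the complement of $E$. Hence \eqref{eq:limit-exception} holds. If $h=\infty$, the argument is unchanged: Corollary~\ref{cor:limits} then gives the lower bound $\alpha=d$ directly.

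For the lower intermediate dimensions, Lemma~\ref{lem:borel} and Corollary~\ref{cor:limits} both cover lower dimensions, and we repeat the argument verbatim with $h$ defined from the lower source function. The only points needing care are bookkeeping. First, the exponent range in Lemma~\ref{lem:grassmann-trans} requires $d\le m$, which is exactly why the statement assumes $d\le\min\{h,m\}$. Second, Frostman's lemma is applied on the Grassmannian through local charts, which is already handled by the reference used in the proof of Theorem~\ref{thm:exception}.
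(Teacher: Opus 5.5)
Your proposal is correct and follows essentially the same route as the paper's proof. You take a Frostman measure on the exceptional set and obtain transversality with exponent $\alpha=d$ from Lemma~\ref{lem:grassmann-trans}, which is where $d\le m$ is used; Corollary~\ref{cor:limits} then gives $\lim_{\theta\to0^+}\overline{\dim}_\theta\mu_V\ge\min\{d,h\}=d$ for $\nu$-almost every $V$, with your explicit appeal to Lemma~\ref{lem:borel} for measurability matching what the paper uses implicitly via the Frostman argument of Theorem~\ref{thm:exception}.
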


\begin{proof}
The same Frostman argument as in Theorem~\ref{thm:exception},
together with Lemma~\ref{lem:grassmann-trans},
gives a nonzero finite measure $\nu$ supported on the
exceptional set for which \eqref{eq:transversality}
holds with $\alpha=d$.
Corollary~\ref{cor:limits} then yields
\[
\lim_{\theta\to0^+}\overline{\dim}_\theta\mu_V
\ge \min\{d,h\}=d
\]
for $\nu$-almost every $V$, a contradiction.
The proof for lower intermediate dimensions is identical.
\end{proof}

\begin{remark}
The exponent $b_*$ is the smallest transversality exponent for which the two-parameter estimate yields $\overline{\dim}_\theta\mu_V\ge d$. For the source dimension functions in \eqref{eq:general-source-values}, as well as constant source dimension functions equal to \(q\), one has $b_*=b$, so the corresponding sharpness
constructions also apply to Theorem~\ref{thm:exception}. Moreover, Corollary~\ref{cor:nonharmonic-sharp} shows that the improvement $b_*<b$ can be strict and sharp even when the entire source dimension function is fixed.
\end{remark}

We next collect all directions at which either dimension function
fails to be preserved at some positive parameter. Write
\begin{equation*}
    \mathcal E_\mu
:=
\left\{
V\in G(n,m):
\exists\,\theta\in(0,1]
\text{ such that }
\overline{\dim}_\theta\mu_V
<
\overline{\dim}_\theta\mu
\text{ or }
\underline{\dim}_\theta\mu_V
<
\underline{\dim}_\theta\mu
\right\}.
\end{equation*}

\begin{corollary}\label{cor:all-exceptions}
Denote $q=\dim_{\mathrm{qA}}\mu$. The following assertions hold.
\begin{enumerate}
\item[(i)] If $0<q\le m$, then
\[
\mathcal H^{m(n-m-1)+q}
(\mathcal E_\mu)=0.
\]

\item[(ii)] Suppose that both intermediate dimension
functions of $\mu$ are constant on $(0,1]$, and let
\[
h=\max\left\{
\overline{\dim}_\theta\mu,
\underline{\dim}_\theta\mu
\right\}\le m.
\]
If $h>0$, then
\[
\mathcal H^{m(n-m-1)+h}(\mathcal E_\mu)=0.
\]
If $h=0$, then $\mathcal E_\mu=\varnothing$.
\end{enumerate}
\end{corollary}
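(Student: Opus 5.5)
The plan is to reduce $\mathcal E_\mu$ to a countable union of sets already controlled by Theorem~\ref{thm:exception} and Corollary~\ref{cor:limit-exception}, using continuity of the dimension functions in $\theta$ to pass from all $\theta$ to rational $\theta$ and from strict inequality to a rational threshold. Concretely, if $\overline{\dim}_\theta\mu_V<\overline{\dim}_\theta\mu$ for some $\theta$, then either the source function is infinite (impossible under the hypotheses, since $\overline{\dim}_\theta\mu\le q\le m$ in (i), $\le h\le m$ in (ii)) or both functions are finite and continuous by Lemma~\ref{lem:window}. The image function $\theta\mapsto\overline{\dim}_\theta\mu_V$ is also finite, because it is bounded by the source function via \eqref{eq:lipschitz}. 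Hence by continuity there are a rational $\theta'\in(0,1]$ and a rational $d$ with
\[
\overline{\dim}_{\theta'}\mu_V<d<\overline{\dim}_{\theta'}\mu .
\]
(When $\theta=1$, take $\theta'$ rational close to $1$; $\theta'=1$ itself is rational.) The same holds for lower dimensions. Thus
\[
\mathcal E_\mu\subset\bigcup_{\theta,d\in\mathbb Q}\Bigl(\{V:\overline{\dim}_\theta\mu_V<d\}\cup\{V:\underline{\dim}_\theta\mu_V<d\}\Bigr),
\]
where the union runs over $0<d<\overline{\dim}_\theta\mu$, respectively $0<d<\underline{\dim}_\theta\mu$. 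By countable subadditivity, it then suffices to bound each piece.

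For (i), fix such $\theta,d$ with $0<d<s=\overline{\dim}_\theta\mu\le q$. The simple exponent $b=(1/d-1/s+1/q)^{-1}$ satisfies $b\le q\le m$, and $b_*\le b$. Theorem~\ref{thm:exception} gives $\mathcal H^{m(n-m-1)+b_*}=0$ for the piece. Since $b_*\le q$, this implies $\mathcal H^{m(n-m-1)+q}=0$ by monotonicity of Hausdorff measures in the exponent. The case $q=0$ is excluded by hypothesis, and the lower-dimension pieces are handled identically.

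For (ii), the source functions are constant, so for $d$ below the constant value the infimum $\kappa_d$ equals $0$, and hence $b_*=d\le h\le m$. Theorem~\ref{thm:exception}, or directly Corollary~\ref{cor:limit-exception} combined with monotonicity in $\theta$, gives $\mathcal H^{m(n-m-1)+d}=0$ for each piece, hence $\mathcal H^{m(n-m-1)+h}=0$. If $h=0$, both source functions vanish identically, so no $V$ can have a strictly smaller image value, and $\mathcal E_\mu=\varnothing$.

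The only delicate step is the reduction to rational parameters. It requires finiteness of the image dimension function, which comes from \eqref{eq:lipschitz}, and continuity of both the source and image functions at $\theta$, which is Lemma~\ref{lem:window}. One must also check the endpoint $\theta=1$ and the case where the source value equals $0$, where no exception is possible. I expect no further obstacle; the proof is essentially bookkeeping over a countable union.
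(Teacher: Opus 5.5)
Your proposal is correct and follows essentially the paper's argument: you use continuity to reduce $\mathcal E_\mu$ to a countable union over rational $\theta$, apply Theorem~\ref{thm:exception} to each piece (with $b_*\le q$ in (i), and $\kappa_d=0$, $b_*=d$ in (ii), where the paper instead uses monotonicity to pass to the limiting set and invokes Corollary~\ref{cor:limit-exception}), and conclude by countable subadditivity. Your extra rational thresholds $d<D_\theta\mu$ are harmless but unnecessary, since Theorem~\ref{thm:exception} already allows $d=D_\theta\mu$, and $\kappa_d\le\theta$ then gives $b_*\le q$ directly without passing through $b$.
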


\begin{proof}
For (i), continuity of the intermediate dimension functions gives
\[
\mathcal E_\mu
=
\bigcup_{D\in\{\overline{\dim},\underline{\dim}\}}
\bigcup_{\theta\in\mathbb Q\cap(0,1]}
\left\{
V\in G(n,m):
D_\theta\mu_V<D_\theta\mu
\right\}.
\]
For each $\theta$ with $D_\theta\mu>0$,
taking $d=D_\theta\mu$ in Theorem~\ref{thm:exception}
gives $b_*\leq q$. If $D_\theta\mu=0$, the
corresponding exceptional set is empty.
Thus (i) follows by countable subadditivity.

For (ii), monotonicity of the intermediate
dimension functions gives
\[
\mathcal E_\mu
=
\bigcup_{D\in\{\overline{\dim},\underline{\dim}\}}
\left\{
V\in G(n,m):
\lim_{\theta\downarrow0}D_\theta\mu_V
<
D_\theta\mu
\right\},
\]
where $D_\theta\mu$ is constant in $\theta$.
If $h>0$, applying \eqref{eq:limit-exception} to each nonzero
constant source dimension gives
\[
\mathcal H^{m(n-m-1)+h}(\mathcal E_\mu)=0.
\]
If $h=0$, neither dimension can decrease,
so $\mathcal E_\mu=\varnothing$.
\end{proof}

For each fixed $0<\alpha\le m$, the set of directions $V\in G(n,m)$ for which either version of
\eqref{eq:two-window-bound} fails for some $(\theta,\kappa)\in(0,1]^2$
has zero $\mathcal H^{m(n-m-1)+\alpha}$-measure. Indeed, applying the Frostman argument as in Theorem~\ref{thm:exception} to each rational pair $(\theta,\kappa)$ gives the conclusion outside a common Hausdorff null set. The continuity and infinite-dimensional arguments in the proof of Theorem~\ref{thm:transversal} then extend the estimate to all $(\theta,\kappa)\in(0,1]^2$.

\subsection{Radial centres}\label{sec:radial}

For radial projections, the parameter is the centre of projection.
To apply the same potential and Frostman arguments, we need to control
the centres from which two source points are seen in nearby directions.
For $z\notin K$, define
\[
 \pi_z(x)=\frac{x-z}{|x-z|},\qquad \mu_z=(\pi_z)_\#\mu.
\]
The sphere carries the chord metric, and $\pi_z|_K$ is Lipschitz.
The next lemma gives the required transversality estimates on bounded
sets of centres separated from $K$.

\begin{lemma}\label{lem:radial-trans}
Let $U\subset\R^n\setminus K$ be bounded, with
$\operatorname{dist}(U,K)>0$. Then
\begin{equation}\label{eq:radial-lebesgue}
 \mathcal L^n\{z\in U:|\pi_z(x)-\pi_z(y)|\le\rho|x-y|\}
 \lesssim_U\rho^{n-1}.
\end{equation}
If $\sigma$ is a finite $k$-Frostman measure on $U$, with
$1<k\le n$, then
\begin{equation}\label{eq:radial-frostman}
 \sigma\{z:|\pi_z(x)-\pi_z(y)|\le\rho|x-y|\}
 \lesssim\rho^{k-1}.
\end{equation}
Both bounds are uniform in distinct $x,y\in K$ and $0<\rho<1$.
\end{lemma}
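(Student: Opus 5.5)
The plan is to reduce both estimates to one geometric statement. Fix distinct $x,y\in K$ and put $L=|x-y|$ and $e=(y-x)/L$. Set $D=\operatorname{dist}(U,K)>0$ and $R=\sup\{|z-w|:z\in U,\ w\in K\}<\infty$. For $z\in U$, the chord distance $|\pi_z(x)-\pi_z(y)|$ is comparable, with constants depending only on $D$ and $R$, to the sine of the angle between $x-z$ and $y-z$. By the law of sines, this sine equals
\[
L\,\operatorname{dist}(z,\ell)\,/\,\bigl(|x-z||y-z|\bigr),
\]
where $\ell$ is the line through $x$ and $y$. Indeed, twice the triangle's area is both $L\operatorname{dist}(z,\ell)$ and $|x-z||y-z|\sin\angle$. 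Since $|x-z|,|y-z|\in[D,R]$, the condition $|\pi_z(x)-\pi_z(y)|\le\rho L$ forces $\operatorname{dist}(z,\ell)\le C_U\rho$, with $C_U$ depending only on $D$ and $R$.

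Thus the bad set is contained in $U\cap N_{C\rho}(\ell)$, the $C\rho$-neighbourhood of a line intersected with the bounded set $U$. This step deliberately ignores that the angle can be small in two ways: $z$ on the far side of the segment, or $z$ between $x$ and $y$. The second case is excluded only when $\ell$ is far from $U$; it is simply absorbed, because we only need an upper bound.

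For \eqref{eq:radial-lebesgue}, note that $U\subset B(x,R)$. The set $N_{C\rho}(\ell)\cap B(x,R)$ is a cylinder of length $2R$ and radius $C\rho$, so its Lebesgue measure is $\lesssim R\rho^{n-1}$, uniformly in $x$ and $y$.

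For \eqref{eq:radial-frostman}, I would cover the cylinder piece $N_{C\rho}(\ell)\cap B(x,R)$ by $\lesssim R/\rho$ balls of radius $2C\rho$ centred on $\ell$. The $k$-Frostman bound gives each ball $\sigma$-mass $\lesssim\rho^k$, hence the total is $\lesssim\rho^{k-1}$. This is exactly where $k>1$ is needed for a nontrivial exponent; the hypothesis $k\le n$ is only the natural range.

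I expect the main obstacle to be the comparison between chord distance and the angle, and this is where I would be most careful. The chord distance is $2\sin(\angle/2)$. This is comparable to $\sin\angle$ only when $\angle$ is bounded away from $\pi$, and an angle near $\pi$ does not arise from $\rho<1$ small. To handle it, I would argue that an angle near $\pi$ requires $z$ to lie near the segment $[x,y]$ at distance $\ge D$ from both endpoints, so $L\ge 2D$. For $\rho$ below a threshold depending on $D$ and $R$, the small-chord condition then forces the angle to be small, and the law-of-sines comparison applies. For $\rho$ above the threshold the bound is trivial, since $\sigma(U)$ and $\mathcal L^n(U)$ are finite.

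Alternatively, one can avoid cases entirely by using the one-sided inequality chord $\ge\sin\angle$, which holds since $2\sin(\angle/2)\ge 2\sin(\angle/2)\cos(\angle/2)=\sin\angle$. This is all the inclusion needs, so I would adopt it, and then all constants depend only on $D$ and $R$.
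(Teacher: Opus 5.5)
Your argument is correct and is essentially the paper's proof. The paper likewise uses $|\xi-\zeta|\ge|\xi\wedge\zeta|$ for unit vectors (your inequality $2\sin(\angle/2)\ge\sin\angle$) and $|(x-z)\wedge(y-z)|=|x-y|\operatorname{dist}(z,L_{xy})$ to confine the bad centres to a $C\rho$-tube about the line through $x$ and $y$. It then covers that tube's intersection with $U$ by $O(\rho^{-1})$ balls of radius $C\rho$, which gives the exponents $n-1$ and $k-1$. Your detour about angles near $\pi$ is unnecessary once you adopt the one-sided inequality, and the inclusion only uses the upper bound $|x-z||y-z|\le R^2$.
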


\begin{proof}
Fix distinct $x,y\in K$, and let $L_{xy}$ be the line through $x$ and $y$. Since $|\xi-\zeta|\ge |\xi\wedge\zeta|$ for unit vectors, we have
\begin{equation}\label{eq:radial-geometric-bound}
\begin{aligned}
|\pi_z(x)-\pi_z(y)|
&\ge \frac{|(x-z)\wedge(y-z)|}{|z-x||z-y|}=\frac{|x-y|\operatorname{dist}(z,L_{xy})}
{|z-x||z-y|}.
\end{aligned}
\end{equation}
Since $U$ and $K$ are bounded, the denominator is uniformly bounded. Thus
\[
|\pi_z(x)-\pi_z(y)|\le \rho|x-y|
\quad\Longrightarrow\quad
\operatorname{dist}(z,L_{xy})\lesssim_U\rho.
\]
As $U$ is bounded, the intersection of $U$ with the $C\rho$-neighborhood of $L_{xy}$ can be covered by $O(\rho^{-1})$ balls of radius $C\rho$. Consequently,
\[
\mathcal L^n\{z\in U:
|\pi_z(x)-\pi_z(y)|\le \rho|x-y|\}
\lesssim_U \rho^{n-1}.
\]
If $\sigma$ is a $k$-Frostman measure on $U$, the same covering gives
\[
\sigma\{z\in U:
|\pi_z(x)-\pi_z(y)|\le \rho|x-y|\}
\lesssim \rho^{-1}\rho^k
=\rho^{k-1}.
\]
Both estimates are uniform in $x,y$ and $\rho$, as required.
\end{proof}

Lemma~\ref{lem:radial-trans} gives transversality exponent $n-1$
for Lebesgue measure on the centres, and exponent $k-1$ for a
$k$-Frostman measure. Substituting these exponents into the preceding
arguments gives the radial projection and exceptional-set bounds.

\begin{theorem}\label{thm:radial}
There exists a set $U_0\subset\mathbb R^n\setminus K$
of full $\mathcal L^n$-measure such that, for every
$z\in U_0$ and $\theta\in(0,1]$,
\begin{equation}\label{eq:radial-dimension-bound}
\left[
\frac{1}{\overline{\dim}_\theta\mu}
+
\left(
\frac{1}{n-1}
-\frac{1}{\dim_{\mathrm{qA}}\mu}
\right)_+
\right]^{-1}
\le \overline{\dim}_\theta\mu_z
\le \overline{\dim}_\theta\mu.
\end{equation}
The corresponding estimates for lower intermediate dimensions,
the two-parameter estimate \eqref{eq:two-window-bound} for all
$\theta,\kappa\in(0,1]$, and the conclusions of
Corollary~\ref{cor:limits} also hold on the same set,
with $\alpha=n-1$.

For fixed $\theta\in(0,1]$ and finite
$0<d\le\overline{\dim}_\theta\mu$, let $b_*$ be defined by \eqref{eq:spectral-exception-exponent}.
If $b_*\le n-1$, then
\begin{equation}\label{eq:radial-exceptional-bound}
\mathcal H^{1+b_*}
\left\{
z\in\mathbb R^n\setminus K:
\overline{\dim}_\theta\mu_z<d
\right\}=0.
\end{equation}
Moreover, writing $h=\lim_{\theta\downarrow0}
\overline{\dim}_\theta\mu$, for every finite $0<d\le\min\{h,n-1\}$,
\begin{equation}\label{eq:radial-limit-exceptional}
\mathcal H^{1+d}
\left\{
z\in\mathbb R^n\setminus K:
\lim_{\theta\downarrow0}
\overline{\dim}_\theta\mu_z<d
\right\}=0.
\end{equation}
Both exceptional-set estimates also hold for lower intermediate dimensions.
\end{theorem}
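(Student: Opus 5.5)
The plan is to reduce everything to Theorem~\ref{thm:transversal}, Corollary~\ref{cor:limits} and the Frostman argument of Theorem~\ref{thm:exception}, with Lemma~\ref{lem:radial-trans} supplying the transversality. The one difficulty is that $\mathbb R^n\setminus K$ is neither compact nor separated from $K$, so we exhaust it by countably many admissible pieces. For $j\ge1$ let
\[
U_j=\{z:|z|<j,\ \operatorname{dist}(z,K)>1/j\},\qquad \Lambda_j=\overline{U_j}.
\]
Each $\Lambda_j$ is compact, disjoint from $K$, and at positive distance from it. The map $F(z,x)=\pi_z(x)$ is jointly continuous on $\Lambda_j\times K$. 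Each $\pi_z$ is Lipschitz on $K$, with constant about $2j$, so \eqref{eq:lipschitz} applies. Since $\bigcup_j U_j=\mathbb R^n\setminus K$, it suffices to prove every assertion on each $U_j$ and then take countable unions or intersections.

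For the almost-everywhere part, let $\nu=\mathcal L^n|_{\Lambda_j}$, a nonzero finite measure (discard empty $U_j$). By \eqref{eq:radial-lebesgue}, applied to the bounded set $\Lambda_j$ at positive distance from $K$, condition \eqref{eq:transversality} holds with $\alpha=n-1$.
\begin{itemize}
\item Theorem~\ref{thm:transversal} then yields a $\nu$-full set $\Lambda_{0,j}$ on which \eqref{eq:two-window-bound} holds for all $\theta,\kappa$ and both dimensions. Part (ii) of that theorem gives \eqref{eq:radial-dimension-bound} and its lower analogue.
\item Corollary~\ref{cor:limits} holds on the same set, because its proof uses only \eqref{eq:two-window-bound} and \eqref{eq:lipschitz}.
\item Set $U_0=\bigcup_j(\Lambda_{0,j}\cap U_j)$. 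Its complement in $\mathbb R^n\setminus K$ is contained in $\bigcup_j(U_j\setminus\Lambda_{0,j})$, a countable union of null sets, so $U_0$ has full measure.
\end{itemize}

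For \eqref{eq:radial-exceptional-bound}, fix $\theta$ and $d$ with $b_*\le n-1$. Suppose the exceptional set $E$ has positive $\mathcal H^{1+b_*}$-measure. By countable subadditivity, some $E\cap U_j$ has positive measure. That set is Borel, by Lemma~\ref{lem:borel} applied on $\Lambda_j$ and intersected with the open set $U_j$. Frostman's lemma in $\mathbb R^n$ then gives a nonzero finite measure $\sigma$ on $E\cap U_j$ with $\sigma(B(z,\rho))\lesssim\rho^{1+b_*}$.
\begin{itemize}
\item Take $k=1+b_*$. Since $0<d\le b_*\le n-1$, we have $1<k\le n$, so \eqref{eq:radial-frostman} gives \eqref{eq:transversality} with exponent $k-1=b_*$.
\item If $\kappa_d>0$, then $\overline{\dim}_{\kappa_d}\mu=d$ by continuity. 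Theorem~\ref{thm:transversal}(i) with $\alpha=b_*$ and source parameter $\kappa_d$ gives $\overline{\dim}_\theta\mu_z\ge d$ for $\sigma$-a.e.\ $z$, exactly as in the proof of Theorem~\ref{thm:exception}. This needs the identity $\min\{d,[1/b_*+(\kappa_d/\theta)(1/d-1/\max\{q,b_*\})]^{-1}\}=d$, and here $\max\{q,b_*\}=q$ because $b_*\le q$ (from the chain $d\le b_*\le b\le q$).
\item If $\kappa_d=0$, then $b_*=d$. Corollary~\ref{cor:limits} with $\alpha=d$ gives $\overline{\dim}_\theta\mu_z\ge\lim_{\kappa\to0^+}\overline{\dim}_\kappa\mu_z\ge d$ $\sigma$-a.e.
\end{itemize}
Either way $\sigma$ gives positive mass to points outside $E$, contradicting that $\sigma$ is supported on $E$.

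For \eqref{eq:radial-limit-exceptional}, the argument is the same with $k=1+d\in(1,n]$, following Corollary~\ref{cor:limit-exception}. The lower-dimensional versions are identical.

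The main obstacle I expect is measurability and localization: Lemma~\ref{lem:borel} requires a compact parameter space, and Frostman's lemma requires a Borel set. The compact exhaustion $\Lambda_j$ resolves both, and it also secures the uniform denominators needed in Lemma~\ref{lem:radial-trans}. Otherwise the proof is a direct substitution of exponents: $n-1$ for Lebesgue measure and $k-1$ for a $k$-Frostman measure.
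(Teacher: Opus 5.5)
Your proposal is correct and follows essentially the same route as the paper. The paper also uses a compact exhaustion of $\mathbb R^n\setminus K$ by pieces at positive distance from $K$, with Lemma~\ref{lem:radial-trans} giving transversality exponent $n-1$ for Lebesgue measure and $k-1$ for $k$-Frostman measures, and then applies Theorem~\ref{thm:transversal}, Corollary~\ref{cor:limits} and the Frostman argument of Theorem~\ref{thm:exception}. Your extra checks fill in details the paper leaves implicit: Borel measurability via Lemma~\ref{lem:borel} on each compact piece, and $\max\{q,b_*\}=q$, which follows from $b_*\le q$.
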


\begin{proof}
Consider the compact exhaustion
\[
U_j=\left\{
z\in\mathbb R^n:
|z|\le j,\ \operatorname{dist}(z,K)\ge j^{-1}
\right\},
\qquad
\mathbb R^n\setminus K=\bigcup_{j=1}^{\infty}U_j.
\]
On each $U_j\times K$, the radial projection family is jointly continuous. By Lemma~\ref{lem:radial-trans}, the transversality condition \eqref{eq:transversality} holds with exponent $n-1$ for $\mathcal L^n|_{U_j}$. Thus Theorem~\ref{thm:transversal} and Corollary~\ref{cor:limits} give the almost-everywhere conclusions.

For the exceptional-set estimates, Lemma~\ref{lem:radial-trans}
gives transversality exponent $b_*$ for any $(1+b_*)$-Frostman
measure on $U_j$. The proof of Theorem~\ref{thm:exception} then
yields \eqref{eq:radial-exceptional-bound}. Similarly, a $(1+d)$-Frostman measure and Corollary~\ref{cor:limits} give
\eqref{eq:radial-limit-exceptional}. The same arguments apply to lower intermediate dimensions. Finally, the countable exhaustion extends all conclusions to $\mathbb R^n\setminus K$.
\end{proof}

In particular, if $0<q=\dim_{\mathrm{qA}}\mu\le n-1$,
both intermediate dimension functions are preserved
for almost every centre, and
\[
\mathcal H^{1+q}(\mathcal E_\mu^{\mathrm{rad}})=0,
\]
where $\mathcal E_\mu^{\mathrm{rad}}$ denotes the set of
centres at which either dimension decreases for some
$\theta\in(0,1]$.
If both source dimension functions are constant with
maximum $0<h\le n-1$, the same conclusion holds
with $h$ in place of $q$, without any restriction on \(\dim_{\mathrm{qA}}\mu\).

\section{Sampling and layered measures}\label{sec:layered}

To prove sharpness, we replace reference measures by finite samples
that retain the relevant ball-mass bounds for their orthogonal and
exterior radial images. Placing the samples in separated layers gives
explicit dimension functions, with a prescribed quasi-Assouad dimension
and infinite Assouad dimension. We use this construction in
Sections~\ref{sec:sharp} and~\ref{sec:third-round}.

\subsection{A common sample for all projections}

To use the same samples for every projection, we must approximate
the reference measure simultaneously on all relevant ball preimages.
We therefore work with a single family containing ordinary balls,
preimages under orthogonal projections, and spherical-cap preimages
under radial projections with centres outside the support.

We define $\mathcal A=
\left\{
\left\{x\in\mathbb R^n:
P_1(x)\le0,\ P_2(x)\le0\right\}:
P_1,P_2\in\mathbb R[x_1,\ldots,x_n],\
\deg P_i\le2
\right\}.$ This family has
finite VC dimension, depending only on $n$.
It contains ordinary balls and the preimages of
balls under orthogonal projections, since both are
defined by a single quadratic inequality.

For radial projections, fix a centre $b$, a unit
vector $e$, and $0<u\le1$. Writing
$\pi_b(x)=(x-b)/|x-b|$, we have, for $x\ne b$,
\[
|\pi_b(x)-e|\le u
\iff
e\cdot(x-b)\ge
(1-u^2/2)|x-b|.
\]
Since $1-u^2/2>0$, this is equivalent to
\[
\begin{cases}
e\cdot(x-b)\ge0,\\
|e\cdot(x-b)|^2
\ge\left(1-\dfrac{u^2}{2}\right)^2|x-b|^2.
\end{cases}
\]
Thus the preimage of a spherical cap also belongs
to $\mathcal A$. Consequently, a relative sample
that approximates the reference measure uniformly
on $\mathcal A$ controls source balls and the
corresponding balls under all orthogonal and
exterior radial projections.

\begin{lemma}\label{lem:relative-clouds}
Let $\sigma$ be a compactly supported probability measure in $\R^n$,
and fix $x_*\in\R^n$. For every $N\ge2$
one can choose $x_1,\ldots,x_N\in\spt\sigma$ so that
\[
 \eta_N=\frac1{N+1}\left(\delta_{x_*}+\sum_{i=1}^N\delta_{x_i}\right)
\]
satisfies the following bounds for every $A\in\mathcal A$:
\begin{equation}\label{eq:relative-unconditional}
\begin{aligned}
 \eta_N(A)&\le C\left(\sigma(A)+\frac{\log(2N)}N\right),\\
 \eta_N(A)&\ge c\sigma(A),\text{ if }\sigma(A)\ge C\frac{\log(2N)}N.
\end{aligned}
\end{equation}
If $A\cap\spt\eta_N\ne\varnothing$, then
\begin{equation}\label{eq:relative-cloud-balls}
 (C \log(2N))^{-1}\max\{N^{-1},\sigma(A)\}
 \le\eta_N(A)\le C \log(2N)\max\{N^{-1},\sigma(A)\}.
\end{equation}
If $x_*\in\spt\sigma$ and $\sigma$ has the ball-growth bound
$C_0(R/u)^q$, then
\begin{equation}\label{eq:relative-cloud-growth}
 \frac{\eta_N(B(x,R))}{\eta_N(B(x,u))}
 \le C^2 C_0[\log(2N)]^2(R/u)^q,
 \ x\in\spt\eta_N,\  0<u\le R\le1.
\end{equation}
\end{lemma}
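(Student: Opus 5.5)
The plan is to prove Lemma~\ref{lem:relative-clouds} by the probabilistic method, using relative $\varepsilon$-approximations for range spaces of finite VC dimension \cite{LLS,HPS}.

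\textbf{Step 1: the sample.} Since $\mathcal A$ has VC dimension $D=D(n)$, there is a relative approximation result of the following form. If $x_1,\ldots,x_N$ are drawn independently from $\sigma$, then with positive probability, for every $A\in\mathcal A$,
\[
 \Bigl|\tfrac1N\#\{i:x_i\in A\}-\sigma(A)\Bigr|
 \le \epsilon\max\{\sigma(A),p\},
\]
with $\epsilon=1/2$ and $p=C_1\log(2N)/N$; here $C_1$ depends only on $D$. This is the $(p,\epsilon)$-relative approximation theorem of Li--Long--Srinivasan. Almost surely every $x_i$ lies in $\spt\sigma$, so we fix such an outcome.

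\textbf{Step 2: the bounds \eqref{eq:relative-unconditional}.} Adding the extra atom $\delta_{x_*}$ and renormalizing by $N+1$ changes masses by at most $1/(N+1)\le p$. The upper bound is then $\eta_N(A)\le\tfrac32\max\{\sigma(A),p\}+N^{-1}$. For the lower bound, if $\sigma(A)\ge p$ then $\eta_N(A)\ge\tfrac{N}{N+1}\cdot\tfrac12\sigma(A)\ge\sigma(A)/4$.

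\textbf{Step 3: the two-sided bound \eqref{eq:relative-cloud-balls}.} The upper bound is immediate from Step 2, since $p\lesssim\log(2N)\,N^{-1}$. For the lower bound, suppose $A\cap\spt\eta_N\ne\varnothing$. Then $\eta_N(A)\ge 1/(N+1)$, because every atom has mass $1/(N+1)$. If $\sigma(A)\ge p$, Step 2 gives $\eta_N(A)\gtrsim\sigma(A)$. If $\sigma(A)<p$, then
\[
 \max\{N^{-1},\sigma(A)\}\le C_1\log(2N)/N\lesssim\log(2N)\,\eta_N(A).
\]
In both cases the claimed lower bound holds.

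\textbf{Step 4: the growth bound \eqref{eq:relative-cloud-growth}.} Balls lie in $\mathcal A$. Let $x\in\spt\eta_N$; then $x\in\spt\sigma$, since $x_*\in\spt\sigma$ and each $x_i\in\spt\sigma$. By \eqref{eq:relative-cloud-balls}, applied to the balls $B(x,R)$ and $B(x,u)$, both of which meet $\spt\eta_N$ because they contain $x$,
\[
 \frac{\eta_N(B(x,R))}{\eta_N(B(x,u))}
 \le C^2[\log(2N)]^2\,
 \frac{\max\{N^{-1},\sigma(B(x,R))\}}{\max\{N^{-1},\sigma(B(x,u))\}}.
\]
Write $a=\sigma(B(x,R))$ and $b=\sigma(B(x,u))$. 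The ball-growth hypothesis gives $a\le C_0(R/u)^q\,b$. We may assume $C_0\ge1$, since $u=R$ forces it. Because $(R/u)^q\ge1$, we get
\[
 \max\{N^{-1},a\}\le C_0(R/u)^q\max\{N^{-1},b\},
\]
which is the claim.

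\textbf{Main obstacle.} The main obstacle is only bookkeeping. We must make sure the VC relative-approximation theorem is cited with a $p$ of order $\log N/N$; the theorem gives $p\sim(D\log(1/p)+\log(1/q'))/(\epsilon^2 N)$, and this is of that order. We must also handle the case $x_*\notin\spt\sigma$ in the first two assertions, which works because they never require $x_*\in\spt\sigma$.
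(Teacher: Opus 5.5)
Your proposal is correct and follows essentially the same route as the paper. Both arguments take a relative $(p,1/2)$-approximation from Li--Long--Srinivasan with $p\sim\log N/N$, absorb the $1/(N+1)$ perturbation caused by the extra atom at $x_*$, use the atomic floor $1/(N+1)$ for sets meeting $\operatorname{supp}\eta_N$, and obtain the growth bound by comparing $\max\{N^{-1},\cdot\}$ at the two radii. The only details the paper adds are the countable-separability (measurability) of $\mathcal A$, which is needed to apply the VC theorem to a general $\sigma$, and a justification of the finite VC dimension via the degree-two monomial lift.
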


\begin{remark}
    Repeated observations are allowed, so every point of
$\operatorname{supp}\eta_N$ has mass at least $1/(N+1)$.
If $\sigma(\{x_*\})=0$, the sample can be chosen so that
$\eta_N(\{x_*\})=1/(N+1)$.
If $\sigma$ is nonatomic, the sample points may be chosen
distinct and different from $x_*$.
For sufficiently large $N$, any fixed finite subset of
$\operatorname{supp}\sigma$ may also be prescribed among
$x_1,\ldots,x_N$, with constants depending on its cardinality.
\end{remark}

\begin{proof}
The degree-two monomial lift maps $\mathcal A$ to
intersections of two halfspaces in a finite-dimensional
space. Hence $\mathcal A$ has finite VC dimension depending
only on $n$. Rational outward approximations on the
compact support of $\sigma$ make the class countably
separable.

For sufficiently large $N$, the relative approximation
theorem~\cite[Theorem~5]{LLS}, in the form
of~\cite[Theorem~2.11(ii)]{HPS}, with
error $1/2$, failure probability $1/4$, and
$p_N=L\log N/N$ for sufficiently large $L$, gives
an empirical measure
\[
\widehat{\sigma}_N
=\frac1N\sum_{i=1}^N\delta_{x_i}
\]
such that, simultaneously for all $A\in\mathcal A$,
\[
|\widehat{\sigma}_N(A)-\sigma(A)|
\le\frac12\max\{\sigma(A),p_N\}.
\]
Since
\[
\eta_N=\frac{N}{N+1}\widehat{\sigma}_N
       +\frac1{N+1}\delta_{x_*},
\]
we have
\[
|\eta_N(A)-\widehat{\sigma}_N(A)|
\le\frac1{N+1}.
\]
These estimates imply \eqref{eq:relative-unconditional}.

If $A\cap\operatorname{supp}\eta_N\ne\varnothing$,
then $\eta_N(A)\ge(N+1)^{-1}$. Moreover, \eqref{eq:relative-unconditional} gives
$\eta_N(A)\ge c\sigma(A)$ whenever $\sigma(A)\ge p_N$,
after adjusting the constants. Consequently,
\[
\eta_N(A)\gtrsim
\begin{cases}
N^{-1},&\sigma(A)<p_N,\\
\sigma(A),&\sigma(A)\ge p_N,
\end{cases}
\]
and hence
\[
\eta_N(A)\gtrsim
\frac1{\log(2N)}
\max\{N^{-1},\sigma(A)\}.
\]
The reverse inequality follows from \eqref{eq:relative-unconditional}, proving \eqref{eq:relative-cloud-balls}.

If $\sigma(\{x_*\})=0$, the sampled points may be chosen
different from $x_*$. When $\sigma$ is nonatomic, they
may also be chosen pairwise distinct. To prescribe a
fixed finite set of support points, reserve the
corresponding sample positions and apply the relative
approximation theorem to the remaining points. The
additional error is $O(N^{-1})$ and is absorbed in
\eqref{eq:relative-unconditional}--\eqref{eq:relative-cloud-balls}.

Finally, suppose that $x_*\in\operatorname{supp}\sigma$
and that $\sigma$ satisfies the stated $q$-growth bound.
For $x\in\operatorname{supp}\eta_N$ and $0<u\le R\le1$,
we have
\[
\begin{aligned}
\max\{N^{-1},\sigma(B(x,R))\}
&\le C_0\left(\frac Ru\right)^q
   \max\{N^{-1},\sigma(B(x,u))\}.
\end{aligned}
\]
Applying \eqref{eq:relative-cloud-balls} to the two balls gives
\[
\frac{\eta_N(B(x,R))}
     {\eta_N(B(x,u))}
\lesssim C_0[\log(2N)]^2
\left(\frac Ru\right)^q,
\]
which proves \eqref{eq:relative-cloud-growth}.
\end{proof}

\subsection{Separated layers}

We place the samples at geometrically separated scales, so that
small balls can isolate individual layers. After a fixed dilation,
assume $\spt\sigma\cup\{x_*\}\subset\overline B(0,1)$.
For a unit vector $v$, let
\begin{equation}\label{eq:sharp-layers}
 R_j=2^{-j},\quad \ell_j=\frac{R_j}{100j},\quad
 T_j(x)=R_jv+\ell_jx,\ j\ge2.
\end{equation}
For samples from Lemma~\ref{lem:relative-clouds}, define
\begin{equation}\label{eq:sharp-mu}
 \mu=Z^{-1}\sum_{j\ge2}j^{-2}(T_j)_\#\eta_{N_j},
 \qquad Z=\sum_{j\ge2}j^{-2}.
\end{equation}
We use either $N_j=\lceil R_j^{-a}\rceil$, with $a>0$, or
$N_j=\lceil e^{j^2}\rceil$. The first lemma turns lower mass bounds
on the individual layers into upper bounds for the dimension functions
of their mixture.

\begin{lemma}\label{lem:image-layers}
Let $a,p>0$, $h\ge0$ and $C_0\ge1$. Let $\nu_j$ be finitely
supported probabilities with $\spt\nu_j\subset B(z_0,C_0R_j)$.
Suppose that for all sufficiently large $j$,
\begin{equation}\label{eq:sharp-common-lower}
 \nu_j(B(z,u))\ge c\left(\log2N_j\right)^{-h}
 \max\left\{N_j^{-1},\min\left\{1,(u/R_j)^p\right\}\right\}
\end{equation}
for $z\in\spt\nu_j$ and $0<u\le1$.
Then $\nu=Z^{-1}\sum_{j\ge2}j^{-2}\nu_j$ satisfies
\begin{equation}\label{eq:general-image-values}
 \overline{\dim}_t\nu\le
 \begin{cases}
 \displaystyle\frac{pat}{p+at},&N_j=\lceil R_j^{-a}\rceil,\\[5pt]
 p,&N_j=\lceil e^{j^2}\rceil,
 \end{cases}
 \qquad 0<t\le1.
\end{equation}
\end{lemma}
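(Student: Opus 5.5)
The plan is to verify Definition~\ref{def:intermediate} directly. Fix $0<t\le1$, let $s_0$ be the claimed value ($pat/(p+at)$ or $p$), and fix $\varepsilon>0$ with $s=s_0+\varepsilon$. For every sufficiently small $r$ and every $z\in\spt\nu$, I will exhibit $u\in[r,r^t]$ with $\nu(B(z,u))\gtrsim u^{s}$. Every $z\in\spt\nu$ lies in $\spt\nu_j$ for some $j$. Two lower bounds will be used throughout:
\[
\nu(B(z,u))\ge Z^{-1}j^{-2}\nu_j(B(z,u)),
\]
together with \eqref{eq:sharp-common-lower}.

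\emph{Finitely many small layers.} The finitely many small $j$ for which \eqref{eq:sharp-common-lower} may fail contribute finitely many atoms, each of positive mass. Every ball centred at such an atom has mass bounded below by a constant, so these points satisfy the requirement with $u=r$.

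\emph{Case $R_j<r$.} Then $z\in B(z_0,C_0R_j)$ lies very close to $z_0$. Take $u=r$ and choose $k$ with $R_k\approx r/(2C_0)$, so that $k\approx\log_2(1/r)$ and $k\le j$. The ball $B(z,r)$ contains $B(z_0,C_0R_k)\supset\spt\nu_k$, because $|z-z_0|\le C_0R_j\le C_0R_k$. Hence
\[
\nu(B(z,r))\ge Z^{-1}k^{-2}\gtrsim(\log(1/r))^{-2}\ge r^{s},
\]
for small $r$. Note that the lower bound comes from the whole layer $k$, not from $\nu_j$.

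\emph{Case $r\le R_j\le r^t$.} Take $u=R_j$. By \eqref{eq:sharp-common-lower} the mass is at least
\[
c\,j^{-2}(\log 2N_j)^{-h}.
\]
Here $j\lesssim\log(1/r)$ and $\log N_j\lesssim j^2$ in both regimes, so this quantity is only polylogarithmically small in $r$, and it exceeds $u^s$ for small $r$.

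\emph{Case $R_j>r^t$.} This is where the exponent comes from. First take $N_j=\lceil R_j^{-a}\rceil$. Consider the two endpoints.
\begin{itemize}
\item At $u=r$, the term $N_j^{-1}$ gives mass $\gtrsim R_j^{a}$ up to logarithms. This is $\ge r^{s_0}$ whenever $R_j\ge r^{s_0/a}$.
\item At $u=r^t$, the term $(r^t/R_j)^p$ gives mass $\ge r^{ts_0}=u^{s_0}$ whenever $R_j\le r^{t(p-s_0)/p}$.
\end{itemize}
These two ranges together cover all $R_j\in(r^t,1]$ exactly when
\[
\frac{s_0}{a}\ge\frac{t(p-s_0)}{p},
\qquad\text{that is,}\qquad
s_0\ge\frac{pat}{p+at}.
\]
So the choice $s_0=pat/(p+at)$ is precisely the balancing point. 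In the regime $N_j=\lceil e^{j^2}\rceil$, take $u=r^t$ directly: since $R_j\le1$, the mass is $\gtrsim(r^t/R_j)^p\ge u^p$.

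\emph{Logarithmic factors.} In this case $R_j>r^t$ forces $j\lesssim t^{-1}\log(1/r)$. All the factors $j^{-2}(\log2N_j)^{-h}$ are therefore polylogarithmic in $1/r$, and they are absorbed by the $\varepsilon$ in $s=s_0+\varepsilon$.

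The main obstacle I expect is keeping all losses uniform in $z$ and $j$. In particular, the tiny-layer case $R_j<r$ cannot be handled using $\nu_j$ itself, because $j$ is then unbounded and $j^{-2}$ is not controlled. This is why that case is routed through the whole layer $k\approx\log_2(1/r)$ instead. It relies on the layers being nested near $z_0$ with radii $C_0R_j$, which is what the hypothesis $\spt\nu_j\subset B(z_0,C_0R_j)$ provides. Letting $\varepsilon\to0$ then gives \eqref{eq:general-image-values}.
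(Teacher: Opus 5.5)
Your argument is the paper's own proof. It verifies Definition~\ref{def:intermediate} directly. It balances the atomic floor $N_j^{-1}\sim R_j^{a}$ at $u=r$ against the power term $(r^t/R_j)^p$ at $u=r^t$; the two meet at $R_j=r^{d/a}$ with $d=pat/(p+at)$. Deep layers are handled by the tail of layers near $z_0$, and the factors $j^{-2}(\log 2N_j)^{-h}$ are absorbed into $\varepsilon$. The paper only places the case boundaries differently: it runs the tail at $u=r^t$ for $R_j<r^t/(4C_0)$, and covers $r^t/(4C_0)\le R_j<r^{d/a}$ with $u=r^t$.

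Two details need repair.

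First, in your case $R_j<r$ the claim $k\le j$ fails when $r/(2C_0)<R_j<r$. There you only know $|z-z_0|\le C_0R_j$, and $C_0R_j+C_0R_k$ can exceed $r$. So the inclusion $\spt\nu_k\subset B(z_0,C_0R_k)\subset B(z,r)$ is not justified. Restrict the tail case to $R_j\le r/(2C_0)$, where $k\le j$ does hold. For $r/(2C_0)<R_j<r$, take $u=r$ and apply \eqref{eq:sharp-common-lower} to layer $j$ itself. There $\min\{1,(r/R_j)^p\}=1$ and $j<\log_2(2C_0/r)$, so $\nu(B(z,r))$ is bounded below by a negative power of $\log(1/r)$, which exceeds $r^{s}$ for small $r$. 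Alternatively, use the paper's thresholds.

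Second, $\spt\nu$ contains $z_0$, the limit of the layers, and $z_0$ need not lie in any $\spt\nu_j$. So the statement that every support point lies in some $\spt\nu_j$ is not correct. Your tail argument with $|z-z_0|=0$ covers this point, as the paper notes.
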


\begin{proof}
For $N_j=\lceil R_j^{-a}\rceil$,
\eqref{eq:sharp-common-lower} gives
\[
\nu(B(z,u))
\gtrsim j^{-C_1}
\max\left\{
R_j^a,\min\left\{1,(u/R_j)^p\right\}
\right\},
\ z\in\spt\nu_j.
\]
Fix $0<t\le1$ and let $d=pat/(p+at)$, so that $d/a<t$ and $p(t-d/a)=td$.
For sufficiently small $r>0$ and
$z\in\operatorname{supp}\nu_j$, we distinguish
three cases.

If $R_j\ge r^{d/a}$, take $u=r$. By (6.6),
\[
\nu(B(z,r))
\gtrsim j^{-C_1}R_j^a
\gtrsim (1+|\log r|)^{-C_1}r^d.
\]

If $r^t/(4C_0)\le R_j<r^{d/a}$, take $u=r^t$.
Since $p(t-d/a)=td$, (6.6) gives
\[
\begin{aligned}
\nu(B(z,r^t))
&\gtrsim j^{-C_1}
\min\left\{1,\left(\frac{r^t}{R_j}\right)^p\right\}\gtrsim (1+|\log r|)^{-C_1}r^{td}.
\end{aligned}
\]

If $R_j<r^t/(4C_0)$, take $u=r^t$.
Since
$\operatorname{supp}\nu_k\subset B(z_0,C_0R_k)$,
we have
\[
\bigcup_{R_k<r^t/(4C_0)}
\operatorname{supp}\nu_k
\subset B(z,r^t).
\]
Consequently,
\[
\nu(B(z,r^t))
\gtrsim
\sum_{k\gtrsim|\log r|}k^{-2}
\gtrsim(1+|\log r|)^{-1}.
\]

Hence, for sufficiently small $r>0$
and $z\in\operatorname{supp}\nu_j$,
\[
\nu(B(z,u))\gtrsim
\begin{cases}
(1+|\log r|)^{-C_1}r^d,
&u=r,\quad R_j\ge r^{d/a},\\[3pt]
(1+|\log r|)^{-C_1}r^{td},
&u=r^t,\quad r^t/(4C_0)\le R_j<r^{d/a},\\[3pt]
(1+|\log r|)^{-1},
&u=r^t,\quad R_j<r^t/(4C_0).
\end{cases}
\]
The same tail estimate applies at $z_0$.
Thus, for every $\varepsilon>0$ and all sufficiently
small $r$, there exists $u\in[r,r^t]$ such that
\[
\nu(B(z,u))
\gtrsim_\varepsilon u^{d+\varepsilon},
\ z\in\spt\nu.
\]
Hence $\overline{\dim}_t\nu\le d$.

For $N_j=\lceil e^{j^2}\rceil$, we discard
the $N_j^{-1}$ term in
\eqref{eq:sharp-common-lower}.
If $R_j\ge r/(4C_0)$, taking $u=r$ gives
\[
\nu(B(z,r))
\gtrsim(1+|\log r|)^{-C_2}r^p.
\]
For the remaining layers, the preceding tail
estimate applies. Thus, for every
$\varepsilon>0$ and all sufficiently small $r$,
\[
\nu(B(z,r))\gtrsim_\varepsilon r^{p+\varepsilon},
\ z\in\spt\nu,
\]
and therefore $\overline{\dim}_t\nu
\le\overline{\dim}_1\nu\le p.$
\end{proof}

For projected samples, a lower $p$-power mass bound for the
projected reference measure gives \eqref{eq:sharp-common-lower}
through Lemma~\ref{lem:relative-clouds}. This bound must also hold
at the image of the distinguished point. A fixed logarithmic loss
can be absorbed into a power of $\log(2N_j)$ when
$(u/R_j)^p\ge N_j^{-1}$; below this threshold, the mass at each
sampled point suffices.

To determine the source dimensions exactly, we also require a
matching upper mass bound at the distinguished point. The next
lemma combines this condition with the reference growth estimate.

\begin{lemma}\label{lem:general-reference-layers}
Let $\mu$ be the measure defined in \eqref{eq:sharp-mu},
and suppose that $p\in(0,\infty)$ and
$x_*\in K_0=\operatorname{supp}\sigma$.
Assume that, for some $h\ge0$,
\begin{align}
\frac{\sigma(B(x,R))}{\sigma(B(x,u))}
&\le C(1+|\log u|)^h
\left(\frac{R}{u}\right)^p,\quad \forall x\in K_0,\ 0<u\le R\le1,
\label{eq:reference-ratio}\\
\sigma(B(x_*,u))
&\lesssim u^p,\quad \forall 0<u<u_0.
\label{eq:marked-point-mass}
\end{align}
Then $\mu$ has countable compact support,
$\dim_{\rm qA}\mu=p$ and $\dim_{\rm A}\mu=\infty$.
Moreover, for every $0<t\le1$, we have
\begin{equation}\label{eq:general-source-values}
\underline{\dim}_t\mu
=\overline{\dim}_t\mu
=
\begin{cases}
\displaystyle
\left(\frac1p+\frac1{at}\right)^{-1},
& N_j=\lceil R_j^{-a}\rceil,\\[6pt]
p,
& N_j=\lceil e^{j^2}\rceil.
\end{cases}
\end{equation}

If $p=\infty$, the corresponding conclusions hold
with \eqref{eq:reference-ratio} and
\eqref{eq:marked-point-mass} replaced by
$\operatorname{dist}(x_*,K_0)>0$.
\end{lemma}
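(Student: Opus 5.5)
The plan is to verify the conclusions of Lemma~\ref{lem:general-reference-layers} in four steps. Step one is the structure of the support, step two the upper bounds for the dimension functions, step three the matching lower bounds at the distinguished points $T_j(x_*)$, and step four the two growth dimensions.

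\emph{Support.} Each layer $(T_j)_\#\eta_{N_j}$ is finitely supported in $B(R_jv,\ell_j)$. Since $\ell_j=R_j/(100j)$, these balls are pairwise separated by distance at least $R_j/4$, and they accumulate only at $0$. Hence $\spt\mu$ is the countable compact set $\{0\}\cup\bigcup_j T_j(\spt\eta_{N_j})$.

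\emph{Upper bounds.} I would apply Lemma~\ref{lem:image-layers} with $\nu_j=(T_j)_\#\eta_{N_j}$, $z_0=0$ and $C_0=2$. To verify \eqref{eq:sharp-common-lower}, fix $x\in K_0$. Normalise so that $\sigma(B(x,2))=1$, and apply \eqref{eq:reference-ratio} with $R=2$. This gives $\sigma(B(x,w))\gtrsim(1+|\log w|)^{-h}\min\{1,w^p\}$. By \eqref{eq:relative-cloud-balls}, at $w=u/\ell_j$ we get
\[
\nu_j(B(z,u))\gtrsim(\log 2N_j)^{-1-h'}\max\{N_j^{-1},\min\{1,(u/\ell_j)^p\}\},\qquad z\in\spt\nu_j .
\]
Here the loss $(\log(1/u))^h$ is harmless: below the threshold $(u/\ell_j)^p<N_j^{-1}$ the atom mass suffices. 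Replacing $\ell_j$ by $R_j$ costs a factor $(100j)^p\lesssim(\log 2N_j)^{C}$. Lemma~\ref{lem:image-layers} then gives $\overline{\dim}_t\mu\le(1/p+1/(at))^{-1}$, respectively $\le p$.

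\emph{Lower bounds.} By \eqref{eq:relative-cloud-balls} and \eqref{eq:marked-point-mass}, for $x=T_j(x_*)$ and $u\le R_j/4$ we have
\[
\mu(B(x,u))\lesssim j^{-2}\log(2N_j)\bigl(N_j^{-1}+(u/\ell_j)^p\bigr),
\]
because no other layer meets the ball. In the case $N_j=\lceil R_j^{-a}\rceil$, fix $s<pat/(p+at)$ and take $\delta=R_j^{a/s'}$ with $s<s'<pat/(p+at)$. The term $N_j^{-1}u^{-s}$ is largest at $u=\delta$, where it is at most $R_j^{a}\delta^{-s}$. The term $(u/R_j)^pu^{-s}$ is largest at $u=\delta^t$, and $\delta^{t(p-s)}\le R_j^{p+\eta}$ holds precisely because $s'<pat/(p+at)$. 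Both are then $o(1)$ after absorbing the factors $j^{O(1)}$, and $\delta^t\ll R_j$, so \eqref{small-mass} holds on $[\delta,\delta^\theta]$ with $\theta=t$. Consecutive admissible $\delta$'s differ by a bounded power of $2$, so shrinking $s'$ slightly covers every small scale. This gives the lower intermediate dimension bound, and hence equality for both. In the case $N_j=\lceil e^{j^2}\rceil$, take $\delta=R_j^{M/t}$ with $M$ large. The atom term $e^{-j^2}$ is superpolynomially small, and $(u/R_j)^p\le u^{p(1-1/M)}$ on the window, so the dimension is at least $p(1-1/M)$ for every $M$.

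\emph{Growth dimensions.} For $\dim_{\rm qA}\mu\le p$, split into two cases according to whether $u\le R^{1+\delta}\le R$ lie within one layer scale or not.
\begin{itemize}
\item For $R\lesssim\ell_j$ at $x$ in layer $j$, \eqref{eq:relative-cloud-growth} (with the logarithmic factor of \eqref{eq:reference-ratio}) gives a ratio $\le j^{O(1)}(R/u)^p$. Since $R/u\ge R^{-\delta}\ge R_j^{-\delta}$, the factor $j^{O(1)}$ is absorbed into $(R/u)^{\varepsilon}$.
\item For larger $R$, bound $\mu(B(x,R))\le1$ and $\mu(B(x,u))\gtrsim j^{-C}(u/\ell_j)^p$, which yields the same estimate.
\end{itemize}
For $\dim_{\rm qA}\mu\ge p$, the marked point gives $\mu(B(T_jx_*,\ell_j))/\mu(B(T_jx_*,u))\gtrsim j^{-C}(\ell_j/u)^p$ for $u\ge\ell_jN_j^{-1/p}$, and this range contains $u\le \ell_j^{1+\delta}$ for each fixed $\delta$.

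The step I expect to be the main obstacle is $\dim_{\rm A}\mu=\infty$, where the tail weights $j^{-2}$ do not by themselves force unbounded ratios. I would first try to exploit that the sample points are arbitrary points of $\spt\sigma$: two sampled points of layer $j$ can lie at distance $\rho_j\ll\ell_jN_j^{-1/p}$, or an atom sits beside much heavier mass. A ball of radius $2\rho$ then contains roughly $N_j^{-1}\cdot(\text{local mass})$ versus a single atom at radius $\rho/2$, with the ratio unbounded compared with a fixed power of $4$. If this fails, I would use the Remark after Lemma~\ref{lem:relative-clouds} to prescribe a pair of sample points at distance tending to zero relative to their mass ratio, at constant cost. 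This modification does not affect any of the bounds above.

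For $p=\infty$ the same scheme applies. The assumption $\operatorname{dist}(x_*,K_0)>0$ makes $T_jx_*$ an isolated atom of mass $\approx j^{-2}N_j^{-1}$ at scales up to $\ell_j$, which replaces \eqref{eq:marked-point-mass} in the lower bounds.
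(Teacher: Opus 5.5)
Your treatment of the support, of the upper bounds via Lemma~\ref{lem:image-layers}, and of the lower bounds via witness balls at $T_jx_*$ is essentially the paper's argument. Your choice $\delta=R_j^{a/s'}$ is the paper's $R_j\sim r^{d/a}$ with a little slack. The quasi-Assouad lower bound via the marked point also matches, though for polynomial counts the window $\ell_jN_j^{-1/p}\le u\le\ell_j^{1+\delta}$ is nonempty only for $\delta<a/p$, not for each fixed $\delta$. This suffices, since only $\delta\downarrow0$ matters.

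\textbf{Main gap: $\dim_{\mathrm A}\mu=\infty$.} Your premise that the layer weights cannot by themselves force unbounded ratios is false, and the paper uses exactly those weights. Compare a single layer weight $j^{-2}$ with the cumulative tail $\sum_{k>j}k^{-2}\sim j^{-1}$:
\begin{itemize}
\item The ball $B(T_jx_*,R_j/4)$ contains all of layer $j$, whose diameter is at most $R_j/100$, and meets no other layer, since the separation is at least $R_j/3$. Hence its mass is exactly $Z^{-1}j^{-2}$.
\item Since $|T_jx_*|\le 1.005R_j$ and every deeper layer lies in $B(0,0.51R_j)$, the ball $B(T_jx_*,2R_j)$ contains all layers $k>j$. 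Hence its mass is $\gtrsim j^{-1}$.
\end{itemize}
The mass ratio is therefore $\gtrsim j\to\infty$ at the fixed radius ratio $8$. This holds for every admissible choice of samples and also when $p=\infty$.

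Your proposed substitutes do not work. Lemma~\ref{lem:relative-clouds} gives no information placing two sample points abnormally close, or an atom beside much heavier mass. It only bounds sampled ratios from above, via \eqref{eq:relative-cloud-balls} and \eqref{eq:reference-ratio}. Prescribing points through the Remark changes the construction, so you would prove the claim only for special samples, not for the measure of \eqref{eq:sharp-mu} with arbitrary admissible samples. Moreover, boundedly many prescribed atoms, each of mass $(N_j+1)^{-1}$, only produce mass ratios bounded by their number.

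\textbf{Secondary gap: the quasi-Assouad upper bound.} Your bound misses the regime $u>4R_j$ for a centre $x$ in layer $j$.
\begin{itemize}
\item The estimate $\mu(B(x,u))\gtrsim j^{-C}(u/\ell_j)^p$ is valid only for $u\lesssim\ell_j$.
\item For larger $u$, the whole-layer bound $j^{-2}$ leaves a factor $j^{2}$. This is absorbable only while $j\lesssim 1+|\log u|$, that is, for $u\lesssim R_j$.
\item When $u>4R_j$, $j$ may be arbitrarily large compared with $|\log u|$. A constant uniform in $x$ then requires the origin tail, $\mu(B(x,u))\ge\mu(B(0,u/2))\gtrsim(1+|\log u|)^{-1}$, as in the paper's four-range estimate.
\end{itemize}
You also need to absorb the factor $(1+|\log u|)^h$, not just $j^{O(1)}$. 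This is harmless, since $R/u\ge u^{-\delta/(1+\delta)}$ when $u\le R^{1+\delta}$.
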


\begin{proof}
\emph{Geometry.}
Let $K_j=T_j(\operatorname{supp}\eta_{N_j})$, $x_j^*=T_jx_*$ and $K=\{0\}\cup\bigcup_{j\ge2}K_j$. Since $K_j\subset B(R_jv,R_j/200)$ and $R_{j+1}=R_j/2$, we have
\begin{equation}\label{eq:layer-sep}
    \operatorname{dist}(K_j,K\setminus K_j)
\ge R_j/3.
\end{equation}
Since each $K_j$ is finite and $R_j\to0$,
$K=\operatorname{supp}\mu$ is countable and compact.
Moreover, the tail estimate
$\sum_{j\ge m}j^{-2}\sim m^{-1}$ yields
\begin{equation}\label{eq:ball-0}
    \mu(B(0,u))\gtrsim (1+|\log u|)^{-1}
\end{equation}
for all sufficiently small $u>0$.
Finally, both choices of $N_j$ satisfy
$\log(2N_j)\lesssim j^2$.

\emph{Quasi-Assouad dimension.}
By \eqref{eq:reference-ratio}, taking $R=1$
and using compactness of $K_0$, we obtain
\begin{equation}\label{sigma-lower-bd}
    \sigma(B(x,u))
\gtrsim \frac{u^p}{(1+|\log u|)^h},
\quad\forall x\in K_0,\ 0<u\le1.
\end{equation}
For $\eta_N$, the atomic mass gives
a lower bound of order $N^{-1}$ at every support
point. Combining this with the relative sampling estimate \eqref{eq:relative-cloud-balls} in Lemma~\ref{lem:relative-clouds}, and using
$1+|\log u|\lesssim\log(2N)$ whenever
$u^p\ge N^{-1}$, yields
\[
\eta_N(B(x,u))
\gtrsim
\frac{\max\{N^{-1},u^p\}}
     {[\log(2N)]^{h+1}},
\quad \forall x\in\operatorname{supp}\eta_N,\
0<u\le1.
\]
Therefore Lemma~\ref{lem:image-layers} gives the required upper
bounds for the intermediate dimensions.

To estimate the quasi-Assouad dimension, we
combine the reference growth bound \eqref{eq:reference-ratio} with
the relative sampling estimate \eqref{eq:relative-cloud-balls} of Lemma~\ref{lem:relative-clouds}.
\begin{equation*}
    \frac{\eta_{N_j}(B(x,R))}{\eta_{N_j}(B(x,u))}\lesssim j^4(1+|\log u|)^h \left(R/u\right)^p.
\end{equation*}

Using the scaling factor $\ell_j$ of $T_j$ and the separation
estimate \eqref{eq:layer-sep}, we obtain, for $x\in K_j$ and $R<R_j/4$,
\[
\frac{\mu(B(x,R))}{\mu(B(x,u))}
=
\frac{\eta_{N_j}(B(T_j^{-1}x,R/\ell_j))}
     {\eta_{N_j}(B(T_j^{-1}x,u/\ell_j))}
\lesssim
j^4(1+|\log u|)^h
\left(R/u\right)^p.
\]

For $R\ge R_j/4$, we distinguish three cases. If $u<R_j/4\le R$, the lower mass bound
for the $j$-th layer gives
\[
\mu(B(x,u))
\gtrsim
j^{-4}(1+|\log u|)^{-h}
\left(u/R_j\right)^p.
\]
Since $\mu(B(x,R))\le1$, we obtain
\[
\frac{\mu(B(x,R))}{\mu(B(x,u))}
\lesssim
j^4(1+|\log u|)^h
\left(R_j/u\right)^p.
\]

If $R_j/4\le u\le4R_j$, then
$\operatorname{diam}K_j<R_j/4\le u$, so
$K_j\subset B(x,u)$. Consequently,
\[
\mu(B(x,u))\ge\mu(K_j)=Z^{-1}j^{-2},
\]
and hence 
\[
\frac{\mu(B(x,R))}{\mu(B(x,u))}
\lesssim j^2.
\]

Finally, if $u>4R_j$, then
$B(0,u/2)\cap K\subset B(x,u)$.
By \eqref{eq:ball-0},
\[
\mu(B(x,u))
\ge\mu(B(0,u/2))
\gtrsim(1+|\log u|)^{-1},
\]
which yields
\[
\frac{\mu(B(x,R))}{\mu(B(x,u))}
\lesssim1+|\log u|.
\]
Hence, for $x\in K_j$ and $0<u\le R<r_0$,
\[
\frac{\mu(B(x,R))}{\mu(B(x,u))}
\lesssim
\begin{cases}
j^4(1+|\log u|)^h(R/u)^p,
& R<R_j/4,\\
j^4(1+|\log u|)^h(R_j/u)^p,
& u<R_j/4\le R,\\
j^2,
& R_j/4\le u\le4R_j,\\
1+|\log u|,
& u>4R_j.
\end{cases}
\]
Since $j\lesssim1+|\log u|$ in the first
three cases, and \eqref{eq:ball-0} also applies at $x=0$,
we conclude that
\[
\frac{\mu(B(x,R))}{\mu(B(x,u))}
\le C(1+|\log u|)^{h+8}
\left(R/u\right)^p,
\quad\forall x\in K,\ 0<u\le R<r_0.
\]
For every $\varepsilon,\delta>0$,
\[
u\le R^{1+\delta}
\quad\Longrightarrow\quad
(1+|\log u|)^{h+8}
\lesssim_{\varepsilon,\delta}(R/u)^\varepsilon.
\]
Hence $\dim_{\rm qA}\mu\le p$ by the Definition~\ref{def:qA}.

For the reverse inequality, consider $N_j\sim R_j^{-a}$. Take $\rho_j=\ell_j/2$, $\beta\in(0,a/p)$ and $s_j=\ell_j R_j^{\beta}$. Then, we have
\begin{align*}
    &\rho_j/s_j\sim R_j^{-\beta}=2^{j\beta}\to\infty,\qquad N_j\left(s_j/\ell_j\right)^p\sim R_{j}^{p\beta-a}\to \infty.
\end{align*}
For all sufficiently large $j$, both balls $B(x_j^*,\rho_j)$ and $B(x_j^*,s_j)$ meet only $K_j$, then
\begin{align*}
    \frac{\mu(B(x_j^*,\rho_j))}{\mu(B(x_j^*,s_j))}&=\frac{\eta_{N_j}(B(x_*,1/2))}{\eta_{N_j}(B(x_*,R_j^{\beta}))}\\
    &\geq\frac{\left(C\log(2N_j)\right)^{-1}\max\left\{N_j^{-1},c_0\right\}}{\left(C\log(2N_j)\right)\max\left\{N_j^{-1},\sigma(B(x_*,R_j^{\beta}))\right\}}\quad (\text{use }\eqref{eq:relative-cloud-balls})\\
    &\gtrsim \left(C\log(2N_j)\right)^{-2}\frac{c_0}{\max\left\{N_j^{-1},R_j^{p\beta}\right\}}\quad (\text{use }\eqref{eq:marked-point-mass})\\
    &\gtrsim j^{-4}R_j^{-p\beta}\sim j^{-4}\left(\frac{\rho_j}{s_j}\right)^p.
\end{align*}

It remains to verify the scale condition in the definition of the quasi-Assouad dimension, namely, that \(s_j\le \rho_j^{1+\delta_0}\) for some \(\delta_0>0\) and all sufficiently large \(j\). Take $\delta_0\in(0,\beta)$. Since 
\begin{equation*}
    j^{\delta_0}2^{-j(\beta-\delta_0)}\to0,\quad j\to\infty.
\end{equation*}
Then for all sufficiently large \(j\), 
\begin{equation*}
    j^{\delta_0}R_j^{\beta-\delta_0}\leq\frac{1}{2\cdot 200^{\delta_0}}.
\end{equation*}
Rearranging it yields \(s_j\le \rho_j^{1+\delta_0}\). Hence, $\dim_{\rm qA}\mu\geq p$.

For $N_j\sim e^{j^2}$, the same argument yields \(\dim_{\rm qA}\mu\ge p\) for any fixed \(\beta>0\) and \(\delta_0\in(0,\beta)\).

\emph{Intermediate dimensions.}
First consider $N_j\sim R_j^{-a}$. Let
\[
d=\left(\frac1p+\frac1{at}\right)^{-1},
\]
so that $d/a<t$, $d<p$ and
$p(t-d/a)=td$. Given sufficiently small
$r>0$, choose $j=j_r$ such that
$R_j\sim r^{d/a}$. Since
$r^t=o(R_j)$, the balls $B(x_j^*,u)$,
$r\le u\le r^t$, intersect only $K_j$. Moreover,
\[
\frac{r^t}{\ell_j}
\sim \log\frac{1}{r}\cdot r^{t-d/a}\to0,\quad r\to 0^+.
\]
By the upper relative sampling estimate \eqref{eq:relative-cloud-balls} and \eqref{eq:marked-point-mass}, for every $0<s<d$ and $r\le u\le r^t$,
\begin{align*}
    \frac{\mu(B(x_j^*,u))}{u^s}&=\frac{Z^{-1}j^{-2}\eta_{N_j}\left(B(x_*,u/\ell_{j})\right)}{u^s}\\
    &\leq \frac{Z^{-1}j^{-2}C\log(2N_j)\max\left\{N_{j}^{-1},\sigma\left(B(x_*,u/\ell_{j})\right)\right\}}{u^s}\quad (\text{use }\eqref{eq:relative-cloud-balls})\\
    &\lesssim \left(N_j^{-1}+(u/\ell_j)^p\right)u^{-s}\lesssim r^{d-s}+j^pu^{p-s}R_j^{-p}\quad (N_j^{-1}\sim R_j^a\sim r^d)\\
    &\lesssim \left(1+\log 1/r\right)^p(r^{d-s}+r^{t(d-s)}).\quad (s<d<p \text{ and } p(t-d/a)=td)
\end{align*}
Then
\begin{equation}\label{eq:layer-witness-obstruction}
    \sup_{u\in[r,r^t]}\frac{\mu(B(x_j^*,u))}{u^s}\to 0,\quad r\to 0^+.
\end{equation}
Thus $\underline{\dim}_t\mu\ge d$.

For exponential sampling,
$N_j=\lceil e^{j^2}\rceil$, fix
$0<s<p$ and choose
\[
0<c<\frac{t(p-s)}p,
\qquad R_j\sim r^c.
\]
Again $r^t=o(R_j)$, so the balls
$B(x_j^*,u)$, $r\le u\le r^t$,
meet only $K_j$. Taking the supremum in the same sampling estimate gives
\[
\sup_{r\le u\le r^t}
\frac{\mu(B(x_j^*,u))}{u^s}
\lesssim (1+|\log r|)^C
 \left(e^{-j^2}r^{-s}+r^{t(p-s)-cp}\right)
\longrightarrow0.
\]
Here $j\sim|\log r|$ and $t(p-s)-cp>0$.
Hence $\underline{\dim}_t\mu\ge p$.

For the upper bound, we verify the hypotheses of
Lemma~\ref{lem:image-layers}. Set $\nu_j=(T_j)_{\#}\eta_{N_j}$.
For $u\leq\ell_j$, we have
\begin{align}\label{nu-lower-bd}
    \nu_j(B(z,u))&=\eta_{N_j}\left(B\left(T_j^{-1}z,u/\ell_j\right)\right)\notag\\
    &\geq \left(C\log2N_j\right)^{-1}\max\left\{N_j^{-1},\sigma\left(B\left(T_j^{-1}z,u/\ell_j\right)\right)\right\}\notag\\
    &\gtrsim \left(\log2N_j\right)^{-1}\max\left\{N_j^{-1},\frac{(u/\ell_j)^p}{(1+\log(\ell_j/u))^h}\right\}\quad (\text{use }\eqref{sigma-lower-bd})\notag\\
    &\gtrsim \left(\log2N_j\right)^{-1-h}\max\left\{N_j^{-1},(u/\ell_j)^p\right\}\\
    &\geq \left(\log2N_j\right)^{-1-h}\max\left\{N_j^{-1},(u/R_j)^p\right\}.\notag
\end{align}
If $u>\ell_j$, by the monotonicity of $\nu_j$ and \eqref{nu-lower-bd}, 
\begin{equation}
    \nu_j(B(z,u))\geq\nu_j(B(z,\ell_j))\gtrsim \left(\log2N_j\right)^{-1-h}.
\end{equation}
Hence, we obtain
\begin{equation*}
    \nu_j(B(z,u))
\gtrsim\left(\log2N_j\right)^{-1-h}
\max\left\{
N_j^{-1},
\min\left\{1,\left(\frac{u}{R_j}\right)^p\right\}
\right\}.
\end{equation*}
Combining the above lower bounds with
Lemma~\ref{lem:image-layers} gives
\[
\underline{\dim}_t\mu
=\overline{\dim}_t\mu
=
\begin{cases}
\displaystyle
\left(\frac1p+\frac1{at}\right)^{-1},
& N_j=\lceil R_j^{-a}\rceil,\\[6pt]
p,
& N_j=\lceil e^{j^2}\rceil.
\end{cases}
\]

\emph{The case $p=\infty$.}
Let $d_*=\operatorname{dist}(x_*,K_0)>0$.
Since $T_j$ has scaling factor $\ell_j$,
\[
\operatorname{dist}\bigl(x_j^*,
K_j\setminus\{x_j^*\}\bigr)
\ge \ell_j d_*.
\]
Thus, for $u<\ell_jd_*$, the ball
$B(x_j^*,u)$ contains only \(x_j^*\) in \(K\).

First suppose that $N_j\sim R_j^{-a}$. We show that
$\overline{\dim}_t\mu\le at$.
Fix $\varepsilon>0$ and sufficiently small $r>0$.
For $x\in K_j$, we have
\begin{equation*}
    \mu(B(x,u))
\gtrsim
\begin{cases}
(1+|\log u|)^{-2}r^{at},
& R_j\ge r^t/4,\ u=r,\\[4pt]
(1+|\log u|)^{-1},
& R_j<r^t/4,\ u=r^t.
\end{cases}
\end{equation*}
Thus, for every $x\in K$, there exists
$u\in[r,r^t]$ such that
$\mu(B(x,u))\gtrsim_\varepsilon u^{at+\varepsilon}$.
Consequently,
$\overline{\dim}_t\mu\le at$.

For the reverse inequality, fix $0<s<at$ and choose
$R_j\sim r^\gamma$ with
\[
 s/a<\gamma<t.
\]
Since
\[
r^t=o(\ell_j)
\quad\text{and}\quad
r^t=o(R_j),
\]
every ball $B(x_j^*,u)$, 
$r\le u\le r^t$ contains only $x_j^*$,
provided $r>0$ is sufficiently small.
Consequently,
\[
\begin{aligned}
\sup_{r\le u\le r^t}
\frac{\mu(B(x_j^*,u))}{u^s}
&=\frac{Z^{-1}j^{-2}}{(N_j+1)r^s}\lesssim
(1+|\log r|)^C r^{a\gamma-s}
\to0,\quad r\to 0^+.
\end{aligned}
\]
Hence $\underline{\dim}_t\mu\ge at$.

For $N_j\sim e^{j^2}$, fix any finite $s>0$ and choose
$R_j\sim r^\gamma$ with $0<\gamma<t$.
Then $r^t=o(\ell_j)$, and the same isolated-point argument gives
\[
\sup_{r\le u\le r^t}
\frac{\mu(B(x_j^*,u))}{u^s}
\lesssim
j^{-2}e^{-j^2}r^{-s}
\to0,\quad r\to 0^+.
\]
Together with the upper bounds, this proves
\[
\underline{\dim}_t\mu
=\overline{\dim}_t\mu
=
\begin{cases}
at, & N_j=\lceil R_j^{-a}\rceil,\\[4pt]
\infty, & N_j=\lceil e^{j^2}\rceil.
\end{cases}
\]

We next consider the quasi-Assouad dimension.
For $N_j\sim R_j^{-a}$ and any $\delta>0$,
choose $\rho_j=B\ell_j$, with $B$ large enough that $B(x_j^*,\rho_j)$ contains $K_j$
but no other layer. For sufficiently large
$j$, the smaller ball
$B(x_j^*,\rho_j^{1+\delta})$
contains only $x_j^*$. Hence
\[
\frac{\mu(B(x_j^*,\rho_j))}
{\mu(B(x_j^*,\rho_j^{1+\delta}))}
=N_j+1\sim R_j^{-a}.
\]
Since $\rho_j\sim R_j/j$, we have
\[
\frac{\rho_j}{\rho_j^{1+\delta}}
=\rho_j^{-\delta}
\sim j^\delta R_j^{-\delta}.
\]
It follows that $H(\delta)\ge a/\delta$,
and therefore
$\dim_{\rm qA}\mu=\infty$.
For $N_j\sim e^{j^2}$, the same conclusion follows from
$\overline{\dim}_1\mu=\infty$.

Finally, for either sampling scheme,
\[
\mu(B(x_j^*,R_j/4))=Z^{-1}j^{-2},
\qquad
\mu(B(x_j^*,2R_j))\gtrsim j^{-1}.
\]
The mass ratio diverges while the radius ratio remains fixed, so
$\dim_{\rm A}\mu=\infty$.
\end{proof}

\section{Sharp projection bounds}\label{sec:sharp}

The layer construction reduces sharpness to a choice of reference
measure. We require prescribed source growth and lower $m$-power
image mass bounds, so that the resulting image upper bounds match
the almost-everywhere lower bounds.

\begin{theorem}\label{thm:sharp-clouds}
Let $m\ge1$ be an integer, $m<p\le\infty$, and $0<a\le\infty$.
There is a probability measure $\mu$ with countable compact support
in $\R^{m+1}$ such that $\dim_{\mathrm{qA}}\mu=p$,
$\dim_{\mathrm A}\mu=\infty$, and
\begin{equation}\label{eq:sharp-source}
 \underline{\dim}_\theta\mu=\overline{\dim}_\theta\mu
 =\left(\frac1p+\frac1{a\theta}\right)^{-1},
 \quad0<\theta\le1.
\end{equation}
For every $V\in G(m+1,m)$, $z\notin\spt\mu$, and $0<\theta\le1$,
\begin{equation}\label{eq:sharp-image-upper}
\begin{aligned}
 \overline{\dim}_\theta\mu_V
 &\le\left(\frac1m+\frac1{a\theta}\right)^{-1},\qquad
 \overline{\dim}_\theta\mu_z
 \le\left(\frac1m+\frac1{a\theta}\right)^{-1}.
\end{aligned}
\end{equation}
There are full-measure sets of directions and exterior centres on
which equality holds for both upper and lower dimensions,
simultaneously for all $\theta\in(0,1]$.
\end{theorem}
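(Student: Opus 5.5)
We build $\mu$ from the layered construction \eqref{eq:sharp-mu} of Lemma~\ref{lem:general-reference-layers} in $\R^{m+1}$, with $N_j=\lceil R_j^{-a}\rceil$ for finite $a$ and $N_j=\lceil e^{j^2}\rceil$ for $a=\infty$. The reference measure $\sigma$ must satisfy three requirements. It needs $p$-growth \eqref{eq:reference-ratio}. It needs the marked-point bound \eqref{eq:marked-point-mass} at $x_*$. And every orthogonal and exterior radial image of $\sigma$ must have lower $m$-power mass bounds, including at the image of $x_*$.

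For finite $p>m$, we take $\sigma$ to be normalized $\mathcal L^{m+1}$ on the unit ball, weighted by the density $|x-x_*|^{p-m-1}$ with $x_*=0$. This density gives ratio growth of exponent $p$ up to constants, since $p-m-1>-1$. It also gives $\sigma(B(0,u))\sim u^p$. For $p=\infty$, we take $\sigma$ uniform on a sphere or annulus away from $x_*$, so that $\operatorname{dist}(x_*,K_0)>0$. Lemma~\ref{lem:general-reference-layers} then gives $\dim_{\mathrm{qA}}\mu=p$, $\dim_{\mathrm A}\mu=\infty$ and \eqref{eq:sharp-source}, where $(1/p+1/(a\theta))^{-1}$ is read with the conventions $1/\infty=0$.

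For the image upper bounds \eqref{eq:sharp-image-upper}, we fix $V$ or an exterior centre $z$ and set $\nu_j=(P_V\circ T_j)_\#\eta_{N_j}$ or $\nu_j=(\pi_z\circ T_j)_\#\eta_{N_j}$. These are supported in $B(P_V(R_jv),C R_j)$ and, for the radial case, in a cap of size $\lesssim R_j$ around $\pi_z(0)$; here $0\notin$ support is fine since $z\notin\spt\mu$ and $0\in\spt\mu$. We then check \eqref{eq:sharp-common-lower} with exponent $m$. The projected reference $(P_V)_\#\sigma$ has a density bounded below on its support disc, because the weight is integrable along fibres and is bounded below away from $0$. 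Hence $(P_V)_\#\sigma(B(w,u))\gtrsim\min\{1,u^m\}$ for $w$ in the projected support, including $P_Vx_*$. The radial case is analogous, after a bi-Lipschitz change of variables near the cap.

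Since ball preimages lie in $\mathcal A$, \eqref{eq:relative-cloud-balls} transfers these bounds to $\eta_{N_j}$ with a $\log(2N_j)$ loss. Rescaling by $\ell_j\gtrsim R_j/j$ changes $(u/\ell_j)^m$ into $(u/R_j)^m$ at the cost of $j^m\lesssim(\log N_j)^{m/2}$. Lemma~\ref{lem:image-layers} with exponent $m$ then gives the bound $ma\theta/(m+a\theta)$, or $m$ when $a=\infty$, for every $V$ and $z$. In the radial case the constants depend on $\operatorname{dist}(z,\spt\mu)$, but only the fixed $z$ matters.

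Equality almost everywhere comes from Theorem~\ref{thm:orthogonal-main} and Theorem~\ref{thm:radial} with $\alpha=m=n-1$. The lower bound equals $[1/\overline{\dim}_\theta\mu+1/m-1/p]^{-1}=(1/m+1/(a\theta))^{-1}$, which matches the upper bound. Both results give a single full-measure set for all $\theta$ and both dimension functions.

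The main obstacle is the uniform lower $m$-power bound for the projected reference measure. It must hold at every point of the projected sample support, for every direction and every exterior centre, with constants uniform in the layer index, and it must survive the $\min\{1,\cdot\}$ truncation in \eqref{eq:sharp-common-lower}. We would first verify the lower bound on the density of $(P_V)_\#\sigma$ explicitly, uniformly in $V$ by rotation invariance. For radial images, we would use the Lipschitz inverse of $\pi_z$ on the cone over the cap, with constants depending on $z$.
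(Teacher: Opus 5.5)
Your architecture is the paper's: the layering of Lemma~\ref{lem:general-reference-layers}, one relative sample for the class $\mathcal A$, Lemma~\ref{lem:image-layers} with exponent $m$, and almost-everywhere equality from Theorems~\ref{thm:transversal} and~\ref{thm:radial}. For $p=\infty$, the sphere (not the annulus) is exactly the paper's choice. The gap is the reference measure for finite $p$. It fails two of the three requirements you list.

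\emph{Growth fails for $m<p<m+1$.} The weight $|x|^{p-m-1}$ is singular only at $0$. At every $x$ with $|x|\ge 1/4$ one has $\sigma(B(x,u))\sim u^{m+1}$. So $\sigma(B(x,R))/\sigma(B(x,u))\sim (R/u)^{m+1}$, and the lower bound $\sigma(B(x,u))\gtrsim u^p$ fails. Integrability of the weight ($p-m-1>-1$) says nothing about growth. Any reference with an absolutely continuous part has local dimension $m+1$ at its Lebesgue density points, so for $p<m+1$ the reference must be singular. With your $\sigma$, generic sample points give $\dim_{\mathrm{qA}}\mu\ge m+1>p$ and source functions $(1/(m+1)+1/(a\theta))^{-1}$, not \eqref{eq:sharp-source}.

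\emph{Image lower bounds fail for every finite $p$, and for the annulus option at $p=\infty$.} Projecting an absolutely continuous measure on a solid ball gives a density comparable to fibre length, which behaves like $(1-|y|)^{1/2}$ at the rim of the disc. So it is not bounded below, and $(P_V)_\#\sigma(B(w,u))\sim u^{m+1/2}$ at rim points. This breaks the construction itself, not just the proof. The set $\{x:|P_Vx|\ge 1-\epsilon\}$ lies in $\mathcal A$ and has $\sigma$-mass $\sim\epsilon^{3/2}$. Hence, for every $V$, each $\eta_{N_j}$ has a point projecting within $\lesssim(\log N_j/N_j)^{2/3}$ of the rim. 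There the image-layer mass is $\lesssim j^{C}\bigl(N_j^{-1}+(u/\ell_j)^{m+1/2}\bigr)$. Running the witness argument of Lemma~\ref{lem:general-reference-layers} at these points gives $\underline{\dim}_\theta\mu_V\ge\bigl(1/(m+\tfrac12)+1/(a\theta)\bigr)^{-1}$ whenever $P_Vv\ne0$, taking $p\ge m+1$ so that the growth is fine. So \eqref{eq:sharp-image-upper} is false for your $\mu$. Radial caps from exterior centres behave the same way.

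The missing idea is a shell-type reference: every point must have local dimension between $m$ and $p$, and the rim must carry enough mass. The paper (Lemma~\ref{lem:sharp-reference}) takes $\sigma=F_\#(\tau\times\omega_m)$ with $F(t,\omega)=t\omega$. Here $\tau$ is self-similar on $[1/2,1]$ for the two ratio-$\lambda$ maps fixing $1/2$ and $1$, with weights $\lambda^{p-m}$ and $1-\lambda^{p-m}$. Then:
\begin{itemize}
\item $\sigma(B(t\omega,u))\sim u^m\tau(B(t,u))$ gives $p$-growth and $\sigma(B(x,u))\gtrsim u^p$ everywhere.
\item The marked point $x_*=\tfrac12 e_1$ has $\sigma(B(x_*,u))\sim u^p$.
\item Each sphere of radius $t$ projects with density $\sim (t^2-|y|^2)^{-1/2}$, and $\tau([1-v,1])\sim v^{\beta_0}$ with $\beta_0<1/2$. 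So the projected density is $\gtrsim v^{-1/2}\tau([1-v,1])\gtrsim 1$ all the way to the rim.
\end{itemize}
The radial counterpart is Lemma~\ref{lem:sharp-radial-reference}. It is uniform in the normalized centres $b_j=(z-R_je_1)/\ell_j$, which have $|b_j|\to\infty$; constants depending only on $z$ are not enough.

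Simply adding surface measure on $S^m$ to your weighted ball does not repair this. At distance $\delta$ from the sphere, radii $2\delta$ and $\delta^{1+\delta_0}$ give mass ratio $\sim\delta^{-1-(m+1)\delta_0}$ against $(R/u)^s\sim\delta^{-\delta_0 s}$, so the quasi-Assouad dimension becomes infinite. For $p\ge m+1$ alone, an extra factor $(1-|x|)^{-\gamma}$ with $\tfrac12<\gamma<1$ would fix the rim. For $m<p<m+1$ a singular reference is unavoidable.
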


The construction attains both \eqref{eq:two-window-bound} and
\eqref{eq:quantitative-bound}. It also shows that small source
dimensions alone do not ensure preservation: if
\(a<mp/(p-m)\) for finite \(p\), or \(a<m\) for \(p=\infty\), then
\[
\sup_{0<\theta\le1}\dim_\theta\mu
=\left(\frac1p+\frac1a\right)^{-1}<m,
\]
while the typical image dimensions are strictly smaller than the corresponding source dimensions for every \(0<\theta\le1\). Thus the threshold in Corollary~\ref{cor:preserve} is sharp even when the source dimensions lie strictly below \(m\). 

\subsection{Reference measures with prescribed growth}

The next two lemmas supply the reference estimates needed in
Section~\ref{sec:layered}: prescribed growth, small mass at a
distinguished point, and lower mass bounds for orthogonal and
radial images.

\begin{lemma}
\label{lem:sharp-reference}
Let $m\ge1$ be an integer and let $m<p<\infty$.
There exist a probability measure $\sigma$ supported on
$\{x\in\R^{m+1}:1/2\le |x|\le1\}$ and a point
$x_*\in\spt\sigma$ such that
\begin{alignat}{2}
\frac{\sigma(B(x,R))}{\sigma(B(x,u))}
&\le C\left(\frac{R}{u}\right)^p,
&\qquad& x\in\operatorname{supp}\sigma,\ 0<u\le R\le1,
\label{eq:sharp-growth}\\[4pt]
cu^p\le \sigma(B(x,u))
&\le Cu^m,
&\qquad& x\in\operatorname{supp}\sigma,\ 0<u\le1,
\label{eq:sharp-mass}\\[4pt]
\sigma(B(x_*,u))
&\sim u^p,
&\qquad& 0<u\le1.
\label{eq:sharp-point}
\end{alignat}
Moreover, for every $V\in G(m+1,m)$,
\begin{equation}\label{eq:sharp-reference-projection}
 (P_V)_\#\sigma(B(z,u))\ge c u^m,
 \quad z\in P_V(\spt\sigma),\ 0<u\le1.
\end{equation}
All constants depend only on $m,p$.
\end{lemma}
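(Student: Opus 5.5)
We will construct $\sigma$ as a weighted sum $\sigma=\lambda+\tau$ of two pieces in the annulus $A=\{1/2\le|x|\le1\}\subset\R^{m+1}$. The first piece is $\lambda$, normalized surface measure on an $m$-dimensional sphere $S=\{|x|=3/4\}$, or on the whole annulus smoothed. The second piece is a radial "power weight" near a point $x_*\in S$.

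Concretely, take $\sigma$ with density $w(x)=c\,|x-x_*|^{p-m}$ with respect to $m$-dimensional surface measure on $S$. Then at $x_*$ we get $\sigma(B(x_*,u))\sim\int_0^u \rho^{p-m}\rho^{m-1}\,d\rho\sim u^p$, which is \eqref{eq:sharp-point}. At a general $x\in S$ at distance $\rho_0=|x-x_*|$, a ball of radius $u$ has mass $\sim u^m\max\{\rho_0,u\}^{p-m}$. Since $p>m$, this is bounded above by $Cu^m$ and below by $cu^p$, which gives \eqref{eq:sharp-mass}.

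For the growth bound \eqref{eq:sharp-growth}, we compare
\[
\frac{R^m\max\{\rho_0,R\}^{p-m}}{u^m\max\{\rho_0,u\}^{p-m}}\le\left(\frac Ru\right)^m\left(\frac Ru\right)^{p-m}=\left(\frac Ru\right)^p,
\]
using the fact that $\max\{\rho_0,R\}/\max\{\rho_0,u\}\le R/u$. The implicit constants are uniform because $S$ is a smooth compact $m$-manifold, so surface balls of radius $u\le1$ have measure comparable to $u^m$ up to constants depending only on $m$. This is the routine part.

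The main obstacle is \eqref{eq:sharp-reference-projection}, which must hold uniformly over \emph{every} $V\in G(m+1,m)$. The difficulty is that projecting a sphere degenerates near its "horizon" in direction $V^\perp$. There, however, the projection is a fold, and the image density blows up rather than vanishes. We plan to check this as follows. Fix $V$ and $z=P_Vy$ with $y\in S$. The preimage $P_V^{-1}(B(z,u))\cap S$ contains a surface cap around $y$ whose extent in directions tangent to $S$ and parallel to $V$ is at least $u$, because $P_V$ is $1$-Lipschitz. We expect this cap to have surface measure at least $cu^m$, uniformly in $V$: on the sphere the preimage of a $u$-ball under projection contains a geodesic ball of radius $\gtrsim u$ around $y$.

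The remaining issue is the weight $w$, which vanishes at $x_*$. The preimage $P_V^{-1}(P_Vy)\cap S$ consists of $y$ and its reflection $y'$ in $V$. If $y$ is near $x_*$, the cap around $y$ has small $w$-mass, so we instead use $\sigma(B(y',u))$, or a neighbouring cap. The plan is to show that at least one of the two preimage caps has distance $\gtrsim\min\{1,\cdot\}$ from $x_*$. That fails exactly when $y\approx y'\approx x_*$, i.e. when $x_*$ lies on the horizon of $V$. To avoid this case, we replace $\lambda$ by adding an unweighted component: take
\[
\sigma=\tfrac12 w\,d\mathcal H^m|_S+\tfrac12\,\mathcal H^m|_{S'}
\]
with a second sphere $S'$ whose projection covers $P_V(S)$ for all $V$. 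For example, take $S'=S$ itself: we then use density $w+1$ on $S$, except that the point mass behaviour at $x_*$ must remain $\sim u^p$, so instead the uniform part should be placed away from $x_*$. We will therefore choose $S'$ a concentric sphere of radius $1$, with $x_*$ at radius $3/4$ so that $x_*\notin S'$. This keeps \eqref{eq:sharp-point} for $u$ less than the gap $1/4$; larger $u$ is trivial.

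With this choice, the lower bound \eqref{eq:sharp-reference-projection} at $z\in P_V(S)$ follows from the fact that $P_V(S)\subset P_V(\text{ball of radius }1)=P_V(S')$. The measure $(P_V)_\#\mathcal H^m|_{S'}$ has density bounded below by a positive constant on this disc, since the projection of a sphere onto a hyperplane has density $\gtrsim1$ on the whole image disc, with fold singularities only increasing it. So the needed bound follows from the uniform-sphere computation. The remaining bounds \eqref{eq:sharp-growth}–\eqref{eq:sharp-mass} persist for the two-sphere sum, because the spheres are separated by $1/4$ and each satisfies them separately; the constants still depend only on $m,p$.
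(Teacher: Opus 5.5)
Your final two-sphere construction is correct, and it takes a genuinely different, more elementary route than the paper.

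\textbf{The paper's route.} The paper builds a rotationally invariant measure $\sigma=F_\#(\tau\times\omega_m)$ with $F(t,\omega)=t\omega$. Here $\tau$ is a self-similar Cantor measure on $[1/2,1]$ with local exponent $p-m$ at $1/2$. This gives $x_*=\tfrac12e_1$, and the $p$-growth follows by cylinder counting through $\sigma(B(t\omega,u))\sim u^m\tau(B(t,u))$. At the endpoint $1$ the exponent is $\beta_0<1/2$, so $\tau([1-v,1])\gtrsim v^{\beta_0}$ beats the fold factor $v^{-1/2}$. The projected density is therefore bounded below on the whole disc.

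\textbf{Your route.} You replace the Cantor radial factor by the explicit weight $|x-x_*|^{p-m}$ on an $m$-sphere. Then $\sigma(B(x,u))\sim u^m\max\{|x-x_*|,u\}^{p-m}$ gives \eqref{eq:sharp-growth}--\eqref{eq:sharp-point} at once, for every $p>m$, with no cylinder counting. Your unweighted unit sphere does the job of the paper's mass concentration near $t=1$.

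Your worry about the weighted sphere alone is real. If $x_*\in V$, the preimage of $B(P_Vx_*,u)$ is a fold region of size roughly $u^{m-1}\times u^{1/2}$. It carries mass about $u^{m-1/2+(p-m)/2}$, which is $o(u^m)$ once $p>m+1$.

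\textbf{Two small points.}
\begin{enumerate}
\item The projection bound is cleanest via the Lipschitz property rather than the density. Since $P_V(\spt\sigma)=\overline B_V(0,1)=P_V(S^m)$, every $z$ equals $P_Vy'$ with $|y'|=1$. Hence $(P_V)_\#\sigma(B(z,u))\ge\sigma(B(y',u))\ge\tfrac12\omega_m(B(y',u))\gtrsim u^m$, with no need for ball-intersection volumes near the rim.
\item The remark that each sphere satisfies the bounds separately does not cover balls meeting both spheres. For those, $R\ge1/4$, so the lower bound in \eqref{eq:sharp-mass} gives a ratio at most $1/(cu^p)\le 4^pc^{-1}(R/u)^p$.
\end{enumerate}

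\textbf{What the paper's choice buys.} Rotational invariance, which the paper reuses in Lemma~\ref{lem:sharp-radial-reference} to compute radial-projection densities by the area formula. Your measure satisfies $\sigma\ge\tfrac12\omega_m$ after normalizing each piece, and that lemma already treats $\omega_m$ itself. So your construction would also serve the later arguments.
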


\begin{proof}
\emph{Radial measure construction.}
Let $\beta=p-m>0$. Choose $0<\lambda<1/4$
sufficiently small such that
\[
w=\lambda^\beta<\frac12,
\qquad
0<\beta_0=\frac{\log(1-w)}{\log\lambda}
<\min\{\beta,1/2\}.
\]
Consider the following two contracting similarities,
\begin{equation*}
    T_0(t)=\frac12+\lambda\left(t-\frac12\right),
\qquad
T_1(t)=1+\lambda(t-1).
\end{equation*}
By Hutchinson Theorem~\cite[Appendix~B]{BishopPeres2017}, there exists a unique probability measure $\tau$ on $[1/2,1]$ satisfying
\begin{equation*}
    \tau=w(T_0)_\#\tau+(1-w)(T_1)_\#\tau.
\end{equation*}

For $t\in\operatorname{supp}\tau$, let $I_k(t)$
denote the $k$-level cylinder containing $t$,
with $I_k(t)=[1/2,1]$ for $k\le0$.
Given $0<u\le R\le1$, choose $k\le l$ such that
$\lambda^k\sim R$ and $\lambda^l\sim u$. Fix an integer $c_0\ge2$ sufficiently large that
\[
\left(\frac12-\lambda\right)\lambda^{-c_0-1}>1.
\]
Since every $j$-level cylinder has diameter
$\lambda^j/2$, we have
\[
\operatorname{diam} I_{l+c_0}(t)
=\frac12\lambda^{l+c_0}<u,
\]
and hence $I_{l+c_0}(t)\subset B(t,u)$.
On the other hand, any two distinct $j$-level
cylinders are separated by a distance at least
$(1/2-\lambda)\lambda^{j-1}$.
If $k>c_0$, taking $j=k-c_0$ gives
\[
\operatorname{dist}(I_j(t),I')
\ge\left(\frac12-\lambda\right)\lambda^{k-c_0-1}
>R
\]
for every other $j$-level cylinder $I'$.
Thus $B(t,R)\cap\operatorname{supp}\tau
\subset I_{k-c_0}(t)$.
The same inclusion is immediate when $k\le c_0$,
by our convention $I_{k-c_0}(t)=[1/2,1]$.
Consequently,
\[
B(t,R)\cap\operatorname{supp}\tau
\subset I_{k-c_0}(t),
\qquad
I_{l+c_0}(t)\subset B(t,u).
\]
Since both weights are at least $w=\lambda^\beta$,
the multiplicative structure of cylinder masses gives
\[
\frac{\tau(I_{k-c_0}(t))}
{\tau(I_{l+c_0}(t))}
\le w^{-(l-k+2c_0)}
\lesssim w^{-(l-k)}.
\]
Hence
\begin{equation}\label{eq:sharp-radial-growth}
\frac{\tau(B(t,R))}{\tau(B(t,u))}
\lesssim w^{-(l-k)}
\lesssim\left(\frac Ru\right)^\beta.
\end{equation}
Taking $R=1$ gives
\[
\tau(B(t,u))\gtrsim u^\beta,
\quad t\in\operatorname{supp}\tau.
\]

The constant-symbol cylinders at the endpoints,
together with comparison at adjacent cylinder scales, also give
\begin{equation}\label{eq:sharp-radial-extremes}
    \tau\left([1/2,1/2+u]\right)\sim u^\beta,
\quad
\tau([1-u,1])\sim u^{\beta_0},
\quad 0<u\le\frac12.
\end{equation}

\emph{Source estimates.}
We now use $\tau$ to construct a measure $\sigma$
satisfying \eqref{eq:sharp-growth}--\eqref{eq:sharp-point}. Let $\omega_m$ be normalized surface measure
on $S^m$, and define
\[
\sigma=F_\#(\tau\times\omega_m),
\qquad F(t,\omega)=t\omega.
\]
For $1/2\le t,s\le1$ and $\omega,v\in S^m$, we have
\[
|t\omega-sv|^2=(t-s)^2+ts|\omega-v|^2.
\]
It follows that
\begin{align*}
    F^{-1}\left(B(t\omega,u)\right)&\subseteq B(t,u)\times \left\{v\in\mathbb{S}^m:|\omega-v|<2u\right\},\\
    F^{-1}\left(B(t\omega,u)\right)&\supseteq B(t,u/4)\times \left\{v\in\mathbb{S}^m:|\omega-v|<u/4\right\}.
\end{align*}
Using the $m$-Ahlfors regularity of $\omega_m$ and
\eqref{eq:sharp-radial-growth}, we obtain
\begin{align*}
    \sigma(B(t\omega,u))&=\int_{[1/2,1]}\omega_m\left(\left\{v\in\mathbb{S}^m:|t\omega-sv|<u\right\}\right){\rm d}\tau(s)\\
    &\sim u^m\tau(B(t,u)),
\quad
t\in\operatorname{supp}\tau,\ 0<u\le1.
\end{align*}
Consequently, \eqref{eq:sharp-radial-growth} gives
\[
\frac{\sigma(B(x,R))}{\sigma(B(x,u))}
\lesssim
\left(\frac Ru\right)^{m+\beta}
=\left(\frac Ru\right)^p,
\]
for $x\in\operatorname{supp}\sigma$ and
$0<u\le R\le1$.
Moreover,
\[
u^p\lesssim\sigma(B(x,u))\lesssim u^m,
\quad
x\in\operatorname{supp}\sigma,\quad 0<u\le1.
\]
Taking $x_*=\frac12e_1$, \eqref{eq:sharp-radial-extremes} yields
\[
\sigma(B(x_*,u))
\sim
u^m\tau([1/2,1/2+u])
\sim u^p,
\quad 0<u\le1.
\]

\emph{Orthogonal projections.}
By rotational invariance, it suffices to consider
$V=\mathbb R^m\times\{0\}$.
Using the two hemispheres of $S^m$, the projection
of normalized surface measure on the sphere of
radius $t$ has density
\[
f_t(y)
=c_m t^{-m}
\left(1-\frac{|y|^2}{t^2}\right)^{-1/2}
\mathbf 1_{\{|y|<t\}}.
\]
Hence $(P_V)_\#\sigma$ has density
\[
f(y)=\int_{t>|y|}f_t(y)\,d\tau(t).
\]

For $1/2\le |y|<1$, put $v=1-|y|$.
Since
\[
f_t(y)=c_m t^{1-m}(t^2-|y|^2)^{-1/2}
\]
for $t>|y|$, and $t^2-|y|^2\le2v$ whenever
$1-v\le t\le1$, \eqref{eq:sharp-radial-extremes} gives
\[
\begin{aligned}
f(y)
&\gtrsim v^{-1/2}\tau([1-v,1])
\gtrsim v^{\beta_0-1/2}
\gtrsim1.\quad (\text{use }\beta_0<1/2)
\end{aligned}
\]
For $|y|<1/2$, we have
$f_t(y)\gtrsim1$ for $t\in[1/2,1]$,
so integrating against $\tau$ gives the same lower bound.
Thus
\[
f(y)\ge c>0,\quad |y|<1.
\]
It follows that, for every
$z\in P_V(\operatorname{supp}\sigma)$
and $0<u\le1$,
\[
\begin{aligned}
(P_V)_\#\sigma(B(z,u))
&\ge c\,\mathcal L^m(B(z,u)\cap B(0,1))
\gtrsim u^m.
\end{aligned}
\]
This proves \eqref{eq:sharp-reference-projection}.
\end{proof}

We next establish the radial estimate, including the surface measure
used when the prescribed quasi-Assouad dimension is infinite.

\begin{lemma}
\label{lem:sharp-radial-reference}
Let $\sigma$ be the reference measure of
Lemma~\ref{lem:sharp-reference}, or normalized surface measure
on $S^m$. For every $b\in\R^{m+1}$ with $d=|b|\ge2$,
$\pi_b(x)=(x-b)/|x-b|$ satisfies
\begin{equation}\label{eq:sharp-radial-reference}
 (\pi_b)_\#\sigma(B(e,u))
 \ge c\min\{1,(du)^m\},
 \quad e\in\pi_b(\overline B(0,1)),\ 0<u\le1.
\end{equation}
where $c>0$ is independent of $b$, and balls on the target sphere
use the chord metric.
\end{lemma}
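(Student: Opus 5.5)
The plan is to reduce the estimate to a lower bound for the density of the image measure on the sphere, obtained by integrating over the radial parameter, much as in the orthogonal case of Lemma~\ref{lem:sharp-reference}. Write $\sigma=F_\#(\tau\times\omega_m)$, where $\tau$ is either the measure from Lemma~\ref{lem:sharp-reference} or $\delta_1$ in the surface-measure case. It then suffices to prove the bound for each sphere $tS^m$ with $1/2\le t\le1$, uniformly in $t$, and integrate against $\tau$. Rotational invariance lets us take $b=-de_{m+1}$ with $d\ge2$. From $b$, every sphere $tS^m$ subtends a cone of half-angle $\arcsin(t/d)\sim 1/d$ around $e_{m+1}$, so the set $\pi_b(\overline B(0,1))$ is a spherical cap of radius comparable to $1/d$.

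The key step is a Jacobian computation for the visible (near) hemisphere of $tS^m$. The map $\pi_b$ restricted to the part of $tS^m$ facing $b$ is a diffeomorphism onto the cap $C_t$ of angular radius $\arcsin(t/d)$. Away from the horizon its Jacobian is comparable to $|x-b|^{-m}\cos\angle(x-b,\nu(x))\sim d^{-m}$, where $\nu$ is the normal. Hence the density of $(\pi_b)_\#\omega_m|_{tS^m}$ with respect to surface measure on $S^m$ is comparable to $d^m$ on the inner part of $C_t$. Near the boundary of $C_t$ the line of sight is tangent, and the density blows up like an inverse square root, exactly as $f_t$ does in the orthogonal proof. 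So the density is $\gtrsim d^m$ on all of $C_t$.

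The lower bound then follows by taking a ball $B(e,u)$ with $e\in C_1$. If $u\gtrsim1/d$, the ball contains a fixed proportion of the full cap $C_{1/2}$, which carries $\omega_m$-mass $\gtrsim 1$ on each sphere, so the image measure is $\gtrsim1$. If $u\lesssim 1/d$, the ball $B(e,u)\cap C_1$ contains a spherical region of measure $\gtrsim u^m$, since a cap intersected with a small ball centred in it has comparable measure. The contribution of the spheres with $e\in C_t$ is then $\gtrsim d^m u^m$.

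The main obstacle is that when $e$ lies near the boundary of $C_1$, the smaller spheres $tS^m$ with $t$ near $1/2$ do not reach $e$. For the measure $\tau$ of Lemma~\ref{lem:sharp-reference}, the bound must instead come from the spheres with $t$ close to $1$. There I would reuse the endpoint estimate $\tau([1-v,1])\sim v^{\beta_0}$ with $\beta_0<1/2$ from \eqref{eq:sharp-radial-extremes}, combined with the square-root blow-up of the density near the horizon. This is exactly the orthogonal computation rescaled by $1/d$: the blow-up contributes $v^{-1/2}$ and $\tau$ contributes $v^{\beta_0}$, so their product stays $\gtrsim1$ and the aggregate density remains $\gtrsim d^m$ near the edge.
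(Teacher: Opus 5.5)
Your proposal is correct and follows the paper's proof. You compute each sphere's image density via the Jacobian $|x-b|^{-m}\cos\angle(x-b,\nu(x))$, and note that in coordinates rescaled by $d$ it matches the orthogonal density $f_t$, including the inverse-square-root blow-up at the horizon. You then integrate against $\tau$ using $\tau([1-v,1])\sim v^{\beta_0}$ with $\beta_0<1/2$ to get aggregate density $\gtrsim d^m$ on the whole cap $\pi_b(\overline B(0,1))$, and finally convert this into $c\min\{1,(du)^m\}$.

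Two points need tidying. First, the opening reduction to a per-sphere ball bound is not literally valid, since for $e\notin C_t$ that sphere contributes nothing to small balls. The argument must therefore work with aggregated densities, as your last paragraph does. Second, the threshold for ``$u\gtrsim 1/d$'' must be a large multiple of $1/d$, for instance $u\ge 2/d$, which makes $B(e,u)$ contain the whole cap; the remaining range is then handled by the density bound. Keeping only the near hemisphere, rather than both intersection points as in the paper, is harmless for a lower bound.
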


\begin{proof}
Let $d=|b|$ and $v=b/d\in\mathbb{S}^m$. The image
$\pi_b(\overline B(0,1))$ is the spherical cap of
angular radius $\arcsin(1/d)$ centered at $-v$.
Identifying $v^\perp$ with $\R^m$, parametrize this cap by
\[
e_b(y)=-\sqrt{1-\frac{|y|^2}{d^2}}\,v+\frac yd,
\qquad |y|\le1.
\]
Since $d\ge2$, we have
\[
|e_b(y)-e_b(y')|\le\frac{C}{d}|y-y'|,
\qquad |y|,|y'|\le1.
\]

Let $\omega_{m,s}$ denote normalized surface measure
on $sS^m$, where $1/2\le s\le1$.
For $|y|<s$, the ray $b+\lambda e_b(y)$
intersects $sS^m$ at the two points
$x_\pm=b+\lambda_\pm e_b(y)$, where
\[
\lambda_\pm
=\sqrt{d^2-|y|^2}\pm\sqrt{s^2-|y|^2}\sim d.
\]
Writing $n(x)=x/s$ for the unit normal to $sS^m$,
the tangential Jacobian of $\pi_b$ at $x_\pm$ is
\[
J_m(\pi_b|_{sS^m})(x_\pm)
=\frac{|n(x_\pm)\cdot e_b(y)|}{\lambda_\pm^m}
=\frac{\sqrt{s^2-|y|^2}}{s\lambda_\pm^m}.
\]
Moreover, the spherical area element in the chart
$e_b$ is
\[
d\mathcal H^m(e_b(y))
=\frac{d^{-m}}{\sqrt{1-|y|^2/d^2}}\,dy.
\]
Thus, by the area formula, the density of
$(\pi_b)_\#\omega_{m,s}$ in the $y$-coordinates is
\[
\begin{aligned}
g_{b,s}(y)
&=\frac{(\lambda_+^m+\lambda_-^m)d^{-m}}
{\mathcal H^m(S^m)s^{m-1}
\sqrt{s^2-|y|^2}\sqrt{1-|y|^2/d^2}}\,
\mathbf1_{\{|y|<s\}}\\
&\sim
s^{1-m}(s^2-|y|^2)^{-1/2}
\mathbf1_{\{|y|<s\}}\sim_m f_s(y),
\end{aligned}
\]
where $f_s(y)$ is the density obtained in Lemma~\ref{lem:sharp-reference}, uniformly in $b$ and $s$. The tangent directions
form an $\mathcal H^m$-null set. 

If $\sigma$ is the reference measure of Lemma~\ref{lem:sharp-reference}, the density of $(\pi_b)_\#\sigma$ in the $y$-coordinates is:
\[
g_b(y)=\int_{s>|y|}g_{b,s}(y)\,d\tau(s)\sim_m\int_{s>|y|}f_s(y)\,d\tau(s)=f(y).
\]
Hence, we have
\[
g_b(y)\ge c>0,\quad |y|<1.
\]

If $\sigma=\omega_m$, the same
conclusion follows directly from
\[
g_b(y)=g_{b,1}(y)
\gtrsim(1-|y|^2)^{-1/2}\gtrsim1,
\quad |y|<1.
\]

Finally, let $e=e_b(y)$ with $|y|\le1$.
By the Lipschitz estimate for $e_b$, there exists
a fixed $C_0>C$ such that
\[
e_b\bigl(B(y,du/C_0)\cap\overline B^m(0,1)\bigr)
\subset B(e,u).
\]
Consequently,
\[
\begin{aligned}
(\pi_b)_\#\sigma(B(e,u))
&\ge (\pi_b)_\#\sigma\left(e_b\bigl(B(y,du/C_0)\cap\overline B^m(0,1)\bigr)\right)=\int_A g_b(y){\rm d}y\\
&\gtrsim \mathcal L^m
\bigl(B(y,du/C_0)\cap B^m(0,1)\bigr)\gtrsim \min\{1,(du)^m\},
\end{aligned}
\]
where the last inequality follows from the uniform
volume lower bound for intersections of Euclidean balls.
This proves \eqref{eq:sharp-radial-reference}.
\end{proof}

\subsection{Proof of simultaneous sharpness}

We insert these references into the construction of
Section~\ref{sec:layered} and compare the resulting image upper
bounds with the general projection estimates.

\begin{proof}[Proof of Theorem~\ref{thm:sharp-clouds}]

\emph{Source dimensions and orthogonal projections.}
Suppose first that $p<\infty$. Let $\sigma$ and
$x_*\in\spt\sigma$ be given by
Lemma~\ref{lem:sharp-reference}. If $p=\infty$,
let $\sigma$ be normalized surface measure on $S^m$
and take $x_*=0$, so that
$\operatorname{dist}(x_*,\spt\sigma)=1$.

Define $\mu$ by \eqref{eq:sharp-mu}, with $v=e_1$ and
\[
N_j=
\begin{cases}
\lceil R_j^{-a}\rceil, & 0<a<\infty,\\[6pt]
\lceil e^{j^2}\rceil, & a=\infty.
\end{cases}
\]
For $p<\infty$, the growth and marked-point estimates
of Lemma~\ref{lem:sharp-reference} verify the
hypotheses of Lemma~\ref{lem:general-reference-layers}.
For $p=\infty$, its separated marked-point hypothesis
holds by the choice of $x_*$. Thus
Lemma~\ref{lem:general-reference-layers} yields $\dim_{\mathrm{qA}}\mu=p$, $\dim_{\mathrm A}\mu=\infty$ and
\eqref{eq:sharp-source} for both choices of $p$ and $a$.

We next establish the upper bound for orthogonal
projections. Fix $V\in G(m+1,m)$ and let
\[
\nu_j:=(P_V)_\#[(T_j)_\#\eta_{N_j}].
\]
For $p<\infty$, the projection estimate
\eqref{eq:sharp-reference-projection} in
Lemma~\ref{lem:sharp-reference} gives
\begin{equation}\label{eq:reference-projection-mass}
    (P_V)_\#\sigma(B(y,r))\ge cr^m,
\quad
y\in P_V(\spt\sigma),\quad 0<r\le1.
\end{equation}
This includes $y=P_Vx_*$, since
$x_*\in\spt\sigma$. For $p=\infty$,
the projection density of normalized surface measure
on $S^m$ is bounded below on $B(0,1)$.
Hence the same estimate holds at every point of
$P_V(\spt\sigma)$ and at $P_Vx_*=0$.

Observe that
\[
T_j^{-1}\bigl(P_V^{-1}(B(y,r))\bigr)
=
\left\{x\in\R^{m+1}:
|P_VT_j(x)-y|^2<r^2\right\}
\]
is a quadratic sublevel set. Thus Lemma~\ref{lem:relative-clouds} together with \eqref{eq:reference-projection-mass} gives, for every $y\in\spt\nu_j$ and $0<r\le1$,
\[
\begin{aligned}
\nu_j(B(y,r))
&\gtrsim
\frac1{\log(2N_j)}
\max\left\{
N_j^{-1},
\min\left\{1,\left(\frac{r}{\ell_j}\right)^m\right\}
\right\}\\
&\gtrsim
\frac1{\log(2N_j)}
\max\left\{
N_j^{-1},
\min\left\{1,\left(\frac{r}{R_j}\right)^m\right\}
\right\}.
\end{aligned}
\]
This verifies \eqref{eq:sharp-common-lower}
with exponent $m$ and logarithmic exponent $h=1$.

It remains to check the support condition in Lemma~\ref{lem:image-layers}. Since
$\spt\sigma\cup\{x_*\}\subset\overline B(0,1)$,
we have
\[
\begin{aligned}
\spt\nu_j
&\subset P_V\circ T_j(\spt\sigma\cup\{x_*\})\subset B(P_V(R_je_1),C\ell_j)
 \subset B(0,CR_j).
\end{aligned}
\]
Applying Lemma~\ref{lem:image-layers} with exponent $m$ and the corresponding choice of $N_j$, we obtain
\[
\overline{\dim}_t\mu_V
\le\left(\frac1m+\frac1{at}\right)^{-1},
\quad
V\in G(m+1,m),\quad 0<t\le1.
\]

\emph{Radial projections.}
Fix $z\notin\spt\mu$. Since $0\in\spt\mu$, we have $z\ne0$.
For the $j$th layer, let
\[
b_j=\frac{z-R_je_1}{\ell_j}.
\]
Then $\pi_z\circ T_j=\pi_{b_j}$. Since $R_j\to0$ and $\ell_j=R_j/(100j)$, for all
sufficiently large $j$,
\[
|b_j|\ge2,
\qquad
|b_j|\ell_j=|z-R_je_1|\sim_z1.
\]
In particular, $|b_j|\gtrsim_z R_j^{-1}$.

Define the radial image measure of the $j$th layer by
\[
\nu_j:=(\pi_z)_\#[(T_j)_\#\eta_{N_j}]
     =(\pi_{b_j})_\#\eta_{N_j}.
\]
By Lemma~\ref{lem:sharp-radial-reference},
\begin{equation}\label{eq:sharp-radial-mass}
\begin{aligned}
    (\pi_{b_j})_\#\sigma(B(e,u))
&\ge c\min\{1,(|b_j|u)^m\}\gtrsim_z\min\{1,(u/R_j)^m\},
\end{aligned}
\end{equation}
for every $e\in\pi_{b_j}(\overline B(0,1))$
and $0<u\le1$.

Recall that the preimages of spherical caps under
radial projections belong to the class $\mathcal A$
defined before Lemma~\ref{lem:relative-clouds}. Applying Lemma~\ref{lem:relative-clouds} to a
closed cap of radius $u/2$ and \eqref{eq:sharp-radial-mass}, we obtain
\begin{equation}\label{eq:sharp-radial-empirical}
\begin{aligned}
\nu_j(B(e,u))
&\gtrsim_z
\frac1{\log(2N_j)}
\max\left\{
N_j^{-1},
\min\left\{1,\left(\frac{u}{R_j}\right)^m\right\}
\right\},
\end{aligned}
\end{equation}
for every $e\in\spt\nu_j$ and $0<u\le1$.
Here the estimate also covers the image of the marked
point: for $p<\infty$, $x_*\in\spt\sigma$, while for
$p=\infty$, $x_*=0\in\overline B(0,1)$.
Thus \eqref{eq:sharp-radial-empirical} verifies hypothesis \eqref{eq:sharp-common-lower} with exponent $m$ and logarithmic exponent $h=1$.

It remains to verify the support condition in
Lemma~\ref{lem:image-layers}. Since $z\ne0$, for all
sufficiently large $j$,
\[
T_j(\overline B(0,1))
\subset B(0,2R_j)
\subset B(0,|z|/2).
\]
The map $\pi_z$ is Lipschitz on $B(0,|z|/2)$, with
constant depending only on $z$. Hence
\[
\begin{aligned}
\spt\nu_j
&\subset\pi_z(T_j(\spt\sigma\cup\{x_*\}))\subset B(\pi_z(0),C_zR_j).
\end{aligned}
\]
Applying Lemma~\ref{lem:image-layers} with exponent $m$ and
the corresponding choice of $N_j$, we conclude that
\[
\overline{\dim}_t\mu_z
\le\left(\frac1m+\frac1{at}\right)^{-1},
\quad
z\notin\spt\mu,\quad 0<t\le1.
\]

\emph{Equality and optimization.}
By \eqref{eq:sharp-source}, for either source dimension,
\[
 \frac1{\dim_t\mu}-\frac1p=\frac1{at},
 \qquad 0<t\le1,
\]
where $1/\infty=0$. Theorems~\ref{thm:transversal}
and~\ref{thm:radial} therefore give the matching lower
bound
\[
 \dim_t\mu_V,\ \dim_t\mu_z\geq \left(\frac1m+\frac{1}{\dim_t\mu}-\frac{1}{p}\right)^{-1}= \left(\frac1m+\frac1{at}\right)^{-1}
\]
for almost every $V\in G(m+1,m)$ and almost every
$z\notin\spt\mu$, simultaneously for all $t$.
Together with the upper bounds, this proves the asserted
equalities for both upper and lower intermediate dimensions.

Finally, fix a target parameter $t$ and let
$0<\kappa\le1$ be an arbitrary source parameter. The reciprocal term in \eqref{eq:two-window-bound} has denominator
\[
 \begin{aligned}
 \frac1m+\frac{\kappa}{t}
 \left(\frac1{\dim_\kappa\mu}-\frac1p\right)
 =\frac1m+\frac{\kappa}{t}\frac1{a\kappa}=\frac1m+\frac1{at}.
 \end{aligned}
\]
Thus this term is independent of $\kappa$. At $\kappa=t$,
the other entry in the minimum is strictly larger, since
\[
 \dim_t\mu
 =\left(\frac1p+\frac1{at}\right)^{-1}
 >
 \left(\frac1m+\frac1{at}\right)^{-1},
\]
Hence the minimum attains the projected dimension
at $\kappa=t$, proving the sharpness of the
optimized estimate.
\end{proof}

\section{Sharp exceptional-set exponents}\label{sec:third-round}

We next determine how large exceptional sets can be while the source
dimension functions and growth dimension remain fixed. For orthogonal
projections, strict sublevel dimensions can approach the bound and
non-strict sublevels can attain it. Prescribing only a constant lower
source function also allows strict attainment by one measure.
We then treat radial projections in the parameter ranges stated below.

\subsection{Orthogonal directions}

We first treat source functions of the form \eqref{eq:intro-sharpness},
using measures whose source data remain fixed as the exceptional
direction set varies.

\begin{theorem}\label{thm:finite-graded-sharp}
Let $1\le m<n$, $0<\theta\le1$, and
$0<d\le s\le q\le\infty$, with $d<\infty$, and suppose that
\begin{equation}\label{eq:graded-sharp-parameters}
 b=\left(\frac1d-\frac1s+\frac1q\right)^{-1}\le m.
\end{equation}
Let $a=[\theta(1/s-1/q)]^{-1}$ if $s<q$, and $a=\infty$ if $s=q$. The following assertions hold.
\begin{enumerate}
\item[(i)] For every $\varepsilon>0$, there is a probability measure
$\mu_\varepsilon$ with countable compact support in $\R^n$ such that
\begin{equation}\label{eq:graded-sharp-source-data}
 \begin{gathered}
 \qA\mu_\varepsilon=q,\qquad \dA\mu_\varepsilon=\infty,\\
 \underline{\dim}_t\mu_\varepsilon
 =\overline{\dim}_t\mu_\varepsilon
 =\left(\frac1q+\frac1{at}\right)^{-1},\quad0<t\le1.
 \end{gathered}
\end{equation}
and
\begin{equation}\label{eq:finite-graded-optimal}
 \dim_{\mathrm H}\{V\in G(n,m):\overline{\dim}_\theta(\mu_\varepsilon)_V<d\}
 >m(n-m-1)+b-\varepsilon.
\end{equation}
The same bound holds for lower intermediate dimensions.
There is a set $\mathcal V_\varepsilon$ of full $\gamma_{n,m}$-measure such that
\[
 \underline{\dim}_t(\mu_\varepsilon)_V
 =\overline{\dim}_t(\mu_\varepsilon)_V
 =\left(\frac1q+\frac1{at}\right)^{-1},
 \quad V\in\mathcal V_\varepsilon,\ 0<t\le1.
\]

\item[(ii)] If $d<s$, there is a probability measure $\mu_0$
with countable compact support in $\R^n$ satisfying
\eqref{eq:graded-sharp-source-data} with $\mu_\varepsilon$ replaced by
$\mu_0$, and
\begin{equation}\label{eq:exact-nonstrict-exception}
 \begin{aligned}
 \dim_{\mathrm H}\{V:\overline{\dim}_\theta(\mu_0)_V\le d\}
 &=m(n-m-1)+b.
 \end{aligned}
\end{equation}
The same equality holds for lower intermediate dimensions. When $b<m$, there is a set $\mathcal V_0$ of full
$\gamma_{n,m}$-measure such that
\[
\underline{\dim}_t(\mu_0)_V
=\overline{\dim}_t(\mu_0)_V
=\left(\frac1q+\frac1{at}\right)^{-1},
\quad V\in\mathcal V_0,\quad 0<t\le1.
\]
\end{enumerate}
\end{theorem}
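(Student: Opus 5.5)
The plan is to obtain the upper bounds from Theorem~\ref{thm:exception} and the lower bounds from a layered construction, as in Lemma~\ref{lem:general-reference-layers}, built on a reference measure whose light point is seen, from a large family of directions, as part of a regular component of dimension $b'$ slightly below $b$.

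\emph{Upper bounds.} For the source function in \eqref{eq:graded-sharp-source-data}, the map \eqref{eq:reciprocal-window-monotone} is exactly linear in $t$. Hence $b_*=b$, and Theorem~\ref{thm:exception} gives
\[
\dim_{\mathrm H}\{V:\overline{\dim}_\theta\mu_V<d\}\le m(n-m-1)+b.
\]
For (ii), $\{\overline{\dim}_\theta\mu_V\le d\}\subset\{\overline{\dim}_\theta\mu_V<d'\}$ for every $d'\in(d,s]$. Since $b_*(d')\downarrow b$ as $d'\downarrow d$ and $b_*(d')\le m$ for $d'$ close to $d$ when $b<m$, this gives the upper bound in \eqref{eq:exact-nonstrict-exception}. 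When $b=m$ the bound is trivial. The full-measure statements then follow from Theorem~\ref{thm:orthogonal-main}, since the correction term is computed exactly as in the proof of Theorem~\ref{thm:sharp-clouds}.

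\emph{Construction.} Fix $b'<b$; in (ii) take $b'=b$. Choose a compact Ahlfors $b'$-regular measure $\nu_F$ on a set $F\subset\R^n$, and a marked point $x_*$ with $\operatorname{dist}(x_*,F)>0$. Let
\[
E=\{V\in G(n,m):V^\perp\cap(F-x_*)\ne\varnothing\}.
\]
The directions $(y-x_*)/|y-x_*|$, $y\in F$, form a set of dimension $b'$. For each fixed line, the $m$-planes $V$ with that line in $V^\perp$ form a copy of $G(n-1,m)$, of dimension $m(n-m-1)$. A Frostman product argument in local charts then shows that $E$ carries a measure with exponent $m(n-m-1)+b'$. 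In (ii) this measure has positive $\mathcal H^{m(n-m-1)+b}$-measure, which gives the lower bound in \eqref{eq:exact-nonstrict-exception}.

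The reference measure will be $\sigma=\sigma_q+\nu_F$. Here $\sigma_q$ is a $q$-growth piece modelled on Lemma~\ref{lem:sharp-reference}, with $\sigma_q(B(x_*,u))\sim u^q$. For $q=\infty$, $x_*$ is an isolated atom, as in the case $p=\infty$ of Lemma~\ref{lem:general-reference-layers}. The layered measure $\mu$ is then defined by \eqref{eq:sharp-mu}. The source data \eqref{eq:graded-sharp-source-data} should follow from Lemma~\ref{lem:general-reference-layers} with $p=q$, once \eqref{eq:reference-ratio} and \eqref{eq:marked-point-mass} are verified for $\sigma$. The regular piece has growth $b'\le m<q$ and lower bound $u^{b'}\ge u^q$, so adding it should not disturb these hypotheses.

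\emph{Image bound for $V\in E$.} For $V\in E$, $P_Vx_*\in P_VF$. Every ball about a projected point of the $\nu_F$ part has mass at least $cu^{b'}$, because projection is $1$-Lipschitz and $\nu_F$ is regular. Through Lemma~\ref{lem:relative-clouds}, this transfers to the sampled layers. Lemma~\ref{lem:image-layers} with exponent $b'$ then gives
\[
\overline{\dim}_\theta\mu_V\le\Bigl(\frac1{b'}+\frac1{a\theta}\Bigr)^{-1}.
\]
By the choice of $a$, we have $1/b+1/(a\theta)=1/d$. So this upper bound is $<d$ when $b'<b$, and $\le d$ when $b'=b$. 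Letting $b'\uparrow b$ gives \eqref{eq:finite-graded-optimal}.

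\emph{Main obstacle.} Lemma~\ref{lem:image-layers} requires the lower bound \eqref{eq:sharp-common-lower} at every support point of the projected layer, including images of points of $\sigma_q$. The exponent-$b'$ lower bound must therefore hold on all of $P_V(\spt\sigma)$, uniformly for $V\in E$, not only near $P_Vx_*$. I would first try to make $\sigma_q$ light only near $x_*$. Concretely, I would let $\sigma_q$ have local lower bound $u^{b'}$ away from $x_*$, using a radial Cantor density as in Lemma~\ref{lem:sharp-reference} that reaches exponent $q$ only at one endpoint, and attach $F$ so that small neighbourhoods of $x_*$ project inside $P_VF$. If a uniform-in-$V$ bound fails, I would restrict to a compact subset of $E$ with the required Frostman exponent, which suffices for both Hausdorff dimension statements.
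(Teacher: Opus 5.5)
\textbf{Verdict: the proposal has a genuine gap in the construction and a second gap in the full-measure preservation argument.}

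Your upper-bound half is correct and matches the paper. For the harmonic source function, \eqref{eq:reciprocal-window-monotone} is linear in $t$, so $b_*=b$, and (ii) follows by passing to strict sublevels at $d'\downarrow d$. The identity $1/b+1/(a\theta)=1/d$ is also the right target.

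\textbf{Concealing the thin component.} The gap sits at the step you call the main obstacle, and your suggested fix does not close it. For finite $q$, Lemma~\ref{lem:general-reference-layers} requires reference mass accumulating at $x_*$ with $\sigma(B(x_*,u))\sim u^q$. Concealing $x_*$ alone is therefore not enough. A thin point $x$ with $|x-x_*|=r$ typically has projected mass $O(r^q)\ll r^{b'}$ in balls about $P_Vx$ of radius comparable to $r$, unless $P_Vx$ is itself (almost) concealed by the projected regular part. So every thin point must be concealed, for every $V$ in the exceptional family.

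This failure occurs at each fixed $V$, not through non-uniformity in $V$, so restricting to a compact subset of $E$ does not help. With a radial Cantor density the thin set is a continuum. Concealing it for a $b'$-dimensional family of kernel directions forces the regular set to contain a rescaled copy $(1-t)S$ for every $t$ in a positive-dimensional thin parameter set. In general that pushes its dimension above $b'$, which is incompatible with a uniform $u^{b'}$ lower bound.

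The missing idea, supplied by Lemma~\ref{lem:arithmetic-reference}, is to make the thin part countable and collinear:
\begin{enumerate}
\item[(a)] atoms at $B^{-j}v$ with masses $\sim B^{-qj}$, accumulating at the marked point $0$;
\item[(b)] the regular set $A=\bigcup_{j\ge0}(1-B^{-j})S$ with weights $(j+1)^{-2}$, which is still $p$-dimensional ($p=b'$) and has lower bound $u^p/(1+|\log u|)^2$;
\item[(c)] the directions with $\ker(P_V|_{\R v\oplus U})=\R(v+w)$, $w\in S$.
\end{enumerate}
Then \eqref{eq:exact-projected-inclusion} gives $P_V(B^{-j}v)=P_V(v+(1-B^{-j})w)$ and $P_V0=P_V(v+w)$ exactly. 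Hence every projected thin point lies in $P_V(v+A)$.

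\textbf{Full-measure preservation.} The full-measure statements do not follow from Theorem~\ref{thm:orthogonal-main}. Nothing here forces $q\le m$; for instance $q=\infty$ with $b\le m$ is allowed. For $q>m$ that theorem only gives $(1/m+1/(at))^{-1}$, strictly below the source value $(1/q+1/(at))^{-1}$. In Theorem~\ref{thm:sharp-clouds} this smaller value is exactly the typical one.

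Preservation is a property of the specific construction. Modelling $\sigma_q$ on Lemma~\ref{lem:sharp-reference} would destroy it, because that reference has projected density bounded below for \emph{every} $V$, which hides the light point in all directions.

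The paper instead uses $\dim_{\mathrm H}A=p<m$. For Haar-almost every $V$ this gives $P_Vv\ne0$ and $0\notin P_V(v+A)$, so the projected marked point keeps mass $\sim u^q$, or stays isolated when $q=\infty$. Lemma~\ref{lem:cloud-transfer} with exponent $q$ (respectively, the isolated-atom estimate) then gives the source value. This is why (ii) asserts preservation only when $b<m$.

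\textbf{A smaller point.} The directions $(y-x_*)/|y-x_*|$, $y\in F$, need not form a set of dimension $b'$. You must place $F$ so that radial projection from $x_*$ is bi-Lipschitz on it; the paper does this by taking $v+S$ with $S\subset U$.
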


Both source functions are independent of $\varepsilon$ and equal $s$
at parameter $\theta$. For each $0<p\le b$, we realize the image
function $t\mapsto(1/p+1/(at))^{-1}$ on a compact direction set
of dimension $m(n-m-1)+p$. Taking $p<b$ gives part~(i), and
taking $p=b<q$ gives part~(ii).

The reference measure below makes points with small source mass share
their images with a regular component along the selected directions.
Sampling then transfers this geometry to the layers.

\begin{lemma}\label{lem:arithmetic-reference}
Let $0<p\le m$, $q\in[p,\infty]$, and $B\ge16$.
There are compact sets $S\subset A\subset\R^m$ with
$0\notin A$ and
\[
 \dim_{\mathrm H}S=\dim_{\mathrm H}A=p,
\]
a probability $\nu_A$ on $A$, and a reference probability
$\sigma_q$ in $\R v\oplus\R^m$, where $v\perp\R^m$ is a
unit vector, with the following properties. The set $S$ carries
a $p$-Ahlfors regular probability, and
\begin{equation}\label{eq:logarithmic-regular-reference}
 \frac{c u^p}{(1+|\log u|)^2}
 \le\nu_A(B(x,u))\le C u^p,
 \quad x\in A,\ 0<u\le1.
\end{equation}
For $q<\infty$, $0\in\spt\sigma_q$, and
\begin{equation}\label{eq:arithmetic-source-balls}
 \frac{\sigma_q(B(x,R))}{\sigma_q(B(x,u))}
 \le C(1+|\log u|)^2(R/u)^q,\qquad
 \sigma_q(B(0,u))\le Cu^q
\end{equation}
for $x\in\spt\sigma_q$ and $0<u\le R\le1$.
For $q=\infty$, $\operatorname{dist}(0,\spt\sigma_q)>0$;
$0$ is adjoined only to the samples.

For every linear map $P$ satisfying
$\ker(P|_{\R v\oplus\R^m})=\R(v+w)$ for some $w\in S$
and having bi-Lipschitz restriction to $\R^m$, one has
\begin{equation}\label{eq:concealed-reference-image}
 (P_\#\sigma_q)(B(z,u))
 \ge\frac{c_Pu^p}{(1+|\log u|)^2}
 \quad\bigl(z\in\spt(P_\#\sigma_q)\cup\{P0\},\ 0<u\le1\bigr).
\end{equation}
There are also $z_P\in\spt(P_\#\sigma_q)$ and $u_P>0$
such that
\begin{equation}\label{eq:concealed-reference-witness}
 (P_\#\sigma_q)(B(z_P,u))\le C_Pu^p,
 \quad 0<u<u_P.
\end{equation}
The lower-bound constant is uniform under uniform bi-Lipschitz bounds.
\end{lemma}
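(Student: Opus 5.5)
The plan is to build $\sigma_q$ as a \emph{cone measure} over $A$ in the direction $v$. I would take
\[
\sigma_q=F_\#(\tau\times\nu_A),\qquad F(t,w')=t(v+w'),
\]
where $\tau$ is a radial profile on $[0,2]$. The point is that a linear $P$ with $\ker(P|_{\R v\oplus\R^m})=\R(v+w)$, $w\in S$, satisfies
\[
P\bigl(t(v+w')\bigr)=tP(w'-w).
\]
So $P_\#\sigma_q$ is a superposition of dilates of the bi-Lipschitz image of $A-w$, all passing through $P0=0$. For $q=\infty$ I would take $\tau$ to be normalized Lebesgue measure on $[1,2]$, so that $\operatorname{dist}(0,\spt\sigma_q)\ge1$. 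For $q<\infty$, $\tau$ should charge $[0,u]$ with mass comparable to $u^q$. On each dyadic shell $[2^{-k},2^{1-k}]$ it should be a rescaled copy, with weight $2^{-kq}$, of a fixed regular measure of dimension $\min\{1,q-p\}$ (Lebesgue if $q-p\ge1$, a self-similar Cantor measure otherwise; if $q=p$ simply a point mass per shell).

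First I would construct $S\subset A$ and $\nu_A$. Let $S$ be a $p$-Ahlfors regular self-similar set in $\R^m$ with $0\notin S$, carrying its natural measure. Let $A$ be the closure of $S$ together with copies of $S$ rescaled by $2^{-k}$ and attached near each point of $S$ at distance about $2^{-k}$, with summable weights $k^{-2}$. Let $\nu_A$ be the corresponding mixture. The $k^{-2}$ weights are exactly what produce the two logarithms in \eqref{eq:logarithmic-regular-reference}: the upper bound $Cu^p$ follows from regularity of each copy, and the lower bound loses the factor $(1+|\log u|)^{-2}$ from the weight of the copy at scale $u$. Both sets have dimension $p$. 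The purpose of the attached copies is the near-self-similarity: for $w\in S$ and $2^{-k}\lesssim t\le1$, the dilate $t(A-w)$ is, up to bounded distortion, covered near the origin by $A-w$ itself.

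Next come the source estimates \eqref{eq:arithmetic-source-balls}. A ball of radius $u$ about $t(v+w')$ has $\sigma_q$-mass comparable to $\tau(B(t,u))\,\nu_A(B(w',u/t))$ when $u<t$. Since $\tau$ has local dimension at most $q-p$ inside a shell, the ratio between radii $u<R<t$ is at most $(R/u)^{p+\min\{1,q-p\}}\log^2\le(R/u)^q\log^2$. For $u\ge t$ the ball essentially contains $F([0,u]\times A)$, which has mass comparable to $u^q$; this gives the bound at $0$ and the cross-scale comparisons. These are routine.

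The main obstacle is the concealed image bound \eqref{eq:concealed-reference-image}. At $z=P0=0$ it is easy: the preimage of $B(0,u)$ contains $\{t(v+w'):t\in[1,2],\ |w'-w|\le cu\}$, whose mass is at least $c\,\nu_A(B(w,cu))\gtrsim u^p/\log^2$. The difficulty is at image points $z=tP(w'-w)$ with small $t$, where the $\tau$-weight is tiny. If $u\gtrsim|z|$, the ball contains $B(0,u/2)$ and the previous bound applies. If $u\ll|z|$, I would use the self-similarity built into $A$: $z/t'\in P(A-w)$ for some $t'\in[1,2]$, located in a copy attached to $w$ at scale comparable to $|z|$. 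Then $\nu_A$-regularity of that copy gives mass $\gtrsim u^p/\log^2$ from the $t'\in[1,2]$ slab alone. Arranging $A$ so that this covering holds for every $w\in S$ and every scale, with constants depending only on the bi-Lipschitz bounds of $P|_{\R^m}$, is where I expect the real work. If the attachment scheme fails, I would instead take $A$ to be a union of dilates $\bigcup_k(w_0+2^{-k}(S-w_0))$ with weights $k^{-2}$, exploiting invariance under the ratio-$2$ dilation about points of $S$.

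Finally, the witness \eqref{eq:concealed-reference-witness} is obtained by taking $z_P=P(v+w'')$ with $w''\in S$ far from $w$. Near $z_P$ only the slab $t\approx1$ and $w'$ near $w''$ contribute, and the upper bound in \eqref{eq:logarithmic-regular-reference} together with the $\tau$-density yields $C_Pu^p$ for small $u$.
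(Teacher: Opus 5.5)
There is a genuine gap at \eqref{eq:concealed-reference-image}, and it already appears in the main slab $t\in[1,2]$, not only at small $t$. There your $\tau$ is non-atomic: Lebesgue measure for $q=\infty$, and a $\min\{1,q-p\}$-regular measure for $p<q<\infty$. The map $P$ collapses the radial direction of the cone only along the single ray through $v+w$.

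Write $L=P|_{\R^m}$ and take $z=t_0L(w'-w)$ with $t_0\in[1,2]$ and $w'\in A$ away from $w$. For small $u$ only $t$ bounded below contribute, and
\[
 (P_\#\sigma_q)(B(z,u))\lesssim\int\nu_A\bigl(B(w+(t_0/t)(w'-w),Cu)\bigr)\,d\tau(t)
 \lesssim u^p\,\tau\bigl\{t:\operatorname{dist}\bigl(w+(t_0/t)(w'-w),A\bigr)\le Cu\bigr\}.
\]
The last factor tends to $0$ as $u\to0$ unless $A$ contains a $\tau$-positive part of the segment from $w$ through $w'$. That would be needed for essentially every $w'$ and every $w\in S$. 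Nothing in your $A$ provides this, and for $p<1$ with Lebesgue $\tau$ it is impossible, since $\dim A=p$.

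Typically the mass is therefore of order $u^{p+\dim\tau}$. For instance, when $q=\infty$ the image is the union of dilates $\bigcup_{1\le t\le2}tL(A-w)$, which typically has dimension $\min\{m,p+1\}$. So the required bound $c_Pu^p/(1+|\log u|)^2$ fails whenever $p<m$.

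Your step ``$z/t'\in P(A-w)$ for some $t'\in[1,2]$, so regularity of that copy gives $u^p$'' treats a single fibre $t'$ as if it carried mass. It carries only $\tau(B(t',cu))\to0$. Covering ``up to bounded distortion'' cannot repair this either: $z$ is fixed while $u\to0$, so exact containment is needed.

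Making $\tau$ atomic does not rescue the cone. An atom at $t_k$ places thin mass on the whole copy $t_k(v+A)$. Concealing its image inside the regular image then requires, in effect, $w+t_k(A-w)\subset A$ for every $w\in S$. The union over $w\in S$ contains the sumset $t_kS+(1-t_k)S$, whose dimension in general exceeds $p$. Your fallback, using dilations about a single point $w_0$, does not cover $w$ ranging over all of $S$. The statement requires every $w\in S$, and the application needs the $p$-dimensional direction family $E_p$.

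The missing idea is to make the thin part zero-dimensional and place it on the axis: atoms at $B^{-j}v$ with masses $\sim B^{-qj}$. Since $P(v+w)=0$, one has $P(B^{-j}v)=P\bigl(v+(1-B^{-j})w\bigr)$. Concealment then needs only the dilates $(1-B^{-j})S$ about the origin of $\R^m$, which form a countable union of copies of $S$. This is why the paper:
\begin{itemize}
\item takes $A=S\cup\bigcup_{j\ge1}(1-B^{-j})S$ with weights $(j+1)^{-2}$, which is where the two logarithms come from;
\item uses the plain translate of $\nu_A$ to $v+A$ as the regular component, with no radial factor;
\item chooses the witness $z_P$ in $P(v+A)$, outside the countable compact set of projected atoms.
\end{itemize}
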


\begin{proof}
\emph{Construction and ball estimates.}
Choose a compact $p$-Ahlfors regular set $S\subset\R^m$ with
$\operatorname{dist}(0,S)>0$, and let $\nu_S$ be a regular probability
on $S$. Set
\[
 c_0=1,\qquad c_j=1-B^{-j}\quad(j\ge1),\qquad
 A=\bigcup_{j\ge0}c_jS,
\]
and define
\[
 \nu_A=Z_1^{-1}\sum_{j\ge0}(j+1)^{-2}(D_{c_j})_\#\nu_S,
 \qquad Z_1=\sum_{j\ge0}(j+1)^{-2},\qquad D_cx=cx.
\]
Since $c_j\to1$ and $S$ is compact, $A$ is compact. Moreover,
\begin{equation}\label{eq:digit-concealment}
 (1-B^{-j})S\subset A\quad(j\ge1),\qquad
 \dim_{\mathrm H}A=\dim_{\mathrm H}S=p.
\end{equation}
The regularity constants of $c_jS$ are uniform in $j$, since
$15/16\le c_j\le1$. Consequently,
\[
 \nu_A(B(x,u))
 \lesssim\sum_{j\ge0}(j+1)^{-2}u^p\lesssim u^p.
\]

For the lower bound, write $x=c_js$ with $s\in S$ and put
$M=\sup_{s\in S}|s|$. If $j=0$, the component on $S$ gives
$\nu_A(B(x,u))\gtrsim u^p$. If $j\ge1$ and $u\ge4MB^{-j}$,
then $|x-s|\le u/4$, and hence
\[
 \nu_A(B(x,u))\gtrsim\nu_S(B(s,u/2))\gtrsim u^p.
\]
If $u<4MB^{-j}$, then $j+1\lesssim1+|\log u|$, so
\[
 \nu_A(B(x,u))
 \gtrsim(j+1)^{-2}\nu_S(B(s,u/c_j))
 \gtrsim\frac{u^p}{(1+|\log u|)^2}.
\]
This proves \eqref{eq:logarithmic-regular-reference}.

Let $\sigma_A$ be the translate of $\nu_A$ to $v+A$.
For $q<\infty$, define
\begin{equation}\label{eq:thin-geometric-reference}
 \tau_q=(1-B^{-q})\sum_{j\ge1}B^{-q(j-1)}\delta_{B^{-j}v},
 \qquad \sigma_q=\tfrac12(\sigma_A+\tau_q).
\end{equation}
Summing the geometric tail gives, uniformly for
$x\in\spt\tau_q$ and $0<u\le1$,
\[
 \tau_q(B(x,u))\sim\max\{|x|,u\}^q.
\]
Indeed, for $u<|x|/2$ the ball contains only the atom at $x$;
for larger $u$, the atom and the geometric tail give the stated
comparison. At $x=0$, use the tail directly. Thus
\begin{align*}
 \tau_q(B(0,u))&\sim u^q,\\
 \frac{\tau_q(B(x,R))}{\tau_q(B(x,u))}
 &\lesssim
 \left(\frac{\max\{|x|,R\}}{\max\{|x|,u\}}\right)^q
 \le(R/u)^q.
\end{align*}
For the other component, \eqref{eq:logarithmic-regular-reference}
gives
\[
 \frac{\sigma_A(B(x,R))}{\sigma_A(B(x,u))}
 \lesssim(1+|\log u|)^2(R/u)^p.
\]
The supports of the two components are separated. Balls meeting
only one component satisfy its growth estimate. If a ball meets
both components, its outer radius is bounded below; the uniform
lower bound $cu^q/(1+|\log u|)^2$, using $p\le q$, then gives
the first estimate in \eqref{eq:arithmetic-source-balls}.
Its second estimate follows from the geometric tail at $0$.
For $q=\infty$, take $\sigma_\infty=\sigma_A$, so that
$\operatorname{dist}(0,\spt\sigma_\infty)>0$.

\emph{Projection estimates.}
Fix $P$ as in the statement, with $P(v+w)=0$, $w\in S$.
For $t_j=B^{-j}$, we have
\begin{equation}\label{eq:exact-projected-inclusion}
 \begin{aligned}
 P\bigl(v+(1-t_j)w\bigr)
 &=Pv+(1-t_j)Pw=t_jPv=P(t_jv),\\
 P(v+w)&=0=P0.
 \end{aligned}
\end{equation}
By \eqref{eq:digit-concealment}, every projected thin point and
the projected marked point belong to $P(v+A)$. The bi-Lipschitz
restriction of $P$ to $\R^m$ and
\eqref{eq:logarithmic-regular-reference} therefore give
\eqref{eq:concealed-reference-image}, with the asserted uniformity.

The compact set
\[
 \{0\}\cup\{B^{-j}Pv:j\ge1\}
\]
is countable, whereas $P(v+A)$ has Hausdorff dimension $p>0$.
Choose $z_P$ in the latter set and outside the former. For all
sufficiently small $u$, the ball $B(z_P,u)$ meets no projected
thin point. Applying the upper bound for $\nu_A$ and the inverse
Lipschitz bound of $P|_{\R^m}$ gives
\[
 (P_\#\sigma_q)(B(z_P,u))\lesssim_P u^p.
\]
This proves \eqref{eq:concealed-reference-witness}.
A fixed dilation places the reference supports in the unit ball.
\end{proof}

\begin{proof}[Proof of Theorem~\ref{thm:finite-graded-sharp}]
\emph{Construction and source dimensions.}
Fix $0<p\le b$. Since $b\le\min\{q,m\}$, we may apply
Lemma~\ref{lem:arithmetic-reference}. Identify
\[
 W=\R v\oplus U\subset\R^n,\qquad\dim U=m,\qquad Z_0=W^\perp,
\]
and let $Q$ be a compact cube with nonempty interior in
$\operatorname{Hom}(U,Z_0)$. For $w\in S$ and $A_0\in Q$, put
\[
 V_{w,A_0}=\{u-\langle w,u\rangle v+A_0u:u\in U\},
 \qquad E_p=\{V_{w,A_0}:w\in S,\ A_0\in Q\}.
\]
On this compact parameter set, the Grassmannian chart is
bi-Lipschitz. Directly from its definition,
\[
 \ker(P_V|_W)=\R(v+w),\qquad P_V|_U:U\longrightarrow V
 \text{ is an isomorphism}.
\]
The isomorphism bounds are uniform on $E_p$. Since $S\times Q$
is Ahlfors regular, we obtain
\begin{equation}\label{eq:exact-compact-direction-set}
 \dim_{\mathrm H}E_p=m(n-m-1)+p,\qquad
 0<\mathcal H^{m(n-m-1)+p}(E_p)<\infty.
\end{equation}
When $n=m+1$, $Q$ is the single point in the zero-dimensional
space $\operatorname{Hom}(U,Z_0)$.

After a fixed dilation of the reference, choose its relative samples
with marked point $0$ and define $\mu$ by \eqref{eq:sharp-mu}, using
\[
 N_j=\begin{cases}
 \lceil R_j^{-a}\rceil,&a<\infty,\\[3pt]
 \lceil e^{j^2}\rceil,&a=\infty.
 \end{cases}
\]
For $q<\infty$, \eqref{eq:arithmetic-source-balls} verifies the
growth and marked-point assumptions of
Lemma~\ref{lem:general-reference-layers}. For $q=\infty$, the
marked point is separated from the reference support. That lemma
therefore gives the countable compact support, both growth
dimensions, and
\[
 \underline{\dim}_t\mu=\overline{\dim}_t\mu
 =\left(\frac1q+\frac1{at}\right)^{-1},
 \quad0<t\le1.
\]

\emph{Exceptional directions.}
Fix $V\in E_p$. Since $w\ne0$ and $P_V|_U$ is injective,
$P_Vv=-P_Vw\ne0$. The reference image satisfies
\eqref{eq:concealed-reference-image} and
\eqref{eq:concealed-reference-witness}, including the image of
the marked point. Lemma~\ref{lem:cloud-transfer} consequently yields
\begin{equation}\label{eq:exact-exceptional-functions}
 \underline{\dim}_t\mu_V=\overline{\dim}_t\mu_V
 =\left(\frac1p+\frac1{at}\right)^{-1},
 \quad V\in E_p,\ 0<t\le1.
\end{equation}

For part~(i), choose $\max\{0,b-\varepsilon\}<p<b$ and call
the resulting measure $\mu_\varepsilon$. Since
\[
 \frac1p+\frac1{a\theta}
 >\frac1b+\frac1s-\frac1q=\frac1d,
\]
we have
\[
 E_p\subset\{V:\overline{\dim}_\theta(\mu_\varepsilon)_V<d\}
 \subset\{V:\underline{\dim}_\theta(\mu_\varepsilon)_V<d\}.
\]
Equation~\eqref{eq:exact-compact-direction-set} proves
\eqref{eq:finite-graded-optimal} for both dimensions.

For part~(ii), $d<s$ implies $b<q$. Take $p=b$ and write
$\mu_0$ for the measure just constructed. Equations
\eqref{eq:exact-compact-direction-set} and
\eqref{eq:exact-exceptional-functions} give the required lower
bound for both non-strict sublevels. If $b<m$, choose $e>d$
sufficiently close to $d$ that $e<s$ and
\[
 b_e=\left(\frac1e-\frac1s+\frac1q\right)^{-1}<m.
\]
Each non-strict sublevel at $d$ is contained in the corresponding
strict sublevel at $e$. Theorem~\ref{thm:exception} therefore gives
an upper bound $m(n-m-1)+b_e$. Letting $e\downarrow d$ yields
\eqref{eq:exact-nonstrict-exception}. If $b=m$, use the ambient
dimension $\dim G(n,m)=m(n-m)$ instead.

\emph{Almost-everywhere preservation.}
It remains to consider $p<m$. Outside a Haar null set, $P_V|_U$
is an isomorphism and $P_Vv\ne0$. Define
\[
 w_V=-(P_V|_U)^{-1}P_Vv.
\]
In the Grassmannian chart parametrized by
$U\times\operatorname{Hom}(U,Z_0)$, the condition $w_V\in A$
has Haar measure zero, because $\dim_{\mathrm H}A=p<m$.
Thus there is a set $\mathcal V$ of full Haar measure such that
\[
 P_Vv\ne0,\qquad P_V|_U\text{ is invertible},\qquad
 0\notin P_V(v+A),\quad V\in\mathcal V.
\]
Fix $V\in\mathcal V$. If $q<\infty$, the regular image stays
a positive distance from $0$, and the geometric thin component gives
\[
 (P_V)_\#\sigma_q(B(0,u))\sim_V u^q
\]
for all sufficiently small $u$. The source lower mass bound and
Lipschitz contraction also give
\[
 (P_V)_\#\sigma_q(B(y,u))
 \gtrsim\frac{u^q}{(1+|\log u|)^2},
 \quad y\in\spt((P_V)_\#\sigma_q).
\]
Applying Lemma~\ref{lem:cloud-transfer} with exponent $q$ proves
preservation of both dimension functions.

If $q=\infty$, $0$ is separated from the reference image.
The projected layers are separated for all sufficiently large $j$,
since their centres are $R_jP_Vv$ and their diameters are
$O(\ell_j)=o(R_j)$. For $a<\infty$, fix $0<t\le1$, $0<\beta<at$,
and $\beta/a<\gamma<t$. At every sufficiently small $r$, choose
$R_j\sim r^\gamma$ and put $y_j=R_jP_Vv$. Then
$r^t=o(\ell_j)$, and all balls $B(y_j,u)$, $r\le u\le r^t$,
contain only the projected marked atom. Hence
\[
 \sup_{r\le u\le r^t}\frac{\mu_V(B(y_j,u))}{u^\beta}
 =\frac{Z^{-1}j^{-2}}{(N_j+1)r^\beta}
 \lesssim(1+|\log r|)^C r^{a\gamma-\beta}\longrightarrow0.
\]
Together with Lipschitz monotonicity, this gives both image
dimensions equal to $at$. For $a=\infty$, use any $0<\gamma<t$;
the last bound is replaced by $Cj^{-2}e^{-j^2}r^{-\beta}\to0$
for every finite $\beta>0$, so both image dimensions are infinite.
The set $\mathcal V$ was defined independently of $t$. Thus all
the preservation assertions hold on the same full-measure set.
\end{proof}

In the non-strict construction, the whole image function is also
realized on a set of exactly the predicted dimension:
\[
 \dim_{\mathrm H}\left\{V:
 \underline{\dim}_t(\mu_0)_V=\overline{\dim}_t(\mu_0)_V
 =\left(\frac1b+\frac1{at}\right)^{-1}
 \text{ for every }0<t\le1\right\}=m(n-m-1)+b.
\]
Indeed, the compact set $E_b$ gives the lower bound, and
\eqref{eq:exact-nonstrict-exception} gives the upper bound.
The compact set $E_b$ has positive Hausdorff measure at this exponent.
Thus the strict inequality in Theorem~\ref{thm:exception} cannot be
replaced by a non-strict one while retaining its Hausdorff-null conclusion.
For these examples, which prescribe both entire source functions,
the strict conclusion remains an $\varepsilon$-sharpness statement.
The next construction attains strict equality with one measure while
prescribing its lower source function.

\paragraph{Exact strict exceptional dimensions for one measure.}
\begin{proposition}[Strict exceptional sets for one measure]
\label{prop:exact-strict-lower}
Let $1\le m<n$, write $g=m(n-m)$, and let $0<S\le m$.
There is a compactly supported probability measure $\mu$ on $\mathbb R^n$
such that
\[
 \underline{\dim}_t\mu=S,\quad0<t\le1
\]
and, for every $0<t\le1$,
\begin{equation}\label{eq:exact-strict-lower}
 \dim_{\mathrm H}\{V\in G(n,m):
       \underline{\dim}_t\mu_V<S\}=g-m+S.
\end{equation}
Each of the sets
\[
 \begin{gathered}
 \{V\in G(n,m):\underline{\dim}_t\mu_V<S
                    \text{ for every }0<t\le1\},\\
 \{V\in G(n,m):\lim_{t\to0^+}\underline{\dim}_t\mu_V<S\}
 \end{gathered}
\]
also has Hausdorff dimension $g-m+S$.
Every exceptional set in the statement has zero
$\mathcal H^{g-m+S}$-measure.
\end{proposition}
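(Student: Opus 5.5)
The plan splits into an upper bound, which follows from earlier results, and a lower bound, which needs a new construction.

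\emph{Upper bound.} It suffices to show that each exceptional set in the statement has zero $\mathcal H^{g-m+S}$-measure; this also gives the upper bound $g-m+S$ for its Hausdorff dimension. Since $\underline{\dim}_t\mu=S$ for every $t$, the threshold $d=S$ is reached at every parameter. So $\kappa_S=0$ in \eqref{eq:spectral-exception-exponent}, whatever the value of $q=\dim_{\mathrm{qA}}\mu$, and hence $b_*=S\le m$. Theorem~\ref{thm:exception}, in its lower-dimensional form, then gives $\mathcal H^{g-m+S}\{V:\underline{\dim}_t\mu_V<S\}=0$ for each $t$. Corollary~\ref{cor:limit-exception}, with $h=S$ and $d=S$, gives the same conclusion for the set defined by the limit $t\to0^+$. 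Finally, the set where the strict inequality holds for every $t$ is contained in the set for any single $t$, so it is also null.

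\emph{Lower bound: building blocks.} For the reverse inequality, I will realize the dimensions $g-m+p_k$ for a sequence $p_k\uparrow S$ inside one measure. For each $k$, apply the construction in the proof of Theorem~\ref{thm:finite-graded-sharp} with $q=s=S$, $a=\infty$ (so $N_j=\lceil e^{j^2}\rceil$) and $p=p_k$. Lemma~\ref{lem:arithmetic-reference} applies because $p_k<S\le m$ and $q=S\ge p_k$. This yields a measure $\mu_k$ with both source functions identically $S$. It also yields a compact set $E_{p_k}$ of dimension $g-m+p_k$ on which, by \eqref{eq:exact-exceptional-functions}, $\dim_t(\mu_k)_V=p_k<S$ for every $t$. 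I then set
\[
\mu=\sum_k w_k\,(A_k)_\#\mu_k,
\]
where the $A_k$ are similarities placing the pieces in disjoint, geometrically separated regions accumulating at one point, with summable weights $w_k$ as in \eqref{eq:sharp-mu}. The lower bound $\underline{\dim}_t\mu\ge S$ comes from the witness points of a single piece, since these points sit in separated regions. The bound $\underline{\dim}_t\mu\le S$ follows from a tail argument as in Lemma~\ref{lem:image-layers}, applied along a suitable sequence of scales. This is where using only the \emph{lower} source function gives flexibility: only one sequence $r_n$ is needed.

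\emph{Main obstacle.} The hard step is the bound $\underline{\dim}_t\mu_V\le p_k$ for $V\in E_{p_k}$. The other pieces $(A_i)_\#\mu_i$, $i\ne k$, project under $V$ to measures whose dimension is a priori only $\le S$, and any single bad point would destroy the strict inequality. I would try two devices. First, schedule the layers of the different pieces in disjoint blocks of scales, so that along a sequence $r_n$ adapted to $k$ only the layers of piece $k$ are active near any point. Second, arrange that the projected images of the other pieces are covered by projected balls carrying the uniform lower bound $cu^{p_k}$; this is possible because the regular part of each reference contains a $p_i$-Ahlfors regular component and $p_i$ can be capped along the relevant scales. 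Because lower dimensions only need good scales along one sequence, and the sequence may depend on $V$ through $k$, it should be possible to keep $\underline{\dim}_t\mu=S$ for the source while forcing $\underline{\dim}_t\mu_V\le p_k$ simultaneously for all $t$. Then
\[
\bigcup_k E_{p_k}\subset\{V:\underline{\dim}_t\mu_V<S\text{ for every }0<t\le1\}.
\]
This union has dimension $\sup_k(g-m+p_k)=g-m+S$, which gives the lower bound for all three sets at once.
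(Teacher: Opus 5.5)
Your upper bound is the paper's argument: $\kappa_S=0$ and $b_*=S$ in Theorem~\ref{thm:exception}, Corollary~\ref{cor:limit-exception} for the limiting set, and inclusion for the simultaneous set. The lower bound has a genuine gap. The decisive inclusion $E_{p_k}\subset\{V:\underline{\dim}_t\mu_V<S\}$ for the glued measure is only asserted (``it should be possible''), and for the gluing you actually describe it is false.

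Take $V\in E_{p_k}$ with $\ker(P_V|_W)=\R(v+w)$ and $w\in S_{p_k}$. Then $P_V(v+a)=P_V(a-w)$, so the thin points of a piece $i\ne k$ are concealed only if $w$ lies in that piece's regular set. Otherwise its projected marked points $y_j$ satisfy
\[
 \mu_V(B(y_j,u))\lesssim\bigl(e^{-j^2}+(u/\ell_j)^S\bigr),\qquad r\le u\le r^t,
\]
for all such $V$ outside a countable union of the $m(n-m-1)$-dimensional sets $\{V:P_Vz=0\}$. These excluded sets come from collisions with the other countable supports. At every small scale $r$, the layers with $\sqrt{sL}\lesssim j\lesssim c_{t,s}L$, where $L=\log(1/r)$, make these points witnesses for every $s<S$. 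Hence $\underline{\dim}_t\mu_V\ge S$ on all of $E_{p_k}$ except a set of smaller dimension.

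Arranging concealment does not rescue this. On $E_{p_k}$ the map $P_V|_U$ is an isomorphism, so the regular component of each piece $i$ projects bi-Lipschitz. As in the lower-bound half of Lemma~\ref{lem:cloud-transfer}, it then produces witnesses at every scale. This gives $\underline{\dim}_t\mu_V\ge\sup_i p_i=S$. Your ``capping'' device does not address the first obstruction either. At an unconcealed marked point the image mass is bounded \emph{above} as displayed, and changing regular exponents elsewhere puts no mass there.

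The only plausible repair is your scheduling device, but then none of the results you quote applies. Each piece has layers only in blocks, so \eqref{eq:exact-exceptional-functions} and the tail estimates of Lemma~\ref{lem:image-layers} must be redone with gaps. The source bound $\underline{\dim}_t\mu\ge S$ can no longer come from a single piece; it must use whichever piece owns the current block. For every $V\in E_{p_k}$ and every $t$, you must also produce scales at which the thin and regular witness windows of every other piece are empty. None of this is carried out.

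The paper avoids gluing altogether. It builds one Kaufman--Mattila type Moran set in $\R^{\ell}$, $\ell=m+1$, with child cubes centred on $N_k^{-1}\mathbb Z^\ell$ and of size $r_k=N_k^{-\ell/S}$. The children are weighted by $w_M(a)\propto(1+d_M(a))^{-1}$, so that by \eqref{eq:strict-grid-fibre} every lattice fibre of siblings carries mass $\gtrsim(HM^{\ell-1}\log M)^{-1}$. For slopes in the Diophantine set $A_h$, at infinitely many levels such a fibre collapses into one $Cr_k$-ball about \emph{every} image point. This gives $\underline{\dim}_1(L_y)_\#\mu\le S(\ell-1+h)/\ell<S$. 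The drop occurs at one sequence of scales for all points simultaneously, which is exactly what the lower dimension needs. The bound $\dim_{\mathrm H}A_h\ge hS$ from Kaufman--Mattila, the product with the $\operatorname{Hom}(U,Z)$ factor, and $h\uparrow1$ then give $g-m+S$ for all three sets.
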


\begin{proof}
\emph{Construction and source dimensions.}
We first work in $\R^\ell$, where $\ell=m+1$.
Choose integers $N_k$, each a perfect $k!$th power, with
$N_{k-1}\mid N_k$, $N_k>N_{k-1}^{k^2}$ for $k\ge2$, and
\begin{equation}\label{eq:strict-moran-rapid}
 \frac{\sum_{i<k}\log N_i}{\log N_k}\longrightarrow0.
\end{equation}
Choose $N_1$ and each subsequent $N_k$ large enough that the
integers $M_k$ below are at least $2$. These choices also satisfy
the rapid-growth condition in
\cite[proof of Theorem~5, pp.~390--391]{KaufmanMattila1975}.
Put
\[
 r_0=1,\qquad r_k=N_k^{-\ell/S},\qquad
 M_k=\lfloor N_kr_{k-1}/2\rfloor.
\]
Starting from $[-1,1]^\ell$, replace each level-$(k-1)$ cube $Q$
with centre $c_Q$ by
\[
 Q(a)=c_Q+N_k^{-1}a+[-r_k,r_k]^\ell,
 \qquad a\in\mathbb Z^\ell,\quad |a|_\infty\le M_k.
\]
Taking the successive $N_k$ sufficiently large makes the child
cubes disjoint and contained in their parent. Their centres lie
in $N_k^{-1}\mathbb Z^\ell$. Let $K$ be the compact intersection.

The number of level-$k$ cubes is
\[
 B_k=\prod_{i\le k}(2M_i+1)^\ell=N_k^{\ell+o(1)},
\]
so $\dim_{\mathrm H}K\le S$. To prove the reverse inequality,
give the level-$k$ cubes equal mass $B_k^{-1}$, consistently in
$k$, and call the resulting measure $\nu$. Counting the cubes
meeting a ball gives
\[
 \nu(B(x,u))\lesssim
 \begin{cases}
 B_k^{-1},&r_k\le u\le N_k^{-1},\\[3pt]
 B_{k-1}^{-1}(u/r_{k-1})^\ell,
       &N_k^{-1}\le u<r_{k-1}.
 \end{cases}
\]
For every $\varepsilon>0$, \eqref{eq:strict-moran-rapid} gives
$B_i^{-1}\le r_i^{S-\varepsilon}$ for all sufficiently large $i$.
In the first range, this is at most $u^{S-\varepsilon}$.
In the second range,
\[
 B_{k-1}^{-1}(u/r_{k-1})^\ell
 \le u^{S-\varepsilon}(u/r_{k-1})^{\ell-S+\varepsilon}
 \le u^{S-\varepsilon}.
\]
Frostman's lemma therefore gives $\dim_{\mathrm H}K=S$.

We now assign different masses to the same cubes. For $M\ge2$, set
\[
 d_M(a)=M-|a|_\infty,\qquad
 Z_M=\sum_{|a|_\infty\le M}\frac1{1+d_M(a)},\qquad
 w_M(a)=\frac1{Z_M(1+d_M(a))}.
\]
Counting the boundary layers of $[-M,M]^\ell\cap\mathbb Z^\ell$ yields
\begin{equation}\label{eq:strict-grid-weights}
 Z_M\sim M^{\ell-1}\log M,
 \qquad \min_a w_M(a)\sim(M^\ell\log M)^{-1}.
\end{equation}
Give the child $Q(a)$ the conditional mass $w_{M_k}(a)$, and
denote the resulting probability by $\mu$. Its support is $K$.
If $p_k$ is the smallest level-$k$ mass, then
\begin{equation}\label{eq:strict-min-cylinder}
 p_k=\prod_{i\le k}\min_a w_{M_i}(a)=N_k^{-\ell+o(1)},
 \qquad p_{k-1}=N_k^{-o(1)}.
\end{equation}
Every ball centred on $K$ with radius $2\sqrt\ell r_k$ contains
a whole level-$k$ cylinder. For every $\varepsilon>0$,
\eqref{eq:strict-min-cylinder} gives
$p_k\ge(2\sqrt\ell r_k)^{S+\varepsilon}$ for all sufficiently
large $k$. Hence $\underline{\dim}_1\mu\le S$.
The Besicovitch covering lemma
also gives $\dim_{\mathrm H}K\le\underline{\dim}_t\mu$.
Indeed, if $s>\underline{\dim}_t\mu$, the definition supplies
arbitrarily small $r$ and, for every $x\in K$, a radius
$u_x\in[r,r^t]$ with $\mu(B(x,u_x))\ge cu_x^s$.
A covering subfamily with bounded overlap then satisfies
\[
 \sum_x(2u_x)^s\lesssim c^{-1}\sum_x\mu(B(x,u_x))
 \lesssim c^{-1}.
\]
Its largest diameter tends to zero with $r$, so
$\dim_{\mathrm H}K\le s$. Letting $s\downarrow
\underline{\dim}_t\mu$ proves the comparison.
Using monotonicity in $t$, we obtain
\begin{equation}\label{eq:strict-lower-source}
 S\le\underline{\dim}_t\mu\le\underline{\dim}_1\mu\le S,
 \quad0<t\le1.
\end{equation}

\emph{Masses on lattice fibres.}
Let $v\in\mathbb Z^\ell\setminus\{0\}$, put $H=|v|_\infty$, and fix
$a\in[-M,M]^\ell\cap\mathbb Z^\ell$. We claim that
\begin{equation}\label{eq:strict-grid-fibre}
 \sum_{j:\,|a+jv|_\infty\le M}w_M(a+jv)
 \gtrsim\frac1{HM^{\ell-1}\log M}.
\end{equation}
Write $D=1+d_M(a)$. If $D\le4H$, the term at $j=0$ is at least
$1/(4HZ_M)$. If $D>4H$, there are at least $cD/H$ integers
with $|j|H\le D/2-1$. For these integers,
\[
 |a+jv|_\infty\le M,\qquad
 1+d_M(a+jv)\le3D/2.
\]
Their total mass is at least
$(cD/H)(2/(3DZ_M))\gtrsim1/(HZ_M)$.
Equation~\eqref{eq:strict-grid-weights} proves the claim in both cases.

\emph{Exceptional slopes.}
For $y\in[0,1]^m$, let
\[
 L_y(x_1,x')=x'-x_1y.
\]
Fix $0<h<1$. Let $A_h$ be the set of $y\in[0,1]^m$ for which
infinitely many $k$ admit $1\le H\le N_k^h$ and $b\in\mathbb Z^m$ with
\begin{equation}\label{eq:strict-rational-approx}
 |y-b/H|_\infty\le r_k.
\end{equation}
The Diophantine estimate in
\cite[proof of Theorem~5, pp.~390--391]{KaufmanMattila1975} gives
\begin{equation}\label{eq:strict-rational-dimension}
 \dim_{\mathrm H}A_h\ge hS.
\end{equation}
For $S<m$, use their ambient dimension $\ell$, source dimension
$S$, and approximation parameter $\delta=1-h$. Their conditions
are $k<H<N_k^h$ and
$\operatorname{dist}(Hy_i,\mathbb Z)<H N_k^{-\ell/S}$
for each coordinate. Thus their slope set is contained in the
unrestricted version of $A_h$, and their dimension bound is
$S(1-\delta)=hS$. For $S=m$, choose
a rational $h<\lambda<1$ and apply the same result with
\[
 \widetilde N_k=N_k^\lambda,\qquad
 \widetilde S=\lambda m<m,\qquad \widetilde h=h/\lambda.
\]
The perfect-power condition makes $\widetilde N_k$ integers for
all large $k$, and
\[
 \widetilde N_k^{-\ell/\widetilde S}=r_k,\qquad
 \widetilde N_k^{\widetilde h}=N_k^h,\qquad
 \widetilde h\widetilde S=hm.
\]
This proves the endpoint. Restriction to $[0,1]^m$ preserves the
dimension, since the unrestricted approximation set is invariant
under integer translations.

Fix $y\in A_h$ and a level $k$ satisfying
\eqref{eq:strict-rational-approx}. For $x\in K$, let $Q(a)$ be
its level-$k$ cube. The centres of the siblings $Q(a+j(H,b))$
have the same $L_{b/H}$ image. Since $K$ is bounded and
$|y-b/H|_\infty\le r_k$, all these cylinders project into
$B(L_yx,Cr_k)$. Also $|(H,b)|_\infty\le2H$. Hence
\begin{align*}
 (L_y)_\#\mu(B(L_yx,Cr_k))
 &\gtrsim p_{k-1}
       \sum_{j:\,|a+j(H,b)|_\infty\le M_k}w_{M_k}(a+j(H,b))\\
 &\gtrsim\frac{p_{k-1}}{HM_k^{\ell-1}\log M_k}
 \ge N_k^{-(\ell-1+h)-o(1)}.
\end{align*}
The estimate is uniform in $x$, and holds for infinitely many
$k$. Definition~\ref{def:intermediate} gives
\begin{equation}\label{eq:strict-lower-image}
 \underline{\dim}_t((L_y)_\#\mu)
 \le\underline{\dim}_1((L_y)_\#\mu)
 \le\frac{S(\ell-1+h)}\ell<S,
 \quad y\in A_h,\ 0<t\le1.
\end{equation}
The same strict upper bound holds for the limit as $t\downarrow0$.

\emph{Embedding and upper bounds.}
Identify $\R^\ell$ with $W=\R v\oplus U\subset\R^n$, where
$\dim U=m$, and put $Z=W^\perp$. Let $\mathcal Q$ be a compact
cube with nonempty interior in $\operatorname{Hom}(U,Z)$, and set
\[
 V_{y,A}=\{u-\langle y,u\rangle v+Au:u\in U\},
 \qquad y\in[0,1]^m,\quad A\in\mathcal Q.
\]
This is a bi-Lipschitz Grassmannian chart on the compact domain.
Moreover, $P_{V_{y,A}}|_W=J_{y,A}\circ L_y$ for a linear
isomorphism $J_{y,A}$, so the intermediate dimensions agree.
By \eqref{eq:strict-rational-dimension}, the image of
$A_h\times\mathcal Q$ has dimension at least
\[
 hS+m(n-m-1).
\]
It lies in every exceptional set in the statement. Letting
$h\uparrow1$ through a countable sequence gives the common
lower bound $m(n-m-1)+S=g-m+S$.

For each fixed $t$, \eqref{eq:strict-lower-source} gives
$\kappa_S=0$ and $b_*=S$ in Theorem~\ref{thm:exception}.
Its lower-dimensional version gives zero
$\mathcal H^{g-m+S}$-measure for the fixed-$t$ exceptional set.
The simultaneous exceptional set is contained in the one for
$t=1$. Finally, Corollary~\ref{cor:limit-exception} gives the
same Hausdorff-measure conclusion for the limiting exceptional
set. These upper bounds complete the proof.
\end{proof}

\begin{remark}
Proposition~\ref{prop:exact-strict-lower} concerns the lower dimension
function, the constant source curve $S$, and the strict preservation
threshold $d=S$. It does not prescribe the upper source function or the
quasi-Assouad dimension. The construction therefore establishes
single-measure strict equality in this regime; the simultaneous
upper-and-lower harmonic examples of Theorem~\ref{thm:finite-graded-sharp}
retain their stated $\varepsilon$-sharpness for strict level sets.
\end{remark}

\paragraph{A nonharmonic source function.}

The source functions in Theorem~\ref{thm:finite-graded-sharp} give
$b_*=b$. To obtain a strict improvement, we modify the thin component
so that the source dimension function is capped at a prescribed value.
This also allows constant source functions below the growth dimension.

\begin{lemma}[Capped layers]\label{lem:capped-layers}
Let $1\le m<n$, $0<e<\infty$, $e<q\le\infty$,
$0<p\le\min\{q,m\}$ and $0<a\le\infty$, and suppose that
$\left(\frac1{p}+\frac1{a}\right)^{-1}\le e$.
Let $S,A,\nu_A,v$ and the compact family $E_p\subset G(n,m)$
be as in Lemma~\ref{lem:arithmetic-reference} and the proof of
Theorem~\ref{thm:finite-graded-sharp}.
There is a probability measure $\mu$ with countable compact support
in $\R^n$ such that
\[
 \dim_{\mathrm{qA}}\mu=q,\qquad
 \dim_{\mathrm A}\mu=\infty,
\]
and, simultaneously for $0<t\le1$,
\begin{equation}\label{eq:capped-source}
 \underline{\dim}_t\mu=\overline{\dim}_t\mu
 =\min\left\{\left(\frac1{q}+\frac1{at}\right)^{-1},e\right\}.
\end{equation}
For every $V\in E_p$, simultaneously for $0<t\le1$,
\begin{equation}\label{eq:capped-projection}
 \underline{\dim}_t\mu_V=\overline{\dim}_t\mu_V=\left(\frac1{p}+\frac1{at}\right)^{-1}.
\end{equation}
The set $E_p$ has positive finite
$\mathcal H^{m(n-m-1)+p}$-measure.
For $a=\infty$, both source functions are identically $e$ and
both image functions on $E_p$ are identically $p$. This holds for
every $0<p\le\min\{e,m\}$ and prescribed $e<q\le\infty$.
\end{lemma}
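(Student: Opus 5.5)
The plan is to modify the reference measure of Lemma~\ref{lem:arithmetic-reference} by changing only its thin component, and then run the layered construction \eqref{eq:sharp-mu} exactly as in the proof of Theorem~\ref{thm:finite-graded-sharp}. The regular component $\sigma_A=v+A$ stays the same. Consequently every $V\in E_p$ still conceals the thin points and the marked point $0$ inside $P_V(v+A)$, through the identities \eqref{eq:exact-projected-inclusion}. The new feature is a cap $e$ on the source function, and I would produce it with a second family of atoms attached to each layer. In the $j$th layer I would use two marked points. The first is the point $0$, carrying the mass $1/(N_j+1)$ and the geometric thin tail $\tau_q$ as before; this keeps $\dim_{\mathrm{qA}}\mu=q$. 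The second is an extra cloud of $M_j$ atoms at the thin positions $B^{-i}v$, with $M_j\sim R_j^{-e}$ but total weight scaled so that each atom has mass comparable to $R_j^{e}$. The intent is that the least-massive points of every layer have mass at least of order $R_j^{e}$ up to logarithms. The arithmetic in Lemma~\ref{lem:image-layers} then shows that the case-one estimate bounds every $\overline{\dim}_t\mu$ by $e$. Combined with the bound $(1/q+1/(at))^{-1}$ from the same lemma, this gives the upper half of \eqref{eq:capped-source}.

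The source lower bound comes from the witness argument of Lemma~\ref{lem:general-reference-layers}, via \eqref{eq:layer-witness-obstruction}. When $(1/q+1/(at))^{-1}\le e$, I would use the old marked point with $R_j\sim r^{d/a}$. When the cap is active, I would instead use an isolated capped atom at scale $R_j\sim r^{\gamma}$ with $\gamma$ slightly less than $t$. Its ball over $[r,r^t]$ contains only that atom, of mass $R_j^{e}$, and $R_j^{e}r^{-s}\to0$ whenever $s<e\gamma$; letting $\gamma\uparrow t$ does not reach $e$, so this choice alone fails. I therefore place the capped atoms in dense clusters of Minkowski type $e$ at scale $R_j$, for example the lattice $R_jN^{-1/e}\mathbb Z$-pieces used in Proposition~\ref{prop:exact-strict-lower}. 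This makes the local dimension exactly $e$ at every scale between $R_j^{1+o(1)}$ and $R_j$. The same computation as for $(1/q+1/(at))^{-1}$ then gives $\underline{\dim}_t\ge e$ whenever the cap is active. The growth dimension stays at most $q$ because $e<q$, and $\dim_{\mathrm A}\mu=\infty$ follows from the layer-mass argument exactly as in Lemma~\ref{lem:general-reference-layers}.

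For the projections I would keep the capped clusters on the line $\R v$ inside the thin region, so that for $V\in E_p$ they also project into $P_V(v+A)$ by \eqref{eq:exact-projected-inclusion}. Then \eqref{eq:concealed-reference-image} and \eqref{eq:concealed-reference-witness} hold unchanged, and Lemma~\ref{lem:cloud-transfer} gives \eqref{eq:capped-projection}. The hypothesis $(1/p+1/a)^{-1}\le e$ guarantees that the projected function does not exceed the source function, which is consistent with \eqref{eq:lipschitz}. The statement $0<\mathcal H^{m(n-m-1)+p}(E_p)<\infty$ is \eqref{eq:exact-compact-direction-set}. For $a=\infty$, I would take $N_j=\lceil e^{j^2}\rceil$; the source function becomes $\min\{q,e\}=e$ and the image function on $E_p$ becomes $p$.

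I expect the main obstacle to be choosing the capped clusters so that they cap the upper dimension at $e$ and witness the lower dimension at exactly $e$, while neither raising the quasi-Assouad dimension above $q$ nor spoiling concealment under $P_V$. Cluster points must lie on $\R v$ to be concealed, but $e$ may exceed $1$. If so, I would instead spread the cluster inside $\R v\oplus\R^m$ along $v+(1-t)A$-type slices, where concealment is automatic for all $V\in E_p$.
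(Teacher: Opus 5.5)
There is a genuine gap. The capped thin component is the central new ingredient, and it is never actually constructed. Both devices you propose normalize the cap relative to the layer instead of relative to the distance from the marked point.

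\emph{Why the cap fails at the marked point.} The difficulty sits at $x_j^*=T_j0$, which must be capped and must also serve as the witness. Put $d=(1/q+1/(at))^{-1}$ and let the cap be active, $e<d$; then $e<d\le at\le a$. So your premise that the least massive points of each layer have mass $\gtrsim R_j^e$ is false. The marked atom, the $\tau_q$ atoms near it and the regular samples keep mass $\approx j^{-2}N_j^{-1}\ll j^{-2}R_j^e$. (The samples are harmless, since $(1/p+1/(at))^{-1}\le(1/p+1/a)^{-1}\le e$; that, rather than consistency with \eqref{eq:lipschitz}, is what the hypothesis does in the proof.) Suppose $B(x_j^*,u)$ carries only $j^{-2}\max\{N_j^{-1},(u/\ell_j)^q\}$. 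Then $u=r$ gives mass $\gtrsim u^e$ only when $R_j\gtrsim r^{e/a}$, and $u=r^t$ only when $R_j\lesssim r^{t(1-e/q)}$. Since $e<d$, the layers with $r^{t(1-e/q)}<R_j<r^{e/a}$ violate $\overline{\dim}_t\mu\le e$ by a power of $r$.

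\emph{What is forced.} The witness at $x_j^*$, in turn, needs $\mu(B(x_j^*,u))=o(u^s)$ on $[r,r^t]$ for $s$ arbitrarily close to $e$. Together these force, up to logarithms,
\[
 \mu(B(x_j^*,u))\approx j^{-2}\max\{N_j^{-1},(u/\ell_j)^q,u^e\},\qquad 0<u\le c\,\ell_j .
\]
Here the $e$-term is in absolute units, and there is no $q$-term if $q=\infty$. One also needs a lower bound $\gtrsim j^{-2}\max\{N_j^{-1},u^e\}$ about every thin atom.

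\emph{Why your devices do not give this.} Atoms of fixed mass $\approx j^{-2}R_j^e$ cannot do it. For finite $a$, the cap at $t=1$ already forces one within distance $R_j^{a/e-o(1)}$ of $x_j^*$. After that, the witness at $x_j^*$ reaches only $\max\{te,e^2/a\}<e$ for $t<1$. Stacking $M_j\sim R_j^{-e}$ of them at the points $B^{-i}v$ is worse: it puts mass $\approx j^{-2}$ into every polynomially small ball about $x_j^*$. Clusters with $\mu(B(y,u))\approx j^{-2}(u/R_j)^e$ are too heavy by the factor $R_j^{-e}$. Around $x_j^*$, the witness condition becomes $s/a<\gamma<t(1-s/e)$ for $R_j\sim r^\gamma$, which reaches only $(1/e+1/(at))^{-1}<\min\{d,e\}$. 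Placed elsewhere, they leave $x_j^*$ uncapped.

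\emph{The missing idea.} Your overall architecture (unchanged regular component, layering, concealment) matches the paper's. What is missing is to obtain the profile above by re-weighting the geometric tail itself, keeping every atom at $0$ or at some $B^{-k}v$. The construction is as follows.
\begin{enumerate}
\item[(a)] Set $F_j(1)=1$ and $F_j(\rho)=\max\{\rho^q,\ell_j^e\rho^e\}$ for $\rho<1$, keeping only the second term when $q=\infty$.
\item[(b)] Let $k_j$ be the first $k$ with $F_j(B^{-k})\le(N_j+1)^{-1}$.
\item[(c)] Give $B^{-k}v$, for $1\le k\le k_j$, the mass $F_j(B^{-(k-1)})-F_j(B^{-k})$. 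Give $0$ the mass $(N_j+1)^{-1}+F_j(B^{-k_j})$, and normalize.
\end{enumerate}
Then $B(0,\rho)$ has mass $\approx\max\{(N_j+1)^{-1},F_j(\rho)\}$.

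The factor $\ell_j^e$ makes the $e$-term absolute after scaling by $T_j$. Since $F_j(B^{-k})\le B^{-e}F_j(B^{-(k-1)})$, the atom at $B^{-k}v$ has layer mass $\gtrsim(\ell_jB^{-k})^e$. This gives the lower bound at every thin atom.

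Only mass exponents matter, so atoms on a line realize any $e>0$. Thus $e>1$ is no obstruction, and concealment for $V\in E_p$ is exactly \eqref{eq:exact-projected-inclusion}. Your fallback off the line would not be automatically concealed. For $V=V_{w,A_0}$ and $y\in U$, one has $P_V(\alpha v+y)\in P_V(v+A)$ if and only if $y+(1-\alpha)w\in A$, and this must hold for every $w\in S$.

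\emph{Remaining steps.} With this $F_j$, the inequality $F_j(R)\le(R/u)^qF_j(u)$ for $e<q$ gives the $q$-growth. For finite $q$, the reverse bound uses radii at which the $q$-term dominates the other two. Finally, the thin part now depends on $j$, so Lemma~\ref{lem:cloud-transfer} does not apply verbatim. Its lower estimate must come from concealment and the sampling threshold. Its upper witness must come from a regular sample point in a fixed ball that misses the projected thin set.
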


\begin{proof}
\emph{Construction.}
Use the layer maps in \eqref{eq:sharp-layers}, with axis $v$,
and decrease $\ell_j$ by a fixed factor if necessary to fit the
reference supports. Put
\[
 N_j=\begin{cases}
 \lceil R_j^{-a}\rceil,&a<\infty,\\[3pt]
 \lceil e^{j^2}\rceil,&a=\infty,
 \end{cases}
 \qquad M_j=(N_j+1)^{-1}.
\]
Let $\eta^A_{N_j}$ be relative samples of the translate
$\sigma_A$ of $\nu_A$ to $v+A$, with a fixed marked point in
$v+S$. These samples are chosen for the common quadratic range
class of Lemma~\ref{lem:relative-clouds}.

Set $F_j(1)=1$ and, for $0\le\rho<1$, define
\[
 F_j(\rho)=\begin{cases}
 \max\{\rho^q,\ell_j^e\rho^e\},&q<\infty,\\[3pt]
 \ell_j^e\rho^e,&q=\infty.
 \end{cases}
\]
With the same $B\ge16$ as in Lemma~\ref{lem:arithmetic-reference}, put
\[
 k_j=\min\{k\ge1:F_j(B^{-k})\le M_j\}
\]
and
\[
 \lambda_j=\frac1{1+M_j}\left[
 \bigl(M_j+F_j(B^{-k_j})\bigr)\delta_0+
 \sum_{k=1}^{k_j}
 \bigl(F_j(B^{-(k-1)})-F_j(B^{-k})\bigr)\delta_{B^{-k}v}
 \right].
\]
The coefficients telescope, so $\lambda_j$ is a probability.
For $1\le k\le k_j$,
\[
 F_j(B^{-k})\le B^{-e}F_j(B^{-(k-1)}),\qquad
 F_j(B^{-(k-1)})>M_j.
\]
Thus every atom has mass at least $cM_j$, while the atom at $0$
has mass between $M_j/(1+M_j)$ and $2M_j/(1+M_j)$.
Define
\[
 \omega_j=\tfrac12(\eta^A_{N_j}+\lambda_j),\qquad
 \mu=Z^{-1}\sum_{j\ge2}j^{-2}(T_j)_\#\omega_j.
\]
Each layer is finite, every layer atom has mass at least $c/N_j$,
and the separation and origin-tail estimates
\eqref{eq:layer-sep} and \eqref{eq:ball-0} remain valid.
In particular, $\mu$ has countable compact support.

\emph{Ball estimates for the thin component.}
For $q<\infty$, geometric spacing gives
\begin{equation}\label{eq:capped-thin-balls}
 \lambda_j(B(z,u))
 \sim\max\{M_j,F_j(\max\{u,|z|\})\}
 \quad(z\in\spt\lambda_j,\ 0<u\le1).
\end{equation}
For $u<|z|/2$, use the atom at $z$; for larger $u$, sum the
geometric tail. At $z=0$, the added atom is comparable to the
remaining tail, since
$B^{-q}M_j\le F_j(B^{-k_j})\le M_j$.
Writing $x_j^*=T_j0$, scaling gives
\begin{equation}\label{eq:capped-physical-balls}
 (T_j)_\#\lambda_j(B(x_j^*,u))
 \sim\max\{N_j^{-1},(u/\ell_j)^q,u^e\},
 \quad0<u\le\ell_j,\ q<\infty.
\end{equation}
For $q=\infty$, the first thin atom has mass comparable to one.
Below its scale, the same tail calculation gives
\begin{equation}\label{eq:capped-infinite-balls}
 (T_j)_\#\lambda_j(B(x_j^*,u))
 \sim\max\{N_j^{-1},u^e\},
 \quad0<u<\ell_j/B.
\end{equation}
For all $q$, we also have
\begin{equation}\label{eq:capped-uniform-e}
 (T_j)_\#\lambda_j(B(T_jz,u))
 \gtrsim\max\{N_j^{-1},u^e\},
 \quad z\in\spt\lambda_j,\ 0<u\le\ell_j.
\end{equation}
Indeed, if $u<\ell_j|z|$, the centre atom gives the $u^e$ bound.
If $u\ge\ell_j|z|$, its ball contains the positive-axis part
of the marked-point ball of radius $u$. The atomic lower bound
supplies the other term.

\emph{Growth dimensions.}
Suppose first that $q<\infty$. Since $e<q$,
\[
 F_j(R)\le(R/u)^qF_j(u),\quad0<u\le R\le1.
\]
Equation~\eqref{eq:capped-thin-balls} therefore gives a uniform
$q$-growth estimate for $\lambda_j$. For $\eta^A_{N_j}$, use
\eqref{eq:logarithmic-regular-reference} and relative sampling.
The two reference supports are separated and $p\le q$.
Consequently their mixture satisfies a $q$-growth estimate, with
only polynomial losses in $j$ and $1+|\log u|$.

To pass to $\mu$, take $x$ in layer $j$. As in the proof of
Lemma~\ref{lem:general-reference-layers}, the four ranges give
\[
 \frac{\mu(B(x,R))}{\mu(B(x,u))}\lesssim
 \begin{cases}
 j^C(1+|\log u|)^C(R/u)^q,&R<R_j/4,\\[3pt]
 j^C(1+|\log u|)^C(R_j/u)^q,&u<R_j/4\le R,\\[3pt]
 j^2,&R_j/4\le u\le4R_j,\\[3pt]
 1+|\log u|,&u>4R_j.
 \end{cases}
\]
The second estimate uses the same-layer lower mass bound; the
third uses the whole layer, and the fourth uses the origin tail.
Since $j\lesssim1+|\log u|$ in the first three ranges,
\[
 \frac{\mu(B(x,R))}{\mu(B(x,u))}
 \lesssim(1+|\log u|)^C(R/u)^q,
 \quad x\in\spt\mu,\ 0<u\le R<r_0.
\]
At $x=0$, use \eqref{eq:ball-0}. Absorbing the logarithmic
factor at $u\le R^{1+\delta}$ proves
$\dim_{\mathrm{qA}}\mu\le q$.

For the reverse inequality, choose small $\delta,\xi>0$ with
\[
 (1+\xi)(1+\delta)<\frac q{q-e},\qquad
 q\bigl((1+\xi)(1+\delta)-1\bigr)<a,
\]
where the second condition is automatic for $a=\infty$.
Put $\rho_j=R_j^{1+\xi}$ and $u_j=\rho_j^{1+\delta}$.
The displayed inequalities ensure that the $q$-term dominates
both other terms in \eqref{eq:capped-physical-balls}, at each
of the two radii. Neither ball meets another layer or the regular
component. Hence
\[
 \frac{\mu(B(x_j^*,\rho_j))}{\mu(B(x_j^*,u_j))}
 \sim\frac{(\rho_j/\ell_j)^q}{(u_j/\ell_j)^q}
 =(\rho_j/u_j)^q.
\]
This works for every sufficiently small $\delta>0$ and proves
$\dim_{\mathrm{qA}}\mu=q$.

If $q=\infty$, use $\rho_j=2\ell_j/B$ and
$u_j=\rho_j^{1+\delta}$. The outer ball contains the first thin
atom, whereas \eqref{eq:capped-infinite-balls} applies to the
inner ball. Thus
\[
 \frac{\mu(B(x_j^*,\rho_j))}{\mu(B(x_j^*,u_j))}
 \gtrsim\frac1{N_j^{-1}+u_j^e},\qquad
 H(\delta)\ge\frac{\min\{a,e(1+\delta)\}}\delta.
\]
For exponential counts, the minimum is $e(1+\delta)$.
Letting $\delta\downarrow0$ gives infinite quasi-Assouad dimension.
Finally, for every $q$,
\[
 \mu(B(x_j^*,R_j/4))=Z^{-1}j^{-2},\qquad
 \mu(B(x_j^*,2R_j))\gtrsim j^{-1}.
\]
The radius ratio is fixed and the mass ratio tends to infinity,
so $\dim_{\mathrm A}\mu=\infty$.

\emph{Upper bounds for the source dimensions.}
On the thin component, \eqref{eq:capped-uniform-e}, the mass of
a whole layer, and the origin tail give, respectively,
\[
 \mu(B(x,u))\gtrsim
 \begin{cases}
 j^{-2}u^e,&u<\ell_j,\\[3pt]
 j^{-2},&\ell_j\le u\le4R_j,\\[3pt]
 (1+|\log u|)^{-1},&u>4R_j.
 \end{cases}
\]
These bounds imply upper Minkowski dimension at most $e$ for
the thin component. For finite $q$, its lower $q$-power layer
bound and Lemma~\ref{lem:image-layers} also give the upper bound
$(1/q+1/(at))^{-1}$ at parameter $t$.

For $q=\infty$ and $a<\infty$, fix small $r>0$. If
$R_j\ge r^t/C_0$, the atom at any point of layer $j$ gives
$\mu(B(x,r))\gtrsim(1+|\log r|)^{-2}r^{at}$.
If $R_j<r^t/C_0$, take radius $r^t$ and use the origin tail.
This proves the additional upper bound $at$.

The regular sampled component has upper dimension at most
$(1/p+1/(at))^{-1}$ by Lemma~\ref{lem:image-layers}, and
\[
 \left(\frac1p+\frac1{at}\right)^{-1}
 \le\min\left\{\left(\frac1q+\frac1{at}\right)^{-1},e\right\}.
\]
Adding the two components proves the upper bound in
\eqref{eq:capped-source} for every $q$ and $a$.

\emph{Lower bounds for the source dimensions.}
Suppose first that $q,a<\infty$, fix $0<t\le1$, and put
\[
 d=\left(\frac1q+\frac1{at}\right)^{-1}.
\]
At every sufficiently small $r$, choose $R_j\sim r^{d/a}$.
Since $d/a<t$, all balls about $x_j^*$ with radii in $[r,r^t]$
miss the regular component and the other layers. For
$0<s<\min\{d,e\}$, \eqref{eq:capped-physical-balls} gives
\begin{align*}
 \sup_{r\le u\le r^t}\frac{\mu(B(x_j^*,u))}{u^s}
 &\lesssim(1+|\log r|)^C
 \left(r^{d-s}+r^{t(q-s)-qd/a}+r^{t(e-s)}\right)\\
 &=(1+|\log r|)^C
 \left(r^{d-s}+r^{t(d-s)}+r^{t(e-s)}\right)\longrightarrow0.
\end{align*}
Here $q(t-d/a)=td$.

For $q<\infty$, $a=\infty$, fix $0<s<e$, choose
$0<c<t(q-s)/q$, and take $R_j\sim r^c$. The same calculation
gives the bound
\[
 C(1+|\log r|)^C
 \left(e^{-j^2}r^{-s}+r^{t(q-s)-cq}+r^{t(e-s)}\right)
 \longrightarrow0.
\]
For $q=\infty$, $a<\infty$, fix $0<s<\min\{at,e\}$ and
$s/a<c<t$. Taking $R_j\sim r^c$ and applying
\eqref{eq:capped-infinite-balls} gives instead
\[
 C(1+|\log r|)^C
 \left(r^{ca-s}+r^{t(e-s)}\right)\longrightarrow0.
\]
For $q=a=\infty$, take $0<s<e$ and $0<c<t$; the first term
is $e^{-j^2}r^{-s}$. In every case $j\sim|\log r|$, all
displayed powers have positive exponents, and the estimates
hold at every sufficiently small $r$. They therefore give the
lower dimension bound in \eqref{eq:capped-source}.

\emph{Orthogonal projections.}
Fix $V\in E_p$, corresponding to $w\in S$. The identities
\[
 P_V(B^{-k}v)=P_V\bigl(v+(1-B^{-k})w\bigr),\qquad
 P_V0=P_V(v+w)
\]
place every projected thin point in $P_V(v+A)$. For a ball
centred there, the regular reference gives mass at least
$c_Vu^p/(1+|\log u|)^2$. If
$u^p\ge C_V[\log(2N_j)]^3/N_j$, this exceeds the relative
sampling threshold $C\log(2N_j)/N_j$, after increasing $C_V$.
For the remaining radii, the mass $c/N_j$ at every support
point suffices. Thus, including at images of thin points,
\[
 (P_V)_\#\omega_j(B(y,u))
 \gtrsim_V[\log(2N_j)]^{-3}\max\{N_j^{-1},u^p\},
 \quad y\in\spt((P_V)_\#\omega_j),\ 0<u\le1.
\]
After scaling, this verifies the hypothesis of
Lemma~\ref{lem:image-layers} with exponent $p$, and proves the
upper bound in \eqref{eq:capped-projection}.

For the lower bound, choose $y_*$ in $P_V(v+A)$ and outside
the compact countable set $P_V(\{0\}\cup\{B^{-k}v:k\ge1\})$.
Choose $\rho_V>0$ so that $B(y_*,4\rho_V)$ misses this set.
The regular reference gives $B(y_*,\rho_V)$ positive mass,
so the lower sampling estimate supplies a regular sample point
$y_{j,V}$ in this ball for every sufficiently large $j$.
The upper $p$-power bound for the reference holds at every
centre. Therefore the upper sampling estimate gives
\[
 (P_V)_\#\omega_j(B(y_{j,V},u))
 \lesssim_V u^p+\log(2N_j)/N_j,
 \quad0<u<\rho_V.
\]
These balls miss the thin component uniformly in $j$.
Since $P_Vv\ne0$, the projected layers are
separated for all sufficiently large $j$, including from the
finitely many earlier image points, as in
Lemma~\ref{lem:cloud-transfer}. Put
$z_j=R_jP_Vv+\ell_jy_{j,V}$.
For finite $a$, set $d=(1/p+1/(at))^{-1}$ and choose
$R_j\sim r^{d/a}$ at every small $r$. Since $d/a<t$, the
balls $B(z_j,u)$, $r\le u\le r^t$, meet only layer $j$ and
miss its thin component. For $0<s<d$, the sample estimate gives
\begin{align*}
 \sup_{r\le u\le r^t}\frac{\mu_V(B(z_j,u))}{u^s}
 &\lesssim_V(1+|\log r|)^C
       \left(N_j^{-1}r^{-s}+\ell_j^{-p}r^{t(p-s)}\right)\\
 &\lesssim_V(1+|\log r|)^C
       \left(r^{d-s}+r^{t(d-s)}\right)\longrightarrow0,
\end{align*}
where $p(t-d/a)=td$.
For $a=\infty$, fix $0<s<p$ and $0<c<t(p-s)/p$, and take
$R_j\sim r^c$. The corresponding bound is
\[
 C_V(1+|\log r|)^C
 \left(e^{-j^2}r^{-s}+r^{t(p-s)-cp}\right)\longrightarrow0.
\]
Both estimates hold at every sufficiently small $r$. They prove
the matching lower dimensions, and hence
\eqref{eq:capped-projection} for all $0<t\le1$.
\end{proof}

Taking $a=\infty$ in the preceding lemma yields constant source
functions below the prescribed growth dimension. We first compute
their exceptional sets.

\begin{corollary}[Constant source values below the growth dimension]
\label{cor:capped-constant-sharp}
Let $1\le m<n$, $0<e<q\le\infty$, and $0<d\le\min\{e,m\}$.
For every $0<p<d$, there is a probability measure $\mu_p$ with countable
compact support in $\R^n$ such that
\[
 \dim_{\mathrm{qA}}\mu_p=q,\qquad\dim_{\mathrm A}\mu_p=\infty,
 \qquad\underline{\dim}_t\mu_p=\overline{\dim}_t\mu_p=e,
 \quad0<t\le1,
\]
and
\[
 \begin{aligned}
 \dim_{\mathrm H}\{V:\overline{\dim}_t(\mu_p)_V<d\}
 &=m(n-m-1)+p,\\
 \dim_{\mathrm H}\{V:\underline{\dim}_t(\mu_p)_V<d\}
 &=m(n-m-1)+p,
 \end{aligned}
 \quad0<t\le1.
\]
The same dimensions hold when the image dimension is replaced by
its limit as $t\downarrow0$. These exceptional sets contain a common
compact set of positive finite $\mathcal H^{m(n-m-1)+p}$-measure
on which both image functions are identically $p$.

For every $0<t\le1$, both non-strict sublevels at $p$, and the set
where both entire image functions are identically $p$, have Hausdorff
dimension $m(n-m-1)+p$ and positive Hausdorff measure at that exponent.
Thus the entire-source exceptional exponent $d$ is sharp with the
constant source function $e$ and the growth dimension $q>e$ fixed.
\end{corollary}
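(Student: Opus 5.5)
The plan is to take $\mu_p$ to be the measure of Lemma~\ref{lem:capped-layers} with $a=\infty$, growth dimension $q$, cap $e$, and the given $p$. Since $p<d\le\min\{e,m\}$, the hypotheses $0<p\le\min\{q,m\}$ and $(1/p+1/\infty)^{-1}=p\le e$ hold. The lemma then gives $\dim_{\mathrm{qA}}\mu_p=q$ and $\dim_{\mathrm A}\mu_p=\infty$. It also gives both source functions identically $e$, because the minimum in \eqref{eq:capped-source} is $\min\{q,e\}=e$. Finally, for every $V$ in the compact set $E_p$, both image functions are identically $p$, and $E_p$ has positive finite $\mathcal H^{m(n-m-1)+p}$-measure.

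\emph{Lower bounds.} Since $p<d$, the set $E_p$ is contained in the strict sublevels $\{\overline{\dim}_t(\mu_p)_V<d\}$ and $\{\underline{\dim}_t(\mu_p)_V<d\}$ for every $t$. It is also contained in the corresponding sublevels of the limits as $t\downarrow0$, since the image functions are constant equal to $p$ on $E_p$. By the same reasoning, $E_p$ lies in the non-strict sublevels at $p$ and in the set where both entire image functions are identically $p$. Each of these sets therefore has dimension at least $m(n-m-1)+p$ and positive measure at that exponent.

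\emph{Upper bounds.} Here I do not expect the exceptional-set theorems with threshold $d$ to be sharp for this measure. Those theorems give only the exponent $m(n-m-1)+d$, because $\kappa_d=0$ forces $b_*=d$. The statement asks for $m(n-m-1)+p$, so I use the threshold structure of $\mu_p$ directly. The claim is that every $V\notin E_p$ preserves both image functions at value at least $d$, or at least lies outside all the sublevels in question. The natural route is to mimic the almost-everywhere part of the proof of Theorem~\ref{thm:finite-graded-sharp}. Outside $E_p$, either $P_V|_U$ is degenerate, which is a lower-dimensional set, or $w_V=-(P_V|_U)^{-1}P_Vv\notin S$. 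In the second case the thin atoms are not concealed by $P_V(v+S)$.

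This is where the geometry of $A=\bigcup_j c_jS$ must be handled. The set of $V$ with $w_V\in A$ is a countable union of dilates of $S$ times $Q$. Its dimension is therefore $m(n-m-1)+p$, and it is $\sigma$-finite. For those $V$, the image functions are at least the lower bound furnished by \eqref{eq:capped-uniform-e}-type thin estimates, or equal $p$. For $V$ with $w_V\notin A$, the images of the thin points $0$ and $B^{-k}v$ stay separated from $P_V(v+A)$ at scales comparable to their heights. So the projected thin component keeps $u^e$-type small mass near the image of the marked point. The source lower-bound computation in the proof of Lemma~\ref{lem:capped-layers} should then transfer and give image dimensions at least $\min\{e,m\}\ge d$.

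The main obstacle is this last transfer. It requires uniform control over how close $P_V(B^{-k}v)$ comes to $P_V(v+A)$ as $k$ varies, together with the constants $c_V$. If a clean transfer fails, the fallback is to stratify the complement of $\{w_V\in A\}$ by the distance from $w_V$ to $A$, and to get the preservation bound on each stratum, where the distance is bounded below. The non-strict sublevel and identically-$p$ claims then follow because they lie in the strict sublevel at $d$. The limit version follows by monotonicity in $t$.
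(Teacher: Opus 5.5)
Your construction and your lower bounds via $E_p$ are the paper's. You are also right that Theorem~\ref{thm:exception} gives only the exponent $m(n-m-1)+d$ here, because $\kappa_d=0$.

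\textbf{The gap: the degenerate directions.} You discard the directions where $P_V|_U$ is not invertible as ``lower-dimensional''. That set is $\{V:V^\perp\cap U\neq\{0\}\}$, a codimension-one Schubert locus of dimension $m(n-m)-1$. This exceeds the target $m(n-m-1)+p$ whenever $p<m-1$, which the hypotheses allow for every $m\ge2$. For instance, take $n=4$, $m=3$, $p=1/2$. The degenerate directions are those with $V^\perp\subset U$, a set of dimension $2$, while the claimed exceptional dimension is $1/2$. So your bad set $\{P_V|_U\text{ degenerate}\}\cup\{w_V\in A\}$ is too large, and you give no argument for the degenerate directions. In fact they are preserved: for $V^\perp\subset U$ one has $P_Vv=v\neq0$ and $0\notin P_V(v+A)$.

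\textbf{The fix.} The paper never parametrizes by $w_V$. It uses the intrinsic set $\mathcal B_p=\{V:P_Vv=0\}\cup\{V:0\in P_V(v+A)\}$. The first family is $\{V\subset v^\perp\}\cong G(n-1,m)$. The second is a countable union of bundles over the $p$-regular sets $v+c_jS$, with fibres $\{V\subset y^\perp\}\cong G(n-1,m)$. Hence $\dim_{\mathrm H}\mathcal B_p\le m(n-m-1)+p$, and invertibility of $P_V|_U$ is never used.

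\textbf{The step you call the main obstacle is not needed.} The separation you aim for is also false in general. Indeed $P_V(B^{-k}v)\in P_V(v+A)$ exactly when $(1-B^{-k})w_V\in A$, and this can happen with $w_V\notin A$. For $V\notin\mathcal B_p$ only two facts are used: $P_Vv\neq0$ and $\operatorname{dist}(0,P_V(v+A))>0$. The argument then runs as follows.
\begin{enumerate}
\item Take the witness $y_j=R_jP_Vv$, the image of the marked point, with $R_j\sim r^c$ and $0<c<t(q-s)/q$ (any $0<c<t$ if $q=\infty$).
\item Then $r^t=o(\ell_j)$. In the rescaled reference picture, the balls $B(y_j,u)$ with $r\le u\le r^t$ shrink to $0$. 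They meet only thin atoms $B^{-k}P_Vv$ with large $k$, never $P_V(v+A)$ or other layers.
\item Since $P_V$ acts on $\R v$ as a dilation, $(\mu_p)_V(B(y_j,u))\lesssim_V j^{-2}(e^{-j^2}+(u/\ell_j)^q+u^e)$. For $q=\infty$, use \eqref{eq:capped-infinite-balls} and drop the middle term.
\item The source computation in Lemma~\ref{lem:capped-layers} then gives $\underline{\dim}_t(\mu_p)_V\ge e$. No $\min\{e,m\}$ enters.
\item Lipschitz monotonicity makes both image functions equal to $e\ge d>p$ off $\mathcal B_p$, also in the limit $t\downarrow0$.
\end{enumerate}
So every set in the statement lies in $\mathcal B_p$, and no stratification by $\operatorname{dist}(w_V,A)$ is required.
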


\begin{proof}
\emph{Construction and lower bounds.}
Apply Lemma~\ref{lem:capped-layers} with $a=\infty$. Its
hypotheses hold because $p<d\le e$. The resulting measure has
the required source data, and
\[
 \underline{\dim}_t(\mu_p)_V
 =\overline{\dim}_t(\mu_p)_V=p,
 \quad V\in E_p,\ 0<t\le1.
\]
Since $p<d$, $E_p$ lies in all the strict sublevels, their
limiting versions, the non-strict sublevels at $p$, and the
common equality set. Its positive finite
$\mathcal H^{m(n-m-1)+p}$-measure gives every asserted lower bound.

\emph{Upper bounds.}
Set
\[
 \mathcal B_p=\{V:P_Vv=0\}\cup\{V:0\in P_V(v+A)\}.
\]
For each nonzero $x\in\R^n$, the family $\{V:P_Vx=0\}$ is
isometric to $G(n-1,m)$ and has dimension $m(n-m-1)$.
Since $A=\bigcup_{j\ge0}c_jS$, the second family is a countable
union of bundles over the $p$-regular sets $c_jS$, each with
fibre $G(n-1,m)$. Since each $c_jS$ is Ahlfors regular,
its product with a compact smooth fibre has dimension
$p+m(n-m-1)$. Countably many smooth local trivializations give
\[
 \dim_{\mathrm H}\mathcal B_p\le m(n-m-1)+p.
\]

Fix $V\notin\mathcal B_p$. Then $P_Vv\ne0$ and
$\operatorname{dist}(0,P_V(v+A))>0$. In every sufficiently
late layer, small balls about $y_j=R_jP_Vv$ therefore meet only
the projected thin component. For $q<\infty$, its marked-point
bound is
\[
 (\mu_p)_V(B(y_j,u))
 \lesssim_V j^{-2}\left(e^{-j^2}+(u/\ell_j)^q+u^e\right).
\]
Fix $0<t\le1$ and $0<s<e$, choose $0<c<t(q-s)/q$, and take
$R_j\sim r^c$ at each small $r$. As in the preceding proof,
\[
 \sup_{r\le u\le r^t}
 \frac{(\mu_p)_V(B(y_j,u))}{u^s}
 \lesssim_V(1+|\log r|)^C
 \left(e^{-j^2}r^{-s}+r^{t(q-s)-cq}+r^{t(e-s)}\right)
 \longrightarrow0.
\]
For $q=\infty$, take any $0<c<t$ and use
\eqref{eq:capped-infinite-balls}; the middle term is omitted.
Thus every lower intermediate dimension of the image is at
least $e$. Lipschitz monotonicity gives
\[
 \underline{\dim}_t(\mu_p)_V
 =\overline{\dim}_t(\mu_p)_V=e,
 \quad V\notin\mathcal B_p,\ 0<t\le1.
\]
The limits as $t\downarrow0$ also equal $e$.
Since $p<d\le e$, all sets in the statement are contained in
$\mathcal B_p$. This proves their upper bounds. Finally,
letting $p\uparrow d$ proves sharpness while keeping $e$ and
$q$ fixed.
\end{proof}

We next combine a capped construction with an uncapped one. Their
source functions cross, making information at an earlier parameter
strictly improve the exceptional exponent.

\begin{corollary}[Attainment of the improved exceptional exponent]
\label{cor:nonharmonic-sharp}
Let $1\le m<n$ and write $k=m(n-m-1)$.
For every $0<p\le2/3$ there is a probability measure with countable compact
support $\mu_p$ such that
$\dim_{\mathrm{qA}}\mu_p=2$,
$\dim_{\mathrm A}\mu_p=\infty$, and both source functions equal
\begin{equation}\label{eq:nonharmonic-source}
 D(t)=
 \max\left\{\min\left\{\frac{2t}{1+t},\frac12\right\},
                    \frac{2t}{2+t}\right\}
 =
 \begin{cases}
  2t/(1+t),&0<t\le1/3,\\
  1/2,&1/3<t\le2/3,\\
  2t/(2+t),&2/3<t\le1.
 \end{cases}
\end{equation}
There is a compact set $E_p\subset G(n,m)$ with
$0<\mathcal H^{k+p}(E_p)<\infty$ on which both image functions are
\begin{equation}\label{eq:nonharmonic-image}
 \underline{\dim}_t(\mu_p)_V
 =\overline{\dim}_t(\mu_p)_V
 =\left(\frac1p+\frac1{2t}\right)^{-1},
 \quad0<t\le1.
\end{equation}
If $p<2/3$, then
\[
 \begin{aligned}
 \dim_{\mathrm H}\{V:\overline{\dim}_1(\mu_p)_V<1/2\}&=k+p,\\
 \dim_{\mathrm H}\{V:\underline{\dim}_1(\mu_p)_V<1/2\}&=k+p.
 \end{aligned}
\]
For $p=2/3$, both non-strict sublevels at $1/2$ have Hausdorff
dimension $k+2/3$ and positive $\mathcal H^{k+2/3}$-measure.
The set where both image functions equal
$t\mapsto(1/p+1/(2t))^{-1}$ for every $0<t\le1$
has dimension $k+p$ and positive $\mathcal H^{k+p}$-measure
for every $0<p\le2/3$.

At $\theta=1$ and $d=1/2$, one has $D(1)=2/3$ and
\[
 b=1,\qquad \kappa_d=1/3,\qquad b_*=2/3.
\]
Consequently the strictly improved bound
\eqref{eq:spectral-graded-exception} is sharp for this fixed
nonharmonic source function.
\end{corollary}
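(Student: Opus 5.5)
The plan is to combine two layered measures of the kinds already built: a capped measure from Lemma~\ref{lem:capped-layers} and an uncapped one from Lemma~\ref{lem:general-reference-layers}. The two pieces are placed in disjoint, well-separated regions, say along different axes $v$ and $v'$ with layers at comparable scales $R_j$, and their probabilities are averaged.

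The source dimension $D(t)$ is the maximum of two functions. The first is $\min\{2t/(1+t),1/2\}=\min\{(1/2+1/(1\cdot t))^{-1},e\}$, which is the capped function with $q=2$, $a=1$, $e=1/2$. The second is $2t/(2+t)=(1/2+1/(2t))^{-1}$, which is the uncapped function with $q=2$, $a=2$.

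For a mixture of two measures with separated supports, each intermediate dimension is the maximum of the two dimensions. The upper bound holds because each piece supplies good balls at its own points. The lower bound holds because a witness point of either piece has small mass in the mixture, since separation means balls about it see only its own piece. So both source functions equal $D$.

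The growth data also follow from the pieces. The quasi-Assouad dimension is $\max\{2,2\}=2$, because ratios of masses of balls about a point of one piece only involve that piece. The Assouad dimension is $\infty$ already from either piece.

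For the projections I use the capped construction, which carries the regular set $A$ with $p\le\min\{q,m\}$. Lemma~\ref{lem:capped-layers} needs $(1/p+1/a)^{-1}\le e$. With $a=2$ and $e=1/2$ this reads $2p/(2+p)\le 1/2$, that is $p\le 2/3$. So I will put the capping and hiding into the $a=2$ component, with $e=2/3$ chosen so that the capped function $\min\{2t/(2+t),2/3\}$ equals $2t/(2+t)$ on all of $(0,1]$. The other component is an uncapped-style capped measure with $a=1$, $e=1/2$ and no hiding.

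Since $p\le 2/3\le\min\{q,m\}$ when $m\ge1$, Lemma~\ref{lem:capped-layers} gives the following for the $a=2$ piece: for $V\in E_p$, the image functions equal $(1/p+1/(2t))^{-1}$.

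For the $a=1$ piece I need its image on $E_p$ to have dimension at most $(1/p+1/(2t))^{-1}$, since the mixture's image dimension is the maximum. This is the main obstacle. The generic image of that piece has dimension $\min\{2t/(1+t),1/2\}$, which is too large. So the $a=1$ piece must also be hidden along the same directions.

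My first attempt is to build it from the same reference set $A$ and the same family $E_p$, with its thin atoms placed on the same axis geometry relative to $v+A$, and with sample counts $N_j=R_j^{-1}$. Lemma~\ref{lem:image-layers} then gives image dimension at most $(1/p+1/t)^{-1}\le(1/p+1/(2t))^{-1}$. The regular sampled component of this piece also has source dimension $(1/p+1/t)^{-1}\le D$, so the source computation is unaffected.

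With the image formula~\eqref{eq:nonharmonic-image} established on $E_p$, the exceptional-set statements follow.

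The lower bounds come from $E_p$, using $0<\mathcal H^{k+p}(E_p)<\infty$ and the fact that $(1/p+1/2)^{-1}<1/2$ exactly when $p<2/3$. When $p=2/3$ the value is exactly $1/2$, which gives the non-strict sublevel.

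For the upper bounds I have two routes. The first is Theorem~\ref{thm:exception} applied at $\theta=1$, $d=1/2$. Here $\kappa_d=1/3$, because $D(t)\ge 1/2$ exactly when $t\ge1/3$. This gives
\[
b_*=\Bigl[2-\tfrac13(2-\tfrac12)\Bigr]^{-1}=\tfrac23,
\]
which bounds the strict sets by $k+2/3$.

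For the sharper value $k+p$ when $p<2/3$, the second route is the argument of Corollary~\ref{cor:capped-constant-sharp}. Outside a bad set $\mathcal B_p$ of dimension at most $k+p$, the marked thin points of both pieces are not concealed. Witness points at those thin atoms then give image dimension at least $D(1)=2/3>1/2$.

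Finally, $b=(2-3/2+1/2)^{-1}=1$. The equality-set claim combines the lower bound from $E_p$ with the non-strict upper bound, taking $d\downarrow(1/p+1/2)^{-1}$ in Theorem~\ref{thm:exception}.
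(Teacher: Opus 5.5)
Your parameter bookkeeping is inverted, and the measure you actually specify does not have source function $D$. Since $(1/2+1/t)^{-1}=2t/(2+t)$ and $(1/2+1/(2t))^{-1}=2t/(1+t)$, the branch $\min\{2t/(1+t),1/2\}$ comes from Lemma~\ref{lem:capped-layers} with $(q,a,e)=(2,2,1/2)$, and the branch $2t/(2+t)$ comes from $a=1$. Your two pieces, $(q,a,e)=(2,2,2/3)$ and $(2,1,1/2)$, have source functions $\min\{2t/(1+t),2/3\}$ and $\min\{2t/(2+t),1/2\}$. Their maximum is $\min\{2t/(1+t),2/3\}$, which exceeds $D$ on $(1/3,1)$; for example, it equals $2/3$ at $t=1/2$, where $D(1/2)=1/2$. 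Your restriction $p\le 2/3$ is also inconsistent with your choice. You derived it with $e=1/2$ on the $a=2$ piece, but you then impose the cap $e=2/3$, which only requires $p\le 1$. The correct assignment, which is the paper's, is the reverse. The concealed capped piece is $(q,a,e)=(2,2,1/2)$: its hypothesis $(1/p+1/2)^{-1}\le 1/2$ is exactly $p\le 2/3$, and its image on $E_p$ is $(1/p+1/(2t))^{-1}$. The other piece is the uncapped concealed layered measure of Theorem~\ref{thm:finite-graded-sharp} with $(q,a)=(2,1)$ and the same $S,A,v,E_p$. Its source is $2t/(2+t)$ and its image on $E_p$ is the smaller $(1/p+1/t)^{-1}$. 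With this swap your outline becomes the paper's proof.

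Two smaller points. First, the maximum rule for the images on $E_p$ needs the two \emph{projected} supports to be separated for every $V\in E_p$. The paper achieves this by translating the second piece by $Lv$ with $L$ large, using $\inf_{V\in E_p}|P_Vv|>0$. Second, for the upper bounds your incidence set must also contain $\bigcup_{x\in K^{(1)}}\{V:P_V(x-Lv)=0\}$. This keeps the marked points of the uncapped piece, with witnesses at $R_j\sim r^{2/3}$, separated from the capped piece's image. Only these witnesses give $\underline{\dim}_1(\mu_p)_V\ge 2/3$, since the capped piece has dimension at most $1/2$ at $t=1$. That strict margin above $1/2$ is essential when $p=2/3$. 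There both the non-strict sublevel at $1/2$ and the equality set sit at threshold $1/2$, and Theorem~\ref{thm:exception} with $d\downarrow 1/2$ only gives $k+1$, because $\kappa_d\to 2/3$ and $b_*=(1/d-1)^{-1}\to 1$. For $p<2/3$, by contrast, your route via Theorem~\ref{thm:exception} with $d\downarrow(1/p+1/2)^{-1}$ for the equality set does work: there $\kappa_d=d/(2-d)$ and $b_*=(1/d-1/2)^{-1}\to p$.
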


\begin{proof}
\emph{Construction and dimension functions.}
Apply Lemma~\ref{lem:capped-layers} with $(q,a,e)=(2,2,1/2)$.
The condition on $p$ is exactly
\[
 \left(\frac1p+\frac12\right)^{-1}\le\frac12
 \quad\Longleftrightarrow\quad p\le\frac23.
\]
Call the resulting measure $\mu^{(1)}$. With the same
$S,A,v,E_p$, use the uncapped construction of
Theorem~\ref{thm:finite-graded-sharp} with $(q,a)=(2,1)$ to
obtain $\mu^{(2)}$. Put
\[
 \mu_p=\tfrac12\mu^{(1)}+
       \tfrac12(x\mapsto x+Lv)_\#\mu^{(2)}.
\]
Choose $L$ sufficiently large. Since
$\inf_{V\in E_p}|P_Vv|>0$, the two source supports and their
two projected supports are separated for every $V\in E_p$.

On separated supports, the upper and lower dimension functions
of a sum agree with the maximum when both dimensions agree for
each summand. The same maximum rule holds for the growth
dimensions. Consequently,
\begin{align*}
 \underline{\dim}_t\mu_p=\overline{\dim}_t\mu_p
 &=\max\left\{
       \min\left\{\frac{2t}{1+t},\frac12\right\},
       \frac{2t}{2+t}\right\},\\
 \underline{\dim}_t(\mu_p)_V=\overline{\dim}_t(\mu_p)_V
 &=\max\left\{
       \left(\frac1p+\frac1{2t}\right)^{-1},
       \left(\frac1p+\frac1t\right)^{-1}\right\}
  =\left(\frac1p+\frac1{2t}\right)^{-1},
       \quad V\in E_p.
\end{align*}
Also $\dim_{\mathrm{qA}}\mu_p=2$ and
$\dim_{\mathrm A}\mu_p=\infty$. The source formula has the
three stated ranges, with transition points $1/3$ and $2/3$.
This proves \eqref{eq:nonharmonic-source} and
\eqref{eq:nonharmonic-image}.

\emph{Upper bounds for exceptional directions.}
Write $K^{(1)}=\spt\mu^{(1)}$, and let $\mathcal B$ be the union of
\[
 \{V:P_Vv=0\},\qquad
 \{V:0\in P_V(v+A)\},\qquad
 \bigcup_{x\in K^{(1)}}\{V:P_V(x-Lv)=0\}.
\]
The first family has dimension $k$. So does each family in the
last union, which is countable. The middle family is a countable
union of bundles over $c_jS$, with fibre $G(n-1,m)$.
Therefore
\[
 \dim_{\mathrm H}\mathcal B\le k+p.
\]

Fix $V\notin\mathcal B$. Compactness gives
\[
 P_Vv\ne0,\qquad
 \operatorname{dist}(0,P_V(v+A))>0,\qquad
 \operatorname{dist}(P_V(Lv),P_VK^{(1)})>0.
\]
Thus the marked points in late layers of the second component
are separated from the regular image, the first component, and
the other layers. At each small $r$, choose
$R_j\sim r^{2/3}$ and set $y_j=P_V(Lv)+R_jP_Vv$.
The marked-point sampling estimate, with $a=1$ and $q=2$, gives
\begin{align*}
 (\mu_p)_V(B(y_j,r))
 &\lesssim_V(1+|\log r|)^C
       \left(R_j+(r/\ell_j)^2\right)\\
 &\lesssim_V(1+|\log r|)^C r^{2/3}.
\end{align*}
For every $0<s<2/3$, division by $r^s$ gives a quantity tending
to zero at every sufficiently small $r$. Hence
$\underline{\dim}_1(\mu_p)_V\ge2/3$.
The source value at $t=1$ is $2/3$, so Lipschitz monotonicity gives
\[
 \underline{\dim}_1(\mu_p)_V
 =\overline{\dim}_1(\mu_p)_V=2/3,
 \quad V\notin\mathcal B.
\]
All sublevels in the statement, and the common equality set for
the entire image functions, therefore lie in $\mathcal B$.

\emph{Matching lower bounds and the optimized exponent.}
On $E_p$, the image value at $t=1$ is
$(1/p+1/2)^{-1}$. It is less than $1/2$ for $p<2/3$ and equals
$1/2$ for $p=2/3$. The dimension and positive Hausdorff measure
of $E_p$ give all matching lower bounds. Finally,
\[
 D(1)=\frac23,\qquad \kappa_{1/2}=\frac13,\qquad
 b=\left(2-\frac32+\frac12\right)^{-1}=1,
\]
whereas
\[
 b_*=\left[2-\frac13\left(2-\frac12\right)\right]^{-1}
 =\frac23.
\]
Letting $p\uparrow2/3$ proves sharpness of the strict exceptional
exponent; $p=2/3$ gives its non-strict attainment.
\end{proof}

\begin{remark}
The same construction works for every $1<q\le\infty$, replacing
the two growth exponents $2$ by $q$ and the source curve by
\[
 D_q(t)=\max\left\{\min\left\{\left(\frac1{q}+\frac1{2t}\right)^{-1},1/2\right\},\left(\frac1{q}+\frac1{t}\right)^{-1}\right\}.
\]
The source growth is then $q$, whereas all the image formulas and
exceptional-set dimensions in Corollary~\ref{cor:nonharmonic-sharp}
are unchanged. Indeed,
\[
 s=\left(\frac1{q}+1\right)^{-1}>\frac12,\qquad
 \kappa_{1/2}=\frac1{2(2-1/q)},\qquad
 b=1,\qquad b_*=\frac23.
\]
The incidence proof only uses $s>1/2$, which explains the restriction
$q>1$ for the stated non-strict conclusion. When $q=\infty$, the
second, uncapped component uses the external marked-point version
of Lemma~\ref{lem:general-reference-layers}; its source function
is $t$ and the marked image remains a witness outside the same
incidence set. The displayed curve
\eqref{eq:nonharmonic-source} is the particular case $q=2$.
\end{remark}

\subsection{Radial centres}

Radial matching depends on both the layer and the centre, so the
orthogonal construction needs adjustment. Reference sets containing
radial intervals permit exact matching. A second construction allows
an error of order $R_j^2$ and works on a restricted range of scales.
Together they give the following ranges of sharpness.

\begin{theorem}\label{thm:radial-stable-sharp}
Let $m\ge1$, $0<\theta\le1$ and $0<d\le s\le q\le\infty$,
with $d<\infty$. Put
\begin{equation}\label{eq:radial-stable-parameters}
 b=\left(\frac1d-\frac1s+\frac1q\right)^{-1}\le m,
 \qquad
 a=\left[\theta\left(\frac1s-\frac1q\right)\right]^{-1},
\end{equation}
where $a=\infty$ when $s=q$.
In each assertion below, $\mu$ is a probability measure with
countable compact support in $\R^{m+1}$ satisfying
\begin{equation}\label{eq:radial-sharp-source-data}
 \begin{gathered}
 \dim_{\mathrm{qA}}\mu=q,\qquad \dim_{\mathrm A}\mu=\infty,\\
 \underline{\dim}_t\mu=\overline{\dim}_t\mu
 =\left(\frac1q+\frac1{at}\right)^{-1},\quad0<t\le1.
 \end{gathered}
\end{equation}
In particular, both source dimensions at $\theta$ equal $s$.
\begin{enumerate}
\item[(i)] If $b>1$ or $a\theta<b$, then, for every $\varepsilon>0$, there is one such measure $\mu$
for which
\begin{align}
 &\dim_{\mathrm H}\{z\notin\spt\mu:
                    \overline{\dim}_\theta\mu_z<d\}
 >1+b-\varepsilon,\label{eq:radial-stable-optimal}\\
 &\dim_{\mathrm H}\{V\in G(m+1,m):
                    \overline{\dim}_\theta\mu_V<d\}
 >b-\varepsilon.\label{eq:radial-orthogonal-optimal}
\end{align}
Both inequalities also hold for lower intermediate dimensions.
\item[(ii)] Suppose that $d<s$ or $b=m$. If $b\ge1$ or $a\theta\le b$, then there is one such measure $\mu$ for which
\begin{align}
 &\dim_{\mathrm H}\{z\notin\spt\mu:\overline{\dim}_\theta\mu_z\le d\}
 =1+b,\label{eq:radial-nonstrict-optimal}\\
 &\dim_{\mathrm H}\{V\in G(m+1,m):\overline{\dim}_\theta\mu_V\le d\}
 =b.\label{eq:radial-orthogonal-nonstrict-optimal}
\end{align}
Both identities also hold for lower intermediate dimensions.
The common equality sets
\[
\begin{aligned}
&\dim_{\mathrm H}\{z\notin\spt\mu:
\underline{\dim}_\theta\mu_z
=\overline{\dim}_\theta\mu_z=d\}
=1+b,\\
&\dim_{\mathrm H}\{V\in G(m+1,m):
\underline{\dim}_\theta\mu_V
=\overline{\dim}_\theta\mu_V=d\}
=b.
\end{aligned}
\]
have dimensions $1+b$ and $b$, respectively, and positive Hausdorff
measure at these exponents.
\end{enumerate}
\end{theorem}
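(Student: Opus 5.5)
The strategy mirrors the orthogonal case, Theorem~\ref{thm:finite-graded-sharp}. I would build one layered measure by \eqref{eq:sharp-mu} from a reference $\sigma$ supported in $\R v\oplus\R^m=\R^{m+1}$. The source data \eqref{eq:radial-sharp-source-data} then come directly from Lemma~\ref{lem:general-reference-layers}, once $\sigma$ satisfies \eqref{eq:reference-ratio} with exponent $q$ and the marked-point bound at $0$. Exceptional sets are obtained by making the thin points of every layer project, from selected centres $z$, into the image of a $p$-regular component. Their dimensions are then computed by a transfer lemma of the type of Lemma~\ref{lem:cloud-transfer}, giving image functions $(1/p+1/(at))^{-1}$. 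At $t=\theta$ this value is $<d$ exactly when $p<b$, and equals $d$ when $p=b$. Part~(i) takes $p\in(b-\varepsilon,b)$ and part~(ii) takes $p=b$. The upper bounds in (ii) come from Theorem~\ref{thm:radial}, \eqref{eq:radial-exceptional-bound}, applied to the strict sublevel at $e\downarrow d$ as in the proof of Theorem~\ref{thm:finite-graded-sharp}. The orthogonal statements \eqref{eq:radial-orthogonal-optimal} and \eqref{eq:radial-orthogonal-nonstrict-optimal} use the same measure and the direction sets $E_p$, with $n=m+1$ so that $k=0$.

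\emph{Radial matching.} The difficulty is that $\pi_z\circ T_j=\pi_{b_j}$ with $b_j=(z-R_jv)/\ell_j$, so the concealment depends on $j$. I would handle this in two regimes, matching the two hypotheses.

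\emph{First regime ($b>1$, or $b\ge1$ in (ii)).} Use a reference $v+A$ in which $A$ contains radial segments. Concretely, take $A=\{\lambda w:w\in S',\ \lambda\in[\lambda_0,1]\}$ with $S'$ a $(b-1)$-regular set, so that $A$ has dimension $b$. The thin atoms $B^{-k}v$ and the marked point $0$ are then seen from a centre $z$ through $v+A$ for every layer simultaneously whenever $z$ lies on a suitable cone. Concretely, $z$ should lie on a line through $0$ meeting $v+\lambda w$. The set of such centres, restricted to a compact range, is a product of a segment with a $b$-dimensional set, which gives dimension $1+b$. Exterior centres on these lines see each layer's thin points inside the radial image of the regular part. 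I would check this via $\pi_z(x)=\pi_z(y)$ for collinear $x,y,z$, together with the scaling invariance $T_j$ applied to lines through $R_jv$. I would redefine the layers so that they are aligned along rays from a common vertex, making collinearity preserved across layers. The lower mass bound on the image then follows from Lemma~\ref{lem:sharp-radial-reference}-type Jacobian estimates and relative sampling. The witness point giving the matching upper bound \eqref{eq:concealed-reference-witness} is chosen off the countable image of the thin points.

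\emph{Second regime ($a\theta<b$, or $\le$ in (ii)).} Here I would allow approximate matching: a centre is accepted if the thin points of layer $j$ land within $O(R_j^2)$ of the regular image. This error is negligible at all scales $u\ge r\gtrsim R_j^{2}$ relevant to the window $[r,r^\theta]$. The window is controlled because the relevant layers satisfy $R_j\sim r^{d/a}$, and $a\theta<b$ is precisely what keeps $R_j^2$ below $r$ in the transfer estimate. The accepted set of centres has dimension $\ge 1+p$ by a limsup/Frostman construction of nested neighbourhoods.

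\emph{Verification for generic parameters.} Almost-every preservation for Lebesgue centres and Haar directions follows as in Theorem~\ref{thm:finite-graded-sharp}, by checking that the geometric thin component stays isolated off a null set. The equality-set claims in (ii) follow from positivity of the Hausdorff measure of the compact parameter sets.

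\emph{Main obstacle.} I expect the main obstacle to be making the radial concealment uniform in $j$ while keeping the source growth exactly $q$. This is the design of the ray-aligned reference in the first regime, and the error bookkeeping against the window in the second regime.
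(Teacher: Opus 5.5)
Your overall architecture agrees with the paper: the layered measure via Lemma~\ref{lem:general-reference-layers}, $p<b$ in (i) and $p=b$ in (ii), upper bounds in (ii) from Theorems~\ref{thm:radial} and~\ref{thm:exception} at thresholds $e\downarrow d$, and directions $(v+w)^\perp$. The second regime, however, has a genuine gap.

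You do not say which reference makes the thin points of a layer land within $O(R_j^2)$ of the regular image. You then propose to obtain a $(1+p)$-dimensional set of accepted centres by a ``limsup/Frostman construction''. A limsup set has the wrong quantifier. To put $z$ into $\{\overline{\dim}_\theta\mu_z<d\}$ you need, at \emph{every} small $r$, concealment in every layer with $r^\theta\lesssim R_j\lesssim r^{d/a}$, hence in all sufficiently late layers. For a nested intersection over all late layers, your sketch gives no dimension estimate.

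The missing idea is that no selection of centres is needed. Take the reference of Lemma~\ref{lem:arithmetic-reference} and the whole cone $E_{\mathrm{rad}}=\{\lambda(v+w):2\le\lambda\le3,\ w\in S\}$. By \eqref{eq:exact-projected-inclusion}, every thin point $x$, including $0$, has a partner $y\in v+A$ with $x-y\parallel z$, so $D\pi_z(0)(x-y)=0$. Layer $j$ sits at distance $O(R_j)$ from $0$ and has size $O(\ell_j)$. A first-order Taylor estimate therefore gives $|\pi_z(T_jx)-\pi_z(T_jy)|\lesssim\ell_jR_j\lesssim R_j^2$, uniformly in $j$ and $z\in E_{\mathrm{rad}}$.

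Your window bookkeeping also needs correcting. Concealment is only used at the outer radius $u=r^\theta$, since layers with $R_j\ge r^{\beta/a}$ are handled by their own atoms at radius $r$. The requirement is thus $C_1R_j^2\le r^\theta$ for $R_j<r^{\beta/a}$, i.e.\ $2\beta/a>\theta$ with $\beta$ just above $pa\theta/(p+a\theta)$. This is possible exactly when $a\theta\le p$, which is where the choice $a\theta<p<b$ (resp.\ $a\theta\le p=b$) enters. Demanding $R_j^2\lesssim r$, as you propose, would require $2d/a\ge1$, which fails whenever $\theta<1/2$.

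In the first regime you have the right ingredients: radial segments in $A$, and centres on lines through $0$ and $v+w$. The mechanism you state is not the one that works, and the layers need not be redefined. Collinearity of $z$ with a \emph{fixed} reference pair $x,y$ cannot persist across layers, because it would put $z$ on every line $T_j(L_{xy})$. These lines are parallel and, for a thin $x$ and a regular $y$, pairwise distinct.

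With the unchanged layers, the partner instead slides along the radial segment. The line from $z=\lambda(v+w)$ through the thin point $R_jv+\ell_j\kappa tv$ lies in $\operatorname{span}(v,w)$. It meets the $j$th regular copy at $R_jv+\ell_j\kappa(v+cw)$ with $c=\lambda(1-t)/(\lambda-R_j-\ell_j\kappa t)\in[3/4,5/4]$. This is why the paper takes $S$ radially strictly inside $A$ (radii $[1,2]$ inside $[1/2,3]$) and builds the centres from $S$ rather than from all of $A$.

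For the equality sets in (ii) you also need $\underline{\dim}_\theta\mu_z\ge d$ on the whole centre family, and you leave this to an unspecified radial transfer lemma. The paper instead puts a $(1+b)$-Frostman probability on the cone and a $b$-Frostman probability on the directions. By Lemmas~\ref{lem:radial-trans} and~\ref{lem:grassmann-trans}, both have transversality exponent $b$. Theorem~\ref{thm:transversal} then gives $[1/s+1/b-1/q]^{-1}=d$ almost surely, and the mass distribution principle finishes. The almost-everywhere preservation you verify is not part of this statement.
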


In part~(i), the measure may depend on $\varepsilon$, but both entire
source functions and the quasi-Assouad dimension remain fixed.
The condition $a\theta<b$ in part~(i) can equivalently be written as
$b(1/s-1/q)>1$.

\begin{proof}
\emph{Reference measures and source dimensions.}
For part~(i), choose a reference exponent $p<b$ as specified
below; for part~(ii), take $p=b$. We use two reference measures.

If $p\ge1$, choose a compact $(p-1)$-Ahlfors regular set $H$
in a small patch of $S^{m-1}\subset\R^m$, and set
\[
 A=\{r\omega:1/2\le r\le3,\ \omega\in H\},\qquad
 S=\{r\omega:1\le r\le2,\ \omega\in H\}.
\]
For $p=1$, take $H$ to be a singleton. The radial
parametrizations are bi-Lipschitz, so $A$ and $S$ are
$p$-Ahlfors regular. They also satisfy
\[
 cS\subset A,\quad3/4\le c\le5/4.
\]
Let $\sigma_A$ be a regular probability on $v+A$ and use
$\sigma_q=(\sigma_A+\tau_q)/2$ for finite $q$, with $\tau_q$
as in \eqref{eq:thin-geometric-reference}; for $q=\infty$, use
$\sigma_\infty=\sigma_A$. The marked point is $0$.

For the second construction, use $S,A,\sigma_q$ from
Lemma~\ref{lem:arithmetic-reference}, for any $0<p\le b$.
Here $p$ is the regular reference exponent and $q$ is the source
growth exponent. For finite $q$, the regular component satisfies
\eqref{eq:reference-ratio} with exponent $q$ because $p\le q$,
with a logarithmic loss in the second construction. The separated
thin component has $q$-growth and mass comparable to $u^q$ at $0$.
Thus Lemma~\ref{lem:general-reference-layers} applies.
For $q=\infty$, use its isolated-marked-point case.
Choose a fixed $0<\kappa\le1/4$ that puts the reference in the
unit ball, use polynomial counts for $a<\infty$ and exponential
counts for $a=\infty$, and write $D_\kappa x=\kappa x$. Thus
\eqref{eq:sharp-mu}, applied to the dilated reference, reads
\[
 \mu=Z^{-1}\sum_{j\ge2}j^{-2}
       (T_j\circ D_\kappa)_\#\eta_{N_j}.
\]
The same lemma gives all the required source data.

In both constructions, take the exterior centre set
\begin{equation}\label{eq:radial-cantor-cone}
 E_{\mathrm{rad}}=\{\lambda(v+w):2\le\lambda\le3,\ w\in S\}.
\end{equation}
Its parametrization is bi-Lipschitz, and therefore
\[
 \dim_{\mathrm H}E_{\mathrm{rad}}=1+p,\qquad
 0<\mathcal H^{1+p}(E_{\mathrm{rad}})<\infty.
\]
The set is uniformly separated from $\spt\mu$.

\emph{Exact radial matching.}
Use the first construction, and fix
$z=\lambda(v+w)\in E_{\mathrm{rad}}$.
For $t\in\{0,B^{-1},B^{-2},\ldots\}$, put
\begin{equation}\label{eq:radial-exact-scalar}
 c_{j,t,\lambda}
 =\frac{\lambda(1-t)}{\lambda-R_j-\ell_j\kappa t}.
\end{equation}
Since $0\le t\le B^{-1}\le1/16$, $R_j\le1/4$ and
$2\le\lambda\le3$, the denominator in
\eqref{eq:radial-exact-scalar} is positive and
$3/4\le c_{j,t,\lambda}\le5/4$. Consequently
$v+c_{j,t,\lambda}w\in v+A$. Set
\[
 X=R_jv+\ell_j\kappa tv,\qquad
 Y=R_jv+\ell_j\kappa(v+c_{j,t,\lambda}w).
\]
A direct calculation gives
\[
 Y-z=\frac{\lambda-R_j-\ell_j\kappa}
              {\lambda-R_j-\ell_j\kappa t}(X-z).
\]
Both numerator and denominator of the multiplier are positive,
so $\pi_z(X)=\pi_z(Y)$. Thus every thin image, including the
marked image, has a regular preimage in the same layer; the
matching is uniform in $j$, $t$ and $z\in E_{\mathrm{rad}}$.

The normalized layer map
$x\mapsto\pi_z(R_jv+\ell_j\kappa x)$ has Lipschitz constant
$C\ell_j$, uniformly for $z\in E_{\mathrm{rad}}$.
The regular reference mass in a ball of radius $u/(C\ell_j)$
therefore gives image mass at least
$c\min\{1,(u/\ell_j)^p\}$ at every image support point.
Relative sampling and the mass at each sample point yield
\[
 \nu_j(B(e,u))\gtrsim[\log(2N_j)]^{-C}
 \max\{N_j^{-1},\min\{1,(u/R_j)^p\}\},
\]
where $\nu_j$ is the radial image of layer $j$.
Moreover $\spt\nu_j\subset B(\pi_z(0),CR_j)$.
Lemma~\ref{lem:image-layers} gives
\[
 \overline{\dim}_t\mu_z
 \le\left(\frac1p+\frac1{at}\right)^{-1},
 \quad z\in E_{\mathrm{rad}},\ 0<t\le1.
\]
This includes exponential counts, for which the right-hand side is $p$.

\emph{Approximate radial matching.}
Use the second construction, with finite $a$, and write
\[
 f_{j,z}(x)=\pi_z(R_jv+\ell_j\kappa x).
\]
For each thin point $x$, including $0$, the identities in
\eqref{eq:exact-projected-inclusion} give $y\in v+A$ with
$x-y\parallel z$. For regular points, take $y=x$.
Since $D\pi_z(0)(x-y)=0$, the fundamental theorem of calculus gives
\begin{align*}
 f_{j,z}(x)-f_{j,z}(y)
 =\ell_j\kappa\int_0^1
 \bigl[D\pi_z(R_jv+\ell_j\kappa(y+s(x-y)))-D\pi_z(0)\bigr]
 (x-y)\,ds.
\end{align*}
The reference supports and centre set are fixed compact sets
with positive separation. Thus
\begin{equation}\label{eq:radial-mask-error}
 |f_{j,z}(x)-f_{j,z}(y)|\lesssim\ell_jR_j\lesssim R_j^2,
\end{equation}
uniformly in $j$ and $z\in E_{\mathrm{rad}}$.
Choose $C_1$ so that the error in
\eqref{eq:radial-mask-error} is at most $u/2$ whenever
$u\ge C_1R_j^2$. The uniform $C\ell_j$-Lipschitz bound then
maps the reference ball about $y$ of radius $u/(2C\ell_j)$
into $B(f_{j,z}(x),u)$. The lower mass bound in
\eqref{eq:logarithmic-regular-reference} therefore gives
\[
 (f_{j,z})_\#\sigma_q(B(f_{j,z}(x),u))
 \gtrsim j^{-C}\min\{1,(u/R_j)^p\}.
\]
Here the logarithmic loss is bounded by a power of $j$, since
$u\ge C_1R_j^2$. Applying relative sampling, and using the
atomic mass for smaller radii, gives
\begin{equation}\label{eq:radial-window-sample}
 \begin{aligned}
 (f_{j,z})_\#\eta_{N_j}(B(e,u))&\gtrsim N_j^{-1},
       &&u>0,\\
 (f_{j,z})_\#\eta_{N_j}(B(e,u))
 &\gtrsim[\log(2N_j)]^{-C}
       \max\{N_j^{-1},\min\{1,(u/R_j)^p\}\},
       &&u\ge C_1R_j^2.
 \end{aligned}
\end{equation}
These estimates hold at every image sample point $e$.

Fix $0<t\le1$ and choose
\[
 \max\left\{\frac{pat}{p+at},\frac{at}{2}\right\}<\beta<at.
\]
We verify the defining upper dimension condition at every small
$r$. If $R_j\ge r^{\beta/a}$, take $u=r$. The atom gives
\[
 \mu_z(B(e,r))\gtrsim j^{-2}R_j^a
 \gtrsim(1+|\log r|)^{-2}r^\beta.
\]
If $r^t/(4C_0)\le R_j<r^{\beta/a}$, take $u=r^t$.
Since $2\beta/a>t$, one has $C_1R_j^2\le r^t$ for all small
$r$, so the second line of \eqref{eq:radial-window-sample} applies.
Using $p(t-\beta/a)\le t\beta$, we obtain
\[
 \mu_z(B(e,r^t))
 \gtrsim(1+|\log r|)^{-C}
       \min\{1,(r^t/R_j)^p\}
 \gtrsim(1+|\log r|)^{-C}r^{t\beta}.
\]
If $R_j<r^t/(4C_0)$, the ball of radius $r^t$ contains the
tail of the image layers, so its mass is at least
$c(1+|\log r|)^{-1}$. This also handles $\pi_z(0)$.
The finitely many initial layer points have a positive minimum
mass, so they also satisfy the required bound for small $r$.
Absorbing logarithms by an arbitrarily small increase in the
exponent, and then decreasing $\beta$, proves
\[
 \overline{\dim}_t\mu_z
 \le\max\left\{\frac{pat}{p+at},\frac{at}{2}\right\}.
\]
The inequality $pat/(p+at)\ge at/2$ is equivalent to $at\le p$.
Hence the harmonic bound holds throughout this closed range.
At $at=p$, the auxiliary exponent $\beta$ is still chosen strictly
above $at/2$, and the desired value follows by letting $\beta$
decrease to $at/2$.

\emph{Orthogonal projections and strict sublevels.}
For either construction, put $V_w=(v+w)^\perp$, $w\in S$.
Every thin image lies in the regular image: in the first
construction use $(1-B^{-k})S\subset A$, and in the second use
\eqref{eq:digit-concealment}. The lower reference mass bound
and relative sampling verify Lemma~\ref{lem:image-layers}, giving
\[
 \overline{\dim}_t\mu_{V_w}
 \le\left(\frac1p+\frac1{at}\right)^{-1},
 \quad w\in S,\ 0<t\le1.
\]
The direction family is bi-Lipschitz to $S$ and has dimension $p$.

If $b>1$, choose $\max\{1,b-\varepsilon\}<p<b$ and use the
first construction. If $a\theta<b$, choose
$\max\{a\theta,b-\varepsilon\}<p<b$ and use the second.
At $t=\theta$, the radial upper bound in each case is
$(1/p+1/(a\theta))^{-1}$, and
\[
 \left(\frac1p+\frac1{a\theta}\right)^{-1}
 <\left(\frac1b+\frac1{a\theta}\right)^{-1}=d.
\]
The centre and direction families have dimensions $1+p$ and
$p$, respectively. They prove both strict inequalities in
part~(i), for both intermediate dimensions.

\emph{Non-strict sublevels and equality sets.}
For part~(ii), take $p=b$. If $b\ge1$, use the first
construction; if $a\theta\le b$, use the second. Both give the
image upper bound $d$ at $\theta$. The centre family carries
a $(1+b)$-Frostman probability, and the direction family carries
a $b$-Frostman probability. By Lemmas~\ref{lem:radial-trans} and
\ref{lem:grassmann-trans}, their transversality exponent is $b$.
Theorem~\ref{thm:transversal} gives, for both image dimensions,
the matching almost-everywhere lower bound
\[
 \left[\frac1s+\left(\frac1b-\frac1q\right)\right]^{-1}=d.
\]
Both common equality sets therefore have full measure for the
corresponding Frostman probabilities. The mass distribution
principle gives dimensions at least $1+b$ and $b$, and positive
Hausdorff measure at those exponents.

If $b<m$, the assumption in part~(ii) gives $d<s$. Choose
$e>d$ with $e<s$ and
\[
 b_e=\left(\frac1e-\frac1s+\frac1q\right)^{-1}<m.
\]
The non-strict sublevels at $d$ lie in the strict sublevels at
$e$. Theorems~\ref{thm:radial} and~\ref{thm:exception} bound
their dimensions by $1+b_e$ and $b_e$. Letting $e\downarrow d$
gives the required upper bounds. If $b=m$, use the ambient
dimensions $m+1$ and $m$. The common equality sets are contained
in these non-strict sublevels, so the same upper bounds finish
all the assertions.
\end{proof}

The two matching arguments leave the strict range
$b\le1$, $a\theta\ge b$ outside the layered construction.
For non-strict sublevels they include $b=1$ and $a\theta=b$,
but leave $b<1$, $a\theta>b$ untreated. The obstruction to exact
matching in the present centre cone is geometric: for fixed $w\ne0$,
matching the marked point for every $\lambda\in[2,3]$ requires a
nondegenerate interval of multiples of $w$ in the regular reference.
Its dimension must therefore be at least one. The condition $d<s$ or $b=m$ in part~(ii) is also needed:
if $d=s$ and $q=b<m$, Lipschitz monotonicity makes the non-strict
sublevel the whole parameter space.

For constant source functions, we obtain further ranges by using
the reference measure directly. The next proposition gives radial
dimension bounds, with exact formulas when $p<m$, and keeps the
Assouad dimension finite.

\begin{proposition}[Radial images of a constant source function]
\label{prop:radial-constant-reference}
Let $m\ge1$, $0<q<\infty$, and $0<p\le\min\{q,m\}$.
There are a compactly supported probability measure $\mu$ in
$\R^{m+1}$ and a compact $(1+p)$-Ahlfors regular set
$E\subset\R^{m+1}\setminus\spt\mu$ such that
\[
 \dim_{\mathrm A}\mu=\dim_{\mathrm{qA}}\mu=q,
 \qquad
 \underline{\dim}_t\mu=\overline{\dim}_t\mu=q,
 \quad 0<t\le1,
\]
and, for every $z\in E$ and $0<t\le1$,
\begin{equation}\label{eq:radial-constant-reference-window}
 p\le\underline{\dim}_t\mu_z
 \le\overline{\dim}_t\mu_z\le\max\{p,qt\}.
\end{equation}
In particular, both projected dimensions equal $p$ on the whole
set $E$, simultaneously for $0<t\le p/q$.
If $p<m$, one may choose the same pair $(\mu,E)$ to satisfy
all the preceding assertions and
\begin{equation}\label{eq:radial-constant-reference-exact}
 \underline{\dim}_t\mu_z=\overline{\dim}_t\mu_z
 =\max\{p,qt\}
 \qquad(z\in E,\ 0<t\le1).
\end{equation}

\end{proposition}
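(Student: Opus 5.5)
We use the reference measure directly, with no layers, so that the Assouad dimension stays finite. The model is the first construction in the proof of Theorem~\ref{thm:radial-stable-sharp}. If $p\ge1$, take a compact $(p-1)$-Ahlfors regular set $H$ in a small patch of $S^{m-1}$ and the radial sets $S\subset A$ with $cS\subset A$ for $3/4\le c\le5/4$. (For $p<1$ this must be replaced by a set giving exact radial matching, which is the first difficulty below.) Put
\[
 \mu=\tfrac12\bigl(\sigma_A+\tau_q\bigr),
\]
where $\sigma_A$ is a $p$-regular probability on $v+A$ and $\tau_q$ is the geometric thin measure of \eqref{eq:thin-geometric-reference} at the points $B^{-k}v$, with $0$ in the support. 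Let $E=\{\lambda(v+w):2\le\lambda\le3,\ w\in S\}$, which is $(1+p)$-regular and separated from $\spt\mu$.

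\emph{Source data.} We first bound the Assouad dimension. As in the proof of Lemma~\ref{lem:arithmetic-reference}, $\tau_q(B(x,u))\sim\max\{|x|,u\}^q$ gives $q$-growth with no logarithmic loss. The regular part has $p$-growth and $p\le q$. The components are separated, and every ball has mass $\gtrsim u^q$. Together these give $\dim_{\mathrm A}\mu\le q$. The pair of radii $B^{-k}\pm$ at $0$ gives the matching lower bound, so both $\dim_{\mathrm A}\mu$ and $\dim_{\mathrm{qA}}\mu$ equal $q$.

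For the intermediate dimensions, the bound $\mu(B(x,u))\gtrsim u^q$ everywhere gives $\overline{\dim}_1\mu\le q$. At the point $0$, with $\mu(B(0,u))\sim u^q$, the ratio $\mu(B(0,u))/u^s$ tends to zero uniformly on $[r,r^t]$ for every $s<q$. Hence $\underline{\dim}_t\mu\ge q$, and Lemma~\ref{lem:window} gives equality for all $t$.

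\emph{Radial bounds on $E$.} Fix $z=\lambda(v+w)\in E$. The exact matching scalar \eqref{eq:radial-exact-scalar}, with $R_j=0$ and $\ell_j\kappa=1$, becomes $c=\lambda(1-t)/(\lambda-t)$. It lies in $[3/4,5/4]$, so each thin atom $tv$ and $0$ has a regular preimage $v+cw\in v+A$ on the same ray from $z$. Since $\pi_z$ is bi-Lipschitz on small pieces of $v+A$ transverse to rays, every ball centred at a point of $\spt\mu_z$ has mass $\gtrsim u^{p}$ (possibly after adjusting $A$ so the radial direction is compensated). This gives $\overline{\dim}_t\mu_z\le p$ when taking $u=r$.

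The bound $\max\{p,qt\}$ needs a second case: for $u=r^t$ the ball has mass $\gtrsim r^{qt}$ from the $q$-lower bound. Combining the two, there is always a witness with mass $\gtrsim u^{\max\{p,qt\}}$. For the lower bound $p$, the pushforward of the regular part has an upper mass bound $\lesssim u^p$ at generic image points. These points are chosen away from the countable set of thin images. This gives $\underline{\dim}_t\mu_z\ge p$.

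\emph{Exact formula when $p<m$.} Modify the thin component so that it also carries a piece whose radial images are \emph{not} concealed. For example, add a second geometric sequence along a direction $v'$ such that $\pi_z(B^{-k}v')\notin\pi_z(v+A)$ for all $z\in E$. This is possible because $\pi_z(v+A)$ has dimension $p<m$, so it misses an open set of directions uniformly over the compact set $E$. The image of this sequence has $\mu_z(B(y,u))\sim\max\{|y-\pi_z0|,u\}^q$ near $\pi_z(0')$. The $q$-witness argument at the image of the new marked point then gives $\underline{\dim}_t\mu_z\ge qt$ for $qt>p$. This is done by choosing the atom at distance $\sim r^t$, so that masses in $[r,r^t]$ are $\lesssim r^{qt}$.

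\emph{Main obstacle.} The delicate step is twofold. First, the case $p<1$ needs a concealment mechanism valid for all $\lambda\in[2,3]$, and the remark after Theorem~\ref{thm:radial-stable-sharp} suggests such a mechanism may be impossible. So for $p<1$ I would instead let $E$ be a product of a $p$-regular set of $w$ with a $\lambda$-interval, and place the thin atoms on a cone that is itself $1$-dimensional. Second, the quantitative upper bound $u^p$ must hold at generic image points uniformly in $z$.
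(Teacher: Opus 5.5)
Your source estimates are fine, and for $p\ge1$ your fully concealed construction does give the window bound (in fact $\dim_t\mu_z=p$ for all $t$). But the obstacle you raise for $p<1$ rests on a false premise. The window bound only needs the marked point $0$ to be concealed, not the thin atoms. For $z=\lambda(v+w)$ one has $\pi_z(0)=\pi_z(v+w)$ for \emph{any} $p$-regular $S\subset U$ bounded away from $0$, with no radial intervals, so $p<1$ is no harder.

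The exponent $\max\{p,qt\}$ then comes from a dichotomy on the atom images $e_{j,z}=\pi_z(B^{-j}v)$. Fix $D>\max\{p,qt\}$. If $B^{-j}\ge r^{D/q}$, the atom alone gives $\mu_z(B(e_{j,z},r))\gtrsim B^{-jq}\ge r^D$. Otherwise $|e_{j,z}-\pi_z(0)|\le CB^{-j}<Cr^{D/q}\le r^t/2$ for small $r$, because $D/q>t$. Then $B(e_{j,z},r^t)\supset B(\pi_z(0),r^t/2)$ carries regular mass $\gtrsim r^{tp}\ge r^{tD}$. Your ``second case'' does not do this: mass $\gtrsim r^{qt}$ at radius $u=r^t$ is only $u^q$, which gives the exponent $q$, not $qt$. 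You need the correct dichotomy anyway, since the atoms you add for $p<m$ are unconcealed.

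The real gap is the exact formula. Its content is a quantitative separation: the image of the $k$th witness atom must lie at distance $\gtrsim B^{-k}$ from $\pi_z(v+A)$, uniformly in $z\in E$ and $k$. The fact that $\pi_z(v+A)$ has dimension $p<m$ does not give this.

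\begin{itemize}
\item The new atom images converge to the concealed point $\pi_z(0)\in\pi_z(v+A)$, so missing an open set of directions is irrelevant.
\item The rays through the atoms meet $v+U$ on a line through $v+w$, at distances $\sim B^{-k}$ from $v+w$. A $p$-regular set may come arbitrarily close to such segments at every scale. Porosity gives holes, but not in a prescribed direction, at the scales $B^{-k}$, uniformly in $w\in S$.
\item The added sequence must also accumulate at $0$. An unconcealed limit $0'$ would give $\mu_z(B(\pi_z0',u))\sim u^q$, hence $\underline{\dim}_t\mu_z\ge q$, which violates the window bound when $t<1$ and $p<q$.
\end{itemize}

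The missing idea is a gap structure synchronized with the ratio $B$. The paper keeps the single sequence along $v$ and takes $S=\{r\omega:r\in1+C,\ \omega\in H\}$. Here $C$ is a $\delta$-regular Cantor set with ratio $B=N^{1/\delta}$ and cylinder fraction $T<1/100$, and $H\subset S^{m-1}$ is $(p-\delta)$-regular. The hypothesis $p<m$ is what leaves room for $\delta>0$.

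The matching scalar $c_{j,\lambda}=\lambda(1-t_j)/(\lambda-t_j)$ puts $c_{j,\lambda}r$ in $r-[t_j/2,7t_j/10]$. This point lies in a gap of $1+C$, at distance $\ge t_j/4$ from $1+C$. Hence $\mu_z(B(\pi_z(t_jv),u))=c_qt_j^q$ for $u<c_0t_j$, and choosing $t_j\sim r^t$ gives $\underline{\dim}_t\mu_z\ge qt$. This $S$ still conceals $0$, so the same pair also satisfies the window bound. That includes $p<1$, where your fallback does not give a construction.
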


\begin{proof}
\emph{Construction and source estimates.}
Write $\R^{m+1}=\R v\oplus U$, with $v$ a unit vector and
$\dim U=m$. Choose a compact $p$-Ahlfors regular set $S\subset U$
bounded away from $0$, and a regular probability $\nu_S$ on $S$.
Let $\sigma$ be its translate to $v+S$. For $B\ge16$, set
\[
 \tau_q=(1-B^{-q})\sum_{j\ge1}B^{-q(j-1)}\delta_{B^{-j}v},
 \qquad \mu=\tfrac12(\sigma+\tau_q).
\]
The supports are separated. For $x\in\spt\tau_q$ and $0<u\le1$, geometric summation gives
\[
 \tau_q(B(x,u))\sim\max\{|x|,u\}^q.
\]
Here the constants may depend on $B$ and $q$, but not on $x$ or $u$.
At $x=0$ this gives $\tau_q(B(0,u))\sim u^q$, and, for
$0<u\le R\le1$,
\[
 \frac{\tau_q(B(x,R))}{\tau_q(B(x,u))}
 \lesssim
 \left(\frac{\max\{|x|,R\}}{\max\{|x|,u\}}\right)^q
 \le(R/u)^q.
\]
Since $p\le q$, the regular component and the separated-support
argument give $\dim_{\mathrm A}\mu\le q$.
For small $u$, $\mu(B(0,u))\sim u^q$, so for every $s<q$,
\[
 \sup_{r\le u\le r^t}\frac{\mu(B(0,u))}{u^s}
 \lesssim r^{t(q-s)}\longrightarrow0.
\]
Thus $\underline{\dim}_t\mu\ge q$ for all $t>0$.
The dimension comparisons in Lemma~\ref{lem:window} now give
all the stated source equalities, including both growth dimensions.

\emph{Radial image bounds.}
Define
\[
 E=\{\lambda(v+w):2\le\lambda\le3,\ w\in S\}.
\]
The parameter map is bi-Lipschitz, so $E$ is compact and
$(1+p)$-Ahlfors regular. It is separated from $\spt\mu$.
For $z=\lambda(v+w)\in E$,
\[
 \pi_z(0)=\pi_z(v+w).
\]
The restriction of $\pi_z$ to any fixed bounded patch of the
hyperplane $v+U$ is uniformly bi-Lipschitz for $z\in E$.
Indeed, if $\omega=\pi_z(x)$ for $x\in v+U$, its inverse is
\[
 x=z+\frac{1-\lambda}{\langle\omega,v\rangle}\omega,
\]
and
\[
 |\langle\omega,v\rangle|
 =\frac{\lambda-1}{|x-z|}
\]
is uniformly bounded below when $x$ ranges over the bounded patch.
Consequently $\xi_z=(\pi_z)_\#\sigma$ is
$p$-Ahlfors regular, with uniform constants, and
$\pi_z(0)\in\spt\xi_z$.

Write $t_j=B^{-j}$ and $e_{j,z}=\pi_z(t_jv)$. Uniformly on $E$,
\[
 |e_{j,z}-\pi_z(0)|\lesssim t_j,
 \qquad \mu_z(\{e_{j,z}\})\gtrsim t_j^q.
\]
Fix $0<t\le1$ and $D>\max\{p,qt\}$. If
$t_j\ge r^{D/q}$, the atom gives
\[
 \mu_z(B(e_{j,z},r))\gtrsim t_j^q\ge r^D.
\]
If $t_j<r^{D/q}$, then $D/q>t$ implies
$|e_{j,z}-\pi_z(0)|\le r^t/2$ for all small $r$. Regularity gives
\begin{align*}
 \mu_z(B(e_{j,z},r^t))
 &\ge\tfrac12\xi_z(B(\pi_z(0),r^t/2))\\
 &\gtrsim r^{tp}\ge r^{tD}.
\end{align*}
At points of $\spt\xi_z$, use its regular lower bound directly.
This verifies the defining upper condition at every support
point, and proves
$\overline{\dim}_t\mu_z\le\max\{p,qt\}$.

For the lower bound, choose $y_z\in\spt\xi_z$ outside the
compact countable set $\{\pi_z(0)\}\cup\{e_{j,z}:j\ge1\}$.
Such a point exists since $p>0$. Small balls about $y_z$ meet
no thin image, so
\[
 \mu_z(B(y_z,u))\lesssim u^p.
\]
For every $s<p$, the same supremum calculation as for the
source gives $\underline{\dim}_t\mu_z\ge p$. This proves
\eqref{eq:radial-constant-reference-window}.

\emph{Exact image functions for $p<m$.}
Choose $0<\delta<1$ with $0\le p-\delta\le m-1$, and a compact
$(p-\delta)$-Ahlfors regular set $H\subset S^{m-1}$. A singleton
is allowed when $p=\delta$. Choose a large integer $N$ so that
\[
 B=N^{1/\delta}\ge16,\qquad T=\frac{N-1}{B-1}<\frac1{100},
\]
and let
\[
 C=\left\{\sum_{k\ge1}d_kB^{-k}:d_k\in\{0,\ldots,N-1\}\right\},
 \qquad S=\{r\omega:r\in1+C,\ \omega\in H\}.
\]
The similarities defining $C$ have strong separation, so $C$
is $\delta$-Ahlfors regular. The radial parametrization is
bi-Lipschitz, and therefore this choice of $S$ is $p$-Ahlfors
regular. Use the same $B$ in $\tau_q$.

Fix $z=\lambda(v+w)\in E$, write $w=r\omega\in S$, and put
\[
 c_{j,\lambda}=\frac{\lambda(1-t_j)}{\lambda-t_j}.
\]
Direct calculation gives
\[
 \pi_z(t_jv)=\pi_z(v+c_{j,\lambda}w),\qquad
 \frac12t_j\le r-c_{j,\lambda}r\le\frac7{10}t_j.
\]
Let $r_j$ be the level-$j$ prefix of $r\in1+C$. Its cylinder
lies in $[r_j,r_j+Tt_j]$. Distinct level-$j$ prefix values are separated by at least
$(1-T)t_j$. Indeed, at their first differing digit $k\le j$,
the leading difference is at least $B^{-k}$, while the total
remaining difference is at most $TB^{-k}$.
Consequently,
\[
 c_{j,\lambda}r-r_j
 \in[-7t_j/10,-(1/2-T)t_j],
\]
The cylinder at $r_j$ and all following cylinders lie to the
right of this point by at least $(1/2-T)t_j$. Every preceding
cylinder ends at or to the left of $r_j-(1-2T)t_j$, and hence
lies to its left by at least $(3/10-2T)t_j$. Therefore
\begin{align*}
 \operatorname{dist}(c_{j,\lambda}r,1+C)
 &\ge\min\{1/2-T,\,3/10-2T\}t_j\ge t_j/4,\\
 \operatorname{dist}(c_{j,\lambda}w,S)&\ge t_j/4.
\end{align*}
This calculation only uses cylinder separation, so $B$ need not
be an integer. Apply the uniform inverse Lipschitz estimate on one bounded
hyperplane patch containing both $v+S$ and all
$v+c_{j,\lambda}w$. It yields
\[
 \operatorname{dist}(\pi_z(t_jv),\pi_z(v+S))\gtrsim t_j.
\]
Also $\pi_z$ is uniformly bi-Lipschitz on
$\{tv:0\le t\le B^{-1}\}$, since $w$ stays bounded away from
$0$. Together with geometric spacing, this gives a fixed $c_0>0$
such that
\[
 \mu_z(B(\pi_z(t_jv),u))=c_qt_j^q,
 \quad0<u<c_0t_j,\ z\in E.
\]

Fix $0<t\le1$ and $0<s<qt$. At every sufficiently small $r$,
choose $j$ with
\[
 2c_0^{-1}r^t\le t_j<2Bc_0^{-1}r^t.
\]
All balls about $\pi_z(t_jv)$ with radii in $[r,r^t]$ contain
only that thin atom. Hence
\[
 \sup_{r\le u\le r^t}
 \frac{\mu_z(B(\pi_z(t_jv),u))}{u^s}
 \le\frac{c_qt_j^q}{r^s}\lesssim r^{qt-s}\longrightarrow0.
\]
This proves $\underline{\dim}_t\mu_z\ge qt$ at every positive
parameter. Combining it with the lower bound $p$ and the upper
bound already proved gives
\eqref{eq:radial-constant-reference-exact}.
\end{proof}

\paragraph{Sharpness for constant source functions.}
We apply the preceding proposition to the radial exceptional-set bound.
Let $0<q<\infty$, $0<\theta\le1$, and
$0<d\le\min\{q,m\}$. Since the source function is constant,
$s=q$ and $b=d$.
If $\theta q<d$, choose
\[
 \max\{\theta q,d-\varepsilon,0\}<p<d
\]
in Proposition~\ref{prop:radial-constant-reference}. The resulting
measure has both source functions identically $q$, has finite
Assouad dimension $q$, and satisfies
\[
 \dim_{\mathrm H}\{z\notin\spt\mu:\overline{\dim}_\theta\mu_z<d\}
 >1+d-\varepsilon.
\]
The same bound holds for lower intermediate dimensions.
If $\theta q\le d$ and either $d<q$ or $d=m$, take $p=d$.
The same source data then give
\[
 \dim_{\mathrm H}\{z\notin\spt\mu:\overline{\dim}_\theta\mu_z\le d\}
 =1+d.
\]
The same identity holds for lower intermediate dimensions, and the
common equality set has positive $\mathcal H^{1+d}$-measure. The lower bound follows
from the whole Ahlfors regular centre set $E$; the upper bound
follows by decreasing strict thresholds to $d$, or from the
ambient dimension when $d=m$.
These are additional parameter ranges for compactly supported
measures.

\section{Nonconstant projection dimensions}
\label{sec:examples}

We turn from the size of exceptional sets to the dependence of projected
dimension functions on direction for one fixed measure. The proposition
first realizes both bounds of Theorem~\ref{thm:orthogonal-main} on
open direction sets, then gives a planar construction with prescribed
finite collections of image functions and prescribed Haar proportions.

\begin{proposition}\label{prop:nonconstant-directions}
Let $1\le q\le\infty$ and $0<a\le\infty$.
In each assertion below, $\rho$ is a probability measure with
countable compact support satisfying
\begin{equation}\label{eq:subtarget-source}
\begin{gathered}
 \dim_{\mathrm{qA}}\rho=q,\qquad
 \dim_{\mathrm A}\rho=\infty,\\
 \underline{\dim}_\theta\rho
 =\overline{\dim}_\theta\rho
 =\left(\frac1q+\frac1{a\theta}\right)^{-1},
 \quad0<\theta\le1.
\end{gathered}
\end{equation}
\begin{enumerate}
\setlength{\itemsep}{0.6\baselineskip}

\item[(i)]
For integers $1\le m<n$ with $m<q$, there exist a measure
$\rho$ on $\R^n$ and disjoint nonempty open sets
$\mathcal U_-,\mathcal U_+\subset G(n,m)$ such that, for every
$0<\theta\le1$,
\[
 \underline{\dim}_\theta\rho_V
 =\overline{\dim}_\theta\rho_V
 =
 \begin{cases}
 \displaystyle\left(\frac1m+\frac1{a\theta}\right)^{-1},
       &V\in\mathcal U_-,\\[5pt]
 \displaystyle\left(\frac1q+\frac1{a\theta}\right)^{-1},
       &V\in\mathcal U_+.
 \end{cases}
\]
Thus both bounds of Theorem~\ref{thm:orthogonal-main} are
attained on open direction sets by the same measure.
$\underline{\dim}_\theta\rho
 =\overline{\dim}_\theta\rho<m$, $0<\theta\leq1$ if $a<mq/(q-m)$ for $q<\infty$, or $a<m$ for $q=\infty$.

\item[(ii)]
Let $M\ge2$ be an integer and let $p_1,\ldots,p_M\in[1,q]$.
Given $\lambda_1,\ldots,\lambda_M>0$ with
$\sum_{i=1}^M\lambda_i=1$, there exist a measure $\rho$ as above
on $\R^2$ and pairwise disjoint nonempty open sets
$U_1,\ldots,U_M\subset G(2,1)$ such that
$\gamma_{2,1}(U_i)=\lambda_i$ for every $i$ and
$G(2,1)\setminus\bigcup_{i=1}^M U_i$ is finite.
For every $i$, every $V\in U_i$, and every $0<\theta\le1$,
\begin{equation}\label{eq:subtarget-directional-spectrum}
 \underline{\dim}_\theta\rho_V
 =\overline{\dim}_\theta\rho_V
 =\left(\frac1{p_i}+\frac1{a\theta}\right)^{-1}.
\end{equation}
The same image formula holds for $\rho_z$ whenever
$z\notin\spt\rho$ and $z^\perp\in U_i$.
$\underline{\dim}_\theta\rho
 =\overline{\dim}_\theta\rho<1$, $0<\theta\leq1$ if $a<q/(q-1)$
for finite $q>1$, if $a<\infty$ for $q=1$, or if $a<1$
for $q=\infty$.

\end{enumerate}
\end{proposition}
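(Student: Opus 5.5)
The plan is to run the layered construction of Section~\ref{sec:layered} with a reference measure built so that the "thin" marked point is concealed in a regular $m$-dimensional (or $p_i$-dimensional) component for one open set of directions and exposed for another.

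For (i), work in $W=\R v\oplus U\cong\R^{m+1}\subset\R^n$ with $\dim U=m$. The reference measure is $\sigma_q=\tfrac12(\sigma_A+\tau_q)$.
\begin{itemize}
\item $\sigma_A$ is a regular probability on $v+A$, where $A\subset U$ is a compact $m$-regular set (a closed ball or annulus patch) with $0\notin A$.
\item $\tau_q$ is the geometric thin measure \eqref{eq:thin-geometric-reference} on the axis $\R v$. For $q=\infty$ we use only $\sigma_A$ and take marked point $0$.
\end{itemize}
The mixture satisfies the hypotheses of Lemma~\ref{lem:general-reference-layers}, since $m\le q$ and the supports are separated. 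That lemma then gives \eqref{eq:subtarget-source} for the measure $\rho$ defined by \eqref{eq:sharp-mu} with marked point $0$.

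Now let $\mathcal U_-$ be the set of $V$ for which $P_V|_U$ is an isomorphism and $w_V=-(P_V|_U)^{-1}P_Vv$ lies in the interior of $A$ taken in $U$. Also require that $w_V$ lies in $\{c\,w: w\in\operatorname{int}A,\ c\in[15/16,1]\}\cap\operatorname{int} A$, so that every point $P_V(B^{-k}v)=P_V(v+(1-B^{-k})w_V)$ lies in $P_V(v+A)$. The set $\mathcal U_-$ is open and nonempty, by choosing $A$ as a thick annular patch star-shaped in the needed range.

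For $V\in\mathcal U_-$, every projected thin point and the projected marked point lie in the image of the regular $m$-dimensional component. Relative sampling (Lemma~\ref{lem:relative-clouds}) then gives \eqref{eq:sharp-common-lower} with exponent $m$. Lemma~\ref{lem:image-layers} yields the upper bound $(1/m+1/(a\theta))^{-1}$, and the lower bound is witnessed at a regular sample point, as in the lower-bound part of the proof of Lemma~\ref{lem:capped-layers} with $p=m$.

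Let $\mathcal U_+$ be the open set where $P_Vv\ne0$ and $0\notin P_V(v+A)$, for example $w_V$ far outside $A$. There the marked-point argument from the almost-everywhere part of the proof of Theorem~\ref{thm:finite-graded-sharp} shows the projected layers are separated and the marked image sees only the thin component. This gives the value $(1/q+1/(a\theta))^{-1}$, matched from above by \eqref{eq:lipschitz}.

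For (ii), in $\R^2$ use the thin axis point mass together with a weighted polygon boundary. For each $i$ we take a regular piece of dimension $p_i\in[1,q]$: a segment when $p_i=1$, or for $p_i>1$ a $p_i$-regular reference such as a segment times a Cantor-like set thickened within the plane. The pieces are placed so that, for $V$ in an open arc $U_i$, the line through $0$ in direction $V^\perp$ meets the $i$th piece. The arcs have Haar measure $\lambda_i$, and their complement is the finite set of polygon vertex directions.

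The main obstacle is ensuring that the marked point, in every direction of $U_i$, is hidden only in a component of dimension exactly $p_i$, and never in a higher one, uniformly on the arc. I would arrange this by making the polygon star-shaped about $0$, so that each ray from $0$ crosses exactly one edge, and by choosing the weight and dimension on each edge so that the lower bound comes from a regular sample point on that edge. The radial statement follows identically: for $z$ with $z^\perp\in U_i$, the rays from $z$ through the thin points hit the same edge up to $O(R_j^2)$. This error is handled as in the approximate-matching step of Theorem~\ref{thm:radial-stable-sharp}, using that the edge contains intervals.

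The stated inequalities for when the source dimensions lie below $m$ (respectively below $1$) follow from $\sup_\theta(1/q+1/(a\theta))^{-1}=(1/q+1/a)^{-1}$ by elementary algebra.
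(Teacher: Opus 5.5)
Part (i) is fine and is essentially the paper's argument. The paper takes a cube $S\subset U$, the set $A=\bigcup_j c_jS$ and the reference $\sigma_q$ of Lemma~\ref{lem:arithmetic-reference} with $p=m$. It lets $\mathcal U_-$ be the chart image of $\operatorname{int}S\times\operatorname{int}Q$ and $\mathcal U_+$ a chart box with $w\notin A\cup\{0\}$, then applies Lemma~\ref{lem:cloud-transfer} with exponent $m$, respectively $q$. Your annular patch with $1<|w_V|<2$ does the same job.

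Part (ii) has a genuine gap: the direction-dependent exponent cannot come from hiding the marked point in a regular piece of dimension $p_i$. The statement allows every $p_i\in[1,q]$ with $q$ arbitrary, and $p_i=\infty$ when $q=\infty$; for $p_i>2$ no $p_i$-Ahlfors regular set exists in $\R^2$. Even for $1<p_i\le2$ the mechanism gives the wrong exponent. The target is a line, so $P_V^{-1}(B(y,u))$ is a strip. For a segment-times-Cantor piece, the strip mass at a generic projected point is $\gtrsim u\gg u^{p_i}$. So a regular sample point is not a witness for the upper bound $u^{p_i}$ in \eqref{eq:transfer-base-balls}. In Lemma~\ref{lem:cloud-transfer} the exponent is fixed by the smallest-mass points of the whole projected reference, typically extreme points of its support. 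Your plan controls neither those points nor the points of pieces $k\ne i$ with $p_k>p_i$, and the latter can raise the image dimension. Which piece hides the marked point does not determine the image function.

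The missing idea is to encode $p_i$ as an order of vanishing of a density on one convex curve, so that the direction only selects which point is exposed. The paper puts $\sigma$ on the boundary $\Gamma$ of a centrally symmetric convex $2M$-gon whose vertex pairs $\pm z_i$ have normal cones of Haar proportion $\lambda_i$. The density is $\prod_iw_i(|x-z_i|)w_i(|x+z_i|)$, multiplied by $|x-x_*|^{q-1}$ for finite $q$, where $w_i(r)=r^{p_i-1}$ (or $e^{-1/r}$ if $p_i=\infty$) and $x_*$ lies inside an edge.

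Take $V$ in the normal cone of $\pm z_i$, minus finitely many directions. Then $P_V\Gamma=[-h,h]$ and the projected density is $\sim(h-|s|)^{p_i-1}$ near $\pm h$. It is bounded below on compact subintervals, because each interior level has two preimages on $\Gamma$, at least one away from the special points. Since $p_i\ge1$, this gives the lower bound $u^{p_i}$ everywhere, including at the images of $x_*$ and of the other vertices. The exposed vertex $z_i$, prescribed among the samples, is the witness. The case $p_i=\infty$ needs a separate arbitrary-power argument.

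For radial projections, the approximate matching of Theorem~\ref{thm:radial-stable-sharp} controls only radii $u\ge C_1R_j^2$, and there it yields the harmonic value only when $a\theta\le p$. It therefore cannot give the formula for all $\theta$ and $a$. The paper instead observes that, in the chord metric, the normalized radial projection of layer $j$ is $d_j^{-1}\sim\ell_j$ times a map $F_j$ with line level sets. Moreover $F_j\to(x\mapsto e\cdot x)$ in $C^2$. The exposed-vertex and two-preimage estimates therefore hold uniformly in $j$, with no matching error.
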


We first give the transfer lemma used in
Section~\ref{sec:third-round}. It converts reference mass bounds
into exact projected dimension functions for the layered measure.
The point realizing an upper mass bound need not belong to every
sample: a nearby sampled point suffices.

\begin{lemma}\label{lem:cloud-transfer}
Let $\sigma$ be a probability measure with
$\spt\sigma\cup\{x_*\}$ contained in the unit ball, and let
$\rho_a$ be its layered measure
\eqref{eq:sharp-mu}, with axis $v$. Use polynomial sample counts
for $0<a<\infty$, and $N_j=\lceil e^{j^2}\rceil$ for $a=\infty$.
Fix $V$ with $P_Vv\ne0$, write $\xi=(P_V)_\#\sigma$, and
assume $P_Vx_*\in\spt\xi$.
Suppose that, for some $p\in(0,\infty)$, $h\ge0$, $u_0>0$, and
$z_*\in\spt\xi$,
\begin{equation}\label{eq:transfer-base-balls}
\begin{aligned}
 &\xi(B(z,u))\ge\frac{c_Vu^p}{(1+|\log u|)^h},\ z\in\spt\xi,\\
 &\xi(B(z_*,u))\le C_Vu^p,
\end{aligned}
\quad0<u<u_0.
\end{equation}
Then, for every $0<t\le1$,
\[
 \underline{\dim}_t((\rho_a)_V)
 =\overline{\dim}_t((\rho_a)_V)
 =\left(\frac1p+\frac1{at}\right)^{-1},
\]
where the right-hand side is $p$ for $a=\infty$.
The samples can be fixed independently of $V$.
\end{lemma}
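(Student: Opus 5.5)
The proof has two halves, an upper bound and a matching lower bound, and I treat both inside the layer $j$ picture. Write $\nu_j=(P_V)_\#(T_j)_\#\eta_{N_j}$. The projected layer is the image of $\eta_{N_j}$ under the affine map $x\mapsto R_jP_Vv+\ell_jP_Vx$. Since $P_Vv\ne0$, the layer is centred at $R_jP_Vv$ and has diameter $O(\ell_j)=o(R_j)$. Hence for all large $j$ the projected layers are separated by $\gtrsim R_j$, and only finitely many early layers can overlap.

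\textbf{Upper bound.} I verify hypothesis \eqref{eq:sharp-common-lower} of Lemma~\ref{lem:image-layers} with exponent $p$.
\begin{enumerate}
\item[(a)] The preimage of a ball under $P_V\circ T_j$ lies in the class $\mathcal A$, so the relative sampling bound \eqref{eq:relative-cloud-balls} holds at every image sample point. At the image of $x_*$ I use $P_Vx_*\in\spt\xi$.
\item[(b)] Combining this with the lower bound in \eqref{eq:transfer-base-balls}, rescaled by $\ell_j$, gives
\[
\nu_j(B(y,u))\gtrsim[\log(2N_j)]^{-h-1}\max\{N_j^{-1},\min\{1,(u/R_j)^p\}\}.
\]
The factor $(1+|\log(u/\ell_j)|)^h$ is absorbed into a power of $\log(2N_j)$ when $(u/\ell_j)^p\ge N_j^{-1}$. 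Below that threshold the atomic mass $1/(N_j+1)$ suffices. The radii $u\ge u_0\ell_j$ reduce to the ball of radius $u_0\ell_j$, at the cost of a constant.
\item[(c)] The supports satisfy $\spt\nu_j\subset B(0,CR_j)$.
\end{enumerate}
Lemma~\ref{lem:image-layers} then gives $\overline{\dim}_t(\rho_a)_V\le(1/p+1/(at))^{-1}$. For $a=\infty$ it gives $\le p$.

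\textbf{Lower bound.} I construct a witness point in each layer, following the marked-point argument of Lemma~\ref{lem:general-reference-layers}.
\begin{enumerate}
\item[(a)] The point $z_*$ need not be sampled. Since $\xi(B(z_*,u_0/4))>0$, the lower estimate in \eqref{eq:relative-unconditional} supplies, for every large $j$, a sample point $x_j$ with $P_Vx_j\in B(z_*,\varepsilon_0)$. Here $\varepsilon_0$ is fixed and small.
\item[(b)] The upper bound $\xi(B(z_*,u))\le C_Vu^p$ transfers to centres near $z_*$ for radii $u\gtrsim\varepsilon'$, but not for smaller radii. This is the main obstacle: an upper bound at one point does not control nearby points at small scales.
\item[(c)] To handle it, I choose the witness inside each layer to depend on the scale. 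For layer $j$ I need control only for $u/\ell_j$ in a range $[r/\ell_j,r^t/\ell_j]$ whose lower end tends to $0$. I therefore let $x_j$ be a sample point whose image lies within distance $r/\ell_j$ of $z_*$, or I simply use the centre $w_j=R_jP_Vv+\ell_jz_*$ itself.
\item[(d)] The definitions only need small mass near some point of $\spt(\rho_a)_V$, up to doubling the radius. So at the centre $w_j$ I estimate $\mu_V(B(w_j,2u))$ by the upper sampling bound
\[
\nu_j(B(w_j,2u))\lesssim\log(2N_j)\max\{N_j^{-1},(u/\ell_j)^p\},
\]
and then move to the nearest actual support point $w_j'$, with $|w_j-w_j'|\le r$. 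If necessary I allow the factor-$2$ loss in the window, which does not change the dimension.
\end{enumerate}

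\textbf{Scale choice and conclusion.} With $d=(1/p+1/(at))^{-1}$ I pick $R_j\sim r^{d/a}$. Since $d/a<t$, the balls of radius at most $r^t$ meet only layer $j$. The computation in the proof of Lemma~\ref{lem:general-reference-layers} gives
\[
\sup_{r\le u\le r^t}\frac{\mu_V(B(w_j',u))}{u^s}\lesssim(1+|\log r|)^C\bigl(r^{d-s}+r^{t(d-s)}\bigr)\to0\qquad(s<d).
\]
For $a=\infty$ I take $R_j\sim r^c$ with $c<t(p-s)/p$. This holds at every small $r$, so $\underline{\dim}_t\ge d$.

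Finally, all the estimates used only the single class $\mathcal A$. The samples therefore do not depend on $V$.
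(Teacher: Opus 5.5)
Your upper bound follows the paper's argument and is fine. The gap is in the lower bound, at the choice of witness point.

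\textbf{Where the witness fails.} In (c)--(d) you need a point of $\spt(\rho_a)_V$ within physical distance $r$ of $w_j=R_jP_Vv+\ell_jz_*$. Equivalently, you need a projected sample within normalized distance $r/\ell_j$ of $z_*$. Nothing guarantees this, and for polynomial counts it generally fails:
\begin{itemize}
\item With $R_j\sim r^{d/a}$ one has $N_j^{-1}\sim r^d$.
\item The upper bound in \eqref{eq:transfer-base-balls} gives $\xi(B(z_*,r/\ell_j))\lesssim(r/\ell_j)^p\sim j^pr^{p(1-d/a)}$.
\item One computes $p(1-d/a)-d=p\bigl(1-d(\tfrac1p+\tfrac1a)\bigr)>0$ whenever $t<1$.
\end{itemize}
So this reference ball has mass $o(N_j^{-1})$. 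That is far below the threshold $C\log(2N_j)/N_j$ at which Lemma~\ref{lem:relative-clouds} produces a sample point; for random samples the expected number of points in the ball tends to $0$. Without such a point, the comparison $B(w_j',u)\subset B(w_j,2u)$ fails for $u$ near $r$. You also cannot centre at $w_j$ itself, because the definitions quantify over support points. (For $a=\infty$ your choice happens to work, since $N_j^{-1}=e^{-j^2}$ is super-polynomially small in $r$.)

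\textbf{The missing observation.} Your step (b) is too pessimistic. Shifting the centre by $\delta$ costs only an additive $\delta^p$: if $|z'-z_*|\le\delta$, then $\xi(B(z',u))\le\xi(B(z_*,u+\delta))\lesssim u^p+\delta^p$. So tie the shift to $N$ rather than to $r$:
\begin{itemize}
\item Put $\delta_N=(C_V'[\log(2N)]^{h+1}/N)^{1/p}$.
\item The lower bound in \eqref{eq:transfer-base-balls} at $z_*$ gives $\xi(B(z_*,\delta_N))\ge C\log(2N)/N$ for large $C_V'$.
\item Then \eqref{eq:relative-unconditional} yields a projected sample $z_{N,V}\in B(z_*,\delta_N)$.
\item Consequently $(P_V)_\#\eta_N(B(z_{N,V},u))\lesssim_V u^p+[\log(2N)]^{h+1}/N$ for all $0<u\le1$, including $u<\delta_N$.
\end{itemize}
Up to logarithms, the extra term is the atomic floor $N_j^{-1}$ already present in your estimate. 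So with the witness $y_j=R_jP_Vv+\ell_jz_{N_j,V}$, your scale computation goes through unchanged. This is how the paper argues. Only the witness depends on $V$; the samples stay fixed.
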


\begin{proof}
\emph{Estimates for the projected samples.}
Fix $V$ as in the statement and set
$\xi=(P_V)_\#\sigma$. For all sufficiently large $N$, put
\[
 \delta_N=\left(\frac{C_V'[\log(2N)]^{h+1}}N\right)^{1/p}.
\]
By the lower bound in \eqref{eq:transfer-base-balls}, choosing
$C_V'$ sufficiently large gives
\[
 \xi(B(z_*,\delta_N))\ge C\log(2N)/N.
\]
Lemma~\ref{lem:relative-clouds} therefore supplies a projected
sample point
$z_{N,V}\in B(z_*,\delta_N)$.
For $0<u\le1$, the inclusion
$B(z_{N,V},u)\subset B(z_*,u+\delta_N)$ and the upper relative
sampling bound give
\begin{align*}
 (P_V)_\#\eta_N(B(z_{N,V},u))
 &\lesssim\xi(B(z_*,u+\delta_N))+\log(2N)/N\\
 &\lesssim_V u^p+\delta_N^p+\log(2N)/N.
\end{align*}
For radii outside the original small reference range, enlarge
the constant using the total mass. We conclude that
\begin{equation}\label{eq:transfer-cloud-witness}
 (P_V)_\#\eta_N(B(z_{N,V},u))
 \lesssim_V u^p+[\log(2N)]^{h+1}/N,
 \quad0<u\le1.
\end{equation}

For a projected sample point $z$, apply
\eqref{eq:relative-cloud-balls} to
$A=P_V^{-1}(B(z,u))$. This set meets $\spt\eta_N$, so
\[
 (P_V)_\#\eta_N(B(z,u))
 \gtrsim [\log(2N)]^{-1}
       \max\{N^{-1},\xi(B(z,u))\}.
\]
The reference lower bound applies at $z$, including the projected
marked point, because $P_Vx_*\in\spt\xi$. If $u^p\ge N^{-1}$,
then $1+|\log u|\lesssim_p\log(2N)$; if $u^p<N^{-1}$,
the mass of the sample point itself is at least $(N+1)^{-1}$.
Extending the reference bound to $u\le1$ by adjusting its constant,
we obtain
\begin{equation}\label{eq:transfer-cloud-lower}
 (P_V)_\#\eta_N(B(z,u))
 \gtrsim_V[\log(2N)]^{-h-1}\max\{N^{-1},u^p\},
 \quad z\in\spt((P_V)_\#\eta_N),\ 0<u\le1.
\end{equation}

\emph{Layer geometry and upper dimensions.}
Put $w=P_Vv\ne0$ and
$\nu_j=(P_V\circ T_j)_\#\eta_{N_j}$. Scaling
\eqref{eq:transfer-cloud-lower} gives
\[
 \nu_j(B(y,u))\gtrsim_V[\log(2N_j)]^{-C}
 \max\{N_j^{-1},\min\{1,(u/R_j)^p\}\}.
\]
Also $\spt\nu_j\subset B(R_jw,C\ell_j)\subset B(0,C_VR_j)$.
Lemma~\ref{lem:image-layers} gives the claimed upper bounds.

For distinct sufficiently large $j,k$,
\[
 |R_jw-R_kw|\ge\tfrac12|w|\max\{R_j,R_k\},\qquad
 \ell_j+\ell_k=o(\max\{R_j,R_k\}).
\]
Thus late image layers are separated by a constant times the
larger layer scale. The earlier image points form a finite set:
its nonzero points stay away from late layers, and its points at
$0$ have distance comparable to $R_j$ from layer $j$.
Consequently every sufficiently late layer is separated from
all the other image points by at least $c_VR_j$.

\emph{Lower dimensions.}
Let $a<\infty$, fix $0<t\le1$, and set
\[
 d=\left(\frac1p+\frac1{at}\right)^{-1}.
\]
For every small $r$, choose $R_j\sim r^{d/a}$ and put
$y_j=R_jw+\ell_jz_{N_j,V}$. Since $d/a<t$, one has
$r^t=o(\ell_j)$ and $r^t=o(R_j)$. Hence all balls
$B(y_j,u)$ with $r\le u\le r^t$ meet only layer $j$.
For $0<s<d$, scaling \eqref{eq:transfer-cloud-witness} gives
\begin{align*}
 \sup_{r\le u\le r^t}
 \frac{(\rho_a)_V(B(y_j,u))}{u^s}
 &\lesssim_V(1+|\log r|)^C
     \left(N_j^{-1}r^{-s}+\ell_j^{-p}r^{t(p-s)}\right)\\
 &\lesssim_V(1+|\log r|)^C
     \left(r^{d-s}+r^{t(p-s)-pd/a}\right)\\
 &\lesssim_V(1+|\log r|)^C
     \left(r^{d-s}+r^{t(d-s)}\right)\longrightarrow0.
\end{align*}
We used $p(t-d/a)=td$. Since the estimate holds at every
sufficiently small $r$, it proves
$\underline{\dim}_t((\rho_a)_V)\ge d$.

For $a=\infty$, fix $0<s<p$, choose $0<c<t(p-s)/p$, and take
$R_j\sim r^c$. The same separation and sampling estimates give
\[
 \sup_{r\le u\le r^t}
 \frac{(\rho_\infty)_V(B(y_j,u))}{u^s}
 \lesssim_V(1+|\log r|)^C
 \left(e^{-j^2}r^{-s}+r^{t(p-s)-cp}\right)\longrightarrow0.
\]
Here $j\sim|\log r|$ and $t(p-s)-cp>0$.
Thus the lower dimension is at least $p$. Together with the
upper bounds, this proves all the equalities. The samples were
chosen for the common range class; only the witness point
$z_{N,V}$ depends on $V$.
\end{proof}

\begin{proof}[Proof of Proposition~\ref{prop:nonconstant-directions}]
\emph{Part (i).}
Write $W=\R v\oplus U\subset\R^n$, where $v$ is a unit
vector, $v\perp U$ and $\dim U=m$.
Take a closed $m$-dimensional cube $S\subset U$ bounded away
from $0$, and use the construction in
Lemma~\ref{lem:arithmetic-reference} with $p=m$.
Denote by $A\supset S$ and $\sigma_q$ the resulting compact set
and reference probability. After a fixed dilation, use $\sigma_q$
and marked point $0$ in
\eqref{eq:sharp-mu}, with axis $v$ and the two choices of
sample counts corresponding to $a$. By
Lemma~\ref{lem:general-reference-layers}, the measure $\rho$
satisfies \eqref{eq:subtarget-source}.

Put $Z_0=W^\perp$, and let $Q$ be a compact cube with nonempty
interior in $\operatorname{Hom}(U,Z_0)$.
Use the Grassmannian chart $V_{w,A_0}$ from the proof of
Theorem~\ref{thm:finite-graded-sharp}. Let $\mathcal U_-$ be
the chart image of $\operatorname{int}S\times\operatorname{int}Q$.
For $n=m+1$, the second factor is the zero-dimensional space.
This is a nonempty open set, and
Lemma~\ref{lem:cloud-transfer}, with exponent $m$, gives
\[
 \underline{\dim}_\theta\rho_V=\overline{\dim}_\theta\rho_V
 =\left(\frac1m+\frac1{a\theta}\right)^{-1},
 \quad V\in\mathcal U_-,\ 0<\theta\le1.
\]

Since $A$ is compact, choose a nonempty open chart box with
$w\notin A\cup\{0\}$, and call its image $\mathcal U_+$.
It is disjoint from $\mathcal U_-$, because $S\subset A$.
For every $V\in\mathcal U_+$, $P_Vv\ne0$ and
$0\notin P_V(v+A)$. For finite $q$, the reference image has
a $q$-power upper bound at $0$ and a uniform lower $q$-power
bound with logarithmic loss. Lemma~\ref{lem:cloud-transfer}
therefore gives both image functions equal to the source
function. For $q=\infty$, use the isolated marked-point
calculation in the proof of Theorem~\ref{thm:finite-graded-sharp}.
These arguments are pointwise on the two open sets and hold
for all positive parameters. Finally,
\[
 \left(\frac1q+\frac1a\right)^{-1}<m
 \quad\Longleftrightarrow\quad
 a<\frac{mq}{q-m}
\]
for $q<\infty$; for $q=\infty$, the condition is $a<m$.
Monotonicity in $\theta$ proves the last assertion of part~(i).

\emph{Part (ii): the reference and direction sets.}
Choose a centrally symmetric convex $2M$-gon $P$, with opposite
vertex pairs $\pm z_1,\ldots,\pm z_M$, such that the vertex
normal cones have the prescribed Haar proportions $\lambda_i$.
For example, take supporting lines tangent to a circle whose
successive normal-angle gaps are $\pi\lambda_i$, and repeat
them after rotation by $\pi$. Each opposite vertex pair then
has normal-cone proportion $\lambda_i$ in $G(2,1)$.
Dilate $P$ into the unit ball, put $\Gamma=\partial P$, and
choose a unit vector $v$ parallel to an edge. For $q<\infty$,
choose $x_*$ in an edge interior; for $q=\infty$, take $x_*=0$.

Define
\[
 w_i(r)=\begin{cases}
 r^{p_i-1},&p_i<\infty,\\
 e^{-1/r},&p_i=\infty,
 \end{cases}
 \qquad
 w_*(x)=\begin{cases}
 |x-x_*|^{q-1},&q<\infty,\\
 1,&q=\infty,
 \end{cases}
\]
using their continuous extensions at zero, with $r^0=1$.
Let $\sigma$ be the probability on $\Gamma$ with density
\begin{equation}\label{eq:polygon-reference-density}
 \frac{d\sigma}{d(\mathcal H^1|_\Gamma)}(x)
 =c_Pw_*(x)\prod_{i=1}^M w_i(|x-z_i|)w_i(|x+z_i|).
\end{equation}
The density is positive except at finitely many points;
therefore $\spt\sigma=\Gamma$ and $\sigma$ is nonatomic.
Let $\Sigma$ contain the vertices and, for finite $q$, $x_*$.
From each open vertex normal cone remove the finitely many
directions perpendicular to an edge or to $x-y$ for distinct
$x,y\in\Sigma$. Denote the resulting sets by $U_i$.
They are pairwise disjoint, nonempty and open, have Haar
measures $\lambda_i$, and their union has finite complement.
In particular, $P_Vv\ne0$ on their union.

\emph{Source estimates and sampling.}
For finite $q$, near each $z\in\Sigma$ the density is comparable
to $|x-z|^\beta$, for some $0\le\beta\le q-1$.
On each incident edge, elementary integration gives
\[
 \sigma(B(x,u))\sim u(|x-z|+u)^\beta.
\]
At a vertex, add the contributions from its two incident
edges. Away from $\Sigma$, the density is bounded above and
below. Consequently,
\begin{align*}
 \frac{\sigma(B(x,R))}{\sigma(B(x,u))}
 \lesssim\frac{R(|x-z|+R)^\beta}
                  {u(|x-z|+u)^\beta}
 \lesssim(R/u)^{1+\beta}\le(R/u)^q,\quad
 \sigma(B(x_*,u))&\sim u^q.
\end{align*}
The local estimates extend to all $0<u\le R\le1$ by compactness
and separation of the finitely many special points.
For $q=\infty$, the marked point has positive distance from
$\Gamma$. Thus the respective hypotheses of
Lemma~\ref{lem:general-reference-layers} hold.

Choose the common relative samples with all vertices prescribed
in every sufficiently late layer, and construct $\rho$ by
\eqref{eq:sharp-mu}, with axis $v$ and the corresponding sample
counts. That lemma proves \eqref{eq:subtarget-source} and the
growth assertions. The same samples will be used for every
orthogonal and radial image.

\emph{Orthogonal images with finite $p_i$.}
Fix $V\in U_i$, and orient its unit vector $e$ so that $z_i$
is the exposed vertex. Set $h=e\cdot z_i>0$, so the image of
$\Gamma$ is $[-h,h]$. On every edge, projection has nonzero
constant derivative. Near either image endpoint, only the two
incident edges contribute, and
\[
 |x-z_i|\sim_V h-e\cdot x.
\]
The change-of-variables formula and the vertex weight give
\[
 \frac{d(P_V)_\#\sigma}{ds}(s)
 \sim_V(h-|s|)^{p_i-1}
\]
near $\pm h$. Every interior image level has two boundary
preimages. Since the projections of the points in $\Sigma$ are
distinct, at least one preimage lies away from $\Sigma$.
Compactness gives a positive lower density bound on every
compact subinterval of $(-h,h)$. Integrating at the endpoints
and in the interior, and using $p_i\ge1$, yields
\begin{align*}
 (P_V)_\#\sigma(B(s,u))&\gtrsim_Vu^{p_i},
       \quad s\in[-h,h],\\
 (P_V)_\#\sigma(B(h,u))&\sim_Vu^{p_i}.
\end{align*}
The projected marked point also belongs to $[-h,h]$.
Lemma~\ref{lem:cloud-transfer}, with exponent $p_i$, now proves
\eqref{eq:subtarget-directional-spectrum}.

\emph{Orthogonal images with $p_i=\infty$.}
Fix $V\in U_i$ and $0<\theta\le1$, and choose $e,h$ as in the
preceding case. Here $q=\infty$. The strict normal-cone condition gives
$|x-z_i|\lesssim_V h-e\cdot x$ near the exposed vertex.
Since $e^{-1/r}$ vanishes faster than every power,
\[
 (P_V)_\#\sigma(B(P_Vz_i,u))\lesssim_{V,L}u^L,
 \quad L>0.
\]
The vertex is a prescribed sample point, so relative sampling
gives
\[
 (P_V)_\#\eta_N(B(P_Vz_i,u))
 \lesssim_{V,L}u^L+\log(2N)/N.
\]
The image layers are separated by the geometry in
Lemma~\ref{lem:cloud-transfer}. For $a<\infty$, fix $0<s<a\theta$,
choose $s/a<c<\theta$ and $L>\theta s/(\theta-c)$, and take
$R_j\sim r^c$ at each small $r$. At
$y_j=R_jP_Vv+\ell_jP_Vz_i$, the scaled estimate gives
\begin{align*}
 \sup_{r\le u\le r^\theta}\frac{\rho_V(B(y_j,u))}{u^s}
 &\lesssim_{V,L}(1+|\log r|)^C
       \left(R_j^ar^{-s}+\ell_j^{-L}r^{\theta(L-s)}\right)\\
 &\lesssim_{V,L}(1+|\log r|)^C
       \left(r^{ca-s}+r^{\theta(L-s)-cL}\right)\longrightarrow0.
\end{align*}
Both exponents are positive by the choices of $c,L$.
Thus the lower image dimension is at least $a\theta$; Lipschitz
monotonicity gives the matching upper bound.
For $a=\infty$, take any $s>0$, $0<c<\theta$, and $L>\theta s/(\theta-c)$.
The first term becomes $e^{-j^2}r^{-s}$, which tends to zero
because $j\sim|\log r|$. Hence both image dimensions are
infinite, as required.

\emph{Radial projections: normalized reference estimates.}
Fix $z\notin\spt\rho$ with $z^\perp\in U_i$.
Since $0\in\spt\rho$, $z\ne0$. For late layers define
\[
 b_j=\frac{z-R_jv}{\ell_j},\qquad d_j=|b_j|,\qquad
 n_j=b_j/d_j,
\]
and choose unit $e_j\perp n_j$ tending to the unit vector
$e\in z^\perp$ exposing $z_i$. Put
\[
 F_j(x)=\frac{e_j\cdot x}{1-n_j\cdot x/d_j}.
\]
Then $d_j\sim_z\ell_j^{-1}\to\infty$ and
$F_j\to(x\mapsto e\cdot x)$ in $C^2(P)$. Its level sets are
lines. The strict normal-cone inequality
\[
 e\cdot(z_i-x)\ge c_z|z_i-x|,\quad x\in P
\]
and $C^1$ convergence show that $F_j$ has its maximum at $z_i$
and minimum at $-z_i$, for every sufficiently large $j$.

The edge derivatives of $F_j$ are uniformly bounded above and
away from zero, and the finitely many values $F_j(\Sigma)$
remain uniformly separated. Write
\[
 h_j^-=F_j(-z_i),\qquad h_j^+=F_j(z_i),\qquad
 \xi_j=(F_j)_\#\sigma.
\]
For $p_i<\infty$, the edgewise change-of-variables formula gives
\[
 \frac{d\xi_j}{ds}(s)\sim_z
 \begin{cases}
 (s-h_j^-)^{p_i-1},&s\text{ near }h_j^-,\\
 (h_j^+-s)^{p_i-1},&s\text{ near }h_j^+,
 \end{cases}
\]
with neighborhoods and constants independent of large $j$.
Away from these neighborhoods, every level line meets $\Gamma$
in two distinct points. At least one stays a fixed distance
from $\Sigma$: otherwise a limiting level would either contain
two distinct points of $\Sigma$, contrary to the excluded
directions, or be tangent at an exposed vertex, contrary to
its distance from the two extreme levels. Thus $d\xi_j/ds$ is
bounded below uniformly on the remaining interval. Integration
gives a lower $p_i$-power ball bound throughout $[h_j^-,h_j^+]$
and an upper $p_i$-power bound at $h_j^+$, uniformly in $j$.
For $p_i=\infty$, the upper bound at $h_j^+$ holds with every
finite power $L$ by the same vertex estimate.
The marked image is covered by the lower bound when $p_i<\infty$:
for finite $q$ it comes from $\Gamma$; for $q=\infty$, $F_j(0)=0$
lies in $F_j(\Gamma)$ because its level line crosses the interior of $P$.

To convert to the chord metric, observe that
\[
 \pi_{b_j}(x)
 =\frac{-n_j+(F_j(x)/d_j)e_j}
        {\sqrt{1+(F_j(x)/d_j)^2}}.
\]
Since $F_j$ is uniformly bounded on $P$,
\[
 |\pi_{b_j}(x)-\pi_{b_j}(y)|
 \sim d_j^{-1}|F_j(x)-F_j(y)|
 \sim_z\ell_j|F_j(x)-F_j(y)|.
\]
Consequently, for finite $p_i$, the radial reference image of
layer $j$ has lower ball mass
\[
 c_z\min\{1,(u/\ell_j)^{p_i}\}
\]
at every support point, and upper ball mass
$C_z(u/\ell_j)^{p_i}$ at the image of $z_i$.
For infinite $p_i$, its upper bound at that image is
$C_{z,L}(u/\ell_j)^L$ for every $L>0$.

\emph{Radial projections: samples and layer dimensions.}
The preimages of spherical caps belong to the common range
class in Lemma~\ref{lem:relative-clouds}. Hence the same samples
satisfy the radial counterparts of the two projected-sample
estimates in Lemma~\ref{lem:cloud-transfer}. The upper witnesses
are taken at the prescribed vertex $z_i$ itself.

The choice of $v$ and the excluded directions give
$D\pi_z(0)v\ne0$. Thus
\[
 |\pi_z(R_jv)-\pi_z(R_kv)|\gtrsim_z\max\{R_j,R_k\},
 \quad j\ne k\text{ large},\qquad
 \operatorname{diam}\pi_z(T_j\Gamma)\lesssim_z\ell_j.
\]
The image layers are therefore separated; the finitely many
earlier image points are handled as in
Lemma~\ref{lem:cloud-transfer}. Their supports also lie in
$B(\pi_z(0),C_zR_j)$.

Fix $0<\theta\le1$. For finite $p_i$, the lower sample bounds verify
Lemma~\ref{lem:image-layers}, giving the desired upper dimensions.
For finite $a$, put $d=(1/p_i+1/(a\theta))^{-1}$, take
$R_j\sim r^{d/a}$, and use the image of the prescribed vertex
as the centre $y_j$. Scaling the upper sample estimate gives
\[
 \sup_{r\le u\le r^\theta}\frac{\rho_z(B(y_j,u))}{u^s}
 \lesssim_z(1+|\log r|)^C
       \left(r^{d-s}+r^{\theta(d-s)}\right)\longrightarrow0,
 \quad0<s<d.
\]
For exponential counts, choose $0<s<p_i$ and
$0<c<\theta(p_i-s)/p_i$; the bound is
\[
 C_z(1+|\log r|)^C
 \left(e^{-j^2}r^{-s}+r^{\theta(p_i-s)-cp_i}\right)\longrightarrow0.
\]
These estimates give the matching lower dimensions.
For $p_i=\infty$, use the arbitrary-power upper bound at the
vertex and the choices of $c,L$ from the orthogonal argument.
It gives both radial dimensions equal to $a\theta$ for finite $a$,
and infinite for $a=\infty$. This proves every asserted radial
formula for the same measure $\rho$.

Finally,
\[
 \left(\frac1q+\frac1a\right)^{-1}<1
 \quad\Longleftrightarrow\quad
 a<\left(1-\frac1q\right)^{-1}.
\]
Together with \eqref{eq:subtarget-source} and monotonicity in
$\theta$, this proves the last assertion of part~(ii), including
$q=1$ and $q=\infty$ under the reciprocal conventions.
\end{proof}

For finite $a$ and a fixed positive parameter,
$p\mapsto(1/p+1/(a\theta))^{-1}$
maps $[1,q]$ onto
\[
 \left[\frac{a\theta}{1+a\theta},
       \left(\frac1q+\frac1{a\theta}\right)^{-1}\right],
\]
with $1/q=0$ when $q=\infty$; the upper endpoint is then
$a\theta$. For $a=\infty$ the corresponding interval is $[1,q]$,
including the infinite endpoint when $q=\infty$.
These intervals are bounded by the sharp almost-everywhere lower
bound and the source upper bound. Part~(ii) of
Proposition~\ref{prop:nonconstant-directions} realizes any finite
collection of the corresponding functions with prescribed positive
Haar proportions, keeping both entire source functions and the
quasi-Assouad dimension fixed. The radial formulas realize the same
functions on the corresponding open sectors of exterior centres.
For example, with finite $a$, $q=2$ and $p_i=3/2,2$ give
$3a\theta/(3+2a\theta)$ and $2a\theta/(2+a\theta)$; both source
dimensions remain below one when $0<a<2$.
Thus a fixed measure can have different projected dimension functions
on sets of directions of positive Haar measure. In particular, a
direction-independent capacity cannot yield a single almost-everywhere
projected value for every measure.

\paragraph{Further questions.}
Several questions remain. Proposition~\ref{prop:exact-strict-lower}
attains the strict exceptional dimension with one measure for a constant
lower source function at its preservation threshold. It remains to
obtain such attainment while prescribing both entire source functions
and the growth dimension, in particular for the source data and
arbitrary strict thresholds $d<s$ in
Theorem~\ref{thm:finite-graded-sharp}.
Corollary~\ref{cor:nonharmonic-sharp} establishes sharpness when
$b_*<b$ for one prescribed nonharmonic source function. Sharpness of
the optimized exponent for every admissible prescribed source function
remains open.

For finite $a$, the radial strict range $b\le1$, $a\theta\ge b$
and the non-strict range $b<1$, $a\theta>b$ are not covered by
the present constructions. For constant source functions
$s=q<\infty$, Proposition~\ref{prop:radial-constant-reference}
adds $\theta q<d$ and $\theta q\le d$, respectively; the
remaining parameters and the infinite constant-source case
are not settled here. Further questions concern direction
distributions beyond the common harmonic family in
Proposition~\ref{prop:nonconstant-directions}, and local mass
conditions ensuring almost-everywhere constancy.

\appendix
\section{Point-capacity characterization}\label{sec:capacity}

We return to the point capacity \eqref{eq:originalcapacity}.
For every positive decay exponent, its upper and lower asymptotic
rates each determine a unique critical exponent. The next proposition
gives sharp comparisons with the corresponding source dimensions and
a growth condition for equality. It extends
\cite[Theorem~5.7]{AMT}; see also \cite{BFF,AMT}.

\begin{proposition}\label{prop:capacity}
Fix $0<\theta\le1$ and $\alpha>0$, and set
$q=\dim_{\mathrm{qA}}\mu$. The equation
\[
 \limsup_{r\downarrow0}
 \frac{\log C_{r,\theta}^{s,\alpha}(\mu)}{-\log r}=s
\]
has a unique solution $c_+\in[0,\alpha]$. The equation with
$\liminf$ has a unique solution $c_-\in[0,\alpha]$.
Writing $D_+=\overline{\dim}_\theta\mu$ and
$D_-=\underline{\dim}_\theta\mu$, one has
\begin{equation}\label{eq:capacity-sharp-comparison}
 \left[\frac1{D_\pm}
       +\left(\frac1\alpha-\frac1q\right)_+\right]^{-1}
 \le c_\pm\le\min\{D_\pm,\alpha\}.
\end{equation}
In particular, $q\le\alpha$ implies $c_\pm=D_\pm$.
The lower bound in \eqref{eq:capacity-sharp-comparison} is attained
simultaneously at all positive parameters by the layered measures of
Lemma~\ref{lem:general-reference-layers}, with $0<a\le\infty$:
\begin{equation}\label{eq:capacity-layer-values}
 c_+=c_-
 =\left(\frac1{\min\{q,\alpha\}}+\frac1{a\theta}\right)^{-1}.
\end{equation}
\end{proposition}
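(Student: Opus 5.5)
We treat the map $s\mapsto \log C^{s,\alpha}_{r,\theta}(\mu)/(-\log r)$ first, to get existence and uniqueness of $c_\pm$. Write $I_s(r)=\inf_{x\in K}\int\phi^{s,\alpha}_{r,\theta}(x-y)\,d\mu(y)$.

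\textbf{Monotonicity and uniqueness.} For $s<s'$ the kernels satisfy $\phi^{s',\alpha}_{r,\theta}\le\phi^{s,\alpha}_{r,\theta}$ pointwise. Moreover, on $r<|z|\le r^\theta$ the ratio of the kernels is $(r/|z|)^{s'-s}\ge r^{(1-\theta)(s'-s)}$, and on $|z|>r^\theta$ it is exactly $r^{(1-\theta)(s'-s)}$. Hence
\[
 r^{(1-\theta)(s'-s)}I_s(r)\le I_{s'}(r)\le I_s(r).
\]
So $F_\pm(s)=\limsup/\liminf\,\log C^{s,\alpha}_{r,\theta}/(-\log r)$ is nondecreasing in $s$ and $(1-\theta)$-Lipschitz. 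Consequently $F_\pm(s)-s$ is continuous and strictly decreasing when $\theta>0$; for $\theta=1$ it has slope $-1$. At $s=0$ it is $\ge0$, since $I_0\le\mu(\R^n)$. At $s=\alpha$, the kernel is $\ge r^\alpha\min\{1,|z|^{-\alpha}\}\gtrsim r^\alpha$ on the bounded set $K-K$, so $F_\pm(\alpha)\le\alpha$. The intermediate value theorem then gives a unique root $c_\pm\in[0,\alpha]$. From strict monotonicity we also get the following characterization: $s<c_\pm$ if and only if $F_\pm(s)>s$.

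\textbf{Upper bound $c_\pm\le\min\{D_\pm,\alpha\}$.} Fix $s>D_+$ (respectively $s>D_-$, along the sequence $r_n$). By Definition~\ref{def:intermediate}, every $x\in K$ has some $u\in[r,r^\theta]$ with $\mu(B(x,u))\ge cu^{s}$. Since $\phi\ge(r/u)^{s}$ on $B(0,u)$, we get $I_{s}(r)\ge c r^{s}$, so $F_\pm(s)\le s$ and hence $c_\pm\le s$.

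\textbf{Lower bound.} Take $t<D_\pm$ and $Q>\max\{q,\alpha\}$ (or $Q=\infty$), set $\kappa=\theta$, and let $b$ be as in Lemma~\ref{lem:potential}. For $\varphi<\theta$ with $t<D_\varphi$, Definition~\ref{def:intermediate} supplies points $x_\delta$ satisfying \eqref{small-mass}: at arbitrarily small $\delta$ in the upper case, and at all small $\delta$ in the lower case. Lemma~\ref{lem:potential} then gives $I_s(\delta^b)\le C\delta^{b(s+\beta)}$ for each $s<t/b$.

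\begin{itemize}
\item For $c_+$, this holds along a sequence $r\to0$, so $\limsup\ge s+\beta>s$ and therefore $c_+\ge s$.
\item For $c_-$, we need the bound at every small $r$. Given $r$, put $\delta=r^{1/b}$; then every small $r$ is covered. Note that this is exactly where the liminf pairs with the lower dimension.
\end{itemize}

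Letting $s\uparrow t/b$, $t\uparrow D_\pm$ and $Q\downarrow\max\{q,\alpha\}$ gives $c_\pm\ge[1/D_\pm+1/\alpha-1/\max\{q,\alpha\}]^{-1}$, which is the stated bound. The infinite-dimension and zero cases are handled by truncation, as in Theorem~\ref{thm:transversal}. When $q\le\alpha$, the two bounds coincide.

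\textbf{Attainment for layered measures.} We expect this to be the main obstacle. Let $\mu$ be a measure from Lemma~\ref{lem:general-reference-layers}, with $D_\pm=(1/q+1/(a\theta))^{-1}$. Then:

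\begin{itemize}
\item If $q\le\alpha$, the value $c=D$ follows from the previous step.
\item If $q>\alpha$, the lower bound already equals $(1/\alpha+1/(a\theta))^{-1}$.
\end{itemize}

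So it remains to show $c_+\le(1/\alpha+1/(a\theta))^{-1}$. Our plan is to mimic the proof of Lemma~\ref{lem:image-layers} with $p=\alpha$. In that lemma, the projection produced the lower $m$-power mass bound; here the far-field term of the kernel, $r^{s+\theta(\alpha-s)}|z|^{-\alpha}$, will play the same role. For $x$ in layer $j$:

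\begin{itemize}
\item When $R_j\ge r^{d/a}$, the atom of mass $\gtrsim R_j^a$ gives $I\gtrsim r^{d}$ up to logarithms.
\item In the middle range, the whole layer, of mass $j^{-2}$ at distance $\lesssim R_j$, contributes $r^{s+\theta(\alpha-s)}R_j^{-\alpha}$, which should be $\gtrsim r^{s}\cdot r^{\theta\alpha}R_j^{-\alpha}$. We need to check that balancing this against $R_j^a$ at $R_j=r^{d/a}$ reproduces exactly $d=(1/\alpha+1/(a\theta))^{-1}$; that computation is the delicate point.
\item For very small layers, the origin tail gives a $\log$-loss bound.
\end{itemize}

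We will verify that these three estimates give $F(s)\le s$ for every $s>d$, so that $c_+\le d$. Combined with $c_-\ge d$ from the lower bound, this yields $c_+=c_-=d$.
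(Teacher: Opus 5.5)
Your proposal is correct and follows the paper's proof essentially step for step. The matching steps are: the kernel comparison, giving difference quotients of $F_\pm(s)-s$ between $-1$ and $-\theta$, for uniqueness; the witness-ball bound $I_s(r)\gtrsim r^s$ for $c_\pm\le\min\{D_\pm,\alpha\}$; Lemma~\ref{lem:potential} with $\kappa=\theta$, with the correct every-scale versus arbitrarily-small-scale quantifiers, for the lower bound; and the atom, nearby-layer far-field and origin-tail decomposition for the layered measures.

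The computation you flag does close, but only if you keep the exact far-field factor. For $r^\theta/C\le R_j<r^{d/a}$, the nearby layer contributes $\gtrsim j^{-2}r^{s+\theta(\alpha-s)-\alpha d/a}$, and this exponent is below $s$ precisely when $s>d$. The paper packages the same fact as $\max\{R_j^a,r^{s+\theta(\alpha-s)}R_j^{-\alpha}\}\ge r^{a(s+\theta(\alpha-s))/(a+\alpha)}$. Your weakened form $r^{s+\theta\alpha}R_j^{-\alpha}$ discards the factor $r^{-\theta s}$ and leaves the exponent $s+\alpha(\theta-d/a)>s$, which would not suffice.
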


\begin{proof}
Define
\[
 \begin{aligned}
 h_+(s)&=\limsup_{r\downarrow0}
 \frac{\log C_{r,\theta}^{s,\alpha}(\mu)}{-\log r}-s,\\
 h_-(s)&=\liminf_{r\downarrow0}
 \frac{\log C_{r,\theta}^{s,\alpha}(\mu)}{-\log r}-s.
 \end{aligned}
\]
The kernel comparison \cite[Lemma~5.3]{AMT}, valid for real
$\alpha>0$, gives
\begin{equation}\label{eq:capacity-monotone}
 -(s-t)\le h_\pm(s)-h_\pm(t)\le-\theta(s-t),
 \quad0\le t\le s\le\alpha.
\end{equation}
Compactness and $0<\mu(K)<\infty$ give
$\mu(K)^{-1}\le C_{r,\theta}^{s,\alpha}(\mu)\lesssim r^{-\alpha}$.
Thus both functions are finite, continuous and strictly decreasing,
with $h_\pm(0)\ge0$ and $h_\pm(\alpha)\le0$. Their unique zeros
$c_\pm$ therefore exist without any growth assumption.

For $D_\pm<s\le\alpha$, choose $D_\pm<t<s$.
At every sufficiently small $r$ for $+$, and along a sequence
$r\downarrow0$ for $-$, every $x\in K$ admits $u\in[r,r^\theta]$
with $\mu(B(x,u))\ge cu^t$. Hence
\[
 \begin{aligned}
 \int\phi_{r,\theta}^{s,\alpha}(x-y)\,d\mu(y)
 &\ge cr^su^{t-s}\ge cr^{s+\theta(t-s)},\\
 C_{r,\theta}^{s,\alpha}(\mu)&\lesssim r^{-s-\theta(t-s)}.
 \end{aligned}
\]
It follows that $h_\pm(s)\le\theta(t-s)<0$, proving the upper
bound in \eqref{eq:capacity-sharp-comparison}.

For its lower bound, take $0<s$ strictly below the asserted value.
Write $\dim_\varphi^+\mu=\overline{\dim}_\varphi\mu$ and
$\dim_\varphi^-\mu=\underline{\dim}_\varphi\mu$.
When $q<\infty$, choose
\[
 Q>\max\{q,\alpha\},\qquad 0<t<D_\pm,\qquad
 0<\varphi<\theta
\]
so that $t<\dim_\varphi^\pm\mu$ and
$s<t/(1+t/\alpha-t/Q)$. This is possible by
Lemma~\ref{lem:window}, taking $t\uparrow D_\pm$ and
$Q\downarrow\max\{q,\alpha\}$.
When $q=\infty$, take $Q=\infty$, $\varphi=\theta$, and
$t<D_\pm$ sufficiently large that $s<t/(1+t/\alpha)$.
In either case, small-mass witnesses occur at arbitrarily small
source scales for $+$ and at every sufficiently small scale for $-$.
Lemma~\ref{lem:potential}, with $\kappa=\theta$, gives
\[
 C_{r,\theta}^{s,\alpha}(\mu)\gtrsim r^{-s-\beta},
 \qquad r=\delta^{1+t/\alpha-t/Q},\quad\beta>0.
\]
The change of scale preserves these quantifiers, so
$h_\pm(s)>0$. This proves \eqref{eq:capacity-sharp-comparison}.
More generally, the same argument with any $\kappa\in(0,1]$
shows that $c_\pm$ is bounded below by the right-hand side of
\eqref{eq:two-window-bound}, using the corresponding source dimension.

It remains to prove the equality assertion. For exponential sample
counts, $D_\pm=q$, so \eqref{eq:capacity-sharp-comparison} already gives
$c_\pm=\min\{q,\alpha\}$. Now let $\mu$ be a polynomial layered
measure in Lemma~\ref{lem:general-reference-layers}.
The lower bound just proved gives the right-hand side of
\eqref{eq:capacity-layer-values}. If $\alpha\ge q$, this already
equals the source dimension. Suppose $\alpha<q$, allowing $q=\infty$,
and put $d=\alpha a\theta/(\alpha+a\theta)$.
We show that $h_\pm(s)<0$ for $d<s\le\alpha$.

Write $K_j=T_j(\spt\eta_{N_j})$, $L=1+|\log r|$, and
$A=s+\theta(\alpha-s)$.
If $x\in K_j$ and $R_j\ge r^\theta/8$, then $j\lesssim L$.
The point mass at $x$ is at least $c j^{-2}R_j^a$;
the layers with indices $k\ge j+2$ have mass comparable to $j^{-1}$
and lie at distance at most $2R_j$ from $x$. Consequently,
\[
 \begin{aligned}
 \int\phi_{r,\theta}^{s,\alpha}(x-y)\,d\mu(y)
 &\ge cL^{-2}\max\{R_j^a,r^A R_j^{-\alpha}\}\\
 &\ge cL^{-2}r^{aA/(a+\alpha)}.
 \end{aligned}
\]
If $R_j<r^\theta/8$, all layers with $R_k<r^\theta/8$ lie in
$B(x,r^\theta)$. Their total mass is comparable to $L^{-1}$, giving
\[
 \int\phi_{r,\theta}^{s,\alpha}(x-y)\,d\mu(y)
 \ge cL^{-1}r^{(1-\theta)s}.
\]
This estimate also holds at $x=0$. Since $s>d$, both
$aA/(a+\alpha)$ and $(1-\theta)s$ are strictly smaller than $s$.
The infimum of the potential is therefore at least
$cL^{-2}r^{s-\varepsilon}$ for some $\varepsilon>0$, uniformly
at every small $r$. Hence $h_\pm(s)<0$, proving
\eqref{eq:capacity-layer-values}.
\end{proof}

For every finite $q>\alpha$, choosing
$0<a<\alpha q/(q-\alpha)$ makes all the source dimensions in
\eqref{eq:general-source-values} strictly smaller than $\alpha$,
while both capacity exponents are strictly smaller than the
corresponding source dimensions. For $q=\infty$, the same conclusion
holds with $0<a<\alpha$. Thus the growth threshold in the
characterization cannot be replaced by an upper bound on the source
intermediate dimensions alone. On the other hand, the two-window
bound shows that a source dimension function constant on
$(0,\theta]$ has capacity exponent $\min\{D_\pm,\alpha\}$,
irrespective of its quasi-Assouad dimension.

\paragraph{Capacity exponents for nonharmonic source functions.}
The same estimates also give explicit capacity exponents for the
nonharmonic examples of Section~\ref{sec:third-round}.
For the capped layers of Lemma~\ref{lem:capped-layers},
the two critical exponents are, for every $\alpha>0$ and $0<t\le1$,
\begin{equation}\label{eq:capped-capacity}
 c_+^\alpha(t)=c_-^\alpha(t)
 =\min\left\{e,\left(\frac1{\min\{q,\alpha\}}+\frac1{at}\right)^{-1}\right\}.
\end{equation}
For $a=\infty$, the source function is constant and equal to $e$.
Letting the source window tend to zero gives the lower bound
$\min\{e,\alpha\}$, and the source comparison gives the reverse
inequality. This is \eqref{eq:capped-capacity} since $e<q$.
Now let $a<\infty$. If $q\le\alpha$, the source comparison is an
equality and proves the assertion.
For $\alpha<q$, allowing $q=\infty$, put
$c=\min\left\{e,\left(\frac1{\alpha}+\frac1{at}\right)^{-1}\right\}$ and choose a source parameter
$\kappa\le t$ with $\left(\frac1{q}+\frac1{a\kappa}\right)^{-1}=c$.
Then the source value at $\kappa$ is $c$ and
$\kappa(1/c-1/q)=1/a$.
The two-window lower bound is therefore exactly $c$.
Conversely, the source comparison bounds the zeros by $e$.
Every layer atom has mass at least $c'/N_j$, and the layer
weights and geometry are unchanged, so the atomic-floor and
deeper-tail argument proving \eqref{eq:capacity-layer-values}
also bounds the zeros by $\left(\frac1{\alpha}+\frac1{at}\right)^{-1}$.
This proves the formula at both finite and infinite growth.

Point-capacity zeros take their maximum under finite sums of
measures with separated compact supports, provided the upper and
lower zero agree for each summand.
Indeed, the potential of the sum is at least the summand's
potential at every point of its support.
Conversely, at a minimizing point for one summand, the other
summands contribute only
$O(r^{s+t(\alpha-s)})$, which is of strictly higher order than
$r^s$ when $s<\alpha$.
The strict monotonicity in \eqref{eq:capacity-monotone} proves
the assertion.
Applying this rule to the measures in
Corollary~\ref{cor:nonharmonic-sharp} gives the exact capacity exponents
\begin{equation}\label{eq:nonharmonic-capacity}
 c_+^{\,\alpha}(t)=c_-^{\,\alpha}(t)
 =
 \max\left\{
   \min\left\{\frac12,\left(\frac1{\min\{2,\alpha\}}+\frac1{2t}\right)^{-1}\right\},
   \left(\frac1{\min\{2,\alpha\}}+\frac1{t}\right)^{-1}
 \right\}.
\end{equation}
These capacity exponents attain the optimized two-window lower bound
for every pair $(t,\alpha)$. Indeed, with the common growth exponent
$q=2$, the bound is increasing in the source value and hence commutes
with the finite maximum; optimization over the source parameter also
commutes with this maximum.
For the source functions $D_q$, $1<q\le\infty$, in the remark after
Corollary~\ref{cor:nonharmonic-sharp}, the same calculation proves
\eqref{eq:nonharmonic-capacity} and its optimality with
$\min\{2,\alpha\}$ replaced by $\min\{q,\alpha\}$.

\Needspace{6\baselineskip}
\medskip
\noindent\textbf{Disclosure on artificial intelligence (AI) assistance:}
The authors acknowledge the use of OpenAI tools in the preparation of
this manuscript, including language editing, LaTeX formatting,
organization, proof checks, and the search for sharpness constructions and related examples in Sections~\ref{sec:sharp}--\ref{sec:examples}.
The authors retain full responsibility for the final verification of all mathematical statements and proofs.

\end{document}